\documentclass[11pt]{article}

\usepackage[T1]{fontenc}
\usepackage[utf8]{inputenc}
\usepackage{lmodern}
\usepackage[margin=1.05in]{geometry}
\usepackage{microtype}
\usepackage{amsmath,amssymb,amsthm,mathtools}
\usepackage{enumitem}
\usepackage{xspace}
\usepackage{tikz}
\usetikzlibrary{arrows.meta,calc,positioning,fit}
\usepackage{fancyhdr}
\usepackage[colorlinks=true,linkcolor=blue!55!black,
  citecolor=blue!55!black,urlcolor=blue!55!black]{hyperref}
\usepackage[nameinlink,capitalise,noabbrev]{cleveref}
\crefformat{chatgptappendix}{#2Appendix#3}
\Crefformat{chatgptappendix}{#2Appendix#3}

\newcommand{\provenanceheader}{%
  \ifnum\value{page}<11 Human-written part%
  \else\ifnum\value{page}<26 Human-verified part%
  \else\ifnum\value{page}<29 References%
  \else Appendix by AI%
  \fi\fi\fi}

\newcommand{\configurepaperheader}{%
  \fancyhf{}%
  \fancyhead[L]{\scriptsize\sffamily\color{black}%
    A nearcircumsphere-Ramsey theorem}%
  \fancyhead[R]{\scriptsize\sffamily\bfseries\color{black}%
    \provenanceheader}%
  \fancyfoot[C]{\thepage}%
  \renewcommand{\headrulewidth}{0.3pt}%
  \renewcommand{\footrulewidth}{0pt}}
\fancypagestyle{paper}{\configurepaperheader}
\fancypagestyle{plain}{%
  \fancyhf{}%
  \fancyfoot[C]{\thepage}%
  \renewcommand{\headrulewidth}{0pt}%
  \renewcommand{\footrulewidth}{0pt}}
\newtheorem{theorem}{Theorem}
\newtheorem{proposition}[theorem]{Proposition}
\newtheorem{lemma}[theorem]{Lemma}
\newtheorem{corollary}[theorem]{Corollary}
\newtheorem*{claim}{Claim}
\theoremstyle{definition}
\newtheorem{definition}[theorem]{Definition}

\theoremstyle{remark}
\newtheorem{remark}[theorem]{Remark}

\def\dim{\mathrm{dim}}
\def\eps{\varepsilon}

\def\N{\mbox{\ensuremath{\mathbb N}}\xspace}
\def\R{\mbox{\ensuremath{\mathbb R}}\xspace}
\def\S{\mbox{\ensuremath{\mathbb S}}\xspace}

\def\Z{\mbox{\ensuremath{\mathbb Z}}\xspace}

\def\D{\mbox{\ensuremath{\mathcal D}}\xspace}

\def\P{\mbox{\ensuremath{\mathcal P}}\xspace}

\mathcode`l="8000
\begingroup
\makeatletter
\lccode`\~=`\l
\DeclareMathSymbol{\lsb@l}{\mathalpha}{letters}{`l}
\lowercase{\gdef~{\ifnum\the\mathgroup=\m@ne \ell \else \lsb@l \fi}}%
\endgroup

\newcommand{\id}{\mathrm{id}}
\newcommand{\Sh}{\mathrm{Sh}}
\newcommand{\KG}{\mathrm{KG}}
\newcommand{\KSh}{\mathrm{KSh}}

\title{A nearcircumsphere-Ramsey Theorem for\\
Solvable Transitive Configurations}
\author{D\"om\"ot\"or P\'alv\"olgyi\thanks{ELTE E\"otv\"os Lor\'and University and Alfr\'ed R\'enyi Institute of Mathematics, Budapest, Hungary. Supported by the NRDI EXCELLENCE--24 grant no.~151504, Combinatorics and Geometry, and by the ERC Advanced Grant no.~101054936, ERMiD. E-mail: \href{mailto:domotor.palvolgyi@ttk.elte.hu}{\texttt{domotor.palvolgyi@ttk.elte.hu}}.}}
\date{}
\hypersetup{
  pdftitle={A nearcircumsphere-Ramsey Theorem for Solvable Transitive Configurations},
  pdfauthor={D\"om\"ot\"or P\'alv\"olgyi},
  pdfsubject={Euclidean Ramsey theory},
  pdfkeywords={Euclidean Ramsey theory, transitive configuration, solvable group,
  nearcircumsphere-Ramsey, Kneser-shift graph, Tucker lemma}}

\begin{document}
\maketitle

\begin{abstract}
Let $P\subset\R^d$ be a finite spherical set.
We prove that if $P$ admits a solvable group of isometries acting transitively on it, then every $r$-coloring of a sufficiently high-dimensional sphere of radius slightly larger than the circumradius of $P$ contains a monochromatic congruent copy of $P$.
Our proof builds on the group-theoretic argument of K\v r\'i\v z and combines it with a topological method that may be of independent interest.
\end{abstract}

\medskip
\noindent\textbf{Keywords.}
Euclidean Ramsey theory; transitive configuration; solvable group; spherical Ramsey theorem; nearcircumsphere-Ramsey; ncs-Ramsey; Kneser-shift graph; Tucker lemma.

\smallskip
\noindent\textbf{2020 Mathematics Subject Classification.}
Primary 05D10; Secondary 05C15, 20D10, 52C10.

\section{Introduction}\label{sec:introduction}

Euclidean Ramsey theory was started by Erd\H{o}s, Graham, Montgomery, Rothschild, Spencer, and Straus in a series of three papers just over half a century ago \cite{ErdosEtAl1973,ErdosEtAl1975II,ErdosEtAl1975III}.
Its central question is to determine for which point sets it is true that every finite coloring of a sufficiently high-dimensional Euclidean space contains a monochromatic (congruent) copy of the set.
More precisely, a finite set of points $P\subset\R^d$ is called \emph{Ramsey} if for every $r\in\N$ there is a dimension $n$ such that every $r$-coloring of $\R^n$ contains a monochromatic set congruent to $P$, and $P$ is called \emph{spherical} if there is a sphere that contains $P$.
They proved that every Ramsey set is spherical and conjectured that this necessary condition is also sufficient.
Using a product theorem, they showed that several classes of spherical sets, including all bricks, are Ramsey.
Later, Frankl and R\"odl~\cite{FranklRodl1987,FranklRodl1990} established that every simplex is also Ramsey.
For more results, see Graham's survey~\cite{Graham2017}; here we will only focus on results that are directly related to the topic of our paper.

Call $P\subset \R^d$ \emph{transitive} if its isometry group acts on it transitively, i.e., for any $p,q\in P$ there is an isometry of $\R^d$ that takes $P$ to $P$ and $p$ to $q$.
Moreover, $P$ is \emph{solvable transitive} if there is a solvable group of isometries of $\R^d$ that preserves $P$ and acts transitively on it.
For example, all regular polygons are solvable transitive, and so are all regular simplices, as some cyclic group acts on them transitively, while the vertex set of the regular dodecahedron is transitive but not solvable transitive.
Every transitive set is spherical, but the converse is not true in general \cite{LeaderRussellWalters2011,LeaderRussellWalters2012}.
K\v r\'i\v z~\cite{Kriz1991} showed that solvable transitive sets are even Ramsey; in fact, he proved the stronger statement that it is enough for a set to admit a solvable isometry group with at most two orbits, which is the case for the dodecahedron.
Therefore, he established that the vertex sets of all regular $n$-gons and Platonic solids are Ramsey; later Cantwell~\cite{Cantwell2007} proved the same for every regular polytope.
A set $P\subset \R^d$ is (solvable) \emph{subtransitive} if $P\subset P'$ for some (solvable) transitive set $P'$ in a possibly higher-dimensional space.
For example, any acute triangle is solvable subtransitive because it embeds in a three-dimensional brick.
As every subset of a Ramsey set is Ramsey, all solvable subtransitive sets are Ramsey, and Leader, Russell, and Walters \cite{LeaderRussellWalters2012} conjectured that Ramsey sets are precisely the subtransitive sets.
Behague~\cite{Behague2025} recently showed that, with only two possible exceptions, all previously known Ramsey configurations are solvable subtransitive in this sense.

For a spherical set $P\subset \R^d$, there is a unique point $o\in\operatorname{aff}P$ and a unique $\rho\geq0$ such that $\|p-o\|=\rho$ for every $p\in P$.
We call $o$ the \emph{circumcenter}, $\rho$ the \emph{circumradius}, and $\S_\rho^{d-1}(o)=\{x\in\R^d:\|x-o\|=\rho\}$ the \emph{circumsphere} of $P$.
Without loss of generality, we will assume that $o$ is the origin and use the notation $\S_R^n=\{x\in\R^{n+1}:\|x\|=R\}$.
Now we will define various versions of \emph{sphere-Ramseyness}.

\begin{definition}\label{def:sphereR}
	Let $P$ be a spherical set with circumradius $\rho$.
	
	$P$ is \emph{sphere-Ramsey} if for every $r\in\N$ there are an $R>0$ and an $n\in\N$ such that every $r$-coloring of $\S_R^n$ contains a monochromatic copy of $P$.
	
	$P$ is \emph{circumsphere-Ramsey} if for every $r\in\N$ there is an $n\in\N$ such that every $r$-coloring of $\S_\rho^n$ contains a monochromatic copy of $P$.
	
	$P$ is \emph{nearcircumsphere-Ramsey}, abbreviated \emph{ncs-Ramsey}, if for every $r\in\N$ and every $\eps>0$ there is an $n\in\N$ such that every $r$-coloring of $\S_{\rho+\eps}^n$ contains a monochromatic copy of $P$.
\end{definition}

The first version of these definitions appeared in Graham~\cite{Graham1983}, who used the term sphere-Ramsey for what we call circumsphere-Ramsey and pioneered its study \cite{Graham1985,Graham1990}.
The terminology later shifted, and sphere-Ramsey came to mean the weaker property defined above, leading to some confusion; the distinction is blurred even in Graham's last survey~\cite{Graham2017}.
By embedding a sphere as a subsphere of a larger sphere of one higher dimension, we obtain the chain of implications
\[
\text{circumsphere-Ramsey}
\ \Longrightarrow\
\text{ncs-Ramsey}
\ \Longrightarrow\
\text{sphere-Ramsey}
\ \Longrightarrow\
\text{Ramsey}
\]

Some of these implications may also hold in the reverse direction.
In fact, it is possible that the last three notions are all equivalent; according to Reiher \cite{Reiher2021}, Graham conjectured that all spherical sets are ncs-Ramsey.
We have not found this conjecture stated in Graham's papers, except for the special case of bricks, which was later settled by Frankl and R\"odl \cite{FranklRodl1990}.
However, circumsphere-Ramseyness is substantially different: a pair of antipodal points is not circumsphere-Ramsey.
A necessary condition for circumsphere-Ramseyness, based on Rado's theorem about partition regularity over the nonzero reals \cite{Rado1945}, was proved by Graham~\cite{Graham1983}.

Graham~\cite{Graham1983} and Lov\'asz~\cite{Lovasz1983} independently proved that every two-point set is ncs-Ramsey.
Equivalently, every pair of points at distance less than $2$ is Ramsey on a sufficiently high-dimensional unit sphere.
Raigorodskii~\cite{Raigorodskii2012}, apparently aware only of Lov\'asz's topological proof, later rediscovered Graham's linear algebra argument based on the Frankl--Wilson theorem~\cite{FranklWilson1981}.

Frankl and R\"odl~\cite{FranklRodl1990} also used linear algebra to prove that every simplex is sphere-Ramsey.
Matou\v sek and R\"odl~\cite{MatousekRodl1995}, using a Banach-space argument based on Krivine's theorem, strengthened this by proving that every simplex is ncs-Ramsey.
Several of these papers establish still stronger conclusions, but we keep the presentation minimal and do not introduce notions such as super-Ramsey, hyper-Ramsey, or $P$-Ramsey.
One more, geometric strengthening is the notion of a diameter-Ramsey configuration, for which the finite Ramsey witness is required to have the same diameter as the target; see Frankl, Pach, Reiher, and R\"odl~\cite{FranklPachReiherRodl2018}.

The proof of K\v r\'i\v z~\cite{Kriz1991} also implies that all solvable transitive sets are sphere-Ramsey.
The main result of our paper is to improve this to ncs-Ramsey, making an important step towards what Reiher called ``Graham's radius conjecture''.

\begin{theorem}\label{thm:main}
If $P$ is solvable transitive, then $P$ is nearcircumsphere-Ramsey.
More explicitly, if $P$ is a finite spherical set with circumradius $\rho$ and admits a solvable group of isometries that acts transitively on $P$, then for every $r\in\N$ and every $\eps>0$ there is a dimension $n$ such that every $r$-coloring of $\S_{\rho+\eps}^n$ contains a monochromatic congruent copy of $P$.
\end{theorem}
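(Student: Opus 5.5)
We argue by induction along a subnormal series of the solvable group, as in K\v r\'i\v z's argument, and replace his product-type steps by a topological step that keeps the radius close to $\rho$. Let $G$ be a solvable group of isometries fixing the circumcenter $o=0$ and acting transitively on $P=Gp_0$. Choose a chain $G=G_0\rhd G_1\rhd\dots\rhd G_k=\{1\}$ with each quotient $G_{i}/G_{i+1}$ cyclic of prime order. We prove by downward induction on $i$ the following statement: every orbit-type configuration $G_i q$, with $q$ on a sphere of the appropriate radius, is ncs-Ramsey in a strengthened form. The strengthening is that we may prescribe the coloring only on points lying in a fixed "generic" position relative to a $G_i$-equivariant embedding. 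This is the same kind of strengthened hypothesis K\v r\'i\v z uses so that the induction closes. The base case $G_k$ gives a single point, which is trivial.

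For the inductive step, let $H=G_{i+1}\lhd K=G_i$ with $K/H\cong\Z_m$. Then $Kq$ is a union of $m$ translates $g^jHq$ ($j\in\Z_m$) of an $H$-orbit, which the generator $g$ permutes cyclically. Given an $r$-coloring of a huge near-circumsphere $\S^n_{\rho+\eps}$, the induction hypothesis produces, for each admissible position, a color class of monochromatic copies of $Hq$. This yields an auxiliary coloring of a space of configurations, namely embeddings of the $m$ blocks, on which $\Z_m$ acts freely by cyclically shifting blocks. What we then need is a monochromatic $\Z_m$-orbit of blocks, i.e.\ $m$ cyclically shifted copies of $Hq$ all receiving the same induced color. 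I would encode this as a coloring of a Kneser-type "shift graph": vertices are the block-configurations, and edges join configurations related by the shift. The required statement is that its chromatic number exceeds $r'$ for large $n$, where $r'$ is the number of induced colors (finite by compactness and a finite-net argument). Dimension counting enters here: the blocks live in mutually orthogonal, nearly orthogonal, or suitably rotated coordinate subspaces, and the slack $\eps$ in the radius provides the room to place them.

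The key lemma is therefore a $\Z_m$-equivariant (Tucker/Dold-type) bound: the free $\Z_m$-space of near-orthogonal block arrangements on $\S^n_{\rho+\eps}$ has connectivity growing with $n$. By Dold's theorem, then, no $\Z_m$-equivariant map to a low-dimensional $\Z_m$-complex built from $r'$ colors exists, and this forces a monochromatic shift-orbit. For prime $m$ this follows the pattern of Lov\'asz's proof for two-point sets, which is exactly the case $m=2$, $H$ trivial. I expect the main obstacle to be this topological step. Two things must be shown: the configuration space is highly connected even under the metric constraint of radius only $\rho+\eps$, which requires an explicit deformation or a join-decomposition as in Lov\'asz's Borsuk-graph argument; and the open-cover/discretization from arbitrary (non-measurable) colorings to a simplicial statement can be carried out. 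For the latter I would first try a Tucker-lemma version on a fine equivariant triangulation, using the $\eps$ slack to absorb the discretization error.
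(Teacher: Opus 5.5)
Write $p$ for the prime order of $K/H$. Your cyclic step has a genuine gap. You need all $p$ blocks $g^jHq$ to receive one color, that is, a configuration together with all of its cyclic shifts. What you actually propose to prove topologically is that the shift graph has chromatic number exceeding $r'$, via a Tucker/Dold obstruction. That only yields a monochromatic \emph{edge}: one configuration and its \emph{single} shift. This is exactly what the paper extracts from Ziegler's $\Z_p$-Tucker lemma in the Kneser-shift theorem, $c(A_1,\dots,A_{p-1})=c(A_2,\dots,A_p)$ (\cref{thm:kneser-shift}). For the triangle it gives $c(123)=c(312)$ but not $c(231)$.

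For $p=2$ an edge is a whole orbit, which is why Lov\'asz's two-point argument works. For $p\geq3$, however, the full-orbit statement is false in a one-step, fixed-context setting, no matter how highly connected your configuration space is. Take $z_1,\dots,z_{p-1},u$ distinct, and color a word by whether its first coordinate not carrying $u$ carries $z_1$. On words with $z_i$ on $A_i$ and $u$ elsewhere, this is the rule $\min(A_1\cup\dots\cup A_{p-1})\in A_1$. Suppose the least element of $B_1\cup\dots\cup B_p$ lies in $B_i$. Then the orbit member beginning with $B_i$ and the one beginning with $B_{i-1}$ get different colors; the latter contains $B_i$ in second position because $p\geq3$. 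So two colors already defeat every cyclic orbit in a fixed context.

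What is missing is K\v r\'i\v z's fusion \emph{inside} each prime cyclic step, not just his induction along the series, together with the machinery that feeds it without losing density. The paper uses topology only for a single rotation identity: interchanging the patterns $(z_1,\dots,z_{p-1},u)$ and $(u,z_1,\dots,z_{p-1})$ does not change the color. It arranges that this identity holds in every context and for many gadgets simultaneously (\cref{thm:rotation-system}). It then encodes $y\mapsto(y,\tau y,\dots,\tau^{p-1}y)$ on the gadgets and performs $p-1$ successive fusions through the non-invariant relations $E_t$. This is the $U,V$ computation in \cref{lem:one-orbit}, with \cref{lem:relative-one-orbit} handling several orbits.

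Context-uniformity is where the radius is at stake. Recording all contexts naively costs $r^{|X|^{n_0}}$ colors, while the admissible excess is only of order $\eta k$. The paper resolves this with multi-scale atoms, using crucially that the Kneser-shift threshold is a \emph{constant} excess $n-pk\geq C(p,r)$. Your ``generic position'' hypothesis and your compactness bound on $r'$ do not address this: compactness is irrelevant for arbitrary colorings, and $r'$ grows with the earlier stages.

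The discretization is also not done by a fine triangulation absorbing errors into $\eps$. The paper instead turns a hypothetical proper coloring into a Tucker labeling on a combinatorial poset, via recursive downset histories. The strengthening that closes the induction is the combinatorial dense block property $\mathrm{DB}(G\curvearrowright X)$, for possibly intransitive actions and many variables, whose active proportions multiply under composition. The radius $\rho+\eps$ then comes from a scaled product embedding, not from connectivity of a metrically constrained configuration space.
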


Note that \cref{thm:main} does not automatically imply the same conclusion for solvable subtransitive sets $P$, because this would require a solvable transitive set containing $P$ with circumradius only slightly larger than $\rho$.
This was implicitly established for simplices by Matou\v sek and R\"odl~\cite{MatousekRodl1995}, and for triangles an explicit, short proof can be found in Leader, Russell, and Walters \cite{LeaderRussellWalters2012}, but we leave it as an open problem whether such a (solvable) transitive set exists for every (solvable) subtransitive set.

Apart from the trivial one-point case, the conclusion of \cref{thm:main} cannot be strengthened by replacing $\rho+\eps$ with $\rho$: no transitive spherical configuration of circumradius $\rho$ is circumsphere-Ramsey.
Indeed, color every sphere by the sign of the first nonzero coordinate.
The sum of finitely many vectors of either color has the same lexicographic sign, so neither color class contains a finite set with barycenter zero.
On the other hand, the barycenter of a transitive configuration is its circumcenter.
If a copy $Q$ lies on a sphere of radius $\rho$ centered at the origin and has barycenter $b$, then
\[
\rho^2
=\frac{1}{|Q|}\sum_{q\in Q}\|q-b\|^2
=\frac{1}{|Q|}\sum_{q\in Q}\|q\|^2-\|b\|^2
=\rho^2-\|b\|^2,
\]
so $b=0$, a contradiction.
Thus \cref{thm:main} is sharp in the radius for solvable transitive configurations: every strictly larger prescribed radius works, while the circumradius itself does not.

\cref{thm:main} also implies the following special case.

\begin{corollary}\label{cor:polygons}
	Every regular polygon is ncs-Ramsey.
\end{corollary}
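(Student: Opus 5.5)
The plan is to deduce \cref{cor:polygons} directly from \cref{thm:main}. All that is needed is that the vertex set of a regular polygon is a finite spherical set admitting a solvable group of isometries acting transitively on it.

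Let $P=\{p_0,\dots,p_{k-1}\}$ be the vertices of a regular $k$-gon in $\R^2$. After a translation we may take the center at the origin, so $p_j=\rho(\cos(2\pi j/k),\sin(2\pi j/k))$. Then $P$ is spherical with circumcenter $0$ and circumradius $\rho$. The rotation $\sigma$ by angle $2\pi/k$ about the origin is an isometry of $\R^2$ with $\sigma(p_j)=p_{j+1}$, indices taken mod $k$. Hence the cyclic group $\langle\sigma\rangle\cong\Z/k\Z$ preserves $P$ and acts transitively on it, since $\sigma^{i-j}$ maps $p_j$ to $p_i$. Cyclic groups are abelian and therefore solvable, so $P$ is solvable transitive.

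Applying \cref{thm:main} now gives the conclusion: for every $r\in\N$ and $\eps>0$ there is an $n$ such that every $r$-coloring of $\S^n_{\rho+\eps}$ contains a monochromatic congruent copy of $P$. That is, $P$ is ncs-Ramsey. The degenerate cases $k\le 2$ (a point or an antipodal pair) are covered by the same argument, and in any case are already known from the work of Graham and Lov\'asz. There is no real obstacle here; the only thing to check is that the group in question is solvable, and this is immediate because it is cyclic.
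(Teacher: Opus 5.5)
Your proposal is correct and follows essentially the same route as the paper, which also notes that the cyclic group of rotations acts transitively on the vertices of a regular polygon. The polygon is therefore solvable transitive, and the corollary follows directly from \cref{thm:main}.
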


Our proof combines topological methods with the original argument of K\v r\'i\v z, though most of our notation is closer to that used in later works \cite{KanellopoulosKaramanlis2020,LeaderRussellWalters2012}.
We use the topological machinery to establish a combinatorial result that can be of independent interest, giving a common generalization to shift graphs and Kneser graphs.
Now we turn our attention to this topic.

\bigskip

The vertices of a \emph{shift graph} $\Sh_p(n)$, defined by Erd\H{o}s and Hajnal \cite{ErdosHajnal1964}, are the increasing $(p-1)$-tuples of $[n]$, and its edges are $(v_1,\ldots,v_{p-1})(v_2,\ldots,v_p)$ for every $v_1<\cdots<v_p$.
They showed that shift graphs have unbounded chromatic number: $\chi(\Sh_p(n))\to\infty$ as $n\to\infty$ for every fixed $p$, roughly as a $(p-2)$-times iterated logarithm of $n$.

The vertices of a \emph{Kneser graph} $\KG(n,k)$ are the $k$-subsets of $[n]$, denoted by $\binom{[n]}k$, and two vertices $A,B$ are adjacent when $A\cap B=\emptyset$.
Lov\'asz~\cite{Lovasz1978} famously proved, using topological methods, Kneser's conjecture, that $\chi(\KG(n,k))=n-2k+2$ for all $n\ge 2k$.
In particular, $\chi(\KG(n,k))\to\infty$ as $n-2k\to\infty$.
\footnote{Interestingly, a slightly weaker version of this statement was proved earlier by Szemer\'edi using a theorem of Kleitman~\cite{Kleitman1966}; he showed that $\chi\bigl(\KG(\lceil(2+\eps)k\rceil,k)\bigr)\to\infty$ as $k\to\infty$ for any $\eps>0$ \cite[Lemma~4]{ErdosSimonovits1973}.
A similar bound in \cref{thm:kneser-shift} would also suffice for us, but I could not find a simpler proof for this weaker statement.}

The vertices of a $p$-uniform \emph{Kneser hypergraph} $\KG^{(p)}(n,k)$ are also $\binom{[n]}k$, and its hyperedges are the sets $\{A_1,\ldots,A_p\}$ of pairwise disjoint vertices.
Alon, Frankl, and Lov\'asz~\cite{AlonFranklLovasz1986} generalized Lov\'asz's theorem by proving $\chi(\KG^{(p)}(n,k))=\left\lceil\frac{n-p(k-1)}{p-1}\right\rceil$ for every $n\geq pk$.
Instead of the $\Z_2$-equivariant Borsuk--Ulam theorem used originally by Lov\'asz, they used the $\Z_p$-equivariant B\'ar\'any--Shlosman--Sz\H{u}cs theorem \cite{BaranyShlosmanSzucs1981} for prime $p$, and a simple product argument for composite $p$.

Now we introduce a new family, \emph{Kneser-shift graphs}.
The vertices of the Kneser-shift graph $\KSh_p(n,k)$ are the ordered $(p-1)$-tuples $(A_1,\ldots,A_{p-1})$ of pairwise disjoint sets from $\binom{[n]}k$, and there is an edge between $(A_1,\ldots,A_{p-1})$ and $(A_2,\ldots,A_p)$ whenever $A_1$ and $A_p$ are also pairwise disjoint.
Our second main result is the following.

\begin{theorem}[Kneser-shift theorem]\label{thm:kneser-shift}
$\chi(\KSh_p(n,k))\to\infty$ as $n-pk\to\infty$ for every prime $p$.
In other words, for every prime $p$ and every $r\in\N$ there is a (least) constant $C(p,r)$, depending only on $p$ and $r$, such that whenever $n-pk$ is at least $C(p,r)$, every $r$-coloring of the vertices of $\KSh_p(n,k)$ admits pairwise disjoint $A_1,\ldots,A_p\in\binom{[n]}k$ for which $c(A_1,\ldots,A_{p-1})=c(A_2,\ldots,A_p)$.
\end{theorem}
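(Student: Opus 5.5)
The plan is to prove the contrapositive with a $\Z_p$-equivariant topological argument, in the spirit of Alon, Frankl, and Lov\'asz but with the shift structure built into the group action. Suppose $c$ is an $r$-coloring of $\KSh_p(n,k)$ with no monochromatic edge. Take pairwise disjoint $A_0,\ldots,A_{p-1}\in\binom{[n]}k$ with indices read mod $p$. For each $i$ put $c_i=c(A_i,A_{i+1},\ldots,A_{i+p-2})$. The hypothesis says exactly that $c_i\ne c_{i+1}$ for every $i$, cyclically, because $(A_i,\ldots,A_{i+p-2})$ and $(A_{i+1},\ldots,A_{i+p-1})$ form an edge.

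So every ordered cyclic $p$-tuple of disjoint $k$-sets gets a word $w\in[r]^{\Z_p}$ with no two cyclically consecutive letters equal. In particular $w$ is nonconstant. Since $p$ is prime, this means $\Z_p$ acts freely on the set $W$ of such words by rotation. Rotating the $p$-tuple rotates the word, so the assignment is $\Z_p$-equivariant. The goal is to show that such an equivariant assignment cannot exist once $n-pk\ge C(p,r)$, for a bound depending only on $|W|\le r^p$.

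To make this contradiction rigorous, I would work with a $\Z_p$-Tucker lemma (Ziegler's or Meunier's version) rather than a continuous Borsuk--Ulam statement. The domain is the poset of signed vectors $x\in(\Z_p\cup\{0\})^n\setminus\{0\}$, with $\Z_p$ acting by adding a constant to the nonzero entries. Write $x^{j}=\{i:x_i=j\}$.

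For $x$ in which every class $x^j$ has size at least $k$, choose $A_j\subseteq x^j$ canonically, say the $k$ smallest elements, giving a disjoint cyclic $p$-tuple. Label $x$ by data derived from the word $w(x)$ together with a distinguished position $j\in\Z_p$, so that the label transforms equivariantly. For $x$ in which some class is small, use the standard Alon--Frankl--Lov\'asz style label: roughly, the position of the smallest class together with the support size $|\{i:x_i\ne0\}|$. Those labels account for about $p(k-1)$ values, which is where the condition that $n-pk$ be large enters. The $\Z_p$-Tucker lemma then says that, with fewer than about $n-pk$ extra labels (here a function of $r^p$ only), there must be a chain $x^{(1)}\preceq\cdots\preceq x^{(p)}$ carrying labels that are "complementary" across all $p$ residues. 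The final step is to check that such a chain forces two consecutive colors $c_i=c_{i+1}$ on nested tuples, which would be a monochromatic edge.

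The main obstacle is the labeling in the "large classes" region. The raw word $w(x)$ is not monotone along chains, because enlarging the classes changes the canonical choice of the $A_j$. The Tucker lemma needs labels whose behavior along chains is controlled. What I would try first is to mimic the Erd\H{o}s--Hajnal iterated-coloring argument for shift graphs: replace $c$ by derived colorings on shorter tuples, $c'(A_1,\ldots,A_{p-2})=\{\,c(A_1,\ldots,A_{p-1}) : \text{valid } A_{p-1}\,\}$. Such set-valued colors are monotone under enlarging the available ground set. This costs an iterated exponential in $r$, which is acceptable since $C(p,r)$ may be arbitrarily large.

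Concretely, the label of $x$ would be the least $j$, together with a derived color recording which colors occur for $(p-1)$-tuples chosen inside the classes of $x$. Complementarity along the chain would then produce $A_1,\ldots,A_p$ with $c(A_1,\ldots,A_{p-1})=c(A_2,\ldots,A_p)$. If this bookkeeping fails, the fallback is to prove the continuous analogue directly. That would use the $\Z_p$-equivariant B\'ar\'any--Shlosman--Sz\H{u}cs theorem on a Gale-type embedding, with the open cover of $\S^{N}$ indexed by the free $\Z_p$-set $W$. It would need a bound of the form: a free $\Z_p$-space covered by $|W|$ equivariant families has index at most $|W|$.
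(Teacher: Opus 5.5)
Your plan has the right ingredients: Ziegler's $\Z_p$-Tucker lemma, primality giving a free rotation on nonconstant cyclic data, and Erd\H{o}s--Hajnal-type derived colourings. But the step that actually produces a contradiction is missing, and several of your concrete choices would fail.

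\emph{The raw word carries no obstruction.} The word $w\in W$ gives nothing by itself. Ordered cyclic $p$-tuples of disjoint $k$-sets already form a free $\Z_p$-set, so equivariant maps into $W$ exist whenever $W\neq\emptyset$. What must be exploited is that $c_i$ sees only $A_i,\ldots,A_{i+p-2}$. Quantitatively, your B\'ar\'any--Shlosman--Sz\H{u}cs fallback with ``index at most $|W|$'' would give a threshold polynomial in $r$. That is impossible, because iterated arc colourings give $C(3,r)\geq\binom{r}{\lfloor r/2\rfloor}-2$.

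\emph{The low region is wrong.} Your low region, ``some class has fewer than $k$ elements'', contains vectors with a class of size close to $n$. So support size does not give only about $p(k-1)$ magnitudes. The low region must be ``all classes have fewer than $k$ elements''. The colour-based labels must then cover every $x$ with \emph{some} class of size $\geq k$, including those whose other classes are small or empty.

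\emph{Plain set-valued derived colours are not monotone.} They are monotone only at the first level. Deeper down, the generators themselves grow, so set inclusion is lost.

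\emph{The endgame has no mechanism.} ``Complementarity along the chain produces $A_1,\ldots,A_p$'' does not work as stated. Sets picked inside different members of a Tucker chain do not assemble into two overlapping $(p-1)$-tuples.

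The missing idea is a monotone, equivariant, never-constant history vector, together with a rank that turns the Tucker chain into a counting contradiction. Use all $k$-subsets, $\mathcal A(X)=(\binom{X_1}{k},\ldots,\binom{X_p}{k})$. Let $T_{p-1}=[r]$ be an antichain and $T_j=\D(T_{j+1})$. Define cyclic histories by $\tau_{p-1}=c$ and $\tau_j(A_{i-j+1},\ldots,A_i)=\mathord\downarrow\{\tau_{j+1}(B,A_{i-j+1},\ldots,A_i):B\in\mathcal A_{i-j}\}$ in $T_j$. The downward closures are what give monotonicity.

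Properness propagates down the levels as the nondomination $\tau_j(A_{i-j+1},\ldots,A_i)\not\leq\tau_j(A_{i-j},\ldots,A_{i-1})$. Hence $D(\mathcal A)=(D_1,\ldots,D_p)$, with $D_i=\mathord\downarrow\{\tau_1(A):A\in\mathcal A_i\}$, is never constant. When some but not all $\mathcal A_i$ are empty this is automatic, since $D_i=\emptyset$ exactly when $\mathcal A_i=\emptyset$.

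For prime $p$, rotation is free on nonconstant vectors, which gives an equivariant sign $\mu_1$. The magnitude $\mu_2=\sum_i|D_i|$ has the tie property: comparable families with equal $\mu_2$ have equal $D$, hence equal signs. Now set $\ell=\sum_i|X_i|$ when all $|X_i|<k$, set $\ell=p(k-1)+\mu_2(D(\mathcal A(X)))$ otherwise, and take $\lambda_2=\lceil\ell/(p-1)\rceil$. A Ziegler chain of $p$ members with equal $\lambda_2$ and distinct signs would then need $p$ distinct values of $\ell$ in a fibre containing at most $p-1$ integers.

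Properness is used only to make $D$ nonconstant. The resulting bound is $C(p,r)\leq p\,|T_1|-2$, of tower height $p-1$, matching the lower bound.
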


Quantitative upper and lower bounds for $C(p,r)$ with the same tower height are given in \cref{prop:kneser-shift-bounds} after the proof of the theorem.
I do not know whether the primality of $p$ is necessary in this theorem.

For $k=1$, the subgraph induced by the increasing tuples is precisely the shift graph $\Sh_p(n)$; the additional vertices record all other orderings.
In fact, for every larger $k$, our graph $\KSh_p(n,k)$ also contains as a subgraph a shift graph $\Sh_p(\lfloor n/k\rfloor)$, by partitioning an initial segment of $[n]$ into consecutive atoms of size $k$.
This already implies that $\chi(\KSh_p(n,k))\to\infty$ as $n\to\infty$ for any fixed $p$ and $k$, but we need the statement when $n-pk\to\infty$.
Indeed, if we also imposed the restriction $A_1<\cdots<A_p$ on the edges, meaning that each element of $A_i$ needs to be smaller than each element of $A_j$ for all $i<j$, we would only get iterated-logarithmic growth; more precisely,
\[
\chi\!\left(\Sh_p(\lfloor n/k\rfloor)\right)
\leq \chi(\text{restricted graph of } \KSh_p(n,k))
\leq \chi(\Sh_p(n))
\leq \chi(\KSh_p(n,k)).
\]

For $p=2$, we get back exactly the Kneser graph: $\KSh_2(n,k)=\KG(n,k)$.
In the first new case, $p=3$, the vertices of $\KSh_3(n,k)$ are oriented edges of the ordinary Kneser graph, and we have an edge between $(A,B)$ and $(B,C)$ when $A,B$, and $C$ are pairwise disjoint, i.e., when these three vertices form a hyperedge in the $3$-uniform Kneser hypergraph $\KG^{(3)}(n,k)$, or equivalently, a triangle in the Kneser graph $\KG(n,k)$.
A somewhat related result is due to Poljak and R\"odl \cite{PoljakRodl1981}, who studied the so-called arc-chromatic number of symmetric digraphs, defined as the minimum number of colors needed to color the directed edges so that no path of length two is monochromatic, and proved that it is exactly $\min\left\{r:\chi(G)\leq\binom{r}{\lfloor r/2\rfloor}\right\}$.
This theorem does not imply \cref{thm:kneser-shift} for $p=3$, since adjacency in $\KSh_3(n,k)$ imposes the additional condition $A\cap C=\emptyset$.
Further iterations of the arc-graph construction, which are related to the cases $p>3$ of our theorem, were also studied \cite{Poljak1991,RorabaughEtAl2018}.

Also note that a proper $r$-coloring $c$ of $\KG^{(p)}(n,k)$ gives a proper $r^{p-1}$-coloring $c'$ of $\KSh_p(n,k)$ defined by
$c'(A_1,\ldots,A_{p-1})=(c(A_i))_{i=1}^{p-1}$.
Consequently, $\chi(\KSh_p(n,k)) \leq \chi(\KG^{(p)}(n,k))^{p-1}$, whereas there is no obvious implication in the other direction.
Nevertheless, a recursive history construction will allow us to derive \cref{thm:kneser-shift} from the same $\Z_p$-equivariant topological machinery that underlies the Kneser hypergraph theorem of Alon, Frankl, and Lovász \cite{AlonFranklLovasz1986}.
More precisely, we use \cref{lem:ziegler-tucker}, a straightforward special case of Ziegler's $\Z_p$-Tucker lemma \cite{Ziegler2002}.

\subsection*{Organization and AI use}

The rest of this paper is organized as follows.
In \cref{sec:triangle} we present the main ideas of the proof of \cref{thm:main} for the case when $P$ is an equilateral triangle; we believe that reading these is sufficient for any expert to understand the whole proof.
These first two sections were drafted by ChatGPT, which was used heavily in the preparation of the whole manuscript, although none of the main mathematical ideas underlying the ncs-Ramsey theorem originated from it.
ChatGPT contributed significantly to the proof of \cref{thm:kneser-shift}, which involved a lot of joint brainstorming, and to improving the bounds.
Perhaps more importantly, ChatGPT was used to disprove several of my incorrect proof approaches, saving a significant amount of time.
It was also used for checking arguments, editing the exposition, and locating references.

The later sections give a detailed proof of the methods used in \cref{sec:triangle} for the general case and combine them with K\v r\'i\v z's method.
These were written by ChatGPT and edited by me very little, as I don't think that they contain any novelty.
Dense block maps and the geometric reduction are developed in \cref{sec:block-maps}.
The Kneser-shift theorem is proved in \cref{sec:kneser-shift}, dense simultaneous rotation systems in \cref{sec:rotation}, and cyclic extensions and solvable groups in \cref{sec:groups}.
The main part of the paper is concluded with open problems in \cref{sec:conclusion}.

After this part (which was v1 on arXiv), I asked ChatGPT to work a bit more on the problems, and it claims to have proved several interesting things, which can be found in the \cref{app:chatgpt-results}.
Among these are that all regular polytopes are ncs-Ramsey and a solution to a problem of Leader, Russell, and Walters \cite{LeaderRussellWalters2012}, that their Block Sets Conjecture holds for the templates $112233$ and $12345$.
I didn't verify these results because they use a lot of group theory which lies outside my field of expertise/interest, but I have no reason to doubt their correctness, and I believe that including these here can help future research.

\section{Main ideas presented for the equilateral triangle}\label{sec:triangle}

We use the equilateral triangle to showcase the main new ideas used in the proof of \cref{thm:main}.
So our goal in this section is to establish that an equilateral triangle is ncs-Ramsey.
Note that this already follows from Matou\v sek and R\"odl~\cite{MatousekRodl1995}, but our proof is entirely different and generalizes in a straightforward manner to other cyclic transitive sets, and, by basic group theory, to solvable transitive sets.
We start with the following special case of \cref{thm:kneser-shift}.
The proof below shows that we may take $C(r)=3\cdot2^r-2$; by definition, the optimal threshold satisfies $C(3,r)\leq C(r)$.

\begin{theorem}[Kneser-shift theorem for triangle]\label{thm:triangle-kneser-shift}
	$\chi(\KSh_3(n,k))\to\infty$ as $n-3k\to\infty$.
	In other words, whenever $n-3k\geq C(r)$, every $r$-coloring $c$ of the ordered pairs $(A,B)$, where $A,B\in\binom{[n]}k$ are disjoint, admits pairwise disjoint sets $A_1,A_2,A_3\in\binom{[n]}k$ for which $c(A_1,A_2)=c(A_2,A_3)$.
\end{theorem}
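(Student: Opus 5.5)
The plan is to induct on the number of colors $r$. Each step compresses the pair-coloring $c$ into a coloring of single $k$-sets and then applies Lovász's theorem $\chi(\KG(n,k))=n-2k+2$. This is the classical shift-graph trick, where one passes from a coloring of pairs to the coloring of points by sets of colors, carried out in the Kneser setting.

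Suppose $c$ has no monochromatic ``shift'' $(A_1,A_2),(A_2,A_3)$ with $A_1,A_2,A_3$ pairwise disjoint. For each $A\in\binom{[n]}k$ define the set of incoming colors
\[
\Phi(A)=\{c(B,A): B\in\tbinom{[n]}k,\ B\cap A=\emptyset\}\subseteq[r].
\]
This is a coloring of $\KG(n,k)$ with at most $2^r$ colors. By Lovász's theorem, once $n-2k+2>2^r$ there are disjoint $A,B$ with $\Phi(A)=\Phi(B)$. Let $i=c(A,B)$. Then $i\in\Phi(B)=\Phi(A)$, so some $C$ disjoint from $A$ has $c(C,A)=i=c(A,B)$. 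If $C$ is also disjoint from $B$, then $(C,A,B)$ is the desired monochromatic shift.

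The main obstacle is exactly this last disjointness: the witness $C$ is only guaranteed to avoid $A$, not $B$. I would handle it by refining $\Phi$ so that it records colors relative to a restricted ground set, and recursing on the number of colors. This is the ``history'' construction the paper alludes to. Concretely, fix $i$ and consider the pairs of color $i$. If for every disjoint $A,B$ with $c(A,B)=i$ every in-neighbor $C$ of $A$ of color $i$ meets $B$, then color $i$ is very restricted. I would then try to show that on a smaller ground set of slack about half the original (from $n-3k\ge C(r)$ down to $n'-3k\ge C(r-1)$), one can delete color $i$ altogether, either by reassigning it or by restricting to configurations avoiding it, and then apply the induction hypothesis with $r-1$ colors. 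The target recursion $C(r)=2C(r-1)+2$ matches this, since it has solution $C(r)=3\cdot 2^r-2$ with $C(0)=1$. Here the doubling pays for the halving of the slack, and the additive $2$ pays for the Lovász step ($n-2k+2$ versus $n-3k$).

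I expect the genuinely delicate point to be making this dichotomy precise: either the $\Phi$-argument yields a disjoint $C$, or one color can be eliminated at the cost of halving the slack. The first thing I would try is to define $\Phi_i(A)$ using only in-neighbors $C$ lying in a fixed half of the complement of $A$. I would also use a two-coloring of the extra elements, assigning each to one side, so that Lovász's theorem simultaneously forces $B$ into the other half. That would make $B\cap C=\emptyset$ automatic.
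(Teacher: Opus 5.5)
\textbf{There is a genuine gap: the theorem's actual difficulty is the disjointness of the witness from $B$, and the proposal leaves it unresolved.}

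Your first computation is correct, but it proves only the classical bound for the arc graph of $\KG(n,k)$. If no two consecutive arcs $(C,A),(A,B)$ with $C\cap A=A\cap B=\emptyset$ share a color, then $\Phi$ properly colors $\KG(n,k)$, so $2^r\geq n-2k+2$. This is the Poljak--R\"odl-type statement that the paper explicitly says does not imply the theorem, because adjacency in $\KSh_3(n,k)$ also requires $C\cap B=\emptyset$.

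The proposal does not supply this condition. The dichotomy ``either a disjoint witness exists, or a color can be eliminated while halving the slack'' is never formulated, let alone proved. The recursion $C(r)=2C(r-1)+2$ is fitted to the constant $3\cdot2^r-2$ rather than derived from any argument.

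The half-splitting device also fails as described. Lov\'asz's theorem returns a disjoint pair with equal labels and gives no control over how $A$ and $B$ sit relative to prescribed halves. Your step needs two rigid, asymmetric relations at once:
\begin{itemize}
\item $A$ must lie in the witness half of $B$, since otherwise $c(A,B)\in\Phi(B)$ is lost once witnesses are restricted;
\item $B$ must avoid the witness half of $A$.
\end{itemize}
Equality of labels attached to single $k$-sets does not produce such relations.

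The paper's remedy is to build the third slot into the symmetry. It works with triples $\mathcal A=(\mathcal A_1,\mathcal A_2,\mathcal A_3)$ of mutually cross-disjoint families of $k$-sets. It records incoming-color sets only with witnesses from the preceding coordinate:
\[
D_i(\mathcal A)=\mathord\downarrow\{\tau^{\mathcal A_{i-1}}(A):A\in\mathcal A_i\},
\]
where $\tau^{\mathcal A_{i-1}}(A)$ is exactly your $\Phi(A)$ restricted to witnesses in $\mathcal A_{i-1}$. Your computation, run inside such a triple, now produces a witness in $\mathcal A_{i-1}$, which is automatically disjoint from the chosen member of $\mathcal A_{i+1}$. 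Hence properness gives $D_{i+1}(\mathcal A)\not\subseteq D_i(\mathcal A)$, so the history vector $D(\mathcal A)$ is never constant.

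Since $3$ is prime, the cyclic shift acts freely on nonconstant triples, which gives an equivariant sign. The sum $\sum_i|D_i(\mathcal A)|$ is a monotone rank with values between $1$ and $3\cdot2^r-1$: comparable triples of equal rank have equal history vectors. Feeding this into Ziegler's $\Z_3$-Tucker lemma (\cref{lem:ziegler-tucker}) on the poset of triples of disjoint subsets of $[n]$ gives a contradiction whenever $n-3k\geq3\cdot2^r-2$. So the missing ingredient is not a color-elimination recursion but a $\Z_3$-equivariant, Tucker-type argument in place of Lov\'asz's $\Z_2$ theorem.
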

\begin{proof}
	Suppose that $c$ is a proper $r$-coloring of $\KSh_3(n,k)$.
	For a family $\mathcal A_0\subseteq\binom{[n]}k$ and any $B\in\binom{[n]}k$, define
	\[
		\tau^{\mathcal A_0}(B)
		=\{c(A,B):A\in\mathcal A_0,\ A\cap B=\emptyset\}\subseteq[r].
	\]
	Thus $\tau^{\mathcal A_0}(B)$ is an element of the Boolean lattice $2^{[r]}$, ordered by inclusion.
	Note that if $\mathcal A_0=\emptyset$, then $\tau^{\mathcal A_0}(B)=\emptyset\in2^{[r]}$ for any $B$.
	
	Let $\P_3(n,k)$ be the poset of triples $(\mathcal A_1,\mathcal A_2,\mathcal A_3)$ of families of $k$-subsets of $[n]$, not all empty, such that every member of one coordinate family is disjoint from every member of a different coordinate family, where in the poset order $(\mathcal A_1,\mathcal A_2,\mathcal A_3)$ is less or equal to $(\mathcal A_1',\mathcal A_2',\mathcal A_3')$ if $\mathcal A_i\subseteq \mathcal A_i'$ for every $i$.
	Let
	\[
		\mathcal A=(\mathcal A_1,\mathcal A_2,\mathcal A_3)
		\in\P_3(n,k),
	\]
	and read subscripts cyclically.
	For $i\in[3]$, let
	\[
		D_i(\mathcal A)
		=\mathord\downarrow
		\{\tau^{\mathcal A_{i-1}}(A):A\in\mathcal A_i\},
	\]
	where $\mathord\downarrow$ denotes the downward closure, taken in $2^{[r]}$.
	Note that $D_i(\mathcal A)=\emptyset$ if and only if 	$\mathcal A_i=\emptyset$, and $D_i(\mathcal A)=\{\emptyset\}$ if and only if $\mathcal A_i\neq\emptyset$ and $\mathcal A_{i-1}=\emptyset$.
	
	Let $\mathfrak D_r$ denote the finite poset of all downsets of $2^{[r]}$, ordered by inclusion, so $D_i(\mathcal A)\in\mathfrak D_r$.
	If $\mathcal A\leq\mathcal A'$ in $\P_3(n,k)$, then we obviously have $D_i(\mathcal A)\subseteq D_i(\mathcal A')$ for every $i\in[3]$.

\begin{claim}
	$D(\mathcal A)
	=(D_1(\mathcal A),D_2(\mathcal A),D_3(\mathcal A))$
	is never constant if $\mathcal A\in \P_3(n,k)$.
\end{claim}
\begin{proof}
	First, $D_i(\mathcal A)$ is empty exactly when $\mathcal A_i$ is empty, so the claim follows immediately if some, but not all, coordinate families are empty.
	This is because if $\mathcal A_i\neq\emptyset$ and $\mathcal A_{i-1}=\emptyset$, then $D_i(\mathcal A)=\{\emptyset\}$, while $D_{i-1}(\mathcal A)=\emptyset$.
	
	Suppose now that all three coordinate families are nonempty.
	We show that $D_{i+1}(\mathcal A)\not\subseteq D_i(\mathcal A)$ for every $i\in[3]$.
	If instead $D_{i+1}(\mathcal A)\subseteq D_i(\mathcal A)$, choose some $A_{i+1}\in\mathcal A_{i+1}$.
	Then $\tau^{\mathcal A_i}(A_{i+1})\in D_i(\mathcal A)$, so there is an $A_i\in\mathcal A_i$ such that $\tau^{\mathcal A_i}(A_{i+1})\subseteq\tau^{\mathcal A_{i-1}}(A_i)$.
	In particular, $c(A_i,A_{i+1})\in\tau^{\mathcal A_{i-1}}(A_i)$, and hence there is an $A_{i-1}\in\mathcal A_{i-1}$ such that $c(A_{i-1},A_i)=c(A_i,A_{i+1})$.
	The sets $A_{i-1},A_i,A_{i+1}$ are pairwise disjoint, contradicting the properness of $c$.
	This proves $D_{i+1}(\mathcal A)\not\subseteq D_i(\mathcal A)$, and therefore the claim.
\end{proof}

	Let $(\mathfrak D_r^3)^*$ be the subposet of nonconstant triples in $\mathfrak D_r^3$, ordered coordinatewise, so $D(\mathcal A)\in(\mathfrak D_r^3)^*$.
	Cyclic shift acts freely on $(\mathfrak D_r^3)^*$, so we may choose an equivariant map
	\[
		\mu_1:(\mathfrak D_r^3)^*\longrightarrow\Z_3.
	\]
	For $E=(E_1,E_2,E_3)\in(\mathfrak D_r^3)^*$, put
	\[
		\mu_2(E)=|E_1|+|E_2|+|E_3|.
	\]
	Each $E_i$ is a subset of $2^{[r]}$, and a triple with total size either $0$ or $3\cdot2^r$ is necessarily constant.
	Consequently,
	\[
		1\leq\mu_2(E)\leq3\cdot2^r-1=C(r)+1,
	\]
	Since $\mu_2$ is invariant under cyclic shifts, $\mu=(\mu_1,\mu_2)$ is an equivariant map from $(\mathfrak D_r^3)^*$ to $\Z_3\times[C(r)+1]$.
	The property of this labeling that we need is that if $E\leq F$ and $\mu_2(E)=\mu_2(F)$, then $\mu_1(E)=\mu_1(F)$.
	Indeed, $E_i\subseteq F_i$ for every $i$, and equality of the sums of their cardinalities forces $E_i=F_i$ for every $i$.
		
	We now apply Ziegler's \cite{Ziegler2002} $\Z_3$-Tucker \cref{lem:ziegler-tucker} directly.
	Let $\mathcal X_3(n)$ be the poset of triples $X=(X_1,X_2,X_3)$ of pairwise disjoint subsets of $[n]$, not all empty, ordered by coordinatewise inclusion.
	We construct an equivariant labeling
	\[
		\lambda:\mathcal X_3(n)\longrightarrow\Z_3\times[m],
		\qquad
		m=\left\lceil\frac{3k-2+C(r)}2\right\rceil.
	\]
	
	If $|X_i|<k$ for every $i\in[3]$, define
	\[
		\ell(X)=|X_1|+|X_2|+|X_3|
	\]
	and set $\lambda_1(X)$ so as to make $\lambda$ equivariant.
	
	If $|X_i|\geq k$ for at least one $i$, let
	\[
		\mathcal A(X)
		=\left(\binom{X_1}{k},\binom{X_2}{k},\binom{X_3}{k}\right)
		\in\P_3(n,k)
	\]
	and define
	\[
		\ell(X)=3k-3+\mu_2(D(\mathcal A(X))),
		\qquad
		\lambda_1(X)=\mu_1(D(\mathcal A(X))).
	\]
	In both cases put
	\[
		\lambda_2(X)=\left\lceil\frac{\ell(X)}2\right\rceil.
	\]
	The two cases are preserved by cyclic shifts, so $\lambda$ is equivariant.
	Moreover, in the first case $1\leq\ell(X)\leq3k-3$, while in the second case
	\[
		3k-2\leq\ell(X)\leq3k-2+C(r).
	\]
	Thus $\lambda_2(X)\in[m]$ in both cases.
		
	Suppose that $n-3k\geq C(r)$.
	Then
	\[
		2m\leq3k+C(r)-1\leq n-1,
	\]
	so $m\leq\lfloor(n-1)/2\rfloor$, as required in \cref{lem:ziegler-tucker}.
	That lemma gives a strict chain
	$X^{(1)}<X^{(2)}<X^{(3)}$ in the poset $\mathcal X_3(n)$
	whose three signs $\lambda_1(X^{(j)})$ are distinct while the three magnitudes $\lambda_2(X^{(j)})$ are equal.
	We claim that the integers $\ell(X^{(1)}),\ell(X^{(2)}),\ell(X^{(3)})$ are pairwise distinct.
	Two members in the first case have distinct $\ell$-values because total size strictly increases along a strict chain.
	A value in the first case is at most $3k-3$, whereas a value in the second case is at least $3k-2$.
	Finally, if two comparable members in the second case had the same $\ell$-value, then their associated families and history vectors $D(\mathcal A(X^{(i)}))$ would be comparable and their $\mu_2$-values would be equal, so the property of $\mu$ proved above would force their signs to be equal.
	This contradicts the distinctness of the signs on the Tucker chain.
	Thus the three $\ell$-values are distinct, although they all belong to one fiber of $t\mapsto\lceil t/2\rceil$.
	Every such fiber contains at most two positive integers, a contradiction.
	This finishes the proof of \cref{thm:triangle-kneser-shift}.
\end{proof}

Now we are ready to start the proof of \cref{thm:main} for the case when $P$ is an equilateral triangle.
Let us introduce one notation: we identify each element $x\in [3]^n$ with the triple $(A_1,A_2,A_3)$ where $i\in A_j$ if $x_i=j$; in this way we obtain a bijection with the ordered partitions $[n]=A_1\mathbin{\dot\cup}A_2\mathbin{\dot\cup}A_3$.
Just like in \cite{FranklRodl1987,Kriz1991,LeaderRussellWalters2012}, our goal is to show the following.

\begin{theorem}\label{thm:triangle-coord}
	For every $r\in\N$ and $\eta>0$ there are $n,k\in\N$ with $n=\lfloor(1+\eta)3k\rfloor$ such that every coloring $c:[3]^n\to[r]$ admits three pairwise disjoint blocks of coordinates of size $k$, so $B_1,B_2,B_3\in\binom{[n]}k$, and a fixing of all coordinates outside these blocks, with the following property.
	For $j\in[3]$, define $x^{(j)}\in[3]^n$ to take the fixed values outside $B_1\cup B_2\cup B_3$ and to equal $i-j$ on $B_i$, where the arithmetic is modulo $3$ and the residue $0$ is denoted by $3$.
	Then we have $c(x^{(1)})=c(x^{(2)})=c(x^{(3)})$.
\end{theorem}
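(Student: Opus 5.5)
The plan is to derive \cref{thm:triangle-coord} from \cref{thm:triangle-kneser-shift}, possibly after strengthening that theorem. I have not closed one step, described in the third paragraph.

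\emph{Translation to $\KSh_3$.} Take $k$ large and $n=\lfloor(1+\eta)3k\rfloor$, so that $n-3k\geq 3\eta k\to\infty$; any threshold of the form $n-3k\geq C$ is then met. Fix every coordinate outside the blocks to the value $3$. For disjoint $A,B\in\binom{[n]}k$, let $x_{A,B}\in[3]^n$ equal $1$ on $A$, $2$ on $B$, and $3$ elsewhere, and put $c'(A,B)=c(x_{A,B})$; this is an $r$-coloring of the vertices of $\KSh_3(n,k)$. With this fixing, reading off the definition of $x^{(j)}$ gives
\[
x^{(3)}=x_{B_1,B_2},\qquad x^{(1)}=x_{B_2,B_3},\qquad x^{(2)}=x_{B_3,B_1}.
\]
So \cref{thm:triangle-coord} is equivalent to finding pairwise disjoint $B_1,B_2,B_3$ with $c'(B_1,B_2)=c'(B_2,B_3)=c'(B_3,B_1)$. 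This is a monochromatic cyclic triangle, whereas \cref{thm:triangle-kneser-shift} only yields a monochromatic directed path $c'(A_1,A_2)=c'(A_2,A_3)$.

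\emph{Closing the cycle.} I would try to rerun the proof of \cref{thm:triangle-kneser-shift} with a history that tracks the closing pair. Suppose that no monochromatic cyclic triangle exists, and define
\[
\tau_i(A_{i+1})=\{c'(A_i,A_{i+1}):A_i\in\mathcal A_i\}\subseteq[r].
\]
The aim is an analogue of the Claim: a downset vector built from these $\tau_i$, refined by a record of the colour $c'(A_{i-1},A_i)$ of the incoming pair, cannot be constant on $\P_3(n,k)$. Once such a nonconstant, monotone, shift-compatible map into a finite poset is available, the labelling $\mu=(\mu_1,\mu_2)$ and the $\Z_3$-Tucker argument via \cref{lem:ziegler-tucker} apply verbatim. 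The resulting contradiction would then produce $A_{i-1},A_i,A_{i+1}$ with all three cyclic colours equal, not merely two.

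\emph{Main obstacle.} The hard step is exactly this strengthened Claim. In the original proof, $D_{i+1}\subseteq D_i$ lets one walk one step backwards, $A_{i+1}\to A_i\to A_{i-1}$. That walk matches $c'(A_i,A_{i+1})$ with $c'(A_{i-1},A_i)$, but gives no control over $c'(A_{i+1},A_{i-1})$. My first attempt would encode, for each $A\in\mathcal A_i$, the set of pairs of colours $(\alpha,\beta)$ such that $A$ has an in-neighbour of colour $\alpha$ in $\mathcal A_{i-1}$ whose own in-neighbour pair, coming from $\mathcal A_{i-2}=\mathcal A_{i+1}$, has colour $\beta$. Comparing these second-order downsets around the $3$-cycle should force the closing set back into $\mathcal A_{i+1}$. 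The target set $2^{[r]}$ then becomes $2^{[r]^2}$, which only changes $C(r)$. If this fails, a fallback is to enlarge the number of colours: apply \cref{thm:kneser-shift} to a product colouring and use pigeonhole over a long path to extract a cyclic triple, at the cost of a worse bound.
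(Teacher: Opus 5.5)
\paragraph{The gap: a cyclic triangle is not forced.} Your translation fixes every coordinate outside $B_1\cup B_2\cup B_3$ to $3$, and you then call \cref{thm:triangle-coord} \emph{equivalent} to finding a monochromatic cyclic triangle $c'(B_1,B_2)=c'(B_2,B_3)=c'(B_3,B_1)$. It is only sufficient, because the theorem lets you choose the fixing. With the all-$3$ fixing, the target statement is false already for $r=2$ and every $n,k$.

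Let $c(x)=1$ if the first coordinate of $x$ with value in $\{1,2\}$ has value $1$, and $c(x)=2$ otherwise. Then $c'(A,B)=1$ exactly when $\min A<\min B$. Disjoint $B_1,B_2,B_3$ have distinct minima, so the three inequalities $\min B_1<\min B_2$, $\min B_2<\min B_3$, $\min B_3<\min B_1$ can be neither all true nor all false. Thus this $c'$ has no monochromatic cyclic triangle.

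Consequently your strengthened Claim cannot hold for any refined history: together with the Tucker step, it would rule out this coloring. The pigeonhole fallback is blocked for the same reason, since every route to a cyclic triple for $c'$ is. Relabelling the example (for instance, comparing first occurrences of $2$ and $3$ when the fixing is all $1$) shows that no constant fixing works at all. The example does contain monochromatic directed paths, consistent with \cref{thm:triangle-kneser-shift}: a linear order obstructs cycles but not paths. So one application of the Kneser-shift theorem can only give the single shift $c(123)=c(312)$, and the third equality must come from somewhere else.

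\paragraph{The missing idea: two-level fusion.} What you need is K\v r\'i\v z's two-level fusion argument, which necessarily uses a nonconstant fixing.

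\begin{enumerate}[label=\textup{(\arabic*)}]
\item \emph{Rotation identity in every context.} The paper uses the Kneser-shift theorem only to obtain the one-step identity $c(z_1z_2u)=c(uz_1z_2)$. It does so for many triples of blocks simultaneously and in every context (\cref{thm:rotation-system}). The excess of order $\eta k$ must still exceed the Kneser-shift threshold, so the palette of the profile coloring must be independent of $k$. This is achieved by letting the profiles record only atom-constant contexts, with the atom scales chosen before $k$.
\item \emph{First fusion.} Encoding a macro-letter $y$ as $(y,\tau y,\tau^2y)$ on each triple makes the induced coloring insensitive to interchanging two of the three macro-letters.
\item \emph{Second fusion.} A second, outer application of the same construction yields the chain
\[
c(\bar1\bar2\bar3)=c(\bar3\bar1\bar2)=c(\bar2\bar1\bar3)=c(\bar3\bar2\bar1)=c(\bar2\bar3\bar1)
\]
of \cref{fig:triangle}, which is the $U,V$ computation in \cref{lem:one-orbit}.
\item \emph{Reading off the theorem.} The composite block map is label-balanced, and its three label classes are the blocks $B_1,B_2,B_3$. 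Its fixed coordinates include inactive outer macro-coordinates set to the baseline letter, and the inner encoding spreads these over all three letters; this is how the order obstruction above is circumvented. Padding by fixed coordinates then gives the exact length $\lfloor(1+\eta)3k\rfloor$.
\end{enumerate}
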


This differs from earlier, similar results in the condition that $n\leq(1+\eta)3k$; this enables us to obtain a nearcircumsphere in \cref{thm:main} for the equilateral triangle by the following, standard scaling argument.
Assume that we want a triangle that is congruent to $v_1v_2v_3\subset \R^d$, which is translated so that its circumcenter is the origin, and write $\rho=\|v_1\|=\|v_2\|=\|v_3\|$.
Encode each word $x\in[3]^n$ by
\[
\phi(x)=\frac{1}{\sqrt{3k}}(v_{x_1},\ldots,v_{x_n})\in(\R^d)^n\cong\R^{dn}
\]
so we can pull back any coloring of $\R^{dn}$ (or a sphere) to a coloring of $[3]^n$ via $\phi$.
Every pair among $x^{(1)},x^{(2)},x^{(3)}$, given by \cref{thm:triangle-coord}, differs on all $3k$ coordinates belonging to $B_1\cup B_2\cup B_3$ and agrees everywhere else.
Moreover, at each coordinate where they differ, the corresponding vectors are two distinct vertices of the original triangle.
Consequently, for $i\neq j$,
\[
\|\phi(x^{(i)})-\phi(x^{(j)})\|^2
=\frac{3k}{3k}\|v_1-v_2\|^2,
\]
so $\phi(x^{(1)}),\phi(x^{(2)}),\phi(x^{(3)})$ form a congruent copy of the original triangle.
On the other hand, every encoded word has norm
\[
\|\phi(x)\|=\rho\sqrt{\frac{n}{3k}}\leq\rho\sqrt{1+\eta}.
\]
Thus, after choosing $\eta>0$ so that $\rho\sqrt{1+\eta}<\rho+\eps$, we may append the same orthogonal coordinate to every $\phi(x)$ to place all encoded words on the sphere $\S^{dn}_{\rho+\eps}$ without changing any distances.
Pulling back a coloring of this sphere to $[3]^n$ and applying the theorem therefore produces a monochromatic congruent copy of the triangle.
This finishes the sketch of how to derive \cref{thm:main} for the equilateral triangle from \cref{thm:triangle-coord}.\\

The main idea of the proof of \cref{thm:triangle-coord} is of course to apply \cref{thm:triangle-kneser-shift}.
From $c:[3]^n\to[r]$ we can get an $r$-coloring $c'$ of $\KSh_3(n,k)$ by setting $c'(A_1,A_2)=c(A_1, A_2, [n]\setminus (A_1\cup A_2))$.
\cref{thm:triangle-kneser-shift} gives some $c'(A_1,A_2)=c'(A_2,A_3)$, which corresponds to $c(A_1,A_2,[n]\setminus(A_1\cup A_2))=c(A_2,A_3,[n]\setminus(A_2\cup A_3))$.
In other words, we get some $x,y\in[3]^n$ for which $c(x)=c(y)$, both $x$ and $y$ have exactly $k$ coordinates that are $1$ and exactly $k$ coordinates that are $2$, and, apart from $n-3k=O(\eta k)$ coordinates on which $x_i=y_i=3$, we have $x_i\equiv y_i+1\pmod 3$; see \cref{fig:first-triangle-shift}.

\begin{figure}[ht]
	\centering
\begin{tikzpicture}[
	cell/.style={draw,minimum width=19mm,minimum height=7mm,font=\small},
	head/.style={minimum width=19mm,minimum height=6mm,font=\small},
	lastcell/.style={draw,minimum width=7mm,minimum height=7mm,font=\small},
	lasthead/.style={minimum width=7mm,minimum height=6mm,font=\small}]
	\foreach \q/\name in {0/A_1,1/A_2,2/A_3}{
		\node[head] at (1.9*\q,1.4) {$\name$};
	}
	\node[lasthead] at (5.1,1.4) {$R$};
	
	\foreach \q/\entry in {0/1,1/2,2/3}{
		\node[cell] at (1.9*\q,0.7) {$\entry$};
	}
	\node[lastcell] at (5.1,0.7) {$3$};
	
	\foreach \q/\entry in {0/3,1/1,2/2}{
		\node[cell] at (1.9*\q,0) {$\entry$};
	}
	\node[lastcell] at (5.1,0) {$3$};
	
	\node[left=4mm] at (-0.7,0.7) {$x$};
	\node[left=4mm] at (-0.7,0) {$y$};
	\node[font=\scriptsize] at (0,-0.65) {$k$};
	\node[font=\scriptsize] at (1.9,-0.65) {$k$};
	\node[font=\scriptsize] at (3.8,-0.65) {$k$};
	\node[font=\scriptsize] at (5.1,-0.65) {$O(\eta k)$};
\end{tikzpicture}
	\caption{The first Kneser-shift move.
	The columns indicate the coordinate blocks, and $R=[n]\setminus(A_1\cup A_2\cup A_3)$ denotes the few fixed coordinates.}
	\label{fig:first-triangle-shift}
\end{figure}

Thus, after fixing the common $3$'s, we have three $k$-size blocks for which schematically $c(123)=c(312)$, where we denoted the value in each monochromatic block by one symbol.
This is nice, but not what we wanted, because we would also need $c(231)$ to be equal to them for the conclusion of \cref{thm:triangle-coord}; we still need the group theoretic trick of K\v r\'i\v z, which we describe next.

Our goal will be to obtain a partition of all but a fixed, $O(\eta k)$ number of coordinates of $[n]$ into triples of blocks, $B_{b,1},B_{b,2},B_{b,3}$, of size $k_0$ each, where $k_0$ is also a carefully chosen, fixed integer.
The number of triples, $m$, needs to be large enough so that we can apply the argument described in the previous paragraph for them one more time, so that the triples of blocks would play the role of the coordinates.
More precisely, in the relevant $x$, each block $B_{b,j}$ will be constant, and the pattern on a triple of blocks will be one of $123$, $312$, and $231$.
We encode these three patterns by their last symbols, respectively $\bar3$, $\bar2$, and $\bar1$, so a configuration of the $m$ triples becomes a word in $\{\bar1,\bar2,\bar3\}^m$.
The triples will be chosen so that the induced coloring is insensitive to interchanging $\bar2$ and $\bar3$ in any collection of coordinates; equivalently, two macro-words have the same color whenever the positions occupied by $\bar1$ agree.
This already appeared in Shelah's proof of the Hales--Jewett theorem \cite{Shelah1988}, and it was termed a \emph{fliptop coloring} in \cite{GrahamRothschildSpencer1990}.
It also plays a central role in the Polymath1 proof of the density Hales--Jewett theorem, where such sets are called $23$-insensitive \cite[Section~5.4 and Section~8]{Polymath2012}.

We will apply the Kneser-shift move also at this outer level, but this time we need to encode more into $c'$ (reusing this symbol, with a slight abuse of notation), so we define $c'(A_1,A_2)=(c(A_1, A_2, [n]\setminus (A_1\cup A_2)),c(A_2, A_1, [n]\setminus (A_1\cup A_2)))$.
An application of \cref{thm:triangle-kneser-shift} gives three $k$-size blocks for which, after fixing the common $\bar3$'s, we schematically have both $c(\bar1\bar2\bar3)=c(\bar3\bar1\bar2)$ and  $c(\bar2\bar1\bar3)=c(\bar3\bar2\bar1).$
The first-level $(\bar2,\bar3)$-insensitivity and outer rotations then give
$c(\bar1\bar2\bar3)
=c(\bar3\bar1\bar2)
=c(\bar2\bar1\bar3)
=c(\bar3\bar2\bar1)
=c(\bar2\bar3\bar1)$; see \cref{fig:triangle}.
\begin{figure}[ht]
	\centering
	\begin{tikzpicture}[
		word/.style={draw,rounded corners=2pt,inner sep=2.5pt,font=\scriptsize},
		both/.style={{Stealth[length=1.7mm]}-{Stealth[length=1.7mm]},thick},
		node distance=15mm]
		\node[word] (a) {$\bar1\bar2\bar3$};
		\node[word,right=of a] (b) {$\bar3\bar1\bar2$};
		\node[word,right=of b] (c) {$\bar2\bar1\bar3$};
		\node[word,right=of c] (d) {$\bar3\bar2\bar1$};
		\node[word,right=of d] (e) {$\bar2\bar3\bar1$};
		\draw[both] (a)--node[above,font=\tiny]{outer} (b);
		\draw[both] (b)--node[above,font=\tiny]{insensitivity} (c);
		\draw[both] (c)--node[above,font=\tiny]{outer} (d);
		\draw[both] (d)--node[above,font=\tiny]{insensitivity} (e);
		\node[below=4mm of a,font=\tiny] {$231.312.123$};
		\node[below=4mm of b,font=\tiny] {$123.231.312$};
		\node[below=4mm of c,font=\tiny] {$312.231.123$};
		\node[below=4mm of d,font=\tiny] {$123.312.231$};
		\node[below=4mm of e,font=\tiny] {$312.123.231$};
	\end{tikzpicture}
	\caption{The two-level fliptop calculation.
		Outer arrows use the Kneser-shift rotation, and the other arrows interchange $\bar2$ and $\bar3$ while preserving the positions of $\bar1$.}
	\label{fig:triangle}
\end{figure}

Decoding the three relevant macro-words $\bar1\bar2\bar3$, $\bar3\bar1\bar2$, and $\bar2\bar3\bar1$ gives
\[
231.312.123,
\qquad
123.231.312,
\qquad
312.123.231,
\]
which are coordinatewise cyclic rotations of one another.
Thus they represent a monochromatic equilateral triangle.\\

The only thing missing is thus to explain how we can obtain $m$ triples of blocks that satisfy the above property.
For simplicity, in this section's proof sketch we will only do this for $m=2$, but the same idea works for any larger value.
This trick is also a new contribution.

\begin{proposition}\label{prop:two-triangle-gadgets}
	For every $r\in\N$ and $\eta>0$ there are $n,k\in\N$ with $n=\lfloor(1+\eta)6k\rfloor$ such that every coloring $c:[3]^n\to[r]$ admits six pairwise disjoint blocks of coordinates of size $k$, so
	\[
		B_{1,1},B_{1,2},B_{1,3},B_{2,1},B_{2,2},B_{2,3}\in\binom{[n]}k,
	\]
	and a fixing of the $O(\eta k)$ remaining coordinates outside the blocks, with the following property.
	Call $x\in[3]^n$ \emph{good} if it is constant on each of the six blocks and takes the fixed values on the remaining coordinates.
	Represent each good $x$ by the word $\tilde x\in[3]^6$ formed by these six constant values, and, with a slight abuse of notation, write $c(\tilde x)=c(x)$.
	Then, for every $\tilde x\in[3]^6$, we have
	\[
		c(\tilde x_1\tilde x_2\tilde x_3 123)
		=c(\tilde x_1\tilde x_2\tilde x_3 312)
	\]
	and
	\[
		c(123\tilde x_4\tilde x_5\tilde x_6)
		=c(312\tilde x_4\tilde x_5\tilde x_6).
	\]
\end{proposition}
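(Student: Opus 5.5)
The plan is to apply \cref{thm:triangle-kneser-shift} twice, once for each triple of blocks, each time with a coloring that records \emph{all} the information about the other triple. We first choose the blocks for the second triple $B_{2,\cdot}$ and then, inside the remaining coordinates, the blocks for the first triple. The difficulty is that the second application must not destroy what the first one gave, so the first application has to be made uniformly over all possible patterns on the other region.

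\textbf{Setup.} Split $[n]$ into two halves $N_1,N_2$ of size about $(1+\eta)3k$ each, so that $|N_b|-3k$ exceeds the thresholds needed below; the sizes and the value of $k$ are chosen at the end.

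\textbf{Step 1 (first triple, uniformly over $N_2$).} Let $W=[3]^{N_2}$ be the set of patterns on $N_2$. Define a coloring $c_1$ of $\KSh_3(N_1,k)$ by
\[
c_1(A_1,A_2)=\bigl(c(A_1,A_2,N_1\setminus(A_1\cup A_2);w)\bigr)_{w\in W},
\]
which uses at most $r^{|W|}=r^{3^{|N_2|}}$ colors. Since $|N_2|$ is fixed before $|N_1|$, we may take $|N_1|-3k\ge C(3,r^{3^{|N_2|}})$. \cref{thm:triangle-kneser-shift} then yields pairwise disjoint $B_{1,1},B_{1,2},B_{1,3}\subset N_1$ with
\[
c(123\,\text{on }B_{1,\cdot};\,3\text{ on }R_1;\,w)=c(312\,\text{on }B_{1,\cdot};\,3\text{ on }R_1;\,w)
\quad\text{for every } w\in W.
\]
Here $R_1$ is the set of leftover coordinates of $N_1$, which are fixed to $3$. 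This gives the second identity of the proposition, and in fact the stronger statement with an arbitrary, not only block-constant, pattern on $N_2$.

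\textbf{Step 2 (second triple).} Now restrict to words that are block-constant on the $B_{1,\cdot}$ and equal to $3$ on $R_1$; there are $27$ such choices $u\in[3]^3$. Color $\KSh_3(N_2,k)$ by
\[
c_2(A_1,A_2)=\bigl(c(u;\,A_1,A_2,N_2\setminus(A_1\cup A_2))\bigr)_{u\in[3]^3},
\]
which uses $r^{27}$ colors. Applying \cref{thm:triangle-kneser-shift} again, with $|N_2|-3k\ge C(3,r^{27})$, gives blocks $B_{2,\cdot}$ and a set $R_2$ fixed to $3$. These satisfy the first identity for every $\tilde x_1\tilde x_2\tilde x_3$.

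\textbf{Consistency and parameters.} The second identity still holds for good words, because Step 1 was uniform in $w$. The main obstacle is that both blocks must have the \emph{same} size $k$ while $|N_2|$ enters the color count of Step 1. This forces $|N_1|-3k$ to be huge compared with $|N_2|\approx 3k$, which looks incompatible with $n\le(1+\eta)6k$.

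My first attempt would be to reorder the quantifiers. Fix the surplus $s_2=\lceil C(3,r^{27})\rceil$ for $N_2$ first. Then note that Step 1 does not really need all of $W$: it only needs the patterns on $N_2$ that can become good, and these are determined by the unknown choice of $(B_{2,\cdot},R_2)$. That is still too many patterns.

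So instead I would reverse the order: do Step 2's type of uniformization first on $N_2$. Color $\KSh_3(N_2,k)$ by the vector over all $u'\in[3]^{N_1}$. This fails symmetrically for the same reason.

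The fix I would pursue is to replace the full pattern space by a bounded one. First apply \cref{thm:triangle-kneser-shift} on $N_1$ with surplus depending only on $r$ and $k$-independent data, using the coloring $c_1$ restricted to patterns on $N_2$ of the form (partition of $N_2$ into at most $3$ sets). Then use a density or pigeonhole pass to reduce to a bounded number of relevant $w$.

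Honestly, I expect the cleanest route is to note that \cref{thm:triangle-kneser-shift} has threshold $C$ independent of $k$ and $n$. This makes it possible to use one large $k$ with small surpluses $s_1,s_2$ satisfying $s_1+s_2\le 6\eta k$, provided the color counts are bounded in terms of $r$ alone. Bounding them independently of $|N_2|$ is precisely the step I expect to be hardest.
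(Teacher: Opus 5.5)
There is a genuine gap, and it is the one you flag yourself. Step~1 colors $\KSh_3(N_1,k)$ by profiles over all of $[3]^{N_2}$, so it uses $r^{3^{|N_2|}}$ colors with $|N_2|\ge 3k$. The required surplus $|N_1|-3k\ge C(3,r^{3^{|N_2|}})$ therefore grows with $k$ and can never be $O(\eta k)$. The remark ``$|N_2|$ is fixed before $|N_1|$'' is not available, since $k$ is common to both triples.

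Your proposed repairs do not help. Restricting to patterns that partition $N_2$ into at most three sets is no restriction, because every word in $[3]^{N_2}$ is such a partition. The ``density or pigeonhole pass'' is unspecified, and reversing the order fails symmetrically, as you note. Also, the counts need not be bounded by $r$ alone; they only need to be independent of $k$, and may depend on $\eta$. As it stands, the proposal does not prove the proposition.

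The missing idea is to coarsen the region whose contexts you record into \emph{atoms}. Keep your order, but proceed as follows.
\begin{enumerate}
\item Fix integers $q,d$ with $d\ge C(3,r^{27})$ and $d/q\le 3\eta$. Let $N_2$ consist of $3q+d$ atoms of size $a$, where $k=aq$ is chosen later.
\item In Step~1, record only patterns $w$ on $N_2$ that are constant on every atom. This uses at most $r^{3^{3q+d}}$ colors, a number independent of $k$. So the surplus $|N_1|-3k\ge C(3,r^{3^{3q+d}})$ is a fixed constant, hence at most $3\eta k$ once $a$ (and so $k$) is large.
\item In Step~2, apply \cref{thm:triangle-kneser-shift} to $\KSh_3(3q+d,q)$ on the ground set of atoms, not to $\KSh_3(N_2,k)$, using your $r^{27}$-color profile over the constant values on $B_{1,1},B_{1,2},B_{1,3}$.
\end{enumerate}

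Each $B_{2,i}$ is then a union of $q$ atoms, so it has physical size $k$, and the leftover in $N_2$ is $ad=kd/q\le 3\eta k$. Every good word is atom-constant on $N_2$. Hence the Step~1 identity, which was proved uniformly over atom-constant contexts, still applies.

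The parameters must be chosen in the order: $q,d$, then the color count, then the $N_1$ surplus, then $k$. This is exactly the paper's construction with the two triples interchanged. The paper first selects $B_{2,\cdot}$, recording contexts only over atom-constant words on the first part, and then selects $B_{1,\cdot}$ as unions of atoms with the $r^{27}$-color profile.
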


\begin{proof}
	In the literature, similar statements are proved by first picking the last group of blocks, in our case $B_{2,1},B_{2,2},B_{2,3}$.
	If we split the coordinates after some initial segment of length $n_0$, the usual method would color a word $x'\in[3]^{n-n_0}$ by its complete color profile
	\[
		c'(x')=\bigl(c(xx'):x\in[3]^{n_0}\bigr).
	\]
	However, this uses $r^{3^{n_0}}$ colors, which depends on $n_0$.
	We cannot afford this when $n\approx2n_0\approx6k$, since the excess $(n-n_0)-3k$ is only of order $\eta k$, much less than $r^{3^{n_0}}$, so we could not invoke \cref{thm:triangle-kneser-shift}.

	To overcome this problem, partition the first $n_0$ coordinates into \emph{atoms} of size $a$ and record only those words that are constant on every atom.
	If there are $n_0/a$ atoms, the resulting profile coloring of the last coordinates has only $r^{3^{n_0/a}}$ colors.
	After applying \cref{thm:triangle-kneser-shift} to this profile coloring, we obtain the blocks $B_{2,1},B_{2,2},B_{2,3}$ while preserving all atom-constant contexts on the first part.
	This argument implies $c(\tilde x_1\tilde x_2\tilde x_3 123)
	=c(\tilde x_1\tilde x_2\tilde x_3 312)$.

	We then regard the atoms themselves as coordinates and apply \cref{thm:triangle-kneser-shift} once more to select $B_{1,1},B_{1,2},B_{1,3}$ as unions of $k/a$ atoms each.
	At this stage we color each atom-word by the profile of its colors over the $3^3=27$ constant assignments on the already selected blocks $B_{2,1},B_{2,2},B_{2,3}$, so the number of colors is at most $r^{27}$.
	This implies $c(123\tilde x_4\tilde x_5\tilde x_6)
	=c(312\tilde x_4\tilde x_5\tilde x_6).$
	
	Therefore, to be able to apply \cref{thm:triangle-kneser-shift} both times, the number of remaining coordinates in the two cases need to satisfy the following inequalities, respectively:
	\[
		\eta k/a\geq C(r^{27})
	\qquad\text{and}\qquad
		\eta k\geq C\!\left(r^{3^{n_0/a}}\right).
	\]
	This is easy to arrange in the correct order.
	First choose $k/a$ large enough for the first inequality; this also fixes $n_0/a$.
	Then choose $k$ large enough for the second inequality.
	This finishes the sketch of the proof of \cref{prop:two-triangle-gadgets}, and thus the sketch of the proof of \cref{thm:triangle-coord}.
\end{proof}

For arbitrary $m$, the same construction uses a reverse hierarchy of atom sizes $a_1,a_2,\ldots$, equivalently of atom counts $q_b=k/a_b$.
When a block tuple at one scale is chosen, its complete color profile records every atom-constant context on the scales that will be processed later.
The general Kneser-shift input is \cref{thm:kneser-shift}, proved in \cref{sec:kneser-shift}, while the precise simultaneous atom construction is \cref{thm:rotation-system}.
For $p=3$, $X=[3]$, $u=3$, $z_1=1$, and $z_2=2$, the identity \eqref{eq:rotation} is the contextual version of the two identities in \cref{prop:two-triangle-gadgets}: it makes $123$ and $312$, hence $\bar3$ and $\bar2$, interchangeable in every context.
The outer fliptop calculation is formalized by \cref{lem:one-orbit}, whose balanced one-variable conclusion for $X=[3]$ gives \cref{thm:triangle-coord}; the passage to an ncs-Ramsey theorem is the geometric reduction in \cref{prop:geometry}.

For a cyclic group of prime order $p$, the same argument performs $p-1$ successive fusions; this is K\v r\'i\v z's method~\cite{Kriz1991} and is proved in \cref{lem:one-orbit}.
Primality enters through both the $\Z_p$-Tucker step and the freeness needed to choose an equivariant sign on nonconstant history vectors; see \cref{rem:prime-case}.
Multiple $C_p$-orbits are handled in \cref{prop:cyclic-action}, and \cref{prop:extension} passes the dense block property through a normal subgroup with prime cyclic quotient.
Iterating this along a composition series proves \cref{thm:DB-solvable}, and \cref{prop:geometry} then yields \cref{thm:main}.
The rest of the paper supplies these details.

\section{Dense block maps and the geometric reduction}\label{sec:block-maps}

We first introduce a compact language for the block constructions used in \cref{sec:triangle}.
Let a finite group $G$ act on a finite alphabet $X$ on the left, so $g(hx)=(gh)x$.
In the triangle case, $X=[3]$ and $G=C_3\cong\Z_3$ acts by cyclically permuting the three letters.
The terminology is adapted from the fixed-degree $G$-copies of Leader, Russell, and Walters~\cite[Section~2, Conjecture~C]{LeaderRussellWalters2012}; a related uniform Hales--Jewett framework is used by Kanellopoulos and Karamanlis~\cite{KanellopoulosKaramanlis2020}.

\begin{definition}[Block map]\label{def:block-map}
A \emph{$G$-block map of dimension $m$, length $n$, and block size $k$} is a map $\Phi:X^m\to X^n$ for which there are pairwise disjoint sets $I_1,\ldots,I_m\subseteq[n]$, each of cardinality $k$, labels $\gamma_\ell\in G$ for $\ell\in I_1\cup\cdots\cup I_m$, and fixed letters $z_\ell\in X$ outside this union, such that
\[
  \Phi(x_1,\ldots,x_m)_\ell=
  \begin{cases}
    \gamma_\ell x_j,&\ell\in I_j,\\
    z_\ell,&\ell\notin I_1\cup\cdots\cup I_m.
  \end{cases}
\]
The sets $I_1,\ldots,I_m$ are the \emph{blocks} of the map, and $mk/n$ is its \emph{active proportion}.
\end{definition}

Thus each input variable is repeated on a block of $k$ coordinates, possibly after applying different elements of $G$, while all remaining coordinates are fixed.
For the three words in \cref{thm:triangle-coord}, there is one variable and its block is divided into three equal parts labeled by the three elements of $C_3$.
No equidistribution of the labels is required in the general definition; only the common block size matters for distances.

\begin{definition}[Dense block property]\label{def:DB}
We say that the action $G\curvearrowright X$ has the \emph{dense block property}, abbreviated $\mathrm{DB}(G\curvearrowright X)$, if for every $r,m\in\N$ and every $0<\eta<1$ there are $n,k\in\N$ such that
\[
 mk\geq(1-\eta)n
\]
and every coloring $c:X^n\to[r]$ admits a $G$-block map $\Phi:X^m\to X^n$ of block size $k$ satisfying
\begin{equation}\label{eq:orbit-insensitivity}
  c(\Phi(x_1,\ldots,x_m))=c(\Phi(y_1,\ldots,y_m))
  \quad\text{whenever }y_j\in Gx_j\text{ for all }j.
\end{equation}
\end{definition}

In other words, after restricting the coloring to the selected block words, the color depends only on the $G$-orbit of each variable.
Up to a harmless reparameterization of $\eta$, the density inequality is the general form of $n\leq(1+\eta)3k$ in \cref{thm:triangle-coord}.
This is a density-strengthened, multi-variable version of \cite[Conjecture~C]{LeaderRussellWalters2012}.

An analogous exact floor normalization can also be imposed here.

\begin{lemma}[Normalization]\label{lem:DB-equivalence}
In \cref{def:DB}, one may equivalently replace the inequality
\[
 mk\geq(1-\eta)n
\]
by the exact equality $mk=\lfloor(1-\eta)n\rfloor$.
\end{lemma}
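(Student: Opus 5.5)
One direction is immediate: exact equality $mk=\lfloor(1-\eta)n\rfloor$ implies $mk\geq(1-\eta)n-1$. Running the exact version with $\eta/2$ in place of $\eta$ and with $n$ large enough that $1\leq(\eta/2)n$ then gives $mk\geq(1-\eta)n$. I will need to check that the exact version can be made to supply arbitrarily large $n$. This follows from the padding argument below: adding fixed coordinates preserves block maps, and for fixed $m$ a large $k$ forces a large $n$.

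The substantive direction is to start from the inequality version and obtain exact equality. The plan is to enlarge $n$ by padding while keeping $k$ fixed.

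Fix $r,m,\eta$. Apply \cref{def:DB} with some $\eta'<\eta$, to be chosen, obtaining $n_0,k$ with $mk\geq(1-\eta')n_0$ and the block-map property for all colorings of $X^{n_0}$. Next pick $n\geq n_0$ with $\lfloor(1-\eta)n\rfloor=mk$. Such an $n$ exists if $mk/(1-\eta)\geq n_0$, which holds because $mk\geq(1-\eta')n_0\geq(1-\eta)n_0$. Concretely, $\lfloor(1-\eta)n\rfloor$ is nondecreasing in $n$ and increases by at most $1$ per step, since $1-\eta<1$. It is at most $mk$ at $n=n_0$, because $(1-\eta)n_0\leq mk$, and it tends to infinity. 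Hence it attains the value $mk$ exactly at some $n\geq n_0$.

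Given a coloring $c:X^n\to[r]$, fix any letter $z\in X$ and define $c_0:X^{n_0}\to[r]$ by $c_0(w)=c(w\,z^{n-n_0})$. The block map $\Phi_0:X^m\to X^{n_0}$ for $c_0$ extends to $\Phi(x)=\Phi_0(x)\,z^{n-n_0}$. This is again a $G$-block map with the same blocks and block size $k$, with the new coordinates treated as fixed coordinates. Property \eqref{eq:orbit-insensitivity} transfers verbatim. Thus $n,k$ witness the exact version, and in fact we may simply take $\eta'=\eta$.

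The only delicate point is the discrete intermediate-value step, namely that $\lfloor(1-\eta)n\rfloor$ hits $mk$ exactly. This needs the step size $1-\eta<1$ and the starting inequality at $n_0$, both of which are checked above. Everything else is routine padding.
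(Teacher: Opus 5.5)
Your padding direction (inequality $\Rightarrow$ exact) is correct and is essentially the paper's argument. The paper takes $n=\lceil mk/(1-\eta)\rceil$ directly instead of your discrete intermediate-value step, and you are right that $\eta'=\eta$ already suffices there.

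The gap is in the other direction. Your estimate $mk\geq(1-\eta/2)n-1\geq(1-\eta)n$ needs a witness with $n\geq 2/\eta$, and your justification for getting one is circular. The padding argument takes the inequality version as its input, but here only the exact version is assumed. Padding an exact witness also does not help: it keeps $mk$ fixed while $\lfloor(1-\eta/2)n\rfloor$ grows, so it destroys exactness. Nor does the exact hypothesis let you ask for a large $k$.

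The problem is genuine. Take the trivial group, $m=1$ and $\eta=1/5$. Then $(n,k)=(2,1)$ is a valid exact witness at parameter $\eta/2$, since $\lfloor 0.9\cdot 2\rfloor=1$ and orbit-insensitivity is vacuous. Yet $k/n=1/2<1-\eta$.

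The missing idea, which is the paper's, is to inflate the number of variables rather than the length:
\begin{itemize}
\item Choose $q$ with $1/(qm)<\eta/2$.
\item Apply the exact version with $qm$ variables and parameter $\eta/2$.
\item Identify the variables in $m$ groups of $q$, merging the corresponding blocks into $m$ blocks of size $qk$. Orbit-insensitivity survives, since $y_j\in Gx_j$ for a merged variable gives the hypothesis for each of its $q$ copies.
\end{itemize}
Then $qm\leq qmk=\lfloor(1-\eta/2)n\rfloor<n$ forces $n>qm$ automatically. Hence $qmk/n>1-\eta/2-1/n>1-\eta$.
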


\begin{proof}
Suppose first that the inequality formulation holds, and fix $r,m\in\N$ and $0<\eta<1$.
Choose $0<\delta<\eta$, and let $n_0,k$ be supplied by the inequality formulation with loss $\delta$.
Put $\ell=mk$ and
\[
 n=\left\lceil\frac{\ell}{1-\eta}\right\rceil.
\]
Since $\ell\geq(1-\delta)n_0$ and $\delta<\eta$, we have $n\geq n_0$, while
\[
 \ell\leq(1-\eta)n<\ell+1.
\]
Fix a letter $u\in X$.
Given a coloring of $X^n$, fix its last $n-n_0$ coordinates to $u$, apply the inequality formulation to the resulting coloring of $X^{n_0}$, and append these fixed coordinates to the block map obtained there.
The number of active coordinates remains $\ell=\lfloor(1-\eta)n\rfloor$, proving the exact formulation.

Conversely, suppose that the exact formulation holds, and fix $r,m\in\N$ and $0<\eta<1$.
Choose $q\in\N$ so large that $1/(qm)<\eta/2$, and apply the exact formulation with $qm$ variables and parameter $\eta/2$.
Identify each of $m$ groups of $q$ input variables.
The unions of the corresponding $q$ blocks form an $m$-dimensional block map of block size $qk$, and orbit-insensitivity is preserved under this identification.
Writing
\[
 \ell=qmk=\lfloor(1-\eta/2)n\rfloor,
\]
we have $\ell<n$, while $k\geq1$ gives $\ell\geq qm$.
Thus $n>qm$, and hence
\[
 \frac{\ell}{n}>1-\frac{\eta}{2}-\frac1n>1-\eta.
\]
This proves the inequality formulation.
\end{proof}

The composition arguments below use the inequality formulation in \cref{def:DB}.

The combinatorial core of the paper is the following theorem.

\begin{theorem}[Dense block theorem]\label{thm:DB-solvable}
If a finite solvable group $G$ acts on a finite set $X$, then $\mathrm{DB}(G\curvearrowright X)$ holds.
\end{theorem}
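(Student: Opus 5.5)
We will prove the dense block theorem by induction along a composition series $1=G_0\trianglelefteq G_1\trianglelefteq\cdots\trianglelefteq G_s=G$ with each quotient $G_{i}/G_{i-1}\cong C_p$ for a prime $p$, exactly as K\v r\'i\v z does for the (sparse) Ramsey statement. The base case is the trivial group: $\mathrm{DB}(\{1\}\curvearrowright X)$ holds with $n=mk$ and identity labels, since condition \eqref{eq:orbit-insensitivity} is then vacuous. The inductive step is an extension lemma: if $N\trianglelefteq H$ with $H/N\cong C_p$ and $\mathrm{DB}(N\curvearrowright X)$ holds, then $\mathrm{DB}(H\curvearrowright X)$ holds. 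We will always work with the inequality formulation, invoking \cref{lem:DB-equivalence} only when we need an exact active proportion.

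The extension step splits into two parts. First, we handle a single cyclic action of prime order. Applying $\mathrm{DB}(N)$ with many variables, we may view each variable as ranging over $N$-orbits, on which $H/N\cong C_p$ acts. A $C_p$-orbit of $N$-orbits is either fixed, in which case there is nothing to do, or of size $p$. In the latter case we run the argument of \cref{sec:triangle}. For $p$ prime and $r'$ colors, \cref{thm:kneser-shift} provides a single Kneser-shift move $c(\bar u\,\bar z_1\cdots)=c(\text{rotated})$ with only $C(p,r')$ spare coordinates. The simultaneous-rotation construction with a reverse hierarchy of atom sizes, generalizing \cref{prop:two-triangle-gadgets}, then yields many $p$-tuples of blocks on which two symbols become interchangeable in every context; the number of colors at each stage depends only on data fixed earlier. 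We then apply K\v r\'i\v z's fliptop chain $p-1$ times, each time fusing one more symbol into the insensitive class, with the outer Kneser-shift rotations supplying the links. This makes the color constant on the whole $C_p$-orbit. Several orbits, and several variables, are handled by grouping coordinates and encoding the profile over the other orbits into the coloring, which again gives a bounded number of colors.

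Second, we compose this with the $N$-level block map. The composition of block maps is a block map with labels in $H$: we lift a generator of $H/N$ to $h\in H$, and labels multiply, so they stay in $H$. Active proportions multiply, so each level loses only a factor $(1-\eta_i)$, and choosing the $\eta_i$ with $\prod(1-\eta_i)\ge 1-\eta$ keeps the density. Insensitivity under $N$ at the inner level together with the cyclic fusion at the outer level gives insensitivity under all of $H$, because every element of $Hx$ is reached from $x$ by an element of $N$ followed by a power of $h$.

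The main obstacle is the density bookkeeping in the cyclic step. Every application of \cref{thm:kneser-shift} may waste only $O(\eta k)$ coordinates, yet the color counts fed into it grow through the profile recordings. The order of quantifiers therefore has to be arranged carefully: first the innermost atom counts, then the block sizes. We would also need to check that the $p-1$ fusions and the multiple-orbit grouping each cost only a constant number of further hierarchy levels. We expect this to be routine but delicate, and we would handle it by fixing all color counts as functions of $(r,m,p,|X|)$ before choosing $k$.
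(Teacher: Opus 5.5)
Your proposal is correct and follows the paper's proof essentially step for step. The paper also uses a composition series with prime cyclic factors and the trivial-group base case. Its extension step applies the dense block property of the normal subgroup with $n_0$ macrovariables, then takes a $C_p$ dense block map on the alphabet of orbits, lifts the quotient labels, and uses that active proportions multiply. The cyclic case comes from dense rotation systems built with the reverse atom hierarchy and \cref{thm:kneser-shift}, followed by $p-1$ K\v r\'i\v z fusions.

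The only difference is minor: the paper needs no extra profile encoding for several $C_p$-orbits. It simply iterates the one-orbit fusion (\cref{lem:relative-one-orbit}). This works because the rotation identity already holds for all letters and contexts, and the relation fused so far is $\tau$-invariant.
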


For $G=C_3$ acting transitively on $[3]$, the proof will yield the stronger label-balanced conclusion that gives \cref{thm:triangle-coord}.
The density conclusion is exactly what will bring the radius of the ambient sphere arbitrarily close to the circumradius.

We use arbitrary $m$ in \cref{def:DB} only because these block maps will be composed; the geometric application needs only $m=1$.

\begin{lemma}[Composition of block maps]\label{lem:block-composition}
Suppose $\Phi:X^{n_0}\to X^n$ and $\Psi:X^m\to X^{n_0}$ are $G$-block maps of block sizes $k_1$ and $k_2$, respectively.
Then $\Phi\circ\Psi$ is a $G$-block map of block size $k_1k_2$.
Moreover,
\[
  \frac{m k_1k_2}{n}
  =\frac{n_0 k_1}{n}\frac{m k_2}{n_0}.
\]
\end{lemma}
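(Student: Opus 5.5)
The plan is to unfold both definitions and compute the composite explicitly. Let $\Phi$ have blocks $I_1,\ldots,I_{n_0}\subseteq[n]$ of size $k_1$, labels $\gamma_\ell$ for $\ell\in\bigcup_i I_i$, and fixed letters $z_\ell$ elsewhere. Let $\Psi$ have blocks $J_1,\ldots,J_m\subseteq[n_0]$ of size $k_2$, labels $\delta_i$ for $i\in\bigcup_j J_j$, and fixed letters $w_i$ for the remaining $i\in[n_0]$. For $j\in[m]$ we take as the new blocks
\[
 K_j=\bigcup_{i\in J_j} I_i\subseteq[n].
\]
These are pairwise disjoint, because the $J_j$ are pairwise disjoint and so are the $I_i$. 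Each $K_j$ is a disjoint union of $k_2$ sets of size $k_1$, so $|K_j|=k_1k_2$.

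Next we read off the coordinates of $\Phi(\Psi(x))$. Take $\ell\in I_i$ with $i\in J_j$. Then $\Psi(x)_i=\delta_i x_j$, so
\[
 \Phi(\Psi(x))_\ell=\gamma_\ell(\delta_i x_j)=(\gamma_\ell\delta_i)x_j,
\]
using that the action is a left action. The new label on $\ell$ is therefore $\gamma_\ell\delta_i\in G$. Now take $\ell\in I_i$ with $i\notin\bigcup_j J_j$. Then $\Phi(\Psi(x))_\ell=\gamma_\ell w_i$, which does not depend on $x$, so it becomes a fixed letter $z'_\ell=\gamma_\ell w_i$. Finally, for $\ell\notin\bigcup_i I_i$ the coordinate is $z_\ell$, which stays fixed. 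These three cases cover all of $[n]$, and together they show that $\Phi\circ\Psi$ satisfies \cref{def:block-map} with blocks $K_1,\ldots,K_m$ of size $k_1k_2$.

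The proportion identity is immediate from
\[
 \frac{mk_1k_2}{n}=\frac{n_0k_1}{n}\cdot\frac{mk_2}{n_0}.
\]
There is no real obstacle in this argument. The only points needing care are the order of composition in the labels, which is $\gamma_\ell\delta_i$ by the left-action convention, and the fact that inactive coordinates of $\Psi$ inside active blocks of $\Phi$ turn into fixed letters.
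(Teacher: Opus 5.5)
Your proof is correct and is the paper's argument written out in full: the composite's $j$th block is $\bigcup_{i\in J_j}I_i$ of size $k_1k_2$, the label at $\ell\in I_i$ is the product $\gamma_\ell\delta_i$ (matching the left-action order the paper uses in the extension step), and every other coordinate is fixed. Your explicit handling of the inactive $\Psi$-coordinates inside active $\Phi$-blocks, which become the fixed letters $\gamma_\ell w_i$, spells out what the paper compresses into ``all remaining coordinates are fixed.''
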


\begin{proof}
If the $i$th macrocoordinate of $\Phi$ is active in $k_1$ physical coordinates and the $j$th variable of $\Psi$ is active on $k_2$ macrocoordinates, then the $j$th variable of the composite is active on $k_1k_2$ physical coordinates.
At such a coordinate its label is the product of the two labels.
All remaining coordinates are fixed.
The formula says that the active proportions multiply.
\end{proof}

We next isolate the geometric consequence.
Its product-embedding argument is the dense analogue of the reduction from group copies to transitive Euclidean sets in~\cite[Proposition~2.1]{LeaderRussellWalters2012}.

\begin{proposition}[Geometric reduction]\label{prop:geometry}
Let $P\subset\R^d$ be a spherical set with circumradius $\rho$, and suppose that a finite group $G$ of isometries acts transitively on $P$.
If $\mathrm{DB}(G\curvearrowright P)$ holds, then $P$ is ncs-Ramsey.
\end{proposition}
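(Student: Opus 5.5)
The plan is to follow the scaling argument sketched for the equilateral triangle after \cref{thm:triangle-coord}, with \cref{def:DB} applied for $m=1$ supplying the block words. Place the circumcenter of $P$ at the origin. Given $r$ and $\eps>0$, choose $0<\eta<1$ so small that $\rho/\sqrt{1-\eta}<\rho+\eps$. Apply $\mathrm{DB}(G\curvearrowright P)$ with this $r$, with $m=1$, and with this $\eta$, obtaining $n,k$ with $k\geq(1-\eta)n$. Define the encoding
\[
\phi:P^n\to(\R^d)^n\cong\R^{dn},\qquad \phi(x)=\frac{1}{\sqrt{k}}(x_1,\ldots,x_n).
\]
Every encoded word has norm $\rho\sqrt{n/k}\leq\rho/\sqrt{1-\eta}<\rho+\eps$. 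Append one further orthogonal coordinate with the common value $\sqrt{(\rho+\eps)^2-\rho^2n/k}$. This gives an embedding $\tilde\phi:P^n\to\S^{dn}_{\rho+\eps}$ that preserves all pairwise distances among encoded words. A coloring of $\S^{dn}_{\rho+\eps}$ pulls back to a coloring $c$ of $P^n$.

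Next apply the dense block property to $c$. It yields a $G$-block map $\Phi:P\to P^n$ with a single block $I$ of size $k$, labels $\gamma_\ell\in G$, and fixed letters $z_\ell$ off $I$. By \eqref{eq:orbit-insensitivity} and transitivity of $G$ on $P$, the color $c(\Phi(x))$ is the same for all $x\in P$. So the set $Q=\{\tilde\phi(\Phi(x)):x\in P\}$ is monochromatic.

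It remains to check that $Q$ is congruent to $P$, which is the only real step. For $x,y\in P$, the words $\Phi(x)$ and $\Phi(y)$ agree off $I$. On each $\ell\in I$ they are $\gamma_\ell x$ and $\gamma_\ell y$, and since each $\gamma_\ell$ is an isometry, $\|\gamma_\ell x-\gamma_\ell y\|=\|x-y\|$. Hence
\[
\|\tilde\phi(\Phi(x))-\tilde\phi(\Phi(y))\|^2=\frac{1}{k}\sum_{\ell\in I}\|\gamma_\ell x-\gamma_\ell y\|^2=\|x-y\|^2.
\]
So $x\mapsto\tilde\phi(\Phi(x))$ is distance preserving on $P$, and a distance-preserving map of a finite set into Euclidean space extends to an isometry. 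Therefore $Q$ is a monochromatic congruent copy of $P$ on $\S^{dn}_{\rho+\eps}$. Since $r$ and $\eps$ were arbitrary, $P$ is ncs-Ramsey.

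Two details need care. First, the labels act on $P$ as the restrictions of isometries of $\R^d$, which is what the distance computation uses. Second, the map $x\mapsto\Phi(x)$ is injective, so $Q$ has $|P|$ points; this holds because $k\geq1$ and each $\gamma_\ell$ is injective.
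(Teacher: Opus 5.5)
Your proposal is correct and follows the paper's proof essentially step for step: center $P$ at the origin, apply $\mathrm{DB}(G\curvearrowright P)$ with $m=1$, scale the grid $P^n$ by $k^{-1/2}$ and append a common coordinate to place it on $\S^{dn}_{\rho+\eps}$, then use transitivity and the fact that the labels are isometries to obtain a monochromatic isometric copy of $P$. The paper also remarks that $G$ fixes the circumcenter, but, as your argument shows, the proof needs only $\|x\|=\rho$ for $x\in P$ and $\|\gamma x-\gamma y\|=\|x-y\|$.
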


\begin{proof}
Translate the circumcenter of $P$ to the origin in its affine span.
The circumcenter is unique there, hence every isometry in $G$ fixes it.
Thus $\|x\|=\rho$ for every $x\in P$.

Fix $r\in\N$ and $\eps>0$, put $R=\rho+\eps$, and choose $0<\eta<1$ so that
\begin{equation}\label{eq:eta-radius}
  1-\eta\geq \frac{\rho^2}{R^2}.
\end{equation}
Apply \cref{def:DB} with $m=1$, and let $n$ be the length and $k$ the block size it supplies.
Thus $k\geq(1-\eta)n$, and by \eqref{eq:eta-radius} we have $n\rho^2/k\leq R^2$.
Therefore
\[
  t=\sqrt{R^2-\frac{n\rho^2}{k}}
\]
is real.
The finite grid
\[
  \iota(P^n)=
  \left\{\left(k^{-1/2}x_1,\ldots,k^{-1/2}x_n,t\right):
  (x_1,\ldots,x_n)\in P^n\right\}
\]
lies on $\S_R^{dn}$, since for every displayed point
\[
 \|\iota(x_1,\ldots,x_n)\|^2
 =\frac1k\sum_{i=1}^n\|x_i\|^2+t^2
 =\frac{n\rho^2}{k}+t^2=R^2.
\]

Restrict an arbitrary $r$-coloring of this sphere to the grid and pull it back to a coloring of $P^n$.
Let $\Phi:P\to P^n$ be the block map supplied above.
Because $G$ is transitive, all points $\iota(\Phi(x))$, $x\in P$, have one color.
If $I$ is the active set of $\Phi$, then for $x,y\in P$,
\[
\begin{split}
 \|\iota(\Phi(x))-\iota(\Phi(y))\|^2
 &=\frac1k\sum_{\ell\in I}\|\gamma_\ell x-\gamma_\ell y\|^2\\
 &=\frac1k\sum_{\ell\in I}\|x-y\|^2
 =\|x-y\|^2.
\end{split}
\]
Thus $x\mapsto\iota(\Phi(x))$ is an isometric embedding of $P$ into the sphere, completing the proof.
\end{proof}

The rest of the paper proves the dense block theorem, \cref{thm:DB-solvable}.

\section{Proof of the Kneser-shift theorem}\label{sec:kneser-shift}

We prove \cref{thm:kneser-shift}.
The successive-shift conclusion, rather than only the usual Kneser-hypergraph theorem, is central to the later atom construction.
We use Ziegler's $\Z_p$-Tucker lemma as our sole topological black box, isolate the lifting step already used in the triangle case, and then generalize the rank-sum history labeling from that proof.

We first quote the one external lemma used in this section.
It is Ziegler's prime-cyclic extension of Tucker's original combinatorial lemma \cite{Tucker1946}.
We state precisely the special case of \cite[Lemma~5.3]{Ziegler2002} used below.
Throughout this section, cyclic coordinates are indexed by $[p]$; when we identify them with $\Z_p$, the index $p$ represents zero and all subscripts are reduced cyclically.
Let $\mathcal X_p(n)$ be the poset of tuples
\[
 X=(X_1,\ldots,X_p)
\]
of pairwise disjoint subsets of $[n]$, not all empty, ordered by coordinatewise inclusion.
Let
\[
\sigma(X_1,\ldots,X_p)=(X_p,X_1,\ldots,X_{p-1}).
\]
The group $\Z_p$ acts by powers of $\sigma$, and on $\Z_p\times[m]$ we use $\sigma(a,t)=(a+1,t)$.

\begin{lemma}[$\Z_p$-Tucker lemma; Ziegler \cite{Ziegler2002}]\label{lem:ziegler-tucker}
Let $p\geq2$ be prime and let $n,m\geq1$.
If
\[
 \lambda:\mathcal X_p(n)\longrightarrow\Z_p\times[m]
\]
is equivariant and
\[
 m\leq\left\lfloor\frac{n-1}{p-1}\right\rfloor,
\]
then there is a strict chain
\[
 X^{(1)}<X^{(2)}<\cdots<X^{(p)}
\]
such that all $\lambda_2(X^{(j)})$ are equal and the $p$ signs $\lambda_1(X^{(j)})$ are the distinct elements of $\Z_p$.
\end{lemma}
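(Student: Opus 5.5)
The plan is to prove the lemma directly from Dold's theorem, in the standard way Tucker-type lemmas follow from Borsuk--Ulam-type results. (Alternatively, one could simply invoke \cite[Lemma~5.3]{Ziegler2002}, taking the alternating labeling there to be our $\lambda$.) The strategy is to argue by contradiction and turn a labeling with no alternating chain into an equivariant map between free $\Z_p$-spaces that Dold's theorem forbids.

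First identify the source. The poset $\mathcal X_p(n)$ is the face poset of the $n$-fold join $(\Z_p)^{*n}$, where $\Z_p$ is a discrete space of $p$ points. Indeed, a tuple $(X_1,\ldots,X_p)$ of pairwise disjoint sets, not all empty, records for each element $\ell\in[n]$ whether it is used and, if so, which coordinate it lies in. Hence the order complex $\Delta(\mathcal X_p(n))$ is the barycentric subdivision of $(\Z_p)^{*n}$. This complex is $(n-2)$-connected, as a join of $n$ nonempty $(-1)$-connected spaces. The action of $\sigma$ is free, because $p$ is prime and a fixed tuple would need $X_1=\cdots=X_p$, forcing all of them to be empty.

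Next build the target. Let $K$ be the simplicial complex on the vertex set $\Z_p\times[m]$ whose faces are the sets $S$ such that, for each magnitude $t$, the signs $\{a:(a,t)\in S\}$ form a proper subset of $\Z_p$. This is the $m$-fold join of the boundary $\partial\Delta^{p-1}$ of the simplex on $\Z_p$, so $\dim K=m(p-1)-1$. Since $p$ is prime, cyclic rotation fixes no proper nonempty subset of $\Z_p$, so the induced $\Z_p$-action on $K$ is free.

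Now suppose the conclusion fails. Then for every chain of $\mathcal X_p(n)$ and every $t$, the signs occurring among chain members of magnitude $t$ do not exhaust $\Z_p$. If all $p$ signs occurred at one magnitude, we could pick one chain member for each sign and obtain a strict chain of $p$ elements with equal magnitudes and distinct signs. Repeated labels merely collapse vertices, so $\lambda$ induces a simplicial map $\Delta(\mathcal X_p(n))\to K$, and this map is equivariant. We thus have an equivariant map from an $(n-2)$-connected free $\Z_p$-space to a free $\Z_p$-complex of dimension $m(p-1)-1$. The hypothesis $m\le\lfloor (n-1)/(p-1)\rfloor$ gives $m(p-1)-1\le n-2$, so Dold's theorem yields a contradiction.

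The main obstacle is this last step: ensuring that Dold's theorem applies with the correct dimension count. This requires checking the connectivity of the join and the freeness of $K$, and both of these use that $p$ is prime. Everything else is bookkeeping.
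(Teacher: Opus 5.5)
Your argument is correct, and it differs from the paper, which does not prove this lemma. The paper quotes it as a special case of Ziegler's Lemma~5.3, which Ziegler establishes by a combinatorial argument; the point of his paper is to avoid topology. You instead derive it from Dold's theorem:
\begin{itemize}
\item the order complex of $\mathcal X_p(n)$ is the subdivided join $(\Z_p)^{*n}$, which is $(n-2)$-connected;
\item a labeling with no alternating chain is exactly an equivariant simplicial map into $(\partial\Delta^{p-1})^{*m}$;
\item that complex is free and has dimension $m(p-1)-1\le n-2$, so Dold's theorem rules out such a map.
\end{itemize}

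The individual checks all hold. A chain whose magnitude-$t$ members carry all $p$ signs yields the required $p$ members directly, since distinct labels force distinct elements. Freeness of $K$ passes to its realization because no nontrivial shift maps a face to itself.

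What each route buys: Ziegler's combinatorial proof keeps the whole Kneser-shift argument free of topology, which is why the paper can treat it as a self-contained ``combinatorial lemma''. Your route is short and conceptual, at the cost of invoking Dold's theorem, and it shows exactly where primality is needed.

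On that last point, your final paragraph overstates things slightly: primality is used only for the freeness of the target $K$.
\begin{itemize}
\item The connectivity of the join has nothing to do with $p$.
\item The shift action on $\mathcal X_p(n)$ is free for every $p$, because $X_i=X_{i+j}$ together with $X_i\cap X_{i+j}=\emptyset$ forces all coordinates to be empty. The paper makes the same observation for $\P_p(n,k)$.
\item Dold's theorem does not require the domain action to be free in any case.
\end{itemize}
For composite $p$ the argument genuinely breaks only in $K$. For example, $\{0,2\}\subset\Z_4$ is a proper subset fixed by a half-turn. This matches the paper's remark that the prime-cyclic Tucker argument is what fails for composite uniformity.
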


The form above already has the signed-integer target needed for the lifting argument.
We derive from it the exact lifting statement needed later.

For $p\geq2$, let $\P_p(n,k)$ consist of tuples
\[
  \mathcal A=(\mathcal A_1,\ldots,\mathcal A_p),
\]
where the $\mathcal A_i$ are families of $k$-subsets of $[n]$, not all empty, and every member of one coordinate family is disjoint from every member of any other.
We order these tuples by coordinatewise inclusion, and $\Z_p$ acts by a cyclic shift.
This action is free for every $p$.
Indeed, a tuple fixed by a nonidentity shift would repeat every nonempty coordinate family in at least two distinct coordinates, forcing each of its members to be disjoint from itself.
Unlike the action on arbitrary nonconstant $p$-tuples used later, this action remains free when $p$ is composite.

\begin{lemma}[Kneser--Tucker lemma]\label{lem:kneser-tucker}
Let $p$ be prime and let $m\geq1$.
Suppose that
\[
 \mu=(\mu_1,\mu_2):
 \P_p(n,k)\longrightarrow\Z_p\times[m]
\]
is equivariant under cyclic shifts.
Suppose moreover that whenever $\mathcal A\leq\mathcal B$,
\begin{equation}\label{eq:signed-tie}
 \mu_2(\mathcal A)=\mu_2(\mathcal B)
 \quad\Longrightarrow\quad
 \mu_1(\mathcal A)=\mu_1(\mathcal B).
\end{equation}
Then $n-pk<m-1$.
Equivalently, no such labeling exists if $n-pk\geq m-1$.
\end{lemma}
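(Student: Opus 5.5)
We argue by contradiction, following the lifting step in the proof of \cref{thm:triangle-kneser-shift}. Suppose $\mu$ is as in the statement and $n-pk\geq m-1$. We build an equivariant labeling $\lambda:\mathcal X_p(n)\to\Z_p\times[M]$ with $M\leq\lfloor(n-1)/(p-1)\rfloor$ and apply \cref{lem:ziegler-tucker}. For $X=(X_1,\ldots,X_p)\in\mathcal X_p(n)$ we split into two cases, both invariant under $\sigma$.

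\emph{Small case.} If $|X_i|<k$ for every $i$, put $\ell(X)=|X_1|+\cdots+|X_p|\in[1,p(k-1)]$. Choose $\lambda_1$ on this part as any equivariant sign. Such a choice exists: $\sigma$ acts freely on $\mathcal X_p(n)$, because a fixed tuple would repeat a nonempty coordinate and contradict disjointness, and on each orbit we pick a representative, give it sign $0$, and transport by $\sigma$.

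\emph{Large case.} If some $|X_i|\geq k$, let $\mathcal A(X)=(\binom{X_1}k,\ldots,\binom{X_p}k)\in\P_p(n,k)$. This is nonempty and the coordinate families are cross-disjoint, since the $X_i$ are pairwise disjoint. It commutes with $\sigma$ and is monotone in $X$. Put $\ell(X)=p(k-1)+\mu_2(\mathcal A(X))$ and $\lambda_1(X)=\mu_1(\mathcal A(X))$. Then $\ell(X)\in[p(k-1)+1,\,p(k-1)+m]$.

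In both cases set $\lambda_2(X)=\lceil\ell(X)/(p-1)\rceil$. This gives $M=\lceil (pk-p+m)/(p-1)\rceil$, and $\lambda$ is equivariant.

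It remains to check the numerical condition. We need $(p-1)M\leq n-1$. Since $(p-1)M\leq pk-p+m+p-2=pk+m-2$, it suffices that $pk+m-2\leq n-1$, i.e.\ $n-pk\geq m-1$, which is exactly our assumption. Hence \cref{lem:ziegler-tucker} yields a strict chain $X^{(1)}<\cdots<X^{(p)}$ with equal $\lambda_2$ and $p$ distinct signs.

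We now show the $p$ values $\ell(X^{(j)})$ are pairwise distinct.
\begin{itemize}
\item Two small members have distinct $\ell$, since total size strictly increases along a strict chain.
\item A small member and a large member are separated by the threshold $p(k-1)$.
\item For two large members $X^{(i)}<X^{(j)}$ we have $\mathcal A(X^{(i)})\leq\mathcal A(X^{(j)})$. If their $\ell$-values agreed, the hypothesis \eqref{eq:signed-tie} would force equal signs, contradicting the distinctness of the signs.
\end{itemize}
So we have $p$ distinct positive integers in one fiber of $t\mapsto\lceil t/(p-1)\rceil$. But each fiber has at most $p-1$ elements, a contradiction.

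The only delicate point is the arithmetic of the threshold, that is, getting exactly $n-pk\geq m-1$ rather than a weaker inequality. The offset $p(k-1)$, combined with the ceiling bound $\lceil a/(p-1)\rceil\leq (a+p-2)/(p-1)$, gives precisely this. The rest is a direct transcription of the triangle argument with $3$ replaced by $p$.
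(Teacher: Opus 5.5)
Your proposal is correct and follows the paper's proof essentially step for step: the same offset $p(k-1)$, the same two-case labeling of $\mathcal X_p(n)$ with $\lambda_2=\lceil\ell/(p-1)\rceil$, the same arithmetic $(p-1)M\leq pk+m-2\leq n-1$, and the same three-way argument that the $\ell$-values along the Tucker chain are distinct. The only cosmetic difference is the small-case sign: you pick an arbitrary equivariant sign on free orbits, whereas the paper takes the index of the coordinate containing the least element of $\bigcup_i X_i$; either works, because no tie condition is needed in the small case.
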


\begin{proof}
We apply \cref{lem:ziegler-tucker} to $\mathcal X_p(n)$.
Put $\ell_0=p(k-1)$.
We construct the Tucker labeling $\lambda$ in parallel with the proof for $p=3$.
If $|X_i|<k$ for every $i$, set
\[
 \ell(X)=\sum_{i=1}^p|X_i|.
\]
Let $\lambda_1(X)$ be the index in $\Z_p$ of the unique coordinate containing the least element of $\bigcup_{i=1}^pX_i$.
This is equivariant under cyclic shifts of the coordinates.

If $|X_i|\geq k$ for at least one $i$, set
\[
 \mathcal A(X)=
 \left(\binom{X_1}{k},\ldots,\binom{X_p}{k}\right)\in\P_p(n,k)
\]
and define
\[
 \ell(X)=\ell_0+\mu_2(\mathcal A(X)),\qquad
 \lambda_1(X)=\mu_1(\mathcal A(X)).
\]
In both cases put
\[
 \lambda_2(X)=\left\lceil\frac{\ell(X)}{p-1}\right\rceil.
\]
The two cases are invariant under the cyclic action, so $\lambda$ is equivariant.
Moreover $1\leq\ell(X)\leq\ell_0+m$; hence its range is contained in $\Z_p\times[m_0]$, where
\[
 m_0=\left\lceil\frac{\ell_0+m}{p-1}\right\rceil.
\]
If $n-pk\geq m-1$, then
\[
 (p-1)m_0\leq \ell_0+m+p-2=pk+m-2\leq n-1.
\]
The numerical hypothesis of \cref{lem:ziegler-tucker} is therefore satisfied.

It remains to check that its conclusion is impossible.
Consider the $p$-element chain supplied by \cref{lem:ziegler-tucker}, whose signs are pairwise distinct.
Two low-case members have distinct $\ell$-values because their total sizes strictly increase along the chain.
A low-case value is at most $\ell_0$, whereas a high-case value is at least $\ell_0+1$.
Finally, if $X<Y$ are two high-case members with $\ell(X)=\ell(Y)$, then $\mathcal A(X)\leq\mathcal A(Y)$ and $\mu_2(\mathcal A(X))=\mu_2(\mathcal A(Y))$.
The tie condition \eqref{eq:signed-tie} would then give $\lambda_1(X)=\lambda_1(Y)$, contrary to the distinct signs on the Tucker chain.
Thus the chain has $p$ distinct $\ell$-values.
But all these values lie in one fiber of $t\mapsto\lceil t/(p-1)\rceil$, and every such fiber contains at most $p-1$ positive integers.
This is a contradiction.
\end{proof}

The primality hypothesis is deliberate.
Ziegler proves \cref{lem:ziegler-tucker} for prime $p$ and obtains the ordinary composite-uniformity Kneser theorem separately, by induction on the prime factors~\cite[Section~5]{Ziegler2002}.
This does not show that the Kneser-shift conclusion is false for composite uniformities; it only shows that the present prime-cyclic Tucker argument does not establish it.

For us, no prime-power variant will be needed.
If a cyclic quotient has order $p^a$, its subgroup chain refines that extension into $a$ extensions with quotient $C_p$.
More generally, the solvable-group argument proceeds through a composition series whose factors all have prime order.
This refinement is carried out in \cref{prop:cyclic-action,prop:extension}; see the proof of \cref{thm:DB-solvable} for the iteration along a composition series.

The labeling used next has close precedents in work on monochromatic monotone paths.
Fox, Pach, Sudakov, and Suk assign to an ordered $(k-1)$-tuple the lengths of the longest monochromatic tight paths ending there; comparison of successive tuples drives their uniformity-reduction recurrence \cite[Theorem~2.1]{FoxPachSudakovSuk2012}.
Moshkovitz and Shapira develop the predecessor-downset mechanism in every uniformity.
In the $3$-uniform base case---the one directly connected with cups and caps---they map a vertex to the downset generated by its predecessor pair-labels \cite[Lemma~2.2]{MoshkovitzShapira2014}; for general uniformity they recursively iterate the same construction \cite[Lemma~3.2]{MoshkovitzShapira2014}.
Our histories use an analogous iteration in a cyclic equivariant poset of disjoint set systems.
We use neither result as a black box.

We now turn a hypothetical proper coloring into the labeling required by \cref{lem:kneser-tucker}.
At the top level, properness makes the colors of successive shifts incomparable.
Repeatedly taking downsets of predecessor histories propagates this nondomination down to one-set histories.
The resulting cyclic vector of histories is nonconstant and equivariant, and, just as in the triangle proof, we use the sum of the cardinalities of its coordinates as an invariant integer rank.

For $h\geq1$, let $t_h(r)$ denote the cardinality of the poset obtained from the $r$-element antichain by $h-1$ successive applications of $\D$.
Thus $t_1(r)=r$, while the posets $T_1$ and $T_0$ below have cardinalities $t_{p-1}(r)$ and $t_p(r)$, respectively.

\begin{proof}[Proof of \cref{thm:kneser-shift}]
Assume, toward a contradiction, that $c$ is a proper $r$-coloring of $\KSh_p(n,k)$.
Define finite posets
\[
  T_{p-1}=[r]\quad\text{with the antichain order},
  \qquad T_j=\D(T_{j+1})\quad(0\leq j\leq p-2),
\]
where $\D(T)$ is the poset of downsets of $T$, ordered by inclusion.
Write $\mathfrak D_{p,r}=T_0$.
When $p=3$, we have $T_1=2^{[r]}$ and $\mathfrak D_{3,r}=\mathfrak D_r$, so the construction below specializes exactly to the one in \cref{sec:triangle}.

Fix $\mathcal A=(\mathcal A_1,\ldots,\mathcal A_p)\in\P_p(n,k)$.
From now on, all subscripts attached to these cyclically ordered families and their members lie in $[p]$ and are read cyclically.
Whenever all displayed members have been chosen from their corresponding families, define the top-level history of a consecutive $(p-1)$-tuple by
\[
 \tau_{p-1}^{\mathcal A}(A_{i-p+2},\ldots,A_i)
 =c(A_{i-p+2},\ldots,A_i).
\]
Recursively, for $1\leq j\leq p-2$, put
\begin{equation}\label{eq:history}
 \tau_j^{\mathcal A}(A_{i-j+1},\ldots,A_i)
 =\mathord\downarrow\!\left\{
 \tau_{j+1}^{\mathcal A}(B,A_{i-j+1},\ldots,A_i):
 B\in\mathcal A_{i-j}\right\}.
\end{equation}
These are the recursive shift histories.

\begin{lemma}[History properties]\label{lem:histories}
The histories have the following properties.
\begin{enumerate}[label=\textup{(\roman*)}]
\item If $\mathcal A\leq\mathcal B$, then every history built from members of $\mathcal A$ can only increase when computed in $\mathcal B$.
\item If every coordinate family of $\mathcal A$ is nonempty, then, for every $1\leq j\leq p-1$ and every consecutive $(j+1)$-tuple,
\begin{equation}\label{eq:nondomination}
 \tau_j^{\mathcal A}(A_{i-j+1},\ldots,A_i)
 \not\leq
 \tau_j^{\mathcal A}(A_{i-j},\ldots,A_{i-1}).
\end{equation}
\end{enumerate}
\end{lemma}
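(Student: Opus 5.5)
Part (i) follows from the recursive definition by downward induction on $j$. At the top level $j=p-1$, the history $\tau_{p-1}^{\mathcal A}(A_{i-p+2},\ldots,A_i)=c(A_{i-p+2},\ldots,A_i)$ does not depend on $\mathcal A$ at all, so it is trivially unchanged when computed in $\mathcal B$. Now suppose that for some $j\le p-2$ every $\tau_{j+1}^{\mathcal A}(\cdot)\le\tau_{j+1}^{\mathcal B}(\cdot)$ in $T_{j+1}$. In \eqref{eq:history}, passing from $\mathcal A$ to $\mathcal B$ enlarges the index family $\mathcal A_{i-j}\subseteq\mathcal B_{i-j}$, and each generator only increases. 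Since the downward closure of a set of elements is monotone under both operations (if each old generator lies below some new generator, the old downset is contained in the new one), we get $\tau_j^{\mathcal A}\subseteq\tau_j^{\mathcal B}$, which is the order in $T_j=\D(T_{j+1})$. The argument needs no nonemptiness: if $\mathcal A_{i-j}=\emptyset$, the history is the empty downset, the minimum of $T_j$.

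For (ii) I also plan a downward induction on $j$, mirroring the Claim in \cref{sec:triangle}. For the base case $j=p-1$, the tuples $(A_{i-p+2},\ldots,A_i)$ and $(A_{i-p+1},\ldots,A_{i-1})$ together consist of $p$ consecutive members, one from each coordinate family, so they are pairwise disjoint because they come from distinct coordinates of $\mathcal A\in\P_p(n,k)$. Hence these two tuples are adjacent in $\KSh_p(n,k)$, properness gives distinct colors, and in the antichain $T_{p-1}=[r]$ distinct elements are incomparable, which gives \eqref{eq:nondomination}.

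For the inductive step, assume \eqref{eq:nondomination} at level $j+1$ for all consecutive tuples, and suppose to the contrary that $\tau_j^{\mathcal A}(A_{i-j+1},\ldots,A_i)\subseteq\tau_j^{\mathcal A}(A_{i-j},\ldots,A_{i-1})$. The set $\mathcal A_{i-j}$ is nonempty, and $A_{i-j}$ is itself a member of it. So the generator $\tau_{j+1}^{\mathcal A}(A_{i-j},A_{i-j+1},\ldots,A_i)$ lies in the left downset, hence in the right one. By the definition of the right-hand history via \eqref{eq:history}, with new index family $\mathcal A_{i-j-1}$, there is $B\in\mathcal A_{i-j-1}$ with
\[
\tau_{j+1}^{\mathcal A}(A_{i-j},\ldots,A_i)\le\tau_{j+1}^{\mathcal A}(B,A_{i-j},\ldots,A_{i-1}).
\]
This contradicts the level-$(j+1)$ hypothesis applied to the consecutive tuple $(B,A_{i-j},\ldots,A_i)$. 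I must check that the indices are consistent with the cyclic reading: $B$ occupies position $i-j-1=i-(j+1)$, as required. I must also check that when $j+2>p$ wraps around, no family is used twice. That cannot happen, since a $(j+2)$-tuple with $j\le p-2$ has length at most $p$.

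The main obstacle I expect is exactly this bookkeeping: making sure that the statement quantified ``for every consecutive $(j+1)$-tuple'' (of members chosen from their respective families) is strong enough to feed the induction. The point is that the witness $B$ is an arbitrary member of its family rather than a prescribed one, so the hypothesis at level $j+1$ must be available for all choices of members, which is how I will state it.
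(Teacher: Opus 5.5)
Your proposal is correct and follows the paper's proof essentially step by step. Part (i) is a downward induction using that generators only grow and the index family only enlarges. Part (ii) is a downward induction whose base case comes from properness and the antichain order on $T_{p-1}$. Its inductive step takes the generator indexed by $A_{i-j}$ and extracts a witness $B\in\mathcal A_{i-j-1}$ that contradicts the level-$(j+1)$ statement.

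Your extra bookkeeping is also correct: the empty-family case in (i), and the check that a $(j+2)$-tuple with $j\le p-2$ never reuses a coordinate. It only makes explicit what the paper leaves implicit.
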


\begin{proof}
Part (i) follows by downward induction.
At the top level the color is unchanged.
At each lower level, every old generator can only increase by induction, while enlarging the relevant coordinate family may also add generators; taking downward closures therefore preserves inclusion.

For (ii), use downward induction on $j$.
When $j=p-1$, the two histories are the colors of adjacent successive tuples, which are distinct because $c$ is proper; distinct elements of $T_{p-1}$ are incomparable.
At a lower level, choose $A_{i-j}\in\mathcal A_{i-j}$, which is possible because all coordinate families are nonempty.
The element
\[
 \tau_{j+1}^{\mathcal A}
 (A_{i-j},A_{i-j+1},\ldots,A_i)
\]
is one of the generators of the downset on the left of \eqref{eq:nondomination}.
If that downset were contained in the one on the right, then, by the definition of a generated downset, there would be some $B\in\mathcal A_{i-j-1}$ such that
\[
 \tau_{j+1}^{\mathcal A}
 (A_{i-j},A_{i-j+1},\ldots,A_i)
 \leq
 \tau_{j+1}^{\mathcal A}
 (B,A_{i-j},\ldots,A_{i-1}).
\]
This is exactly the domination excluded by the induction hypothesis at level $j+1$.
\end{proof}

For $i\in[p]$, define
\[
 D_i(\mathcal A)=\mathord\downarrow
 \{\tau_1^{\mathcal A}(A):A\in\mathcal A_i\}\in\D(T_1).
\]
Notice that $D_i(\mathcal A)$ is empty exactly when $\mathcal A_i$ is empty.
If $p\geq3$, $\mathcal A_i\neq\emptyset$, and $\mathcal A_{i-1}=\emptyset$, then $\tau_1^{\mathcal A}(A)=\emptyset$ for every $A\in\mathcal A_i$, but $D_i(\mathcal A)=\mathord\downarrow\{\emptyset\}=\{\emptyset\}\neq\emptyset$.
If all coordinate families are nonempty, then \cref{lem:histories}(ii) gives
\begin{equation}\label{eq:D-nondomination}
  D_{i+1}(\mathcal A)\not\leq D_i(\mathcal A)
  \qquad\text{for every }i.
\end{equation}
Indeed, containment would place some $\tau_1^{\mathcal A}(A_{i+1})$ below a $\tau_1^{\mathcal A}(A_i)$, contradicting \eqref{eq:nondomination}.

Write
\[
 D(\mathcal A)
 =(D_1(\mathcal A),\ldots,D_p(\mathcal A))
 \in\mathfrak D_{p,r}^p.
\]
This vector is nonconstant.
If some, but not all, coordinate families are empty, then some, but not all, of the $D_i(\mathcal A)$ are empty; if all coordinate families are nonempty, \eqref{eq:D-nondomination} excludes a constant vector.
An induction on $j$ in \eqref{eq:history} also shows that cyclically shifting the coordinate families shifts every history index in the same sense.
Consequently the map $\mathcal A\mapsto D(\mathcal A)$ is equivariant.

We now assign a signed integer to every possible nonconstant history vector.
Let $(\mathfrak D_{p,r}^p)^*$ be the subposet of nonconstant vectors in $\mathfrak D_{p,r}^p$, with coordinatewise order.
Since $p$ is prime, cyclic shift acts freely on $(\mathfrak D_{p,r}^p)^*$: a vector fixed by a nontrivial shift would have all coordinates equal.
We may therefore choose an equivariant map
\[
 \mu_1:(\mathfrak D_{p,r}^p)^*\longrightarrow\Z_p.
\]
For $E=(E_1,\ldots,E_p)\in(\mathfrak D_{p,r}^p)^*$, put
\[
 \mu_2(E)=\sum_{i=1}^p|E_i|.
\]
Each $E_i$ is a downset in $T_1$, regarded as a subset of $T_1$.
Since $E$ is nonconstant, its total size is neither $0$ nor $p|T_1|$, and hence
\[
 1\leq\mu_2(E)\leq p|T_1|-1.
\]
Consequently,
\[
 \mu=(\mu_1,\mu_2):
 (\mathfrak D_{p,r}^p)^*\longrightarrow
 \Z_p\times[p|T_1|-1]
\]
is equivariant, because $\mu_2$ is invariant under cyclic shifts.
Therefore $\mathcal A\mapsto\mu(D(\mathcal A))$ is an equivariant labeling on $\P_p(n,k)$.
We verify the tie property required in \cref{lem:kneser-tucker}.
Suppose that $\mathcal A\leq\mathcal B$ and $\mu_2(D(\mathcal A))=\mu_2(D(\mathcal B))$.
By \cref{lem:histories}(i), we have $D_i(\mathcal A)\subseteq D_i(\mathcal B)$ for every $i$.
Equality of the sums of the coordinate cardinalities therefore forces $D_i(\mathcal A)=D_i(\mathcal B)$ for every $i$.
Thus $D(\mathcal A)=D(\mathcal B)$ and $\mu_1(D(\mathcal A))=\mu_1(D(\mathcal B))$.

The map $\mathcal A\mapsto\mu(D(\mathcal A))$ now satisfies the hypotheses of \cref{lem:kneser-tucker}.
Therefore a proper $r$-coloring can exist only when
\[
 n-pk<p|T_1|-2.
\]
By the definition of the optimal threshold $C(p,r)$, this gives
\begin{equation}\label{eq:C-bound}
 C(p,r)
 \leq p|T_1|-2
 =p\,t_{p-1}(r)-2.
\end{equation}
This proves \cref{thm:kneser-shift}.
\end{proof}

We now compare the explicit upper bound supplied by the history construction with a lower bound coming from iterated arc colorings.
For $s\in\N$, put
\[
 B(s)=\binom{s}{\lfloor s/2\rfloor},
\]
and let $B^{\circ j}$ denote the $j$-fold iterate of $B$, with $B^{\circ0}(s)=s$.
Define
\[
 \operatorname{twr}_1(x)=x,
 \qquad
 \operatorname{twr}_{h+1}(x)=2^{\operatorname{twr}_h(x)}.
\]

\begin{proposition}[Quantitative bounds]\label{prop:kneser-shift-bounds}
For every prime $p$ and every $r\in\N$,
\begin{equation}\label{eq:C-lower-bound}
 \max\left\{0,B^{\circ(p-2)}(r)-p+1\right\}
 \leq C(p,r)
 \leq p\,t_{p-1}(r)-2
 \leq p\,\operatorname{twr}_{p-1}(r)-2.
\end{equation}
\end{proposition}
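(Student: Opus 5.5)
The proof splits into three inequalities. The middle one, $C(p,r)\le p\,t_{p-1}(r)-2$, is exactly \eqref{eq:C-bound}, already obtained at the end of the proof of \cref{thm:kneser-shift}, so nothing remains to do there.

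For the rightmost inequality, I will show $t_h(r)\le\operatorname{twr}_h(r)$ by induction on $h$. The base case is $t_1(r)=r=\operatorname{twr}_1(r)$. For the step, a downset of a finite poset $T$ is in particular a subset of $T$, so $|\D(T)|\le 2^{|T|}$. Hence $t_{h+1}(r)\le 2^{t_h(r)}\le 2^{\operatorname{twr}_h(r)}=\operatorname{twr}_{h+1}(r)$. Taking $h=p-1$ gives the claim.

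For the lower bound, the term $0$ is trivial since $C(p,r)\ge 0$. It therefore suffices to exhibit a proper $r$-coloring of $\KSh_p(n,k)$ with $n-pk=B^{\circ(p-2)}(r)-p$. I will take $k=1$ and $n=N:=B^{\circ(p-2)}(r)$. Following the iterated Poljak--R\"odl arc-coloring idea, I define colorings $c_j$ of ordered $j$-tuples of pairwise disjoint elements (singletons) of $[n]$ for $1\le j\le p-1$. The color sets are $s_j:=B^{\circ(p-1-j)}(r)$, so $s_{p-1}=r$, $s_1=N$, and $B(s_{j+1})=s_j$.

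The invariant I will maintain is that $c_j$ is ``shift-proper'': $c_j(A_1,\dots,A_j)\neq c_j(A_2,\dots,A_{j+1})$ whenever $A_1,\dots,A_{j+1}$ are pairwise disjoint. The construction goes level by level.
\begin{enumerate}[label=\textup{(\arabic*)}]
\item Start with $c_1(\{v\})=v\in[N]$. This is shift-proper because distinct points receive distinct colors.
\item Given $c_j$ with values in $[s_j]$, fix an injection $S$ from $[s_j]$ into the $\lfloor s_{j+1}/2\rfloor$-subsets of $[s_{j+1}]$. Such an injection exists since $B(s_{j+1})=s_j$.
\item Let $c_{j+1}(A_1,\dots,A_{j+1})$ be any element of $S(c_j(A_1,\dots,A_j))\setminus S(c_j(A_2,\dots,A_{j+1}))$.
\end{enumerate}
The set in step (3) is nonempty because the two sets are distinct (by shift-properness of $c_j$) and have the same size.

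Next I check that $c_{j+1}$ is again shift-proper. Suppose $c_{j+1}(A_1,\dots,A_{j+1})=x=c_{j+1}(A_2,\dots,A_{j+2})$. The first equation forces $x\notin S(c_j(A_2,\dots,A_{j+1}))$, while the second forces $x\in S(c_j(A_2,\dots,A_{j+1}))$, a contradiction.

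At level $j=p-1$ we obtain an $r$-coloring of the vertices of $\KSh_p(N,1)$, and its shift-properness is precisely properness in $\KSh_p$. Hence $C(p,r)>N-p$, that is, $C(p,r)\ge B^{\circ(p-2)}(r)-p+1$.

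The main point needing care is the bookkeeping of the indices, namely that the recursion runs $p-2$ times so that $s_1=B^{\circ(p-2)}(r)$, together with the fact that shift-properness propagates through the levels exactly as verified above.
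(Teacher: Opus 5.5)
Your proposal is correct and follows the paper's proof. The upper bounds come from \eqref{eq:C-bound} together with $|\D(T)|\le 2^{|T|}$, and the lower bound iterates the Poljak--R\"odl antichain coloring $p-2$ times on $\KSh_p(N,1)$ with $N=B^{\circ(p-2)}(r)$. The differences are cosmetic. You fix $k=1$ from the outset, so your base coloring $c_1(\{v\})=v$ replaces the paper's appeal to $\chi(\KG(n,k))=n-2k+2$. You also use the mirror-image rule $S(\alpha)\setminus S(\beta)$, where the paper takes the least element of $F_\beta\setminus F_\alpha$.
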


Since
\[
B(s)=2^{s-\frac12\log_2s+O(1)},
\]
the upper and lower bounds in \cref{prop:kneser-shift-bounds} both have tower height $p-1$ as $r\to\infty$, for every fixed prime $p\geq3$.
For $p=2$, the lower bound is exact: $C(2,r)=r-1$.
The auxiliary constant in \cref{sec:triangle} is $C(r)=3\cdot2^r-2$, which is the upper bound above for $p=3$.
In particular,
\[
\binom{r}{\lfloor r/2\rfloor}-2
\leq C(3,r)
\leq3\cdot2^r-2.
\]
Thus the two bounds differ by a factor of $O(\sqrt r)$, and $\log_2C(3,r)=r+O(\log r)$.

\begin{proof}
Since a poset with $s$ elements has at most $2^s$ downsets, the $p-2$ iterations in the definition of $T_1$ give
\[
 t_{p-1}(r)\leq\operatorname{twr}_{p-1}(r).
\]
Together with \eqref{eq:C-bound}, this proves
\begin{equation}\label{eq:C-tower-bound}
 C(p,r)\leq p\,t_{p-1}(r)-2
 \leq p\,\operatorname{twr}_{p-1}(r)-2.
\end{equation}

For the lower bound, we use the standard antichain coloring behind the arc-graph theorem of Poljak and R\"odl~\cite{PoljakRodl1981}, and include the short argument.
Suppose that $\KSh_s(n,k)$ has a proper $t$-coloring $c$, where $s\geq2$ and $t\leq B(u)$.
Choose distinct sets
\[
 F_1,\ldots,F_t\in\binom{[u]}{\lfloor u/2\rfloor}.
\]
For a vertex $X=(A_1,\ldots,A_s)$ of $\KSh_{s+1}(n,k)$, put
\[
 \alpha=c(A_1,\ldots,A_{s-1}),
 \qquad
 \beta=c(A_2,\ldots,A_s),
\]
and color $X$ by the least element of $F_\beta\setminus F_\alpha$.
The two tuples used to define $\alpha$ and $\beta$ are adjacent in $\KSh_s(n,k)$, so $\alpha\neq\beta$.
Since $F_\alpha$ and $F_\beta$ are distinct sets of the same size, $F_\beta\setminus F_\alpha$ is nonempty.

Suppose that
\[
 X=(A_1,\ldots,A_s)
 \quad\text{and}\quad
 Y=(A_2,\ldots,A_{s+1})
\]
are adjacent in $\KSh_{s+1}(n,k)$.
The color of $X$ belongs to $F_{c(A_2,\ldots,A_s)}$, whereas the color of $Y$ does not belong to this set.
Thus the resulting $u$-coloring is proper.

For $n\geq pk$, starting from $\KSh_2(n,k)=\KG(n,k)$ and using $\chi(\KG(n,k))=n-2k+2$, iterating this construction shows that
\[
 \chi(\KSh_p(n,k))\leq r
 \qquad\text{whenever}\qquad
 n-2k+2\leq B^{\circ(p-2)}(r).
\]
Put $q=B^{\circ(p-2)}(r)$ and take $k=1$ and $n=q$.
If $q<p$, the asserted lower bound is trivial.
If $q\geq p$, this gives a proper $r$-coloring with excess $q-p$, and hence
\[
 C(p,r)\geq q-p+1.
\]
This finishes the proof of \cref{prop:kneser-shift-bounds}.
\end{proof}

\begin{remark}\label{rem:prime-case}
The recursive history construction makes sense for every integer $p\geq2$.
In the present proof, primality is used twice: in \cref{lem:ziegler-tucker}, and in the fact that every nonconstant $p$-tuple has a free orbit under cyclic shift.
The group-theoretic argument below requires only these prime-order cases, since a finite solvable group has a composition series with cyclic factors of prime order.
\end{remark}

\section{Dense simultaneous rotation systems}\label{sec:rotation}

We now give the atom construction from \cref{prop:two-triangle-gadgets} for arbitrary $m$ and prime $p$.
Fix a finite alphabet $X$, a baseline letter $u\in X$, and an integer $p\geq2$.
Given ordered pairwise disjoint blocks $B_1,\ldots,B_p$ and a word which is constant on them, write
\[
 (z_1,\ldots,z_p)_{B_1,\ldots,B_p}
\]
for the operation of placing $z_i$ on $B_i$ and leaving every other coordinate as prescribed by the context.

\begin{theorem}[Dense rotation systems]\label{thm:rotation-system}
Let $p$ be prime.
For every finite alphabet $X$, baseline $u\in X$, $r,m\in\N$, and $\eta>0$, there are $n,k\in\N$ with the following property.
Every coloring $c:X^n\to[r]$ admits pairwise disjoint blocks
\[
  B_{b,1},\ldots,B_{b,p}\qquad (b\in[m]),
\]
all of size $k$, which cover at least $(1-\eta)n$ coordinates.
The coordinates outside these blocks are fixed to $u$.
For every $b\in[m]$, every $z_1,\ldots,z_{p-1}\in X$, and every choice of constant letters on the other selected blocks, changing the pattern on $(B_{b,1},\ldots,B_{b,p})$ from $(z_1,\ldots,z_{p-1},u)$ to $(u,z_1,\ldots,z_{p-1})$ does not change the color:
\begin{equation}\label{eq:rotation}
 c(\ldots;(z_1,\ldots,z_{p-1},u)_{B_{b,1},\ldots,B_{b,p}};\ldots)
 =c(\ldots;(u,z_1,\ldots,z_{p-1})_{B_{b,1},\ldots,B_{b,p}};\ldots).
\end{equation}
\end{theorem}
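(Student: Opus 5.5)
I will generalize the atom construction of \cref{prop:two-triangle-gadgets} from $m=2$, $p=3$ to arbitrary $m$ and prime $p$, using \cref{thm:kneser-shift} as the only Ramsey input. The key translation is as follows. Given a coloring $c'$ of $(p-1)$-tuples of pairwise disjoint $k$-sets, a monochromatic edge $(A_1,\ldots,A_{p-1})$, $(A_2,\ldots,A_p)$ of $\KSh_p(n',k)$ is exactly the rotation \eqref{eq:rotation} on the blocks $(B_1,\ldots,B_p)=(A_1,\ldots,A_p)$. Indeed, the pattern $(z_1,\ldots,z_{p-1},u)$ places $z_i$ on $A_i$, and $(u,z_1,\ldots,z_{p-1})$ places $z_i$ on $A_{i+1}$, with $u$ everywhere else.

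So for a fixed context I color a vertex $(A_1,\ldots,A_{p-1})$ by the vector of colors of the words that carry $z_i$ on $A_i$ and $u$ elsewhere, ranging over all $(z_1,\ldots,z_{p-1})\in X^{p-1}$. This uses at most $r^{|X|^{p-1}}$ colors. A single monochromatic edge then gives \eqref{eq:rotation} for all choices of $z$ simultaneously. The leftover $n'-pk$ coordinates are fixed to $u$.

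To handle $m$ block-tuples and all contexts, I use a reverse hierarchy of atom sizes $a_1>a_2>\cdots>a_m$, all dividing $k$, with atom counts $q_b=k/a_b$. The index set is split into $m$ consecutive segments $S_1,\ldots,S_m$, where $S_b$ has about $(1+\eta')pk$ coordinates and is partitioned into atoms of size $a_b$. I process the blocks in the order $b=m,m-1,\ldots,1$.

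At stage $b$, the blocks $B_{b',\cdot}$ for $b'>b$ are already chosen. I regard the atoms of $S_b$ as coordinates, so the ground set has $|S_b|/a_b$ elements and the blocks are unions of $q_b$ atoms. A vertex $(A_1,\ldots,A_{p-1})$ of $\KSh_p(|S_b|/a_b,q_b)$ is colored by the full profile of $c$ over three kinds of data:
\begin{itemize}
\item all $(z_1,\ldots,z_{p-1})$ on the $A_i$ inside $S_b$, with $u$ elsewhere in $S_b$;
\item all atom-constant words on $S_1\cup\cdots\cup S_{b-1}$, at their respective atom sizes;
\item all block-constant assignments on the already chosen blocks of $S_{b+1},\ldots,S_m$, with $u$ off those blocks.
\end{itemize}
The number of colors is at most
\[
 r_b=r^{\,|X|^{p-1}\cdot|X|^{\sum_{b'<b}|S_{b'}|/a_{b'}}\cdot|X|^{p(m-b)}}.
\]
This count depends on the atom counts of earlier segments but not on $a_b$ itself. \cref{thm:kneser-shift} applies once
\[
 (|S_b|/a_b)-pq_b\geq C(p,r_b),
\]
which holds if $\eta' q_b\geq C(p,r_b)$.

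The choice of parameters goes in increasing order of $b$. Choose $q_1$ large, which fixes $|S_1|/a_1$. Then $r_2$ is determined, so choose $q_2$ large, and so on up to $q_m$. Finally let $k$ be a common multiple of the $q_b$ so that every $a_b=k/q_b$ is an integer, and set $n=\sum_b|S_b|$. The covered proportion is then about $1/(1+\eta')$, at least $1-\eta$ for a suitable $\eta'$.

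The contexts that \eqref{eq:rotation} requires are constant on every other selected block. A block of $S_{b'}$ with $b'<b$ is a union of atoms of size $a_{b'}$, so such a context is atom-constant there and is covered by the profile. Blocks in later segments are covered by the third component. Everything else is $u$. Hence each stage yields \eqref{eq:rotation} for its own $b$. Later stages choose blocks only inside their own segments and never alter earlier ones, so every identity survives.

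The main obstacle is the bookkeeping that keeps $r_b$ independent of $a_b$ and of $k$. The profile must quantify only over coarse data at the current scale: atom-constant words on earlier segments, and the finitely many assignments on already chosen later blocks. If it quantified over arbitrary words on the current segment, the number of colors would grow with $k$ and the excess $\eta' q_b$ could never dominate $C(p,r_b)$. I would verify carefully that this ordering is consistent, namely that $q_b$ depends only on $q_1,\ldots,q_{b-1}$.
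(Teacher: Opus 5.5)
Your proposal is correct and follows essentially the same route as the paper. Both use disjoint segments with a reverse hierarchy of atom sizes and select the blocks in the order $b=m,\ldots,1$ by applying \cref{thm:kneser-shift} to the full profile over atom-constant words on earlier segments, block-constant assignments on the already chosen later blocks, and the letters $z_1,\ldots,z_{p-1}$. The parameters are chosen in increasing order of $b$, so that $r_b$ depends only on the atom counts of earlier segments.

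The only differences are cosmetic. The paper fixes the atom count of each segment exactly as $pq_b+d_b$ with $d_b\geq C(p,r_b)$, and imposes $q_{b-1}\mid q_b$ so that $k=q_m$. You instead use an $\eta'$-proportional excess and an arbitrary common multiple of the $q_b$. For your version, round the excess to an integer atom count that depends only on $q_b$.
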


For $p=3$, this says that the patterns $z_1z_2u$ and $uz_1z_2$ are interchangeable in every context.
Thus it is the general form of the two identities in \cref{prop:two-triangle-gadgets}.

\begin{proof}
It is enough to treat $0<\eta<1$.
We will choose integers $q_b$ and $d_b$ for $b=1,\ldots,m$, where $q_b$ will eventually equal $k/a_b$, the number of $a_b$-atoms in each selected block, and $d_b$ will be the unused excess at that atom scale.
Suppose that $q_j,d_j$ have already been chosen for $j<b$, and set
\[
 h_b=\sum_{j=1}^{b-1}(pq_j+d_j).
\]
The number of possible profiles needed at stage $b$ will be at most
\begin{equation}\label{eq:profile-count}
 r_b=r^{\,|X|^{h_b+p(m-b)+p-1}}.
\end{equation}
First choose
\[
 d_b\geq C(p,r_b),
\]
and then choose $q_b$ sufficiently large that
\begin{equation}\label{eq:excess-ratio}
 \frac{d_b}{q_b}<\frac{p\eta}{1-\eta}.
\end{equation}
When $b\geq2$, we also require $q_b$ to be a multiple of $q_{b-1}$.
Thus the parameters are chosen in the order
\[
 (q_1,d_1),\ldots,(q_{b-1},d_{b-1})
 \ \longrightarrow\ r_b\ \longrightarrow\ d_b\ \longrightarrow\ q_b.
\]

After all these choices, put
\[
 k=q_m,
\qquad
 a_b=\frac{k}{q_b}\quad(b\in[m]).
\]
Since $q_1\mid q_2\mid\cdots\mid q_m$, all the atom sizes $a_b$ are integers, and $a_m=1$.
For each $b\in[m]$, take a disjoint coordinate interval $W_b$ and partition it into $pq_b+d_b$ atoms of size $a_b$.
Thus
\begin{equation}\label{eq:stage-size}
 |W_b|=a_b(pq_b+d_b)=pk+a_bd_b.
\end{equation}
Let $n=|W_1|+\cdots+|W_m|$.

We select the block systems in the reverse order $b=m,m-1,\ldots,1$, just as the second block system was selected before the first in \cref{prop:two-triangle-gadgets}.
At stage $b$, color an ordered $(p-1)$-tuple $(A_1,\ldots,A_{p-1})$ of disjoint $q_b$-sets of the $a_b$-atoms in $W_b$ by its complete color profile.
A profile entry is specified by arbitrary atom-constant values on all the atoms in $W_1,\ldots,W_{b-1}$, arbitrary constant values on the blocks already selected in $W_{b+1},\ldots,W_m$, and arbitrary letters $z_1,\ldots,z_{p-1}\in X$.
For this entry, put $z_i$ on the union of the atoms in $A_i$, put $u$ on every as-yet unspecified coordinate, and evaluate $c$.
There are $h_b$ earlier atoms, $p(m-b)$ already selected blocks, and $p-1$ displayed letters, so the profile has $|X|^{h_b+p(m-b)+p-1}$ entries and there are at most $r_b$ different profiles.

The ground set at this stage has $pq_b+d_b$ atoms, and $d_b\geq C(p,r_b)$.
Therefore \cref{thm:kneser-shift} gives pairwise disjoint $q_b$-sets of atoms $A_1,\ldots,A_p$ such that the profiles of $(A_1,\ldots,A_{p-1})$ and $(A_2,\ldots,A_p)$ agree.
Let $B_{b,i}$ be the union of the atoms in $A_i$.
Each block has physical size $a_bq_b=k$, and equality of the two profiles is precisely \eqref{eq:rotation}.

The identities selected at later stages survive because their profiles recorded every atom-constant assignment on $W_b$.
The new identity is uniform over every atom-constant assignment on $W_1,\ldots,W_{b-1}$, so it will in turn survive all subsequent selections.
Hence all $m$ rotation identities hold simultaneously, and all coordinates outside the resulting $mp$ blocks are fixed to $u$.

Finally, the blocks cover $mpk$ coordinates, while \eqref{eq:stage-size} gives
\[
 n=k\left(mp+\sum_{b=1}^m\frac{d_b}{q_b}\right).
\]
By \eqref{eq:excess-ratio},
\[
 \sum_{b=1}^m\frac{d_b}{q_b}
 <\frac{mp\eta}{1-\eta},
\]
and therefore $mpk/n>1-\eta$.
This completes the proof.
\end{proof}

For $m=2$ and $p=3$, we have $q_1=k/a_1$, while our definition gives $q_2=k$, so the construction first fixes $k/a_1$ and only afterwards makes $k$ large.
If only the fixed patterns $123$ and $312$ from \cref{prop:two-triangle-gadgets} are recorded, the two profile counts reduce to $r^{27}$ and $r^{3^{3q_1+d_1}}$, exactly as in \cref{sec:triangle}.

\begin{remark}\label{rem:quantitative}
Although \cref{prop:kneser-shift-bounds} gives an explicit upper bound on the Kneser-shift threshold, the full rotation-system construction remains quantitatively enormous.
To make the construction explicit, one may take $d_b\geq p\,t_{p-1}(r_b)-2$ at each stage, thereby inserting the profile count $r_b$ from \eqref{eq:profile-count} into the iterated-downset rank bound.
We make no attempt to optimize the resulting values of $n$ or $k$.
\end{remark}

\section{Cyclic actions and solvable extensions}\label{sec:groups}

We now turn the rotation identity into the full cyclic insensitivity needed for a group action; this is done practically the same way as by K\v r\'i\v z \cite{Kriz1991}.
It is useful to allow a temporary equivalence relation.
If $E$ is an equivalence relation on $X$, say that a coloring $f:X^n\to[r]$ is \emph{coordinatewise $E$-insensitive} if $f(x)=f(y)$ whenever $x_iE y_i$ for every $i$.

\subsection{One orbit of a prime cycle}

Let $\tau$ be a permutation of $X$ of prime order $p$, and let
\[
 Y=\{x_1,\ldots,x_p\},\qquad \tau x_i=x_{i+1}
\]
with indices in $[p]$ read cyclically.
The $p$ words
\[
 w_i=(x_i,x_{i+1},\ldots,x_p,x_1,\ldots,x_{i-1})
 \qquad(i\in[p])
\]
are the analogues of the three macro-patterns $123,312,231$ in \cref{sec:triangle}.
For this cyclic action, call a block map of block size $k$ \emph{label-balanced} if $p\mid k$ and, for each variable, each of the labels $\id,\tau,\ldots,\tau^{p-1}$ occurs exactly $k/p$ times on its active coordinates.
For $1\leq t\leq p$, let $E_t$ be the equivalence relation whose classes are
$\{x_1,\ldots,x_t\}$ and the singleton sets $\{x\}$ for
$x\in X\setminus\{x_1,\ldots,x_t\}$.
Thus $E_1$ is equality.

\begin{lemma}[One-orbit fusion]\label{lem:one-orbit}
For every $r,m\in\N$ and $\eta>0$, there are $n,k\in\N$ such that every coloring $c:X^n\to[r]$ admits a $\langle\tau\rangle$-block map $\Phi:X^m\to X^n$ of block size $k$ and active proportion at least $1-\eta$ whose induced coloring is coordinatewise $E_p$-insensitive, where $E_p$ has $Y$ as its only nonsingleton class.
If $X=Y$, then $\Phi$ may in addition be chosen label-balanced.
\end{lemma}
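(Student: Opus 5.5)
I plan to prove \cref{lem:one-orbit} by induction on $t$, establishing for each $t=1,\ldots,p$ the same statement with $E_t$ in place of $E_p$. The case $t=1$ is trivial: take the identity block map (all labels $\id$, $k=1$, $n=m$). For the inductive step $t\to t+1$, I compose two block maps via \cref{lem:block-composition}. The outer one comes from \cref{thm:rotation-system} with baseline $u=x_p$ (or the appropriate orbit element) and creates one rotation identity per macro-variable. The inner one is supplied by the induction hypothesis for $E_t$ and is applied to the macro-coordinates. Since active proportions multiply and the losses are arbitrary, the density condition $mk\ge(1-\eta)n$ survives; I will split $\eta$ as $(1-\eta_1)(1-\eta_2)\ge1-\eta$.

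For the outer layer, I apply \cref{thm:rotation-system} with $m'$ macro-variables. For each $b$, I define a $\langle\tau\rangle$-block map by letting macro-variable $b$ act on $B_{b,1}\cup\cdots\cup B_{b,p}$ with label $\tau^{-(i-1)}$ (or a suitable shift) on $B_{b,i}$. Substituting $x_j$ then produces the pattern corresponding to the cyclic word $w_j$ restricted to the orbit. The rotation identity \eqref{eq:rotation}, taken with $z_i$ read off from consecutive orbit elements and $u$ chosen to be the orbit element that wraps around, says that the induced macro-coloring assigns the same color to two letters $x_j$ and $x_{j'}=\tau^{\pm1}x_j$, uniformly in every context. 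However, a single rotation identity gives only one cyclic shift relation, so the real bookkeeping is to choose the labels so that this relation merges exactly the class $\{x_1,\ldots,x_t\}$ with $x_{t+1}$. Following K\v r\'i\v z and the triangle computation in \cref{fig:triangle}, I expect to need the inner $E_t$-insensitivity as well. Concretely, the inner map makes $x_1,\ldots,x_t$ interchangeable coordinatewise, and combining this with an outer rotation (a cyclic shift of the macro-block patterns) chains $w$-patterns exactly as in the sequence $\bar1\bar2\bar3\to\bar3\bar1\bar2\to\bar2\bar1\bar3\to\cdots$. For this reason I will order the composition the other way as well if needed: first the rotation system on physical coordinates, then the inductive $E_t$ map on the macro-coordinates, with its input coloring being the macro-coloring from the rotation system (whose colors are fine since $r$ stays fixed).

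The main obstacle is the combinatorial verification that the rotation identity, together with $E_t$-insensitivity applied \emph{coordinatewise} on the macro-letters, yields $x_{t+1}\,E\,x_t$ without disturbing other letters. Letters outside $Y$ appear only as fixed letters or as $z_i$'s, and \eqref{eq:rotation} holds for arbitrary $z_i\in X$ and arbitrary contexts, so they are unaffected. I would first check the $p=3$ case against \cref{fig:triangle} and then write the general chain of $t$ swaps. For label balance when $X=Y$, I note that each outer macro-block uses each label $\tau^{i}$ on exactly one of the $p$ equal blocks $B_{b,i}$. Composition multiplies labels, so the balance is preserved by \cref{lem:block-composition}, provided the outer map is balanced and $p\mid k$. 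This is automatic because $k$ is a multiple of the outer block multiplicity.
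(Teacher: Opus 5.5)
Your skeleton matches the paper: induction on $t$, the identity map at $t=1$, and composing an $E_t$-map with a rotation-system map while multiplying the losses. But the proposal omits the computation that makes the induction step work, and the one ordering you actually spell out (``first the rotation system on physical coordinates, then the inductive $E_t$ map on the macro-coordinates'') would fail.

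\emph{The order of composition.} Suppose you apply \cref{thm:rotation-system} to $c$, encode, and only then apply the $E_t$-hypothesis to $g=c\circ\Psi$. Then $g\circ\Phi$ is $E_t$-insensitive and nothing more. The only extra information $g$ inherits from the rotation identities is insensitivity to a single pair of letters, e.g.\ $\{x_1,x_2\}$ for baseline $x_1$. A $\langle\tau\rangle$-block map with mixed labels destroys this, since a coordinate labelled $\tau^j$ turns that pair into $\{x_{1+j},x_{2+j}\}$.

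The order must be the paper's:
\begin{enumerate}[label=\textup{(\alph*)}]
\item apply the induction hypothesis to $c$, so that $f=c\circ\Phi$ is coordinatewise $E_t$-insensitive;
\item apply \cref{thm:rotation-system} to $f$ itself, with baseline $x_1$;
\item only then encode by $y\mapsto w(y)=(y,\tau y,\ldots,\tau^{p-1}y)$.
\end{enumerate}
This way the rotation identity (for arbitrary $z_1,\ldots,z_{p-1}$ in arbitrary constant contexts) and the $E_t$-insensitivity hold for the same coloring on the same coordinates.

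\emph{The missing computation.} Your stated target, that the rotation ``yields $x_{t+1}\,E\,x_t$ without disturbing other letters'', is the wrong one. Since $E_t$ is not $\tau$-invariant for $1<t<p$, the $E_t$-insensitivity of $f$ does not pass through the encoding $\Psi$. So the old merges must be re-established too: you must prove $f(w_1)=f(w_2)=\cdots=f(w_{t+1})$ directly. The argument goes as follows.
\begin{itemize}
\item Since $w_2$ ends in the baseline $x_1$, the rotation identity gives $f(w_2)=f(w_1)$.
\item For $2\le i\le t$, $E_t$-insensitivity turns $w_i$ into $U=(x_1,x_{i+1},\ldots,x_p,x_1,\ldots,x_1)$, with $i-1$ trailing copies of $x_1$.
\item The rotation identity, read in reverse, equates $U$ with its left shift $V=(x_{i+1},\ldots,x_p,x_1,\ldots,x_1)$.
\item $E_t$-insensitivity then turns $V$ into $w_{i+1}$.
\end{itemize}
The words $U$ and $V$ are not encodings. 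This is exactly why the rotation identity must be available for $f$ with arbitrary $z_i$, which the reverse order cannot provide.

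\emph{Label balance.} The correct principle is that the composite is balanced when the inner (physical-side) factor is balanced. A balanced outer factor does not suffice. For $p=2$, take two macro-coordinates with outer labels $\id$ and $\tau$, and let all inner labels above the first be $\id$ and all above the second be $\tau$; then every composite label is $\id$. In the correct order, balance is created at the step $1\to2$, where the inner map is the identity and the composite is just the balanced encoding. From then on it is inherited through the induction hypothesis.
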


\begin{proof}
We prove by induction on $t$ that, with arbitrarily small loss, the induced coloring can be made coordinatewise $E_t$-insensitive.
For $t=1$, this follows from the identity block map.
Suppose it has been proved for some $t<p$.
Fix $r,m\in\N$ and $\eta>0$, and choose $0<\eta'<1$ so that $(1-\eta')^2\geq1-\eta$.
Use \cref{thm:rotation-system} with $m$ requested gadgets, baseline $x_1$, and loss $\eta'$, and denote the resulting ambient length and block size by $n_0$ and $k_2$.
Apply the induction hypothesis with $n_0$ macrovariables and loss $\eta'$ to choose an ambient length $n$ and a block size $k_1$.
Now fix an arbitrary coloring $c:X^n\to[r]$, and let $\Phi:X^{n_0}\to X^n$ be the resulting block map.
Then
\[
 f=c\circ\Phi:X^{n_0}\to[r]
\]
is coordinatewise $E_t$-insensitive.

Apply \cref{thm:rotation-system} to this induced coloring $f$ and obtain the promised outer blocks.

On each outer $p$-tuple of blocks encode a letter $y\in X$ by
\[
 w(y)=(y,\tau y,\ldots,\tau^{p-1}y).
\]
This defines a $\langle\tau\rangle$-block map $\Psi:X^m\to X^{n_0}$ of block size $pk_2$; its labels on the $p$ equal blocks are $\id,\tau,\ldots,\tau^{p-1}$.
By \cref{lem:block-composition}, $\Phi\circ\Psi$ has active proportion at least $(1-\eta')^2\geq1-\eta$.
Its block size is the fixed integer $pk_1k_2$.

It remains to check $E_{t+1}$-insensitivity.
Write $w_i=w(x_i)$, in agreement with the notation above.
The word $w_2$ ends in the baseline $x_1$, so \eqref{eq:rotation} gives $f(w_2)=f(w_1)$ in every context.
For $2\leq i\leq t$, use $E_t$-insensitivity to replace the first $x_i$ and the tail entries $x_2,\ldots,x_{i-1}$ of $w_i$ by $x_1$.
This produces
\[
 U=(x_1,x_{i+1},\ldots,x_p,
    \underbrace{x_1,\ldots,x_1}_{i-1\text{ entries}}).
\]
Since $U$ begins with $x_1$, its left cyclic shift is
\[
 V=(x_{i+1},\ldots,x_p,
    \underbrace{x_1,\ldots,x_1}_{i\text{ entries}}),
\]
which ends with the baseline $x_1$; applying \eqref{eq:rotation} in the reverse direction gives $f(V)=f(U)$.
Finally, use $E_t$-insensitivity to replace the last $i$ copies of $x_1$ in $V$ by $x_1,x_2,\ldots,x_i$.
The result is $w_{i+1}$.
Hence
\[
  f(w_1)=f(w_2)=\cdots=f(w_{t+1})
\]
in arbitrary contexts, which is precisely coordinatewise $E_{t+1}$-insensitivity after composition.
Indeed, one may make this replacement in any one of the $m$ outer block-tuples while fixing all the others, and then change the coordinates one at a time.

This proves the induction step, and hence the required $E_p$-insensitivity.

For the final assertion, assume $X=Y$ and track the labels in the same induction.
In the first nontrivial step, from $E_1$-insensitivity to $E_2$-insensitivity, the encoding $w(y)$ places each of $\id,\tau,\ldots,\tau^{p-1}$ on one of $p$ equal outer blocks, so the resulting map is label-balanced.
Thereafter label-balance is preserved by composition: every balanced inner label multiset is multiplied by the corresponding outer label in $C_p$ and therefore remains uniform.
Thus the final map is label-balanced.
\end{proof}

For $X=Y=[3]$ and $m=1$, the balanced conclusion partitions the active coordinates into three equal blocks carrying the three cyclic labels.
Indeed, given $\eta>0$, choose $\delta>0$ so that $(1-\delta)^{-1}\leq1+\eta$, write the resulting block size as $k_0=3k$, and denote the resulting length by $n_0$.
Then
\[
 n_0\leq\frac{k_0}{1-\delta}\leq(1+\eta)3k,
\]
so padding by fixed coordinates gives the exact length $n=\lfloor(1+\eta)3k\rfloor$ in \cref{thm:triangle-coord}.
The inactive coordinates form the common fixed word allowed there.

The intermediate relations $E_t$ are not $\tau$-invariant when $1<t<p$.
This causes no problem: they are used only in the explicit $U,V$ calculation inside one orbit.
Composition with other orbit fusions occurs only after all of $Y$ has been merged, when the resulting equivalence relation is $\tau$-invariant.

\begin{lemma}[Relative one-orbit fusion]\label{lem:relative-one-orbit}
Let $E$ be a $\tau$-invariant equivalence relation on $X$ for which every point of $Y$ is an $E$-singleton, and let $E'=E\vee E_p$.
For every $r,m\in\N$ and $\eta>0$ there are $n,k\in\N$ such that every coordinatewise $E$-insensitive coloring $f:X^n\to[r]$ admits a $\langle\tau\rangle$-block map $\Phi:X^m\to X^n$ of block size $k$ and active proportion at least $1-\eta$ for which $f\circ\Phi$ is coordinatewise $E'$-insensitive.
\end{lemma}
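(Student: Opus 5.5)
The plan is to rerun the induction from \cref{lem:one-orbit} essentially verbatim, replacing the equality relation at the base by $E$. For $1\leq t\leq p$ put $E^{(t)}=E\vee E_t$. Because every point of $Y$ is an $E$-singleton, the classes of $E^{(t)}$ are the $E$-classes disjoint from $Y$ together with the class $\{x_1,\ldots,x_t\}$ and singletons $\{x_i\}$ for $i>t$. Thus $E^{(1)}=E$ and $E^{(p)}=E'$. I will prove by induction on $t$ that every coordinatewise $E$-insensitive coloring admits a $\langle\tau\rangle$-block map of active proportion at least $1-\eta$ whose induced coloring is coordinatewise $E^{(t)}$-insensitive. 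For $t=1$ the identity map works.

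For the inductive step I will choose $\eta'$ with $(1-\eta')^2\geq1-\eta$. Next I apply \cref{thm:rotation-system} with baseline $x_1$, $m$ gadgets and loss $\eta'$, which gives $n_0$ and $k_2$. Then I apply the induction hypothesis with $n_0$ macrovariables to get $\Phi$ with $f\circ\Phi$ coordinatewise $E^{(t)}$-insensitive. Applying \cref{thm:rotation-system} to $g=f\circ\Phi$, I encode $y\mapsto w(y)=(y,\tau y,\ldots,\tau^{p-1}y)$ on each outer $p$-tuple, and compose using \cref{lem:block-composition}. The composite has active proportion at least $1-\eta$ and block size $pk_1k_2$.

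Two things need checking.

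First, $g\circ\Psi$ is still coordinatewise $E$-insensitive. If $y\,E\,y'$, then $\tau^jy\,E\,\tau^jy'$ for all $j$ because $E$ is $\tau$-invariant. Hence $w(y)$ and $w(y')$ agree coordinatewise up to $E\subseteq E^{(t)}$, and $E^{(t)}$-insensitivity of $g$ gives equal colors. This is the only place where $\tau$-invariance of $E$ is used. It is also why the lemma is stated relative to an invariant $E$ rather than to the intermediate $E_t$.

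Second, $g\circ\Psi$ identifies $x_1,\ldots,x_{t+1}$. The $U,V$ calculation from \cref{lem:one-orbit} works word for word. Its steps only replace entries of $w_i$ by $x_1$ or back among $x_1,\ldots,x_i$, which are moves inside the $E^{(t)}$-class $\{x_1,\ldots,x_t\}$, and otherwise apply \eqref{eq:rotation} with baseline $x_1$. So $g(w_1)=\cdots=g(w_{t+1})$ in every context.

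Combining the two checks gives coordinatewise $E^{(t+1)}$-insensitivity. Changing one coordinate at a time along a chain of elementary $E$-steps and $\{x_1,\ldots,x_{t+1}\}$-steps suffices, since $E^{(t+1)}$ is the join of these relations and therefore their transitive closure.

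The main obstacle I expect is a subtlety in the first check, not in the rotation calculation. The outer blocks mix the $E$-class of $y$ with the $E$-classes of $\tau^jy$. When $y$ lies in an $E$-class that is not a singleton outside $Y$, the entries $\tau^jy$ may land in different $E$-classes, so I need coordinatewise compatibility for every $j$ simultaneously. That is exactly what $\tau$-invariance of $E$ provides. Beyond this, I must also make sure that the parameters $n,k$ depend only on $r,m,\eta$ and not on $f$. This holds because the quantifier order is the same as in \cref{lem:one-orbit}.
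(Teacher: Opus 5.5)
Your proposal is correct and follows the paper's proof essentially step for step. Both arguments use induction on $t$ for $E\vee E_t$ with the same parameter order: first the rotation theorem with baseline $x_1$, then the induction hypothesis with $n_0$ macrovariables. Both obtain $E$-insensitivity of $g\circ\Psi$ from the $\tau$-invariance of $E$ (the inactive coordinates of $\Psi$ are fixed to $x_1$ and agree identically), get $E_{t+1}$-insensitivity from the $U,V$ calculation, and conclude with the same alternating-chain argument, giving block size $pk_1k_2$.
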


\begin{proof}
We repeat the preceding induction, keeping the $E$-insensitivity throughout.
More precisely, we prove by induction on $t$ that, with arbitrarily small loss, the induced coloring can be made coordinatewise $(E\vee E_t)$-insensitive.
Since the points of $Y$ are $E$-singletons, $E\vee E_1=E$, so the case $t=1$ follows from the identity block map.
Suppose the statement has been proved for some $t<p$.
Fix $r,m\in\N$ and $\eta>0$, and choose $0<\eta'<1$ so that $(1-\eta')^2\geq1-\eta$.
Use \cref{thm:rotation-system} with baseline $x_1$ to obtain $n_0,k_2$ for $m$ gadgets and loss $\eta'$.
Next use the induction hypothesis with $n_0$ macrovariables and loss $\eta'$ to obtain $n,k_1$.
Given a coordinatewise $E$-insensitive coloring $f:X^n\to[r]$, choose $\Phi:X^{n_0}\to X^n$ so that $g=f\circ\Phi$ is coordinatewise $(E\vee E_t)$-insensitive, and apply the rotation theorem to $g$.
On each resulting $p$-tuple of blocks encode $y\in X$ by
\[
 w(y)=(y,\tau y,\ldots,\tau^{p-1}y),
\]
obtaining a $\langle\tau\rangle$-block map $\Psi:X^m\to X^{n_0}$ of block size $pk_2$.
The $U,V$ calculation in the proof of \cref{lem:one-orbit}, using the $E_t$-insensitivity of $g$, shows that $g\circ\Psi$ is coordinatewise $E_{t+1}$-insensitive.
Moreover, every active coordinate of $\Psi$ has the form $y\mapsto\tau^j y$, so the $E$-insensitivity of $g$ survives because $E$ is $\tau$-invariant; the inactive coordinates agree identically.
Changing one coordinate along an alternating $E$-/$E_{t+1}$-chain shows
that $g\circ\Psi$ is therefore coordinatewise
$(E\vee E_{t+1})$-insensitive.
By \cref{lem:block-composition}, $\Phi\circ\Psi$ has block size $pk_1k_2$ and active proportion at least $(1-\eta')^2\geq1-\eta$.
This proves the induction step.
For $t=p$, we have $E\vee E_p=E'$, proving the lemma.
\end{proof}

\begin{proposition}[Prime cyclic actions]\label{prop:cyclic-action}
If $C_p$ is cyclic of prime order and acts on a finite set $X$, then $\mathrm{DB}(C_p\curvearrowright X)$ holds.
\end{proposition}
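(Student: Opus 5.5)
The plan is to fuse the nontrivial $C_p$-orbits of $X$ one at a time by iterating \cref{lem:relative-one-orbit}, composing block maps with \cref{lem:block-composition} at each step.

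Let $\tau$ generate $C_p$. Since $p$ is prime, every orbit of $\tau$ on $X$ is either a fixed point or has exactly $p$ elements. Call the $p$-element orbits $Y_1,\ldots,Y_s$. Set $E^{(0)}$ to be equality and $E^{(j)}=E^{(j-1)}\vee E_p(Y_j)$. Then each $E^{(j)}$ has classes $Y_1,\ldots,Y_j$ together with singletons. Hence each $E^{(j)}$ is $\tau$-invariant, and the points of $Y_{j+1}$ are $E^{(j)}$-singletons, so the hypotheses of \cref{lem:relative-one-orbit} hold at every stage. Moreover $E^{(s)}$ is exactly the orbit relation of $C_p$, because fixed points are singleton orbits. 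So coordinatewise $E^{(s)}$-insensitivity of $c\circ\Phi$ is precisely condition \eqref{eq:orbit-insensitivity}.

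Given $r,m$ and $\eta$, choose $\eta'$ with $(1-\eta')^{s}\ge 1-\eta$. Parameters are fixed backwards, as in the proof of \cref{lem:one-orbit}:
\begin{enumerate}[label=\textup{(\arabic*)}]
\item Apply \cref{lem:relative-one-orbit} for the last orbit $Y_s$ with relation $E^{(s-1)}$, $m$ outputs and loss $\eta'$. This yields a length $n_{s-1}$ and block size $k_s$.
\item Apply the lemma for $Y_{s-1}$ with $n_{s-1}$ outputs, obtaining $n_{s-2}$ and $k_{s-1}$.
\item Continue in this way down to $Y_1$ with $E^{(0)}$ equality. The lemma applies there to an arbitrary coloring, or one can simply use \cref{lem:one-orbit}. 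This gives $n=n_0$.
\end{enumerate}

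Given $c:X^n\to[r]$, we then apply the stages forwards. The first stage gives $\Phi_1$ with $c\circ\Phi_1$ coordinatewise $E^{(1)}$-insensitive. That coloring is an admissible input for the next stage, which gives $\Phi_2$, and so on. The composite $\Phi=\Phi_1\circ\cdots\circ\Phi_s$ is a $C_p$-block map by \cref{lem:block-composition}. Its block size is $\prod k_j$, a fixed integer independent of $c$, and its active proportion is at least $(1-\eta')^s\ge1-\eta$. Finally, $c\circ\Phi$ is $E^{(s)}$-insensitive, because $c\circ\Phi_1\circ\cdots\circ\Phi_j$ is $E^{(j)}$-insensitive by construction at each stage.

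There are two degenerate cases. If $s=0$, the action is trivial and the identity block map works. If the selected blocks at some stage leave fewer outputs than requested, we pad, as in \cref{lem:DB-equivalence}.

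The main point to verify is that each stage keeps the insensitivity already gained, and that the parameters do not depend on $c$. The first is built into \cref{lem:relative-one-orbit}, which uses the $\tau$-invariance of $E$. The second holds because each lemma supplies $n$ and $k$ depending only on $r$, the number of outputs and the loss. I expect no serious obstacle beyond checking that the order of quantifiers matches this backward choice of parameters.
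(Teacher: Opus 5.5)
Your proposal is correct and follows the paper's proof. Both fuse the nontrivial $C_p$-orbits one at a time with \cref{lem:relative-one-orbit}, taking $E$ to be the relation already obtained, compose the resulting block maps by \cref{lem:block-composition}, and observe that the final relation is exactly the orbit relation. The only difference is cosmetic: you unroll the paper's induction on $j$ (loss $(1-\eta')^2$ per step) into one backward choice of parameters with loss $(1-\eta')^s$. Your padding remark is unnecessary, since each lemma always returns a block map with exactly the requested number of variables.
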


\begin{proof}
Fix a generator $\tau$ of $C_p$.
List the nontrivial $C_p$-orbits as $Y_1,\ldots,Y_s$, and let $F_j$ be the equivalence relation whose nonsingleton classes are $Y_1,\ldots,Y_j$.
If $s=0$, the identity block map proves the assertion.
Assume henceforth that $s\geq1$.
We prove by induction on $j$ that the dense block property holds with coordinatewise $F_j$-insensitivity.
The case $j=1$ is \cref{lem:one-orbit} applied to $Y_1$.

Suppose the statement has been proved for $j-1$, where $2\leq j\leq s$.
Fix $r,m\in\N$ and $0<\eta<1$, and choose $0<\eta'<1$ so that $(1-\eta')^2\geq1-\eta$.
First choose the macro-length $n_0$ and block size $k_2$ supplied by \cref{lem:relative-one-orbit} for $Y_j$, $E=F_{j-1}$, $r$ colors, $m$ variables, and loss $\eta'$.
Use the induction hypothesis with $n_0$ variables and loss $\eta'$ to choose an ambient length $n$ and block size $k_1$.
Fix an arbitrary coloring $c:X^n\to[r]$, and let $\Phi:X^{n_0}\to X^n$ be the resulting block map.
The induced coloring $c\circ\Phi$ is coordinatewise $F_{j-1}$-insensitive, so \cref{lem:relative-one-orbit} supplies a second block map whose induced coloring is coordinatewise $F_j$-insensitive.
The relation $F_{j-1}$ is $C_p$-invariant, and the points of $Y_j$ are its singletons, as required by that lemma.
The active proportion of the composite is at least $(1-\eta')^2\geq1-\eta$ by \cref{lem:block-composition}.
Its block size is the fixed integer $k_1k_2$.
This proves the induction step.

Finally, $F_s$ is exactly the equivalence relation of lying in the same $C_p$-orbit (fixed points remain singleton classes).
Thus $F_s$-insensitivity is precisely \eqref{eq:orbit-insensitivity}, proving the proposition.
\end{proof}

\subsection{A prime cyclic extension}

\begin{proposition}[Extension step]\label{prop:extension}
Let $H\triangleleft G$ be finite groups with $G/H\cong C_p$ for a prime $p$.
Let $G$ act on a finite set $X$, and restrict this action to $H$.
If $\mathrm{DB}(H\curvearrowright X)$ holds, then $\mathrm{DB}(G\curvearrowright X)$ holds.
\end{proposition}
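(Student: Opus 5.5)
We first apply $\mathrm{DB}(H\curvearrowright X)$, so that the color depends only on the $H$-orbit of each coordinate. We then pass to the action of the quotient $G/H\cong C_p$ on the set of $H$-orbits, apply \cref{prop:cyclic-action} there, and lift the resulting block map back to $X$ using labels in $G$. The two maps are composed via \cref{lem:block-composition}, exactly as in the inductive steps of \cref{lem:one-orbit,prop:cyclic-action}.

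\emph{The quotient action.} Let $\Omega=X/H$ be the set of $H$-orbits, and let $\pi:X\to\Omega$ be the quotient map. Since $H\triangleleft G$, each $g\in G$ maps $H$-orbits to $H$-orbits: $g(Hx)=H(gx)$. Hence $G$ acts on $\Omega$, and $H$ acts trivially there. Fix $\tau\in G\setminus H$. Its class generates $G/H$, and $\tau^p\in H$ acts trivially on $\Omega$. So $C_p=\langle\bar\tau\rangle$ acts on $\Omega$, possibly non-faithfully; \cref{prop:cyclic-action} applies to any action, including the trivial one, where $s=0$. Moreover $G=\langle\tau\rangle H$, so for $x,y\in X$ we have $y\in Gx$ if and only if $\pi(y)\in\langle\bar\tau\rangle\pi(x)$.

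\emph{Order of parameter choices.} Fix $r,m$ and $\eta$, and choose $\eta'$ with $(1-\eta')^2\geq1-\eta$.
\begin{enumerate}[label=\textup{(\arabic*)}]
\item Apply $\mathrm{DB}(C_p\curvearrowright\Omega)$ with $r$ colors, $m$ variables and loss $\eta'$, obtaining a length $n_0$ and block size $k_2$.
\item Apply $\mathrm{DB}(H\curvearrowright X)$ with $r$ colors, $n_0$ variables and loss $\eta'$, obtaining $n$ and $k_1$.
\end{enumerate}

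\emph{The construction.} Given $c:X^n\to[r]$, the $H$-block map $\Phi:X^{n_0}\to X^n$ makes $f=c\circ\Phi$ depend only on the $H$-orbit of each coordinate. Thus $f=\bar f\circ\pi^{n_0}$ for a coloring $\bar f:\Omega^{n_0}\to[r]$.

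Applying step (1) to $\bar f$ yields a $C_p$-block map $\bar\Psi:\Omega^m\to\Omega^{n_0}$, with labels $\bar\tau^{j_\ell}$ and fixed letters $\omega_\ell$. We lift it to $\Psi:X^m\to X^{n_0}$ with the same blocks:
\begin{itemize}[label=$\circ$]
\item on an active coordinate $\ell$ in block $I_i$, set $\Psi(x)_\ell=\tau^{j_\ell}x_i$, so the label is $\tau^{j_\ell}\in G$;
\item on an inactive coordinate $\ell$, set $\Psi(x)_\ell$ to a fixed representative of $\omega_\ell$.
\end{itemize}
Then $\pi^{n_0}\circ\Psi=\bar\Psi\circ\pi^m$, because $\pi(\tau^jx)=\bar\tau^j\pi(x)$. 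Consequently
\[
c\circ\Phi\circ\Psi=\bar f\circ\bar\Psi\circ\pi^m .
\]
By \eqref{eq:orbit-insensitivity} for $\bar\Psi$, the right-hand side depends only on the $\langle\bar\tau\rangle$-orbit of each $\pi(x_i)$, which by the remark above is the $G$-orbit of each $x_i$.

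\emph{Size and density.} Since $H\leq G$, the map $\Phi$ is also a $G$-block map, and so is $\Psi$. By \cref{lem:block-composition}, $\Phi\circ\Psi$ is a $G$-block map of block size $k_1k_2$ with active proportion at least $(1-\eta')^2\geq1-\eta$. This block size is fixed in advance, independent of $c$, as \cref{def:DB} requires.

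\emph{Main obstacle.} The delicate point is the bookkeeping in the lift. We must check that a single fixed block size and length work uniformly, which is why the quotient-level parameters are chosen first. We must also check that using an arbitrary coset representative $\tau$ rather than a genuine element of order $p$ is harmless. It is, because only the induced action on $\Omega$ matters and $\tau^p\in H$ acts trivially there. No new Ramsey input is required beyond \cref{prop:cyclic-action}.
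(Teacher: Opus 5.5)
Your proposal is correct and follows the paper's proof essentially step by step: you pass to the $G/H$-action on $H$-orbits, choose the quotient-level parameters $n_0,k_2$ first and then apply $\mathrm{DB}(H\curvearrowright X)$ with $n_0$ variables, lift the quotient block map to a $G$-block map, and compose via the block-composition lemma to get density $(1-\eta')^2$. Using the powers $\tau^{j}$ of one fixed $\tau\in G\setminus H$ as labels is simply a particular choice of the coset representatives $g_i$ that the paper picks arbitrarily.
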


\begin{proof}
Let $\bar X=X/H$ be the set of $H$-orbits.
Normality makes
\[
  (gH)(Hx)=H(gx)
\]
a well-defined action of $G/H$ on $\bar X$.
Fix $r,m\in\N$ and $0<\eta<1$, and choose $0<\eta'<1$ with $(1-\eta')^2\geq1-\eta$.

By \cref{prop:cyclic-action}, choose a macro-length $n_0$ and block size $k_2$ such that every coloring of $\bar X^{n_0}$ has a $(G/H)$-block map $\Psi:\bar X^m\to\bar X^{n_0}$ of block size $k_2$, active proportion at least $1-\eta'$, and orbit-insensitive induced coloring.
Next use $\mathrm{DB}(H\curvearrowright X)$ with $n_0$ macrovariables and loss $\eta'$ to choose an ambient length $n$ and block size $k_1$.
Fix an arbitrary coloring $c:X^n\to[r]$.
This gives an $H$-block map $\Phi_H:X^{n_0}\to X^n$ of block size $k_1$ such that
\[
 \bar c(Hx_1,\ldots,Hx_{n_0})
 =c(\Phi_H(x_1,\ldots,x_{n_0}))
\]
is well-defined on $\bar X^{n_0}$.

Apply the chosen cyclic conclusion to $\bar c$ and obtain $\Psi$ of block size $k_2$.
Let $J_1,\ldots,J_m\subseteq[n_0]$ be its active blocks.
For $i\in J_j$, lift the quotient label $g_iH$ to a representative $g_i\in G$.
If $Z_i\in\bar X$ is the fixed orbit at an inactive coordinate, choose $z_i\in X$ with $Hz_i=Z_i$.
Define
\[
 \widetilde\Psi(x_1,\ldots,x_m)_i=
 \begin{cases}
   g_i x_j,&i\in J_j,\\
   z_i,&i\notin J_1\cup\cdots\cup J_m.
 \end{cases}
\]
This is a $G$-block map lifting $\Psi$.
If a physical coordinate of $\Phi_H$ has label $h\in H$ and lies above an active macrocoordinate $i\in J_j$, then its label in $\Phi_H\circ\widetilde\Psi$ is $hg_i\in G$; the order is dictated by our left-action convention.
Hence the composite is a $G$-block map of block size $k_1k_2$, and
\[
 \frac{m k_1k_2}{n}
 =\frac{n_0 k_1}{n}\frac{m k_2}{n_0}
 \geq(1-\eta')^2\geq1-\eta.
\]
If $y_j\in Gx_j$, then $Hy_j$ and $Hx_j$ lie in the same $(G/H)$-orbit in $\bar X$.
For every $x=(x_1,\ldots,x_m)$, the definitions and the representative-independence of $\bar c$ give
\[
 c\bigl((\Phi_H\circ\widetilde\Psi)(x)\bigr)
 =\bar c\bigl(\Psi(Hx_1,\ldots,Hx_m)\bigr).
\]
The orbit-insensitivity of the induced coloring $\bar c\circ\Psi$ now gives \eqref{eq:orbit-insensitivity} for the composite.
\end{proof}

\begin{proof}[Proof of \cref{thm:DB-solvable}]
A finite solvable group has a composition series
\[
  \{e\}=G_1\triangleleft G_2\triangleleft\cdots\triangleleft G_t=G
\]
with $G_i/G_{i-1}$ cyclic of prime order for every $2\leq i\leq t$.
For the trivial group, the identity block map with $n=m$ and $k=1$ proves the dense block property.
Starting from $G_1$ and applying \cref{prop:extension} successively for $i=2,\ldots,t$ proves $\mathrm{DB}(G\curvearrowright X)$.
\end{proof}

\begin{proof}[Proof of \cref{thm:main}]
Let $\Gamma$ be a solvable group of isometries acting transitively on $P$, and let $G$ be its permutation image on $P$.
As a homomorphic image of $\Gamma$ in $\operatorname{Sym}(P)$, the group $G$ is finite, solvable, and transitive on $P$.
Every permutation in $G$ preserves the pairwise distances in $P$ and therefore extends uniquely to an isometry of $\operatorname{aff}P$; uniqueness makes these extensions an action of $G$.
Apply \cref{thm:DB-solvable} to $G\curvearrowright P$, and then apply \cref{prop:geometry}.
\end{proof}

\section{Concluding remarks}\label{sec:conclusion}

Several questions remain open; I mention a few that fascinate me most.

\begin{enumerate}[label=\textup{(\arabic*)},leftmargin=*]

\item 
Does \cref{thm:main} remain true without the solvability assumption?
Since every subset of a Ramsey set is Ramsey, an affirmative answer would prove the ``if'' direction of the conjecture of Leader, Russell, and Walters~\cite{LeaderRussellWalters2012} that the Ramsey sets are precisely the subtransitive sets.
A first step could be to prove the same conclusion for every finite spherical set admitting a solvable group of isometries with at most two orbits, in parallel with K\v r\'i\v z's ordinary Ramsey theorem~\cite[Theorem~4.4]{Kriz1991}.
A stronger combinatorial question is whether $\mathrm{DB}(G\curvearrowright X)$ holds for every finite group action.

\item
Is every solvable subtransitive set ncs-Ramsey?
A stronger, purely geometric statement would imply this: if a finite set $P$ of circumradius $\rho$ embeds in a solvable transitive set, must it, for every $\eps>0$, embed in a solvable transitive set of circumradius less than $\rho+\eps$?
One may ask the same two questions without the word \emph{solvable}.
We mention that Moore~\cite{Moore2026} and, independently, Mirabi~\cite{Mirabi2026} recently proved the related ordinary Ramsey statement that adjoining any point outside the affine hull of a Ramsey set preserves Ramseyness, but their methods do not seem to imply anything about our question.

\item 
Does \cref{thm:kneser-shift} remain valid for composite $p$?
The prime-factor reduction used for ordinary Kneser hypergraphs does not apply directly here, because the colors live on ordered $(p-1)$-tuples and the desired equality concerns a shift by one set.
Moreover, for composite $p$ the cyclic action on nonconstant history vectors need not be free: for example, $(E,F,E,F)$ is fixed by a half-turn when $p=4$.
Thus the equivariant sign assignment used in the present proof genuinely breaks down.

\item
Determine, or estimate, $\chi(\KSh_p(n,k))$.
In particular, what is the order of growth of the optimal threshold $C(p,r)$ as $r\to\infty$ for fixed prime $p\geq3$?
The bounds in \cref{prop:kneser-shift-bounds} have the same tower height.
For $p=3$, they determine $C(3,r)$ within a factor of $O(\sqrt r)$, but its precise asymptotic order remains open.

\item 
Ivan, Leader, and Walters~\cite{IvanLeaderWalters2026} conjecture that for every fixed template the block degree can be chosen independently of the number of colors, while the ambient word length may still grow with it.
Our density requirement necessarily forces the block size to grow with the number of colors.
Indeed, let $G$ act transitively on $X$, put $q=|X|\geq2$ and $\ell=\lfloor\log_q r\rfloor$, and color a word by its first $\min\{\ell,n\}$ coordinates.
This coloring uses at most $r$ colors; injectivity rules out $n\leq\ell$, while for $n>\ell$ orbit-insensitivity forces all first $\ell$ coordinates to be unused.
Consequently
\[
 \frac{mk}{n}\leq\frac{mk}{mk+\ell},
\]
so an active proportion of at least $1-\eta$ requires
\[
 k\geq\frac{1-\eta}{m\eta}\lfloor\log_q r\rfloor.
\]
How close is this elementary logarithmic lower bound to the truth?
More generally, what is the optimal tradeoff between the block size, the number of colors, and the unused proportion in \cref{thm:DB-solvable}?

\item 
Is there a density version of \cref{thm:main}?
More precisely, if $P$ is solvable transitive with circumradius $\rho$, is it true that for every $\eps,\delta>0$ there are an $n$ and a finite set $X\subseteq\S_{\rho+\eps}^n$ such that every $Y\subseteq X$ with $|Y|\geq\delta|X|$ contains a congruent copy of $P$?
Frankl and R\"odl~\cite{FranklRodl1990} proved the corresponding finite-witness density statement for simplices when $X$ may be an arbitrary finite subset of Euclidean space.
The distinction between ordinary Ramsey and finite-density properties is studied further by Reiher, R\"odl, and Sales~\cite{ReiherRodlSales2024} and by R\"odl and Sales~\cite{RodlSales2024}.
A related but formally different version replaces the finite set $X$ by the whole sphere and relative cardinality by normalized surface measure.
Guruswami and Li~\cite{GuruswamiLi2025} recently proved results of this measurable spherical type for a broad class of inductive configurations, including regular simplices.
The present Kneser-shift input does not directly yield the desired finite-witness statement: for every $p\geq3$, the graph $\KSh_p(n,k)$ has an independent set containing at least one quarter of its vertices.
Indeed, randomly $2$-color $\binom{[n]}k$ and retain the tuples $(A_1,\ldots,A_{p-1})$ for which $A_1$ has color $0$ and $A_2$ has color $1$; the expected size of the retained set is one quarter of all vertices, and it is independent because a retained tuple and its shifted successor would require $A_2$ to have both colors.

\item 
Is there a canonical version of \cref{thm:main}?
More precisely, if $P$ has circumradius $\rho$ and admits a solvable transitive group of isometries, is it true that for every $\eps>0$ there is an $n$ such that every coloring of $\S_{\rho+\eps}^n$, with an arbitrary palette, contains either a monochromatic or a rainbow copy of $P$?
Our proof methods seem useless for this version.
Recent progress in canonical Euclidean Ramsey theory includes acute triangles and hypercubes~\cite{GeherSagdeevToth2025}, all triangles and rectangles~\cite{FangEtAl2025}, all simplices~\cite{GeShuXuYu2026}, all products of simplices~\cite{Shaw2026}, and regular polygons of prime order~\cite{ShawPrime2026}.
These results do not impose the near-circumradius condition above.

\end{enumerate}

\section*{Acknowledgments}

I would like to thank Arsenii Sagdeev and Géza Tóth for several useful preliminary discussions, and Imre B\'ar\'any for pointing me to Szemerédi's result in \cite{ErdosSimonovits1973}. 

\section*{Statement on the use of artificial intelligence}

See at the end of \cref{sec:introduction} (so click on this: \cref{sec:triangle}, and then scroll up a bit, because I didn't bother to add a label to that part).
As discussed there, the rest of the paper is entirely by ChatGPT.

\phantomsection
\addcontentsline{toc}{section}{References}
\begingroup
\footnotesize
\sloppy

\endgroup

\clearpage
\normalsize
\fussy
\appendix
\phantomsection
\begin{center}
{\LARGE\bfseries Appendix\par}
\vspace{0.8em}
{\large Further results claimed by ChatGPT\par}
\end{center}
\addcontentsline{toc}{section}{Appendix: Further results claimed by ChatGPT}
\label[chatgptappendix]{app:chatgpt-results}

\paragraph{Principal results.}
The results below are grouped by topic and ordered roughly by mathematical
interest and importance.  Each reference points to the statement and proof
of the result.

\begin{enumerate}[leftmargin=*,label=\textup{\arabic*.}]
\item \textbf{Fixed-degree block sets for word templates.}
The Block Sets Conjecture for the template $112233$, posed as Problem~G by
Leader, Russell, and Walters~\cite[Problem~G]{LeaderRussellWalters2012}, and
its later color-independent fixed-degree strengthening, conjectured by Ivan,
Leader, and Walters~\cite[Conjecture~4]{IvanLeaderWalters2026}, both hold: a
balanced literal role map of degree $120$ into $1344$ coordinates places all
its permutations in one orbit of a solvable coordinate-permutation group
(Theorem~\ref{thm:appendix-112233}).
An affine-plane construction gives the same conclusion for every template
$1234^k$, $1\leq k\leq6$
(Proposition~\ref{prop:appendix-affine-template-family}).
Combining its $12344$ case with a new pair-splitting gadget proves the
fixed-degree conjecture for $12345$, with a balanced literal role map of
degree $35$ into $504$ coordinates
(Theorem~\ref{thm:appendix-12345}).
Consequently the conjecture holds for every template of length at most five
(Corollary~\ref{cor:appendix-all-templates-length-five}), and also for every
template on at most three symbols and of length at most six
(Corollary~\ref{cor:appendix-short-three-letter-templates}).
A flag-divisibility obstruction shows that a balanced literal certificate
for $123456$ contained in one solvable coordinate orbit must use fixed
padding, and that this orbit has at least $3960$ states
(Proposition~\ref{prop:appendix-literal-flag-divisibility} and
Corollary~\ref{cor:appendix-pure-projection-obstruction}).

\item \textbf{Exact-radius solvable extensions of regular polytopes.}
Every regular convex polytope embeds in a finite solvable transitive
configuration with exactly the same circumradius, and is therefore
ncs-Ramsey (Corollary~\ref{cor:app-all-regular-polytopes}).
The exceptional ingredients are a solvable regular action on the $600$-cell
(Proposition~\ref{prop:app-600-cell}) and an explicit exact-radius solvable
extension of the $120$-cell in $\mathbb R^{600}$
(Theorem~\ref{thm:app-120-cell}); the dodecahedron is handled by a separate
five-coordinate orbit design (Proposition~\ref{prop:app-dodecahedron}).

\item \textbf{The two-orbit problem.}
Setwise-stabilizer compression proves ncs-Ramseyness whenever the compressed
translate degree is at most nine, as well as in the stated weight cases of
degree ten (Theorem~\ref{thm:setwise-stabilizer-compression}, based on
Theorem~\ref{thm:small-johnson-layers}).
Subgroup-chain substitution extends this to imprimitive compressed actions
built from resolved local factors (Theorem~\ref{thm:subgroup-chain-substitution}).
For primitive compressed actions, the entire affine O'Nan--Scott case is
eliminated (Corollary~\ref{cor:no-affine-primitive-counterexample}), and the
remaining nonabelian-socle candidates obey a faithful-character and
divisibility certificate (Proposition~\ref{prop:faithful-common-constituent}).

\item \textbf{Kneser--shift graphs.}
For every uniformity $p$, whether or not $p$ is prime, the exact two-color
threshold is $C(p,2)=0$ for odd $p$ and $C(p,2)=1$ for even $p$
(Proposition~\ref{prop:exact-two-color-kneser-shift}); an alternating-union
homomorphism also gives the general even-$p$ lower bound $C(p,r)\geq r-1$
(Proposition~\ref{prop:alternating-union-projection}).
For prime $p$, the upper bound improves to
$C(p,r)\leq(p-1)t_{p-1}(r)-1$
(Proposition~\ref{prop:appendix-sharper-ksh-bound}), while for $p=3$ a
profile coloring gives the improved lower bound
$C(3,r)\geq2\binom{r-1}{\lfloor(r-1)/2\rfloor}-2$
(Proposition~\ref{prop:appendix-sharper-ksh-lower-bound}).

\item \textbf{Palette-free canonical consequences.}
A dense rotation theorem applied to color equality gives a palette-free
dense kernel-rotation lemma (Lemma~\ref{lem:dense-kernel-rotation}) and,
for cyclic alphabets of prime order, dense one-edge interchangeability
(Corollary~\ref{cor:dense-one-edge-interchangeability}).
The obstruction to iterating this argument is made exact by a reduction to
monochromatic triangles in Kneser graphs
(Proposition~\ref{prop:exact-kneser-obstruction}) and by a five-point profile
coloring of $KG(3k+2,k)$ (Theorem~\ref{thm:five-point-profile}).
In a different direction, Shaw's canonical theorem~\cite{ShawPrime2026}
extends from regular
prime polygons to every transitive Euclidean configuration of prime
cardinality and to all of its powers
(Proposition~\ref{prop:prime-cardinality-canonical}).

\item \textbf{A sharp obstruction for full simplex products.}
For every constant-weight layer $J(v,k)$ with $2\leq k\leq v-2$, the optimal
radius of an embedding in an arbitrary full Cartesian product of regular
simplices is computed exactly; except in the balanced case, this leaves a
strict radius gap and shows why the restricted products used above are essential
(Theorem~\ref{thm:appendix-simplex-product-optimum}).

\item \textbf{The Leader--Russell--Walters kite.}
For powers of a fixed spherical alphabet, the optimal achievable radius is
computed exactly and is either already one or uniformly bounded away from one
(Proposition~\ref{prop:lrw-fixed-alphabet}).
The natural six-letter grids admit explicit kite-free colorings at every
bounded radius (Proposition~\ref{prop:lrw-grid-coloring}), and every
cyclic-path generalized-prism construction has the same unavoidable
$\Omega(1/|\lambda|)$ radius divergence
(Proposition~\ref{prop:lrw-cyclic-path} and
Corollary~\ref{cor:lrw-cyclic-divergence}).
Finally, infinite chromatic number of a concrete projective hypergraph is a
sufficient condition for the kite to be ncs-Ramsey
(Proposition~\ref{prop:lrw-projective-reduction}), while its coherently
orientable finite subhypergraphs are all $2$-colorable
(Proposition~\ref{prop:lrw-coherent-coloring}).
\end{enumerate}

\paragraph{How this appendix was produced.}
Every statement and proof in this appendix was generated by OpenAI's ChatGPT
in research conversations with the author after the first arXiv version of
this paper.  They are presented explicitly as mathematical claims of
ChatGPT.  The author supplied the manuscript, its open problems, and
corrections to unsuccessful approaches.  ChatGPT then ran several parallel
proof searches, compared the surviving arguments, subjected each substantive
section to a separate adversarial proof audit, and checked the finite group
and incidence certificates by exhaustive verification programs.  Arguments
for which a gap or counterexample was found were discarded rather than
reported conditionally.  The author has nevertheless not independently
verified every claim, so this appendix should not be treated as transferring
mathematical responsibility for these results to the author before such
verification has taken place.  None of the remaining unrestricted questions
is declared solved.  In particular, the unrestricted two-orbit problem, the
Leader--Russell--Walters kite problem, and the Block Sets Conjecture for the
template $123456$ remain open.

\section{Orbit designs and the two-orbit problem}
\label{sec:appendix-two-orbit}

\subsection{Compressed two-orbit supports and small constant-weight layers}
\label{subsec:compressed-small-layers}

We identify a binary word of length $t$ with its support, and write
\[
 J(t,\lambda)=\{A\subseteq[t]:|A|=\lambda\}.
\]
We first record the closure notion used below.

\begin{definition}[iterated solvable coordinate scheme]
\label{def:iterated-coordinate-scheme}
We define the class of finite word families having an \emph{iterated
solvable coordinate scheme} by structural recursion.

At a direct step, a family $\mathcal W\subseteq A^s$ has a scheme if it is
contained in a supertemplate $\mathcal C\subseteq A^s$ which is one orbit of
a solvable group $L\leq S_s$.

At a recursive step, choose an $L$-invariant supertemplate
$\mathcal C\supseteq\mathcal W$, where $L\leq S_s$ is solvable, and
permutations $\pi_1,\ldots,\pi_q\in S_s$ preserving $\mathcal C$.  Put
\[
 \Lambda(w)=\bigl(L\pi_1w,\ldots,L\pi_qw\bigr)\in(L\backslash\mathcal C)^q.
\]
If $\Lambda(\mathcal W)$ has a scheme, then so does $\mathcal W$.

At a substitution step, let $\mathcal A\subseteq2^X$ be a nonempty
constant-weight family and let $\mathcal B\subseteq2^Y$ be a
constant-weight family of positive weight, both having schemes.  Write
$\mathcal A[\mathcal B]\subseteq2^{X\times Y}$ for the family whose set of
nonempty $Y$-fibres belongs to $\mathcal A$, whose nonempty fibres belong to
$\mathcal B$, and whose remaining fibres are empty.  Every subfamily of
$\mathcal A[\mathcal B]$ has a scheme.

Finally, the property is preserved by a bijective relabelling of the
alphabet.  Equivalently, the families having schemes form the least class
closed under these four rules.
\end{definition}

The requirement $\pi_i\mathcal C=\mathcal C$ is what makes the recursion geometric rather than merely formal.
It ensures that every $\pi_i$ lifts to an orthogonal coordinate permutation of the same restricted product.

\begin{lemma}[iterated orbit design]
\label{lem:iterated-orbit-design}
Let $P\subseteq\rho\S(V)$ be a finite spherical configuration whose intrinsic circumcenter is the origin and whose intrinsic circumradius is $\rho$, and let a solvable group $G\leq\operatorname{Iso}(P)$ have orbits $O_1,\ldots,O_u$.
Suppose that $h_1,\ldots,h_s\in\operatorname{Iso}(P)$ and that the orbit-type family
\[
 \mathcal W=\left\{\bigl(j(h_1x),\ldots,j(h_sx)\bigr):x\in P\right\}\subseteq[u]^s,
 \qquad h_ix\in O_{j(h_ix)},
\]
has an iterated solvable coordinate scheme.
Then $P$ embeds isometrically in a finite solvable transitive configuration of circumradius exactly $\rho$.
Consequently $P$ is ncs-Ramsey.
\end{lemma}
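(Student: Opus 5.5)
The plan is to prove a strengthened, \emph{configuration-valued} version of the statement by structural induction along the four rules of \cref{def:iterated-coordinate-scheme}, and then specialize it. Throughout I treat $u$ as a formal alphabet. Each letter $a$ carries a finite spherical set $S_a\subset V_a$ of radius exactly $\rho$ about the origin, together with a solvable group $G_a$ of isometries that preserves $S_a$ and acts transitively on it. For a word $w\in A^s$ write $\Pi(w)=\{s^{-1/2}(y_1,\ldots,y_s):y_i\in S_{w_i}\}$; every point of $\Pi(w)$ has norm $\rho$.

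The strengthened claim is as follows. If $\mathcal W\subseteq A^s$ has a scheme, then $\bigcup_{w\in\mathcal W}\Pi(w)$ embeds isometrically, with the origin fixed, into a finite solvable transitive configuration lying on the sphere of radius $\rho$ about the origin. The original lemma follows by taking $S_j=O_j$ and $G_j=G|_{O_j}$, and using the map
\[
x\mapsto s^{-1/2}(h_1x,\ldots,h_sx).
\]
This map is isometric because each $h_i$ is an isometry, and its image lies in $\Pi(w(x))$. The radius is then \emph{exactly} $\rho$ for a cheap reason. A subset of a sphere of radius $\rho$ has circumradius at most that of any spherical set containing it, so the circumradius of the big set is at most $\rho$ because it lies on that sphere, and at least $\rho$ because it contains a copy of $P$. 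The ncs-Ramsey conclusion then follows from \cref{thm:main}.

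\textbf{Direct step.} Here $\mathcal W\subseteq\mathcal C$, and $\mathcal C$ is a single orbit of a solvable $L\le S_s$. I take $Q=\bigcup_{w\in\mathcal C}\Pi(w)$, with the group generated by the coordinatewise product $\prod_i G_{(\cdot)}$ and by the coordinate permutations from $L$. To make this a genuine group I first relabel each $S_a$ inside one common space $V=\bigoplus_a V_a$, so that $L$ can act by permuting coordinates. The group is then contained in $(\prod_a G_a)\wr L$, which is solvable as an extension of a solvable group by a solvable group. It acts transitively on $Q$: $L$ moves between the words of $\mathcal C$, and the product of the $G_a$ is transitive on each $\Pi(w)$. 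All its elements are orthogonal maps.

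\textbf{Recursive step.} The new alphabet is $L\backslash\mathcal C$. For an $L$-orbit $\bar w$, the set $S_{\bar w}=\bigcup_{w\in\bar w}\Pi(w)$ is solvable transitive of radius $\rho$ by the direct-step construction. Since each $\pi_i$ preserves $\mathcal C$, it acts on $\bigcup_{w\in\mathcal C}\Pi(w)$ as an orthogonal coordinate permutation; this is exactly where $\pi_i\mathcal C=\mathcal C$ is used. The map
\[
y\mapsto q^{-1/2}(\pi_1y,\ldots,\pi_qy)
\]
is therefore isometric, and it sends $\Pi(w)$ into $\Pi(\Lambda(w))$ over the new alphabet. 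Applying induction to $\Lambda(\mathcal W)$ and composing the embeddings gives the claim.

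\textbf{Substitution and relabelling.} For substitution, the induction hypotheses give configurations $Q_{\mathcal A}$ and $Q_{\mathcal B}$. I would build the composite by placing $Q_{\mathcal B}$-type configurations on the fibres indexed by the nonempty positions of $Q_{\mathcal A}$, and the empty letter on the remaining fibres. The solvable group is the corresponding wreath-type product of the two groups, and the constant weights make all the scaling factors uniform. Relabelling is trivial.

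I expect the main obstacle to be the substitution step. One must check that the fibres with empty weight (``zero'') contribute consistently to the norms, so that radius $\rho$ is preserved, and that the product group really acts transitively on the lifted set. The way I would try to handle this is to normalize by the constant weights, choosing the scaling of each fibre so that every point has norm $\rho$, and then to verify transitivity orbit by orbit, first on the $\mathcal A$-level support and then fibrewise on the $\mathcal B$-level.
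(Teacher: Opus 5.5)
Your outer reduction agrees with the paper: the letters are the $G$-orbits, you use the diagonal map $x\mapsto s^{-1/2}(h_1x,\ldots,h_sx)$, and the radius is exact because the host lies on the $\rho$-sphere and contains $P$. The induction invariant you set up, however, cannot be carried through. You give each letter its own pair $(S_a,G_a)$, place the $S_a$ in orthogonal summands of $\bigoplus_aV_a$, and act by $(\prod_aG_a)\wr L$. In that placement any two points with different letters are at distance $\sqrt2\,\rho$. Two points $x,x'$ can have $h_ix$ and $h_ix'$ in different orbits, and then the term $\|h_ix-h_ix'\|$ is changed, so the diagonal map is no longer isometric on $P$. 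If instead you keep the orbits in their true positions in $V$, an element of $\prod_aG_a$ acting independently on different orbits is generally not an isometry. For example, take a non-square rectangle with $G$ the central symmetry: swapping one pair of opposite vertices while fixing the other pair does not preserve distances. So your group does not act on $Q$ by isometries.

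What is missing is the paper's relative invariant. It consists of a single configuration $Z$ on the $\rho$-sphere, a single solvable group $A$ of isometries of $Z$ whose orbits are the letters, and isometries of $Z$ whose orbit-type words follow the rest of the scheme. The direct step then uses $A^m\rtimes L$ on $\{m^{-1/2}(y_1,\ldots,y_m):(Ay_1,\ldots,Ay_m)\in\mathcal C\}$. The recursive step forms this restricted product over the whole invariant $\mathcal C$ with the one group $A^s\rtimes L$, whose orbits are exactly the $L$-orbits on $\mathcal C$. Building each new letter $S_{\bar w}$ separately with its own group loses the common group that the next level needs. For the same reason, the inductive conclusion should be a host with labelled orbits, not a transitive host.

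The substitution step has a second, independent gap. Two points of $P$ can have the same $Y$-fibre active for one (a word in $\mathcal B$) and inactive for the other (the constant word $0^Y$). The inner construction must therefore embed both kinds of fibre into one host under one group, without ever fusing $0^Y$ with the $\mathcal B$-words. Since $0^Y$ need not lie in the supertemplates of the $\mathcal B$-certificate, your hypothesis for $\mathcal B$ does not supply this.

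The paper proves a pointed form simultaneously with the lemma: the certificate is run on $\mathcal U\cup\{a^m\}$ with supertemplates $\mathcal C\cup\{a^m\}$. This works because coordinate permutations and the $\pi_i$ fix constant words, and the product points of constant type form a single orbit of the base power $A^m$, distinct from the others. After the inner scheme there are exactly two fibre types, empty and fused positive. The outer $\mathcal A$-construction is then applied to them, in pointed form when the substitution sits inside a larger inactive fibre.

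The rescaling you propose targets a non-issue. The letter $0$ labels an orbit on the $\rho$-sphere, not the zero vector, so every normalized product point already has norm $\rho$.
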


\begin{proof}
We argue by structural induction on a scheme certificate, using a relative
induction invariant.  Simultaneously, we prove the following pointed form of
the assertion: if $\mathcal U\subseteq D^m$ has a scheme and
$a\in D$ with $a^m\notin\mathcal U$, then its certificate can be run
on $\mathcal U\cup\{a^m\}$ with $a^m$ retained as a separate orbit throughout
and with $\mathcal U$ fused as prescribed.  At later stages the distinguished
word may have a different alphabet letter, but it remains
coordinate-constant.

At any stage we have an isometric embedding
\[
 e:P\hookrightarrow Z\subseteq\rho\S(W),
\]
a solvable group $A$ preserving $Z$, and a list of isometries of $Z$ whose orbit-type words are governed by the remainder of the coordinate scheme.
Initially $Z=P$, $A=G$, and the isometries are the given $h_i$.

For a direct step whose current word length is $m$ and whose supertemplate $\mathcal C$ is transitive, form
\[
 Y=\left\{m^{-1/2}(y_1,\ldots,y_m):y_i\in Z,\ (Ay_1,\ldots,Ay_m)\in\mathcal C\right\}.
\]
The appropriate wreath product $A^m\rtimes L$ is solvable and transitive on $Y$, while the diagonal map obtained from the current isometries embeds $P$ isometrically.
Indeed, for the initial stage this map is
\[
 x\longmapsto s^{-1/2}(h_1x,\ldots,h_sx)
\]
and the same calculation applies at every later stage.
Every point of $Y$ has norm $\rho$, so its circumradius is at most $\rho$, and the contained copy of $P$ makes it at least $\rho$.
For the pointed form, replace $\mathcal C$ by
$\mathcal C\cup\{a^m\}$.  Coordinate permutations fix $a^m$; moreover, the
actual product points having this constant orbit type form one orbit under
the base direct power $A^m$.  Thus they remain a single orbit separate from
the $\mathcal C$-orbit.

At a recursive step, use the full invariant supertemplate $\mathcal C$ in the same construction, but do not yet require transitivity.
The $A^s\rtimes L$-orbits of the resulting restricted product are exactly the $L$-orbits on $\mathcal C$.
Each $\pi_i$ induces an isometry of that restricted product because it preserves $\mathcal C$, so these induced maps and the new restricted product satisfy the same relative invariant for the labelled orbit words.
For the pointed form, use $\mathcal C\cup\{a^s\}$; both $L$ and every
$\pi_i$ fix $a^s$, and its labelled-orbit image is again
coordinate-constant.  Thus the pointed induction hypothesis applies as well.
The induction hypothesis applied to $\Lambda(\mathcal W)$ completes the
recursive case.

At a substitution step, index the current maps by $X\times Y$ and suppose
that their type family is contained in $\mathcal A[\mathcal B]$.
Run identical copies of the construction for $\mathcal B$ independently in
the $Y$-fibres over all $x\in X$, retaining the all-zero fibre as a separate
orbit at every stage.  This is possible because coordinate permutations fix
the zero word, while the positive weight of $\mathcal B$ prevents an active
fibre from entering its orbit.  Once the inner constructions terminate, the
two relevant fibre types are the empty fibre and the single fused positive
type.  Apply the construction for $\mathcal A$ to the resulting word of
fibre types, using its pointed form whenever the whole substitution occurs
inside a still larger inactive fibre.  All acting groups are subgroups of
iterated wreath products of
the solvable groups in the two certificates and are therefore solvable.
Associativity of normalized Cartesian products preserves both the isometric
embedding and the exact radius.  Alphabet relabelling merely renames orbit
types, transports the distinguished constant word, and requires no
construction.

Structural induction now proves the first assertion.
The final host is solvable transitive and has the same circumradius as $P$, so the main theorem applied to the host proves the last assertion.
\end{proof}

In particular, binary complementation preserves the existence of a scheme.

\begin{lemma}[even pair layers]
\label{lem:even-pair-layers}
If $q$ is a prime power, then $J(2q,2)$ has an iterated solvable coordinate scheme.
\end{lemma}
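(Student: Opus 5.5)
The plan is to reach $J(2q,2)$ through one recursive step of \cref{def:iterated-coordinate-scheme}, followed by a direct step. A single direct step cannot work in general. Any supertemplate $\mathcal C\supseteq J(2q,2)$ of weight-$2$ words that is one orbit of $L\le S_{2q}$ forces $L$ to be $2$-homogeneous on $2q$ points. Solvable $2$-homogeneous groups essentially live on prime-power degrees, and $2q$ is a prime power only when $q$ is a power of $2$. So the coordinates must be split.

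\textbf{Step 1 (choice of $L$ and its orbits).} Identify $[2q]$ with $\mathbb F_2\times\mathbb F_q$ and take $\mathcal C=J(2q,2)$. Let $L=C_2\times\mathrm{AGL}(1,q)$ act by swapping the two copies and acting diagonally by an affine map on the $\mathbb F_q$-coordinate. This group is solvable. Its orbits on $2$-sets should be:
\begin{itemize}
\item $S$, the pairs inside one copy, by $2$-transitivity of $\mathrm{AGL}(1,q)$ together with the swap;
\item $D=\{\{(0,x),(1,x)\}\}$, the diagonal cross pairs;
\item $O$, the cross pairs $\{(0,x),(1,y)\}$ with $x\ne y$.
\end{itemize}
I would verify this orbit decomposition first.

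\textbf{Step 2 (choice of the permutations $\pi_i$).} Every permutation of $[2q]$ preserves $\mathcal C$, so I am free to choose the $\pi_i$. The first candidate is the family of shears $\pi_c(i,x)=(i,x+ci)$ for $c\in\mathbb F_q$. These fix same-copy pairs and shift the difference $y-x$ of a cross pair by $c$. Then $\Lambda$ sends every same-copy pair to $S^q$, and sends a cross pair of difference $d$ to the word with a single $D$ in position $-d$ and $O$ elsewhere. The words with exactly one $D$ already form one orbit of the translation group of $\mathbb F_q$ acting on the coordinates, which is solvable.

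\textbf{Step 3 (main obstacle: fusing the extra constant word).} The difficulty is that $S^q$ is an extra constant word, and it must be fused with the orbit of the one-$D$ words. The pointed form in \cref{lem:iterated-orbit-design} only keeps constant words as separate orbits; it does not fuse them. I would therefore enlarge the list of permutations by adjoining copy-mixing permutations. The model is $(i,x)\mapsto(i+\epsilon(x),x)$ for a suitable $\epsilon:\mathbb F_q\to\mathbb F_2$, or an $\mathrm{AGL}(1,q)$-family of such maps, chosen so that these permutations turn same-copy pairs into cross pairs. The aim is that every pair, of either type, receives label words with the same multiset of letters in $\{S,D,O\}$. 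Concretely, I would try to make $\Lambda(J(2q,2))$ contained in a single orbit of a solvable group built from translations and multiplications on the coordinate index set, and then finish by a direct step.

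If this balancing fails, the fallback is a two-level recursion. The first recursive step produces words over $\{S,D,O\}$. A second recursive step would then apply $\mathrm{AGL}(1,q)$ on the coordinate indices together with permutations interchanging blocks of coordinates. If needed, the substitution rule would be used with $\mathcal A$ a singleton and $\mathcal B$ a one-orbit layer, so that the constant fibre type is absorbed. The prime-power hypothesis on $q$ enters only through the existence of the solvable $2$-transitive group $\mathrm{AGL}(1,q)$ on $\mathbb F_q$.
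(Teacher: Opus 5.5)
Your proposal has a genuine gap. The decisive Step~3 is only a plan, and the fallbacks you list provably cannot succeed. After Steps~1--2, $\Lambda(J(2q,2))$ consists of the constant word $S^q$ together with the words having one $D$ and $O$ elsewhere. No family containing a constant word and some other word has an iterated solvable coordinate scheme:
\begin{itemize}
\item every coordinate permutation fixes a constant word, so no direct step can place it in a common orbit with anything else;
\item a recursive step sends a constant word $a^s$ to the constant word on the letter $\{a^s\}$, a singleton $L$-orbit. That letter never occurs in the image of a nonconstant word, so the same defect reappears one level up;
\item a subfamily of $\mathcal A[\mathcal B]$ containing a constant word is that word alone.
\end{itemize}
Hence neither a second recursive level nor a substitution step can absorb $S^q$. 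You must change $L$ or the $\pi_i$ themselves, and you never specify how. Your condition that all pairs receive the same multiset of letters is only necessary. With three $L$-orbits, it is already a nontrivial design problem that you leave open.

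The missing idea is to put your copy-mixing maps $(i,x)\mapsto(i+\epsilon(x),x)$ into the group rather than into the list of permutations. Take $L=C_2^q\rtimes\mathrm{AGL}(1,q)$, acting by independent flips inside the fibres $\{(0,x),(1,x)\}$ and by affine maps on the fibre index. The $2$-transitivity of $\mathrm{AGL}(1,q)$ together with the flips makes $S\cup O$ a single orbit. So $L$ has exactly two orbits on $J(2q,2)$: the perfect matching $D$ and its complement.

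Then replace the $q$ shears, whose matchings $\pi_c^{-1}(D)$ miss every same-copy pair, by $2q-1$ permutations $\pi_i$ with $\pi_i(F_i)=D$, where $F_1\sqcup\cdots\sqcup F_{2q-1}$ is a one-factorization of $K_{2q}$. Since $\pi_ie\in D$ exactly when $e\in F_i$, each pair gets the letter $D$ in exactly one position. Thus $\Lambda(J(2q,2))\subseteq J(2q-1,1)$, which is one orbit of $C_{2q-1}$. The supertemplate condition is automatic because every permutation preserves $J(2q,2)$. This one recursive step followed by a direct step is the paper's proof.
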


\begin{proof}
Partition $[2q]$ into $q$ distinguished pairs and let
\[
 L=C_2^q\rtimes\operatorname{AGL}(1,q)
\]
act by independent flips inside the pairs and by affine permutations of the pairs.
The group $L$ has two orbits on $J(2q,2)$: the distinguished matching and its complement.
Choose a one-factorization
\[
 E(K_{2q})=F_1\sqcup\cdots\sqcup F_{2q-1}
\]
and a permutation $\pi_i$ taking $F_i$ to the distinguished matching.
For every edge $e$, the word recording whether $\pi_i(e)$ is in the distinguished matching has weight one, because $e$ lies in exactly one factor $F_i$.
The outer supertemplate is therefore $J(2q-1,1)$, on which $C_{2q-1}$ is transitive.
All $\pi_i$ preserve the full inner supertemplate $J(2q,2)$, so this is a valid recursive step.
\end{proof}

The next theorem is the finite ingredient needed for the compressed two-orbit result.

\begin{theorem}[small constant-weight layers]
\label{thm:small-johnson-layers}
Every full layer $J(t,\lambda)$ has an iterated solvable coordinate scheme when $t\leq9$.
For $t=10$, the same is true for
\[
 \lambda\in\{0,1,2,3,7,8,9,10\}.
\]
\end{theorem}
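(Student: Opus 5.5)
The plan is a finite case analysis over the pairs $(t,\lambda)$, with three reductions and a short list of explicit certificates. First, $J(t,0)$ and $J(t,t)$ are single words, and $J(t,1)$ is one orbit of $C_t$, so these are direct steps. Second, binary complementation, which is an alphabet relabelling noted after \cref{lem:iterated-orbit-design}, identifies $J(t,\lambda)$ with $J(t,t-\lambda)$. This leaves only $2\le\lambda\le\lfloor t/2\rfloor$. For $t=10$ it leaves only $\lambda\in\{2,3\}$, which is exactly why the statement omits $\lambda\in\{4,5,6\}$ there.

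\textbf{Step 1: $\lambda=2$.} If $t=q$ is a prime power, $\operatorname{A\Gamma L}(1,q)$ is solvable and $2$-transitive on $\mathbb F_q$, so $J(q,2)$ is a single orbit. This covers $t\in\{2,3,4,5,7,8,9\}$. For $t=6$ and $t=10$, \cref{lem:even-pair-layers} applies with $q=3$ and $q=5$.

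\textbf{Step 2: $\lambda\ge3$ where a direct step exists.} Next I look for solvable $\lambda$-homogeneous groups. The prime candidate is $\operatorname{A\Gamma L}(1,8)$, of order $168$. I expect it to be $3$-homogeneous on $8$ points, which would make $J(8,3)$ a direct step. For $J(8,4)$ I would test whether this group has at most two orbits on $4$-sets and, if so, finish with a recursive step.

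\textbf{Step 3: the remaining layers.} This leaves $J(6,3)$, $J(7,3)$, $J(9,3)$, $J(9,4)$, $J(10,3)$, and possibly $J(8,4)$. For each I will mimic the one-factorization argument of \cref{lem:even-pair-layers}, in four parts.
\begin{enumerate}
\item Take a solvable $L\le S_t$ whose orbits on $J(t,\lambda)$ include a structured orbit $\mathcal O$. The choices I have in mind are $L=\operatorname{AGL}(2,3)$ on $9$ points with $\mathcal O$ the lines, $C_7\rtimes C_3$ on $7$ points with $\mathcal O$ a Fano plane, and $C_2\wr C_3$ or $C_3\wr C_2$ on $6$ points.
\item Choose permutations $\pi_1,\dots,\pi_q$ so that every $\lambda$-set is carried into $\mathcal O$ by a constant number $w$ of them. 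Such families come from resolutions of $J(t,\lambda)$ into copies of $\mathcal O$. Examples are a large set of $\mathrm{STS}(9)$, which exists (Kirkman), and partitions of the triples of a $7$-set into one Fano plane together with the orbits of a complementary structure.
\item When some $\lambda$-sets fall into several non-$\mathcal O$ orbits of $L$, refine: record the full orbit type rather than only membership in $\mathcal O$. This makes the outer family a subfamily of a product of smaller layers, and I would handle it by further recursive steps or by the substitution step.
\item The induced outer family is then a layer $J(q,w)$, possibly recorded after padding, with $q$ small enough to have been treated before. For $J(9,3)$ with a large set of seven $\mathrm{STS}(9)$ on top of $\operatorname{AGL}(2,3)$, every triple lies in exactly one plane. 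The outer family is therefore $J(7,1)$, which is settled by $C_7$.
\end{enumerate}
For $t=10$, the substitution step $\mathcal A[\mathcal B]$ with $X\times Y=[5]\times[2]$ lets me split a $3$-set according to its pair-fibres. The fibre-type patterns that occur are $(2,1)$ and $(1,1,1)$. They reduce to $J(5,1)$, $J(5,2)$, $J(5,3)$, and the layers of $[2]$, all settled above, with one recursive step to merge the two patterns.

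\textbf{Main obstacle.} The hardest part is the middle layers $J(9,4)$, $J(10,3)$, and, if \cref{lem:even-pair-layers}-style matchings do not suffice, $J(6,3)$. The difficulty is that the needed exact covering designs may fail to exist, in the way a large set of Fano planes fails for $7$ points. My first attempt there is to allow the outer family to be a union of two layers distinguished by the refined orbit types, and to search by computer for $\pi_i$ with small $q$. I would then certify each found family by checking the constant-weight condition exhaustively, as the appendix does for its other finite certificates.
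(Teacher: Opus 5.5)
\textbf{The proposal has a genuine gap.} The theorem is a finite certificate check, so its substance lies in the certificates for the middle layers. For five layers you give none: $J(6,3)$, $J(7,3)$, $J(8,4)$, $J(9,4)$ and $J(10,3)$ are left as a search program. Hence the statement is not proved.

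\textbf{What is already correct.} Your reductions (trivial weights, $J(t,1)$ via $C_t$, complementation) and the weight-two cases match the paper. So does your $J(9,3)$ certificate: a large set of seven $\mathrm{STS}(9)$ over $\operatorname{AGL}(2,3)$, with outer family $J(7,1)$. Your expectation for $J(8,3)$ is true, and already $\operatorname{AGL}(1,8)$ suffices. A triple's setwise stabilizer is trivial, because $3\nmid 56$ and every affine involution of $\mathbb F_8$ is a fixed-point-free translation. So the orbit has size $56=\binom83$.

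\textbf{Two concrete suggestions fail as stated.}
\begin{itemize}
\item For $t=10$, the substitution step needs $\mathcal B$ of constant weight. A triple of pattern $(2,1)$ has fibres of weights $2$ and $1$, so that part lies in no $\mathcal A[\mathcal B]$. Your ``one recursive step to merge the two patterns'' is where all the content sits. Its outer family is determined by the chosen $\pi_i$; it is not assembled from $J(5,\cdot)$.
\item $C_7\rtimes C_3$ has three orbits on $J(7,3)$: the two Fano planes and the remaining $21$ triples. So the outer words are not binary. With $\mathcal O$ a single Fano plane, $w=1$ would require a large set of Fano planes, which does not exist.
\end{itemize}

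\textbf{The missing ingredients.}
\begin{itemize}
\item For $t\in\{6,10\}$, take $L=C_2^{t/2}\rtimes\operatorname{AGL}(1,t/2)$. Its two orbits on triples are exactly your two fibre patterns. Choose a one-factorization $F_1,\dots,F_{t-1}$ of $K_t$ and $\pi_i$ sending $F_i$ to the distinguished matching. The three edges of a triple lie in three distinct factors, so the outer family is $J(t-1,3)$, i.e.\ $J(5,3)$ or your $J(9,3)$.
\item Your own $C_3\wr C_2$ idea also settles $J(6,3)$: with the ten bisections of $[6]$ the outer family is $J(10,1)$. The paper instead uses $S_4\times C_2$ on the edges of $K_4$.
\item For $J(7,3)$, use $\operatorname{AGL}(1,7)$, whose $14$-element orbit $O$ is the union of both Fano planes. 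Five translates $g^iO$ under a $5$-cycle fixing two points cover every triple exactly twice, giving $J(5,2)$. This sidesteps the nonexistence of the large set.
\item For $J(9,4)$, reuse your large set. The four triples inside a $4$-set lie in four distinct systems, since two blocks of an STS share at most one point. $\operatorname{AGL}(2,3)$ has two orbits on $4$-sets (containing a line, or a $4$-arc). So the outer family is $J(7,4)\cong J(7,3)$, which needs the $J(7,3)$ certificate.
\item For $J(8,4)$, the two $\operatorname{AGL}(1,8)$-orbits are the $14$ affine planes and a regular orbit of size $56$. You still need an explicit, verified family of translates. The paper uses ten translates under $g=(1\ 2)(3\ 4\ 5\ 6\ 7)$, whose support words form a subfamily of $J(10,2)$.
\end{itemize}
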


\begin{proof}
The layers of weights zero and $t$ are trivial, and complementation lets us restrict to $\lambda\leq t/2$.
For every $t$, the layer $J(t,1)$ is a single orbit of the cyclic group $C_t$; complementation handles $J(t,t-1)$.
For $t\leq4$, the solvable group $S_t$ is transitive on every layer.
The affine group $\operatorname{AGL}(1,q)$ is solvable and two-transitive on $\mathbb F_q$, so it handles weights one and two, and their complements, when $q$ is a prime power.
It is also transitive on $J(8,3)$.
Indeed, the setwise stabilizer of a triple injects into $S_3$, since an
affine map fixing two distinct points is the identity.
It cannot contain an element inducing a three-cycle because $3\nmid56$.
It cannot contain an element inducing a transposition either: such an
element would be an affine involution fixing the remaining point, whereas
every nonidentity involution in $\operatorname{AGL}(1,8)$ is a nonzero
translation and hence is fixed-point-free.
Thus the stabilizer is trivial, and the orbit has size $56=\binom83$.
Thus only the seven recursive constructions in \cref{tab:small-layer-certificates} need verification.

\begin{table}[ht]
\centering
\small
\begin{tabular}{@{}c p{.49\textwidth} p{.27\textwidth}@{}}
\hline
Layer & Inner orbit partition and translate certificate & Outer supertemplate \\
\hline
$J(6,2)$, $J(10,2)$ & $2q$ points in $q=3,5$ distinguished pairs; a one-factorization of $K_{2q}$ & $J(2q-1,1)$ \\
$J(6,3)$ & $S_4\times C_2$ on the edges of $K_4$; five translates of the path orbit & $J(5,3)$ \\
$J(7,3)$ & $\operatorname{AGL}(1,7)$; five translates of its $14$-set orbit & $J(5,2)$ \\
$J(8,4)$ & $\operatorname{AGL}(1,8)$; ten translates of its affine-plane orbit & subfamily of $J(10,2)$ \\
$J(9,3)$ & $\operatorname{AGL}(2,3)$; seven disjoint affine-plane line systems & $J(7,1)$ \\
$J(9,4)$ & the same seven line systems; four distinct constituent lines & $J(7,3)$, after complementing \\
$J(10,3)$ & five distinguished pairs and the nine factors of $K_{10}$ & $J(9,3)$ \\
\hline
\end{tabular}
\caption{Recursive certificates for the exceptional small layers.}
\label{tab:small-layer-certificates}
\end{table}

For $J(6,2)$ and $J(10,2)$, use \cref{lem:even-pair-layers} with $q=3$ and $q=5$, respectively.
The remaining certificates are given explicitly below.

For $J(6,3)$, identify the six points with the edges
\[
 01,02,03,12,13,23
\]
of $K_4$, in this order.
The action of $S_4$ on the edges has three orbits on triples: the four stars, the four triangles, and the twelve three-edge paths.
Adjoining the central permutation which exchanges each edge with its opposite gives a solvable group $S_4\times C_2$ with two orbits, namely the paths and their complement.
Let $g=(1\ 2\ 3\ 4\ 5)$ fix $0$ in the displayed numerical labelling.
For the four representatives $012,013,123,124$ of the $\langle g\rangle$-orbits on triples, the sets of $i\in\Z_5$ for which the representative lies in $g^iO$, where $O$ is the path orbit, are
\[
 \{1,2,4\},\qquad \{1,2,3\},\qquad \{0,2,4\},\qquad \{0,3,4\}.
\]
Every inner orbit word therefore lies in $J(5,3)$, which is one orbit of the solvable group $\operatorname{AGL}(1,5)$.

For $J(7,3)$, label the ground set by $\mathbb F_7$ and let $O$ be the $\operatorname{AGL}(1,7)$-orbit of $\{0,1,3\}$:
\[
\begin{split}
O=\{&013,015,023,026,045,046,124,126,134,156,\\
    &235,245,346,356\}.
\end{split}
\]
Put $g=(0\ 1\ 2\ 4\ 3)$, fixing $5$ and $6$.
The affine group has exactly two orbits on $J(7,3)$: the displayed orbit $O$ of size $14$ and the orbit of $012$ of size $21$ (the latter triple has setwise stabilizer of order two).
The two intersection certificates
\[
 O\cap gO=\{023,045\},\qquad
 O\cap g^2O=\{026,124,156,245,346\}
\]
have five pairwise disjoint $\langle g\rangle$-translates each, and these ten sets partition all $35$ triples.
Thus each triple belongs to exactly two of $O,gO,\ldots,g^4O$, and the outer word lies in $J(5,2)$.

For $J(8,4)$, write $\mathbb F_8=\mathbb F_2(\alpha)$ with
$\alpha^3+\alpha+1=0$, and label
\[
 0,1,2,3,4,5,6,7
 =0,1,\alpha,1+\alpha,\alpha^2,1+\alpha^2,
   \alpha+\alpha^2,1+\alpha+\alpha^2.
\]
Take the affine-plane orbit
\[
\begin{split}
O=\{&0123,0145,0167,0246,0257,0347,0356,\\
    &1247,1256,1346,1357,2345,2367,4567\}.
\end{split}
\]
The affine group has exactly two orbits on $J(8,4)$: the displayed affine-plane orbit of size $14$ and an orbit of size $56$.
For example, in the indicated field labelling the stabilizer of $0124$ is trivial, so the second orbit is regular.
For $g=(1\ 2)(3\ 4\ 5\ 6\ 7)$, fixing $0$, direct expansion of the displayed list gives
\[
 \{\{i:g^{-i}A\in O\}:A\in J(8,4)\}
 =\bigl\{\{i,j\}:i\ne j,\ i\equiv j\pmod2\bigr\}
  \cup\bigl\{\{i,i+5\}:i=0,\ldots,4\bigr\}.
\]
Here the indices are in $\Z_{10}$, the five second-kind supports occur six times each, and the twenty first-kind supports occur twice each, accounting for all $70$ four-sets.
This is a subfamily of the full outer template $J(10,2)$, which has a scheme by \cref{lem:even-pair-layers}.

For the two layers on nine points, use the affine plane $\operatorname{AG}(2,3)$ with line set
\[
\begin{split}
\mathcal L=\{&012,036,048,057,138,147,156,237,246,258,\\
             &345,678\}.
\end{split}
\]
If $g=(0\ 1\ 4\ 2\ 5\ 3\ 6)$ fixes $7$ and $8$, direct expansion shows that the seven systems $g^i\mathcal L$, $i\in\Z_7$, are pairwise disjoint and partition $J(9,3)$.
The line/nonline orbit partition of the solvable group $\operatorname{AGL}(2,3)$ therefore sends every triple to $J(7,1)$.
On four-sets, the same affine group has two orbits, according as the set contains a line or is a four-arc.
The four triples inside a fixed four-set belong to four different systems $g^i\mathcal L$, since two blocks of a Steiner triple system cannot share two points.
Hence the four-set contains a line in four coordinates and is a four-arc in the other three, so its orbit word lies in $J(7,3)$ after interchanging the two labels.

Finally, for $J(10,3)$ partition the ten points into five distinguished pairs and use
\[
 L=C_2^5\rtimes\operatorname{AGL}(1,5).
\]
Its two triple-orbits consist of triples containing a distinguished pair and triples meeting three distinct pairs.
Choose a one-factorization $E(K_{10})=F_1\sqcup\cdots\sqcup F_9$ and permutations $\pi_i$ taking $F_i$ to the distinguished matching.
A triple has three edges, and these lie in three distinct one-factors because adjacent edges cannot lie in one matching.
Its inner orbit word consequently has weight three and lies in the full outer supertemplate $J(9,3)$, which was just resolved.

All coordinate permutations used above preserve the indicated full constant-weight inner supertemplate, so the supertemplate condition in \cref{def:iterated-coordinate-scheme} is satisfied.
The base cases, the seven certificates, and complementation prove the theorem.
\end{proof}

\begin{theorem}[setwise-stabilizer compression]
\label{thm:setwise-stabilizer-compression}
Let a finite group $H\leq\operatorname{Iso}(P)$ act transitively on a spherical configuration $P$, and let a solvable subgroup $G\leq H$ have two orbits, one of which is $O$.
Put
\[
 M=\operatorname{Stab}_H(O),\qquad
 t=[H:M],\qquad
 \lambda=\frac{t|O|}{|P|}.
\]
Then $\lambda$ is an integer.
If $J(t,\lambda)$ has an iterated solvable coordinate scheme, then $P$ embeds in a finite solvable transitive configuration of the same circumradius and is therefore ncs-Ramsey.
In particular, this conclusion holds whenever $t\leq9$, and also when $t=10$ and $\lambda\in\{1,2,3,7,8,9\}$.
\end{theorem}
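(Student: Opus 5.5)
The plan is to reduce the statement to \cref{lem:iterated-orbit-design}, with the $t$ coset translates of $O$ serving as the isometries $h_i$.

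\textbf{Integrality of $\lambda$.} Write $P=\{O_1,O_2\}$ with $O=O_1$, and fix coset representatives $h_1,\ldots,h_t$ of $M$ in $H$, so that $h_1^{-1}O,\ldots,h_t^{-1}O$ are the distinct $H$-translates of $O$. Double counting the incidences between points $x\in P$ and translates containing $x$ gives $t|O|$. Since $H$ is transitive on $P$ and permutes the translates, each point lies in the same number of translates. That number is therefore $t|O|/|P|$, which must be an integer; this is $\lambda$.

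\textbf{The orbit-type family.} Label $O$ by $1$ and the other $G$-orbit by $2$, and consider the words
\[
 w(x)=\bigl(j(h_1x),\ldots,j(h_tx)\bigr)\in[2]^t .
\]
Here $h_ix\in O$ exactly when $x\in h_i^{-1}O$. So under the identification of binary words with supports used in the appendix (label $1$ being ``in the support''), $w(x)$ is the set of indices of the translates containing $x$. By the integrality step, this set always has size $\lambda$, so $\mathcal W\subseteq J(t,\lambda)$.

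\textbf{Inheriting the scheme.} A scheme for the full layer restricts to any subfamily $\mathcal W$. The direct and recursive steps only require a supertemplate containing $\mathcal W$, and the substitution step explicitly allows subfamilies; a short structural induction confirms that restriction is harmless at every step. \cref{lem:iterated-orbit-design} then applies with $G$, its orbits $O_1,O_2$, and $h_1,\ldots,h_t\in H\le\operatorname{Iso}(P)$. It yields an embedding of $P$ in a finite solvable transitive configuration of the same circumradius, and ncs-Ramseyness follows from \cref{thm:main}. The final clause is \cref{thm:small-johnson-layers}.

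\textbf{Expected obstacle.} The main point needing care is matching conventions. The lemma's coordinates are $j(h_ix)$, so translates must be taken as $h_i^{-1}O$; I would check that left cosets of $M$ give distinct translates indexed correctly. One must also confirm that the relabelling of $\{1,2\}$ needed to identify words with supports is allowed; it is, since the property is invariant under alphabet relabelling.
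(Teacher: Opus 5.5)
Your proposal is correct and follows the paper's own proof. The paper double counts incidences with the $t$ distinct translates $h_i^{-1}O$ to get $\lambda$, observes that the supports $S_x=\{i:h_ix\in O\}$ lie in $J(t,\lambda)$, applies \cref{lem:iterated-orbit-design} with the full layer as supertemplate, and finishes with \cref{thm:small-johnson-layers}. On the convention you flagged: take the $h_i$ to be representatives of the right cosets $M\backslash H$ (so the $h_i^{-1}$ represent $H/M$); then the $h_i^{-1}O$ are exactly the distinct translates.
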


\begin{proof}
Translate $P$ so that its circumcenter is the origin.
The $t$ distinct $H$-translates of $O$ form a tactical configuration on $P$.
Since $H$ is transitive, every point lies in the same number $\lambda$ of these translates, and counting incidences gives $\lambda|P|=t|O|$.
Choose $h_1,\ldots,h_t\in H$ so that the distinct translates are $h_i^{-1}O$.
For every $x\in P$, the support
\[
 S_x=\{i:h_ix\in O\}
\]
therefore lies in $J(t,\lambda)$.
Apply \cref{lem:iterated-orbit-design} with this full layer as a supertemplate, and then use \cref{thm:small-johnson-layers}.
\end{proof}

The index $t$ can be much smaller than the raw coset index $[H:G]$.
Indeed, if $s=[H:G]$, if $a=s|O|/|P|$, and if $e=[M:G]$, then
\[
 t=s/e,\qquad \lambda=a/e.
\]
Thus the common factor forced by the setwise stabilizer should be cancelled before the layer is tested.

\subsection{Substitution along a subgroup chain}
\label{subsec:subgroup-chain-substitution}

The compressed support family has additional structure which resolves every imprimitive case built from smaller resolved factors.
Choose $x_0\in O$, put $J=H_{x_0}$, and identify $P$ with $H/J$.
Since $M$ is transitive on $O$, the compressed family on $H/M$ is
\[
 \mathcal F(H;M,J)=\{h(JM/M):h\in H\}\subseteq2^{H/M},
 \qquad JM/M:=\{jM:j\in J\}.
\]

\begin{theorem}[subgroup-chain substitution]
\label{thm:subgroup-chain-substitution}
For every intermediate subgroup $M\leq U\leq H$, after choosing a transversal of $H/U$ and thereby identifying each $U$-fibre with $U/M$,
\[
 \mathcal F(H;M,J)
 \subseteq
 \mathcal F(H;U,J)\bigl[\mathcal F(U;M,J\cap U)\bigr].
\]
Consequently, for a chain
\[
 M=U_0<U_1<\cdots<U_d=H,
\]
the compressed family has an iterated solvable coordinate scheme whenever every local factor
\[
 \mathcal F_i=\mathcal F(U_i;U_{i-1},J\cap U_i)
 \subseteq J(t_i,\lambda_i),
\]
has one, where
\[
 t_i=[U_i:U_{i-1}],\qquad
 \lambda_i=[J\cap U_i:J\cap U_{i-1}].
\]
In particular, a chain whose nontrivial degrees $t_i$ are all at most nine is sufficient.
\end{theorem}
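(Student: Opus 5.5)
The plan is to prove the containment by unwinding definitions, and then to obtain the consequence by induction along the chain using the substitution rule of \cref{def:iterated-coordinate-scheme}.

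For the containment, fix a transversal $R$ of $H/U$. Each coset $hM\in H/M$ lies in a unique fibre $\rho U$ with $\rho\in R$, and is identified with $\rho^{-1}hM\in U/M$. A member of $\mathcal F(H;M,J)$ has the form $S=h(JM/M)=\{hjM:j\in J\}$. Its set of nonempty $U$-fibres is $\{hjU:j\in J\}=h(JU/U)$, which is a member of $\mathcal F(H;U,J)$. Now fix one nonempty fibre $\rho U=hj_0U$. The points of $S$ in this fibre are the $hjM$ with $hjU=hj_0U$, that is, $j\in j_0(J\cap U)$. Put $u=\rho^{-1}hj_0\in U$. Then in the identification this fibre of $S$ becomes
\[
 \{u j' M: j'\in J\cap U\}=u\bigl((J\cap U)M/M\bigr),
\]
which is a member of $\mathcal F(U;M,J\cap U)$. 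All remaining fibres are empty, so $S\in\mathcal F(H;U,J)[\mathcal F(U;M,J\cap U)]$.

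For the consequence, I would induct on $d$, proving that $\mathcal F(U_i;M,J\cap U_i)$ has a scheme for each $i$. The case $i=1$ is the local factor $\mathcal F_1$. For the inductive step, apply the containment inside the group $U_i$, with intermediate subgroup $U_{i-1}$ and point stabilizer $J\cap U_i$:
\[
 \mathcal F(U_i;M,J\cap U_i)\subseteq\mathcal F_i\bigl[\mathcal F(U_{i-1};M,J\cap U_{i-1})\bigr].
\]
The substitution rule applies provided that both families are constant-weight, that the outer family is nonempty, and that the inner family has positive weight. The outer family $\mathcal F_i$ is an orbit of a set, hence nonempty and constant-weight. The inner family is likewise constant-weight, of weight $[J\cap U_{i-1}:J\cap M]\geq1$. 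Every subfamily of a substitution has a scheme, which gives the step.

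The weights are computed as follows. The size of $(J\cap U_i)U_{i-1}/U_{i-1}$ equals $[J\cap U_i:J\cap U_{i-1}]=\lambda_i$, by the orbit-stabilizer theorem for $J\cap U_i$ acting on $U_i/U_{i-1}$. This confirms $\mathcal F_i\subseteq J(t_i,\lambda_i)$. A subfamily of a family with a scheme need not itself obviously have one, so I would take the hypothesis literally as asking that $\mathcal F_i$ has a scheme. For the final sentence, note that $\mathcal F_i\subseteq J(t_i,\lambda_i)$, and full layers with $t_i\leq 9$ have schemes by \cref{thm:small-johnson-layers}. To pass to the subfamily, I would use the direct-step convention that schemes are witnessed by supertemplates, so that subfamilies inherit them; alternatively, I would embed the family as a subfamily via a trivial substitution with $\mathcal B=J(1,1)$. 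Trivial factors with $t_i=1$ are the single set $\{U_{i-1}\}$ and can be dropped.

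The main obstacle I expect is the bookkeeping of the identification of fibres via the transversal. The same coset $uM$ must be read consistently, and one must check that the choice of $\rho$ only translates the fibre inside $U$, so that it stays in the $U$-orbit $\mathcal F(U;M,J\cap U)$. The remaining steps are routine.
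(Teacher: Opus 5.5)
Your proposal is correct and follows essentially the same route as the paper. Both arguments use the same fibre computation, showing that the trace of $h(JM/M)$ on an active $U$-fibre is a $U$-translate of $(J\cap U)M/M$; the same substitution step of \cref{def:iterated-coordinate-scheme}, iterated along the chain; orbit--stabilizer to obtain $\lambda_i$; and \cref{thm:small-johnson-layers} for the final claim. You also explicitly pass from the full layer $J(t_i,\lambda_i)$ to its subfamily $\mathcal F_i$, for instance via the trivial substitution with $\mathcal B=J(1,1)$, which is a point the paper leaves implicit.
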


\begin{proof}
Partition the coordinate set into $U$-fibres,
\[
 H/M=\bigsqcup_{hU\in H/U}hU/M.
\]
The active fibres of $rJM/M$ are exactly the members of $rJU/U$.
If $rjU/M$ is active, where $j\in J$, then
\[
 (rJM/M)\cap(rjU/M)
 =rj\bigl((J\cap U)M/M\bigr).
\]
Under the chosen identification of this fibre with $U/M$, left translation by $rj$ identifies the intersection with a $U$-translate of
\[
 (J\cap U)M/M\subseteq U/M.
\]
This proves the displayed containment in the wreath supertemplate.
Apply the substitution step in \cref{def:iterated-coordinate-scheme} to run
the inner scheme independently in every active fibre, retaining an inactive
fibre as a separate terminal type, and then run the outer scheme on the fibre
types.
Iteration along the chain proves the second assertion.
The formula for $t_i$ is immediate, and the support weight is $\lambda_i$ by orbit--stabilizer.
The final assertion follows from \cref{thm:small-johnson-layers}.
\end{proof}

If $(J\cap U_i)U_{i-1}$ is a subgroup of $U_i$, the corresponding local supports are disjoint coset blocks and a cyclic permutation of the blocks resolves that step directly.
In particular, normal or more generally permutable steps are free, so they may be stripped from the chain before a primitive residual factor is considered.

\section{Exact-radius solvable extensions of regular polytopes}
\label{app:regular-polytopes}

We call a finite spherical configuration $P$ \emph{exact-radius solvable subtransitive} if it embeds isometrically in a finite solvable transitive configuration having the same circumradius as $P$.
Such a configuration is ncs-Ramsey by \cref{thm:main}: apply that theorem to the host and then retain the embedded copy of $P$.
The purpose of this section is to prove that every regular convex polytope has this property.

We use the iterated orbit-design construction of \cref{lem:iterated-orbit-design}, including its invariant-supertemplate requirement.
The exact-radius conclusion there does not require any additional centering hypothesis on an intermediate restricted product: every host point has norm $\rho$, while the contained isometric copy of $P$ has intrinsic circumradius $\rho$.

\subsection{The dodecahedron}

We begin with the exceptional three-dimensional case needed later.

\begin{proposition}
\label{prop:app-dodecahedron}
The vertex set of the regular dodecahedron has an exact-radius solvable transitive extension.
\end{proposition}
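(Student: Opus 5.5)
The plan is to apply \cref{lem:iterated-orbit-design} with $P$ the $20$ vertices of the dodecahedron and $G$ a solvable subgroup of $\operatorname{Iso}(P)\cong A_5\times C_2$ with few orbits. Then everything reduces to exhibiting isometries $h_1,\ldots,h_s$ whose orbit-type family has an iterated solvable coordinate scheme. The summary announces a ``five-coordinate orbit design'', so I would take $s=5$.

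For the group I would use the solvable subgroup $G=A_4\times C_2$, the stabilizer of one of the five inscribed cubes. Its orbits are the $8$ vertices of the chosen cube and the remaining $12$ vertices. The five $A_5$-translates of this cube partition the $20$ vertices, each vertex lying in exactly two of them. Choose $h_1,\ldots,h_5\in A_5$ so that $h_i^{-1}O$ runs over the five cubes, where $O$ is the cube orbit. Then the type word of $x$ records which two of the five cubes contain $x$, so
\[
 \mathcal W\subseteq J(5,2).
\]
The layer $J(5,2)$ is a single orbit of the solvable group $\operatorname{AGL}(1,5)$, since that group is $2$-transitive on $\mathbb F_5$. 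This is therefore a direct step of \cref{def:iterated-coordinate-scheme}. (Equivalently, this is \cref{thm:setwise-stabilizer-compression} with $H=A_5\times C_2$, $M=\operatorname{Stab}_H(O)=G$, $t=5$ and $\lambda=5\cdot 8/20=2$.)

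By \cref{lem:iterated-orbit-design}, the dodecahedron then embeds in a finite solvable transitive host of circumradius exactly $\rho$. Concretely, the host is the restricted product
\[
 Y=\Bigl\{5^{-1/2}(y_1,\ldots,y_5): y_i\in P,\ (Gy_1,\ldots,Gy_5)\text{ of type in }J(5,2)\Bigr\},
\]
acted on by the solvable group $G^5\rtimes\operatorname{AGL}(1,5)$. The isometric embedding is $x\mapsto 5^{-1/2}(h_1x,\ldots,h_5x)$.

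The main point to verify is the incidence count, namely that each vertex lies in exactly two of the five cubes and that the two cube orbits of $G$ are as claimed. This is a standard fact about the compound of five cubes: $20\cdot 2=5\cdot 8$, and no vertex lies in three cubes. If this failed, I would fall back on a two-orbit $A_4$ or $D_5$ decomposition and search for a different five-translate certificate.
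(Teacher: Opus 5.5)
Your proposal is correct and matches the paper's proof: the same cube stabilizer $G=A_4\times C_2$ with vertex orbits of sizes $8$ and $12$, five translates whose type words have weight $5\cdot 8/20=2$ and so lie in $J(5,2)$, and the direct step of \cref{lem:iterated-orbit-design} with the $2$-transitive solvable group $\operatorname{AGL}(1,5)$. One wording slip: the five cubes do not partition the vertices but cover each vertex exactly twice, and that double cover is the fact you actually use.
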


\begin{proof}
Let $H\cong A_5\times C_2$ be the full symmetry group, where the second
factor is central inversion.
Choose one of the five cubes inscribed in the dodecahedron, and let
$K\cong A_4$ be its stabilizer in the rotational factor.
The eight vertices of this cube split into two antipodal $K$-orbits of
size four.
Every remaining vertex has trivial stabilizer in $K$, so the remaining
twelve vertices form one regular $K$-orbit.
Consequently the solvable group $G=K\times C_2$ has two vertex-orbits $O_0,O_1$ of sizes $8$ and $12$.

The subgroup $G$ has index five in $H$.
The setwise stabilizer of $O_0$ in $H$ is exactly $G$: it contains $G$, while $A_4$ is maximal in $A_5$ and $O_0$ is not $H$-invariant.
Choose representatives $h_1,\ldots,h_5$ of its five right cosets $G\backslash H$ and label a coordinate by $1$ when $h_ix\in O_0$.
For every vertex $x$, the resulting binary word has weight
\[
 \frac{5|O_0|}{|P|}=\frac{5\cdot8}{20}=2.
\]
The supports form a single $A_5$-orbit of two-subsets of the five cosets, so they are all of $J(5,2)$, each with the same multiplicity.
The solvable group $\operatorname{AGL}(1,5)$ is two-transitive on the five coordinates and hence transitive on $J(5,2)$.
The direct case of \cref{lem:iterated-orbit-design} gives the required extension, in fact inside a normalized fivefold product of the original three-dimensional space.
\end{proof}

\subsection{The regular 600-cell}

Let $I\leq\operatorname{Sp}(1)$ be the binary icosahedral group,
traditionally denoted $2I$, of order $120$.
In the standard quaternionic model, its elements are the vertices of the regular $600$-cell; see \cite{ChoiLee2018}.
Write $Z=\{\pm1\}$ and let
\[
 \pi:I\longrightarrow I/Z\cong A_5
\]
be the quotient map.
Let
\[
 T=\pi^{-1}(A_4),
 \qquad
 C=\pi^{-1}(C_5)\cong C_{10}.
\]

\begin{proposition}
\label{prop:app-600-cell}
The regular $600$-cell is solvable transitive.
\end{proposition}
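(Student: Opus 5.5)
The plan is to find a solvable subgroup of $\operatorname{Iso}(I)$ that acts transitively, in fact regularly, on the $120$ vertices $I\subset\mathbb H$. The isometries we use are the maps $x\mapsto axb^{-1}$ with $a,b\in I$. They preserve $I$ and are orthogonal on $\mathbb H\cong\R^4$, since quaternion multiplication by unit quaternions is norm preserving. Left multiplication by $I$ alone is regular on $I$, but $I$ is not solvable. So we combine a left action and a right action, using the two solvable subgroups $T=\pi^{-1}(A_4)$ and $C=\pi^{-1}(C_5)$ of orders $24$ and $10$.

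Consider the group $\Gamma=\{x\mapsto axb^{-1}:a\in T,\ b\in C\}$. It is a homomorphic image of $T\times C$, and $T\times C$ is solvable because $T\cong\operatorname{SL}(2,3)$ and $C$ are solvable. Hence $\Gamma$ is solvable. The orbit of $1$ under $\Gamma$ is the double-coset-like product set $TC=\{ab^{-1}:a\in T,\ b\in C\}$. So transitivity amounts to the single claim $TC=I$.

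To prove $TC=I$, count. By the product formula,
\[
|TC|=\frac{|T|\,|C|}{|T\cap C|}.
\]
The intersection $T\cap C$ contains $Z$. Its image under $\pi$ is $A_4\cap C_5$, which is trivial in $A_5$ because $5\nmid 12$. Hence $T\cap C=Z$, and
\[
|TC|=\frac{24\cdot10}{2}=120=|I|.
\]
Therefore $TC=I$ and $\Gamma$ is transitive. The kernel of $T\times C\to\Gamma$ is $\{(z,z):z\in Z\}$, so $|\Gamma|=120$ and the action is in fact regular.

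The only steps requiring care are elementary facts about the binary icosahedral group. First, $I/Z\cong A_5$ contains subgroups isomorphic to $A_4$ and $C_5$. Second, their preimages $T$ and $C$ are solvable, being extensions of solvable groups by $Z$. I expect no real obstacle; the main point is simply recognizing that the factorization $A_5=A_4C_5$ lifts to a factorization $I=TC$ realized by two-sided multiplication.

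Finally, every element of $\Gamma$ is an isometry of $\R^4$ fixing the circumcenter $0$ and preserving the vertex set. Therefore the $600$-cell is solvable transitive.
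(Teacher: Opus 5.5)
Your proof is correct and is essentially the paper's argument. The paper also lets $T\times C$ act by $x\mapsto txc^{-1}$ and identifies the orbit of $1$ as $TC=I$. It notes that the kernel is the diagonal copy of $Z$, so the induced group has order $120$ and acts regularly. The only cosmetic difference is that the paper gets $TC=I$ by lifting the factorization $A_5=A_4C_5$, while you count $|TC|=|T||C|/|T\cap C|$ directly in $I$; this is the same computation.
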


\begin{proof}
The group $T$ is the binary tetrahedral group, traditionally denoted $2T$.
Since $A_4\cap C_5=1$ and $|A_4||C_5|=60$, we have $A_5=A_4C_5$ and hence $I=TC$.
The solvable group $T\times C$ acts orthogonally on $\mathbb H\cong\R^4$ by
\[
 (t,c):x\longmapsto txc^{-1}.
\]
Its orbit of $1$ is $TC=I$, so it is transitive on all $120$ vertices.
The kernel of the linear action is the diagonal subgroup $\Delta Z=\{(1,1),(-1,-1)\}$.
Thus the induced group has order $120$ and acts regularly; in particular it is solvable.
\end{proof}

\subsection{An exact-radius extension of the regular 120-cell}

Identify the vertices of the regular $120$-cell with the $600$ tetrahedral cells of the $600$-cell, represented by their suitably scaled outward normal vectors.
Thus the cell set $P\subset\R^4$ is spherical, and the left-right quaternionic rotations act on it.
Put
\[
 R=(I\times I)/\Delta Z,
 \qquad
 \Gamma=(I\times C)/\Delta Z,
 \qquad
 G=(T\times C)/\Delta Z.
\]
The group $R$ is the rotational symmetry group of the $600$-cell, while $|\Gamma|=600$ and $[\Gamma:G]=5$.

\begin{lemma}
\label{lem:app-gamma-regular}
The group $\Gamma$ acts regularly on the $600$ tetrahedral cells.
\end{lemma}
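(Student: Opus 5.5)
The plan is to combine an order count with a stabilizer computation. Since $|\Gamma|=600$ equals the number of tetrahedral cells, it suffices to show that the point stabilizer in $\Gamma$ of one cell is trivial, or equivalently that $\Gamma$ acts freely. Then regularity follows by orbit--stabilizer. First I will confirm the order. The subgroup $I\times C$ has order $120\cdot10=1200$, and $\Delta Z\leq I\times C$ because $-1\in C$. Hence $|\Gamma|=600$, and $\Gamma$ acts on $\mathbb H$ by $x\mapsto axc^{-1}$.

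To analyze stabilizers, I will use the structure of the full rotation group. The group $R$ has order $7200$ and acts transitively on the $600$ cells. The stabilizer of a cell is the rotational stabilizer of a regular tetrahedron with a fixed center, so it has order $12$ and is isomorphic to $A_4$. Let $\sigma$ be a cell and $S=R_\sigma\cong A_4$. I need $\Gamma\cap gSg^{-1}=1$ for every $g\in R$. Since $\Gamma$ is normal in $R$ (it is the image of $I\times C$, but $C$ is not normal in $I$), I will not rely on normality. Instead I will argue elementwise: a nontrivial element of $\Gamma$ fixing a cell normal $v$ would fix the vector $v$.

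The key step is to describe which elements $(a,c)$ with $c\in C$ have a fixed vector. A rotation $x\mapsto axc^{-1}$ of $\mathbb H$ fixes a nonzero vector iff $a$ and $c$ are conjugate in $\operatorname{Sp}(1)$, that is, iff they have equal real parts. Here $c$ ranges over $C\cong C_{10}$, so $c$ has order $1,2,5$ or $10$. Equivalently, $\pi(c)$ is trivial or of order $5$.

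The elements of $S\cong A_4$ have orders $1,2,3$. An order-$3$ element of $S$ corresponds to a rotation about the axis through a vertex and the cell center. It is a rotation in $\R^4$ with a $2$-dimensional fixed plane. In the pair $(a,c)$, both $a$ and $c$ then have $\pi$-image of order $3$, or both have real part $\pm\tfrac12$. But $C$ contains no element whose image has order $3$, so such elements are not in $\Gamma$. For an order-$2$ element of $S$, a half-turn fixing the plane through $v$ and an edge midpoint, I need $a,c$ with $\pi$-order $2$, real part $0$. Elements of $C$ have real part $0$ only if their order is $4$, and $C_{10}$ has none. Finally, $(1,1)$ and $(-1,-1)$ give the identity. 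So $S\cap\Gamma=1$, and likewise for every conjugate, since the same order and real-part argument applies to every cell.

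The main obstacle is making rigorous the claim that stabilizer elements have the stated real parts. I would verify it by noting that an element of $R$ fixing a cell fixes its normal $v$. The fixed-vector condition gives $\operatorname{Re}a=\operatorname{Re}c$. The element also has finite order dividing $3$ or $2$ as a rotation, which pins $\pi(a)$ and $\pi(c)$ to orders $3$ or $2$. A cleaner route is the following: if $(a,c)$ fixes a vector, then $\operatorname{Re}a=\operatorname{Re}c$. Elements of $C$ have real parts $\pm1$ or $\pm\cos(\pi j/5)$, and so they are trivial or of order $5$ or $10$ up to sign. Then $S$ would contain an element of order $5$, which is impossible in $A_4$. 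The only remaining case is $a=\pm c=\pm1$, which is the identity in $\Gamma$.
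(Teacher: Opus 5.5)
Your proof is correct, but it reaches the trivial stabilizer by a different mechanism than the paper.

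The paper does not use a fixed-vector criterion. It restricts the right projection $R\to I/Z$, $[a,c]\mapsto cZ$, to the cell stabilizer $R_F\cong A_4$. For $[a,c]\in\Gamma\cap R_F$, the image $cZ$ lies in $C/Z\cong C_5$ and its order divides $|A_4|=12$, so $c=\pm1$. The remaining pure left multiplier $[a,1]$ is then handled combinatorially: it fixes no vertex, so it induces a double transposition on the four vertices of $F$. Its square fixes all four vertices, which forces $a^2=1$, and $a=-1$ would move $F$ to its antipodal cell.

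You instead evaluate at the cell center $v$. The equation $avc^{-1}=v$ forces $\operatorname{Re}a=\operatorname{Re}c\in\{\cos(j\pi/5)\}$. If $c\neq\pm1$, then $a$ and $c$ have the same order $5$ or $10$, so $[a,c]^5=[a^5,c^5]=1$ and the element has order exactly $5$. That is impossible in $A_4$. This one-line check is the only step your ``cleaner route'' leaves implicit, and once you have it, your first case analysis by orders $2$ and $3$ is redundant.

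What each route buys:
\begin{itemize}
\item Your route is more uniform and proves slightly more: every nonidentity element of $\Gamma$ with a nonzero fixed vector has order exactly $5$.
\item It also yields the paper's subsequent remark about pure right multipliers immediately, since $[1,c]$ fixes $v$ only if $c=1$.
\item The paper's route needs only the order of the stabilizer and the freeness of one-sided multiplication. It does not need the conjugacy-class description of $\operatorname{Sp}(1)$.
\end{itemize}

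One correction: your parenthetical sentence on normality is garbled. $\Gamma$ is \emph{not} normal in $R$, precisely because $C$ is not normal in $I$. Since you never use normality, this does not affect the proof.
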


\begin{proof}
Fix a tetrahedral cell $F$.
Its stabilizer $R_F$ in $R$ is the rotational tetrahedral group $A_4$.
If $[a,c]\in\Gamma$ preserves $F$, then $cZ$ belongs to $C/Z\cong C_5$.
On the other hand, restriction of the right projection $R\to I/Z$ to
$R_F$ is a homomorphism, so the order of $cZ$ divides $|A_4|=12$.
Hence $cZ=1$, and the transformation has a representative $[a,1]$ and is
a pure left multiplier.

A nonidentity pure left multiplier fixes no vertex of the $600$-cell.
If it preserved $F$, its permutation of the four vertices would therefore be a fixed-point-free element of $A_4$, hence a double transposition.
Its square would fix all four vertices, forcing $a^2=1$.
The only unit quaternions satisfying this equation are $a=\pm1$; the nonidentity possibility $a=-1$ sends $F$ to its distinct antipodal cell.
This is impossible, so the cell stabilizer in $\Gamma$ is trivial.
Since $|\Gamma|=|P|=600$, the action is regular.
\end{proof}

The same argument, with left and right interchanged, shows that no nonidentity pure right multiplier preserves a tetrahedral cell.

Choose a base cell $x_\ast$ and a transversal $h_1,\ldots,h_5$ for the five right cosets $G\backslash\Gamma$.
By regularity, identify $P$ with $\Gamma$ through $\gamma\mapsto\gamma x_\ast$.
The five $G$-orbit labels of $h_1\gamma x_\ast,\ldots,h_5\gamma x_\ast$ are obtained from the transversal labels by right multiplication by $\gamma$.
The resulting degree-five permutation representation is
\[
 A_5\curvearrowright A_5/A_4,
\]
and its kernel in $\Gamma$ has order ten.
Consequently the type words are the sixty even permutations of five distinct symbols, each occurring ten times.

We next fuse these five orbits.
Let
\[
 D=N_I(C),
 \qquad
 N=(T\times D)/\Delta Z.
\]
Then $D/C\cong C_2$, the group $N$ is solvable, and $G\triangleleft N$.
Choose $d\in D\setminus C$ and put $n=[1,d]$.
The element $n$ normalizes both $\Gamma$ and $G$.
There is a unique $\gamma_\ast\in\Gamma$ such that
$nx_\ast=\gamma_\ast x_\ast$.
Let $q:\Gamma\to I/Z$ be the first-coordinate quotient map.
Conjugation by $n$ fixes $q(\gamma)$ for every $\gamma\in\Gamma$, and
$\ker q=(Z\times C)/\Delta Z\leq G$.
Thus $G(n\gamma n^{-1})=G\gamma$ for every $\gamma\in\Gamma$.
Under the regular identification $P\cong\Gamma$, the action of $n$ therefore
sends the orbit label $G\gamma$ to $G\gamma\gamma_\ast$.
Hence its permutation of the five $G$-orbits is right multiplication by
$\gamma_\ast$, and belongs to the displayed natural $A_5$.

This permutation is nontrivial.
Indeed,
\[
 \operatorname{core}_{\Gamma}(G)=(Z\times C)/\Delta Z,
\]
because $G$ is the inverse image of $A_4$ under the quotient $\Gamma\to A_5$ and $\operatorname{core}_{A_5}(A_4)=1$.
If the permutation were trivial, then $\gamma_\ast$ would lie in this core, and $\gamma_\ast^{-1}n$ would be a nonidentity pure right multiplier fixing the cell $x_\ast$, contrary to the observation following \cref{lem:app-gamma-regular}.
Moreover $n^2\in G$, since $d^2\in C$.
Thus $n$ induces a nontrivial involution of the natural $A_5$, necessarily a double transposition.
It fuses the five $G$-orbits in blocks of sizes $1,2,2$.
The resulting $N$-orbits on $P$ have sizes $120,240,240$, and the fused type words are all thirty words in
\begin{equation}
 \mathcal C=\operatorname{Perm}(01122),
 \label{eq:app-01122}
\end{equation}
each occurring twenty times.

It remains to give an iterated solvable coordinate scheme for \eqref{eq:app-01122}.
Index its five positions by $\mathbb F_5$ and let $L=\operatorname{AGL}(1,5)$.
The group $L$ has two orbits on $\mathcal C$.
The smaller one, of size ten, is
\[
 \mathcal Q=\left\{
 w\in\mathcal C:
 \text{if $x$ is the $0$-position and $\{y,z\}$ are the $1$-positions, then }y+z=2x
 \right\}.
\]
Its setwise stabilizer in $S_5$ is exactly $L$.
To see this, normalize a relation-preserving permutation by an affine map so that it fixes $0$ and $1$.
It must then fix $-1$, preserve $\{2,3\}$, and the relation with midpoint $1$ and endpoints $0,2$ forces it to fix $2$, hence every point.

The six $S_5$-translates of $\mathcal Q$ are indexed by the six Sylow-$5$ subgroups of $S_5$.
The induced degree-six action is two-transitive: the normalizer of a fixed Sylow-$5$ subgroup acts transitively on the other five.
Every word of $\mathcal C$ belongs to exactly
\[
 \frac{6|\mathcal Q|}{|\mathcal C|}=2
\]
of these translates.
Therefore, after choosing coordinate permutations $\pi_1,\ldots,\pi_6$ representing them, the $L$-orbit-type supports
\[
 \{i:\pi_iw\in\mathcal Q\}
\]
run through all fifteen members of $J(6,2)$, each twice.

Finally identify the six positions with the six edges of $K_4$.
The solvable group $S_4$ has two orbits on $J(6,2)$: the twelve adjacent edge-pairs and the three opposite edge-pairs.
Choose a one-factorization $F_1,\ldots,F_5$ of $K_6$ and coordinate permutations $\sigma_i\in S_6$ taking $F_i$ to the three opposite pairs.
Every pair belongs to exactly one $F_i$, so the resulting five-letter type word has weight one.
The cyclic group $C_5$ is transitive on $J(5,1)$.
All supertemplates used here are invariant---first the full multislice $\operatorname{Perm}(01122)$ and then the full layer $J(6,2)$---so this is a valid iterated scheme in the sense of \cref{def:iterated-coordinate-scheme}.

\begin{theorem}
\label{thm:app-120-cell}
The vertex set of the regular $120$-cell embeds isometrically in a finite solvable transitive configuration in $\R^{600}$ having exactly the same circumradius.
Consequently the regular $120$-cell is ncs-Ramsey.
\end{theorem}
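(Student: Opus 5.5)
The plan is to apply \cref{lem:iterated-orbit-design} to the $120$-cell vertex set $P$, viewed as the $600$ tetrahedral cells. I take the solvable group there to be $N=(T\times D)/\Delta Z$. The isometries $h_1,\ldots,h_5$ are the transversal of $G\backslash\Gamma$ chosen above. Since $N\supseteq G$ and $N$ fuses the five $G$-orbits into three $N$-orbits (sizes $120,240,240$), the orbit-type family is exactly the multislice $\mathcal C=\operatorname{Perm}(01122)$ of \eqref{eq:app-01122}. I will first record that $N$ is solvable, which holds because $T$ and $D$ are solvable. I will also confirm that $N$ and the $h_i$ are isometries of $P$: they are left-right quaternionic rotations inside $R$, up to $n=[1,d]$ with $d\in N_I(C)\subseteq I$, so they lie in $R$ and preserve the cell set.

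Next I will assemble the iterated solvable coordinate scheme for $\mathcal C$ described just before the statement, step by step in the sense of \cref{def:iterated-coordinate-scheme}.
\begin{enumerate}
\item A recursive step with $L=\operatorname{AGL}(1,5)$ on the invariant supertemplate $\mathcal C$. Its two $L$-orbits are $\mathcal Q$ and its complement. The six permutations $\pi_i\in S_5$ preserve $\mathcal C$, and the resulting supports land in $J(6,2)$, after relabelling the two orbit types as $0/1$.
\item A second recursive step on $J(6,2)$ with $S_4$ acting on the edges of $K_4$, using the permutations $\sigma_i$ from a one-factorization of $K_6$. This lands in $J(5,1)$.
\item A direct step, since $C_5$ is transitive on $J(5,1)$.
\end{enumerate}
The checks needed are that each supertemplate is invariant under its solvable group and under the chosen permutations, and that the counting claims hold. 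These claims are: the stabilizer of $\mathcal Q$ is exactly $L$, giving six translates; each word lies in exactly two translates, since $6\cdot10/30=2$; and each pair lies in exactly one factor $F_i$.

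With the scheme in hand, \cref{lem:iterated-orbit-design} yields a finite solvable transitive host of circumradius exactly $\rho$ containing $P$. The ambient dimension comes from following the construction. The first direct/recursive layer is a normalized $5$-fold product of $\R^4$. The second recursion uses $6$ copies, and the last uses $5$ copies. This gives $4\cdot5\cdot6\cdot5=600$ coordinates, which matches the stated $\R^{600}$. I will track this carefully, because the lemma's recursive steps take products of the current space at each stage. Finally, \cref{thm:main} applied to the host, followed by restriction to the embedded copy, gives ncs-Ramseyness.

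I expect the main obstacle to be the precise identification of the orbit-type family with all of $\operatorname{Perm}(01122)$, including that the fusion pattern is $1,2,2$. This rests on the double-transposition argument above. The claim that the induced action on type words is the natural $A_5$ also needs care with the left/right coset conventions. If the type family turned out to be a proper subfamily, that would still suffice, since schemes pass to subfamilies via supertemplates. So the real risk is only in the invariance hypotheses, which I would check directly.
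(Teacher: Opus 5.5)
Your proposal is correct and follows essentially the paper's proof: you apply \cref{lem:iterated-orbit-design} to the $N$-orbits with the five transversal isometries $h_1,\ldots,h_5$ and the same three-stage scheme. That scheme is an $\operatorname{AGL}(1,5)$ recursion from $\operatorname{Perm}(01122)$ to $J(6,2)$, then an $S_4$ recursion to $J(5,1)$, then a direct $C_5$ step; from it you read off the dimension $4\cdot5\cdot6\cdot5=600$ and finish with \cref{thm:main} applied to the exact-radius host. The paper additionally remarks that every acting group lies inside an iterated wreath product of $N$, $\operatorname{AGL}(1,5)$, $S_4$, $C_5$ and is therefore solvable, which your argument takes from the lemma's conclusion.
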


\begin{proof}
Apply \cref{lem:iterated-orbit-design} to the $N$-orbits, the five isometries $h_1,\ldots,h_5$, and the iterated scheme just constructed.
The three successive normalized product lengths are $5$, $6$, and $5$, so the final host lies in
\[
 \R^{4\cdot5\cdot6\cdot5}=\R^{600}.
\]
Every acting group is a subgroup of an iterated wreath product of copies of
\[
 N,\qquad \operatorname{AGL}(1,5),\qquad S_4,\qquad C_5,
\]
and is therefore solvable.
Exactness of the circumradius is part of \cref{lem:iterated-orbit-design}.
The ncs-Ramsey conclusion follows from \cref{thm:main}.
\end{proof}

\begin{remark}
The enlargement in \cref{thm:app-120-cell} is genuinely necessary if one insists on a solvable transitive symmetry group.
Modulo central inversion, the full symmetry group of the $120$-cell is $(A_5\times A_5)\rtimes C_2$.
The intersection of a solvable subgroup with $A_5\times A_5$ has solvable projections of order at most twelve, so, after restoring the factor swap and central inversion, its order is at most
\[
 2\cdot2\cdot12^2=576<600.
\]
Thus no solvable subgroup of the symmetry group can be transitive on the $600$ vertices.
\end{remark}

\subsection{All regular polytopes}

\begin{corollary}
\label{cor:app-all-regular-polytopes}
The vertex set of every regular convex polytope is exact-radius solvable subtransitive and hence ncs-Ramsey.
\end{corollary}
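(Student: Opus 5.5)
The plan is to go through the classification of regular convex polytopes. For each one we exhibit a solvable transitive host of the same circumradius; when the polytope is itself solvable transitive, the polytope serves as its own host. Being ncs-Ramsey then follows by applying \cref{thm:main} to the host and keeping the embedded copy. In dimension two, a regular $n$-gon is solvable transitive via $C_n$. In dimension three, the tetrahedron is handled by $C_4$ or $A_4$ and the cube by a solvable subgroup of $S_4\times C_2$. The octahedron is acted on transitively by $A_4$, which already permutes its six vertices transitively (the three axes are permuted cyclically and the Klein four-group reverses axes). The icosahedron is acted on transitively by $A_4\times C_2$: $A_4$ has two orbits of size six on the vertices, and central inversion swaps them, or alternatively the stabilizer structure gives transitivity of the full $T_h$. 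The dodecahedron is \cref{prop:app-dodecahedron}.

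For dimension four, the $5$-cell is a regular simplex, and $C_5$ acts transitively on it. The tesseract is acted on transitively by $C_2^4$, which acts by translations on the hypercube vertices, i.e.\ sign changes. The $16$-cell is the cross-polytope; a solvable group such as $C_2^4\rtimes C_4$ (sign changes together with a cyclic coordinate shift) acts on it transitively. For the $24$-cell, the vertices are the binary tetrahedral group $2T$ in the quaternion model, and $2T$ acts by left multiplication regularly; $2T$ is solvable. The $600$-cell is \cref{prop:app-600-cell}, and the $120$-cell is \cref{thm:app-120-cell}.

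In dimensions $n\ge5$, only three regular polytopes exist: the simplex, acted on by $C_{n+1}$; the hypercube, acted on by $C_2^n$; and the cross-polytope, acted on by $C_2^n\rtimes C_n$ through signed cyclic coordinate permutations, which is transitive on $\{\pm e_i\}$. All of these groups are solvable. In every case the host is $P$ itself, with the same circumradius.

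The only real obstacle is the $120$-cell, where no solvable subgroup of the symmetry group is transitive, and the exceptional $3$- and $4$-dimensional cases. These have already been handled by the preceding results, so the proof reduces to assembling the classification list and checking transitivity for the remaining classical families. For the icosahedron I would verify directly that the pyritohedral group $A_4\times C_2$ (order $24$) has a single orbit of size $12$. This holds because the vertex stabilizer in $T_h$ is a reflection of order $2$, since the coordinates $(0,\pm1,\pm\phi)$ are cycled by the $3$-fold rotation and flipped by sign changes.
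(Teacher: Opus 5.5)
Your proposal is correct and follows the paper's proof: you run through the classification, use the polytope itself as host whenever it is solvable transitive, and cite \cref{prop:app-dodecahedron}, \cref{prop:app-600-cell}, and \cref{thm:app-120-cell} for the remaining cases. Your model of the $24$-cell as $2T$ acting on itself by left multiplication is a valid alternative to the paper's group $C_2^4\rtimes S_4$ acting on the permutations of $(\pm1,\pm1,0,0)$.

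One claim is false, although the conclusion survives. $A_4$ does not have two orbits of size six on the icosahedron's vertices. A vertex stabilizer in $A_5$ is $C_5$ and $A_4\cap C_5=1$, so $A_4$ is already regular on all twelve vertices; this is exactly the paper's argument. Hence your ``central inversion swaps them'' justification is wrong, and only your direct stabilizer computation in $T_h$ actually supports the icosahedron case.
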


\begin{proof}
Use the standard classification of regular convex polytopes; see, for example, \cite{Coxeter1973}.
The infinite families already have solvable vertex-transitive groups.
A cyclic group is transitive on the vertices of a regular polygon and of a regular simplex.
The elementary abelian sign-change group is transitive on a cube, and the signed cyclic shift
\[
 e_1\mapsto e_2\mapsto\cdots\mapsto e_d\mapsto-e_1
\]
generates a cyclic group transitive on the $2d$ vertices of the crosspolytope.

Among the exceptional three-dimensional polytopes, the icosahedron is solvable transitive because $A_5=A_4C_5$ makes $A_4$ regular on the twelve cosets $A_5/C_5$, while the dodecahedron is covered by \cref{prop:app-dodecahedron}.
In dimension four, the signed coordinate group $C_2^4\rtimes S_4$ is solvable and transitive on the twenty-four vectors obtained by permuting $(\pm1,\pm1,0,0)$, a standard model of the $24$-cell.
The $600$-cell is covered by \cref{prop:app-600-cell}, and its dual $120$-cell is covered by \cref{thm:app-120-cell}.
All remaining cases belong to the three infinite families.
Thus every regular convex polytope either is solvable transitive or has the exact-radius solvable transitive extension constructed above, and the conclusion follows from \cref{thm:main}.
\end{proof}

\section{Further reductions for the two-orbit problem}
\label{sec:appendix-two-orbit-reductions}

\subsection{The affine primitive case of the two-orbit problem}
\label{subsec:two-orbit-affine}

We record a reduction which removes the entire affine O'Nan--Scott case
from the two-orbit problem.
Let $H$ act transitively on a finite spherical configuration $P$, let
$G\leq H$ be solvable with exactly two orbits $O$ and $P\setminus O$, and
choose $x\in O$.
Put
\[
 J=H_x,
 \qquad
 M=\operatorname{Stab}_H(O).
\]
The distinct translates of $O$ are indexed by $H/M$, and their binary
support family is
\[
 \mathcal F(H;M,J)
 =\{h(JM/M):h\in H\}\subseteq 2^{H/M}.
\]
Since $G$ is transitive on $O=Mx$, we have the elementary factorization
\begin{equation}
 M=G(M\cap J).
 \label{eq:MGK-factorization}
\end{equation}

\begin{lemma}[compatible transitive subgroup]
\label{lem:compatible-coordinate-subgroup}
Suppose that $A\leq H$ is transitive on $H/M$ and that
$\langle A,G\rangle$ is solvable.
Then $P$ is solvable transitive.
\end{lemma}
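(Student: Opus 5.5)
The plan is to show directly that the solvable group $S=\langle A,G\rangle\leq H$ is transitive on $P$. Then $P$ is solvable transitive by definition, since $S$ consists of isometries preserving $P$. No Ramsey machinery is needed; the argument is pure permutation group theory inside $H$.

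First, $G$ is already transitive on $O$. So it suffices to show that the $S$-orbit of $x$ contains a point of $P\setminus O$, and that $S$ fuses the two $G$-orbits. The obstacle to fusing is exactly that $O$ is a block of translates. Any element of $S$ that does not preserve $O$ moves some point of $O$ into $P\setminus O$. To see this, note that an element $s$ with $sO\neq O$ satisfies $sO\not\subseteq O$ because $|sO|=|O|$. Thus $sO$ meets $P\setminus O$, and the $S$-orbit of $O$ then contains points of both $G$-orbits. Since $S\supseteq G$ and $P=O\cup(P\setminus O)$ is the union of two $G$-orbits, the orbit $Sx$ would be all of $P$.

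It therefore remains to find an element of $S$ not in $M=\operatorname{Stab}_H(O)$. For this I use the transitivity of $A$ on $H/M$. If $|H/M|\geq2$, some $a\in A$ moves the coset $M$, that is, $aM\neq M$, so $a\notin M$ and $aO\neq O$. The degenerate case $|H/M|=1$ means $M=H$, so $O$ is $H$-invariant. This contradicts the transitivity of $H$ on $P$ together with $O\neq P$, since $G$ has exactly two orbits. Combining these, $S$ is transitive on $P$ and solvable.

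The main point to check carefully is the identification of translates of $O$ with $H/M$ via $hM\mapsto hO$, which is immediate from the definition of $M$ as a setwise stabilizer. The factorization \eqref{eq:MGK-factorization} is not even needed for this argument, and I expect no real obstacle. If one wants the stronger conclusion that $S$ is transitive for the right reason, one can also note that $S$ transitive on $H/M$ and $S\cap M\supseteq G$, with $G$ transitive on $O$, give transitivity on $P$ by counting incidences of the tactical configuration. The short argument above, however, already suffices.
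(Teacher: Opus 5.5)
Your proof is correct, but it takes a different route from the paper's.

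\textbf{The paper's route.} It argues purely with cosets. Transitivity of $A$ on $H/M$ gives $H=AM$. The factorization $M=G(M\cap J)$, which encodes transitivity of $G$ on $O$, turns this into $H=AG(M\cap J)\subseteq\langle A,G\rangle J$. Finally, $H=\langle A,G\rangle J$ is precisely transitivity of $\langle A,G\rangle$ on $H/J=P$.

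\textbf{Your route.} You use an orbit-fusion argument. Since $G$ has exactly the two orbits $O$ and $P\setminus O$, any subgroup $S$ with $G\le S\le H$ and $S\not\le M$ contains some $s$ with $sO\not\subseteq O$, and so merges the two $G$-orbits. Transitivity of $A$ is used only to produce one element outside $M$, after you correctly rule out $M=H$.

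\textbf{What each buys.} Your version shows that a visibly weaker hypothesis suffices, namely $\langle A,G\rangle\not\le M$. More generally, the intransitive overgroups of $G$ in $H$ are exactly those contained in $M$. One could therefore replace the last item in the paper's list of properties of an unresolved primitive triple by the formally stronger requirement that every solvable subgroup of $H$ containing $G$ lies in $M$. The paper's version, in turn, gives a uniform coset template that is reused in the next two results (solvable-normal fusion and the compatible socle supplement). In the socle-supplement case, the available input is a factorization such as $N=C(N\cap J)$ rather than an explicit element outside $M$.

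\textbf{Your closing aside.} It is essentially the paper's argument. It is proved by the factorization $H=SM=SG(M\cap J)\subseteq SJ$, not by counting incidences. As you note, it is not needed.
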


\begin{proof}
Transitivity of $A$ on $H/M$ says $H=AM$.
Using \eqref{eq:MGK-factorization},
\[
 H=AM=AG(M\cap J)\subseteq \langle A,G\rangle J.
\]
Thus $H=\langle A,G\rangle J$, so the solvable group
$\langle A,G\rangle$ is transitive on $H/J=P$.
\end{proof}

The compatibility requirement in
\cref{lem:compatible-coordinate-subgroup} is automatic, for example, if
$A$ is solvable and normalized by $G$.
There is also a useful purely local consequence.

\begin{proposition}[a common prime-power case]
\label{prop:common-p-power}
Suppose that $[H:M]$ is a power of a prime $p$ and that the two-orbit
witness $G$ is a $p$-group.
Then $P$ is solvable transitive.
More generally, the same conclusion holds if $G$ normalizes some Sylow
$p$-subgroup of $H$.
\end{proposition}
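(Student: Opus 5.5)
The plan is to apply \cref{lem:compatible-coordinate-subgroup} with $A$ a Sylow $p$-subgroup $S$ of $H$ normalized by $G$. Two things need to be checked: that $S$ is transitive on $H/M$, and that $\langle S,G\rangle$ is solvable.

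\emph{Transitivity.} Suppose $[H:M]=p^a$. Then $M$ contains a Sylow $p'$-part, in the sense that $|H|_{p'}$ divides $|M|$. For any Sylow $p$-subgroup $S$ we get
\[
|SM|=\frac{|S||M|}{|S\cap M|}\geq |S|\cdot\frac{|M|}{|M|_p}=|S|\,|M|_{p'}=|H|,
\]
because $|S\cap M|\leq|M|_p$ and $|M|_{p'}=|H|_{p'}$. Hence $H=SM$, and every Sylow $p$-subgroup is transitive on $H/M$. This works without any hypothesis on $G$.

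\emph{Solvability of $\langle S,G\rangle$.}
\begin{itemize}
\item In the first case, $G$ is a $p$-group. Sylow's theorem places $G$ inside some Sylow $p$-subgroup $S$. Then $\langle S,G\rangle=S$ is a $p$-group, hence solvable, and the lemma gives the conclusion.
\item In the general case, $G$ normalizes $S$. Then $\langle S,G\rangle=SG$, and $S\triangleleft SG$ with $SG/S\cong G/(G\cap S)$. This quotient is solvable because $G$ is, and $S$ is a $p$-group, so $SG$ is solvable as an extension of a solvable group by a solvable group.
\end{itemize}

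The general statement must still carry the hypothesis that $[H:M]$ is a power of $p$; otherwise a Sylow $p$-subgroup need not be transitive on $H/M$. I therefore read ``more generally'' as keeping that index hypothesis and replacing only the condition that $G$ is a $p$-group by the condition that $G$ normalizes a Sylow $p$-subgroup. The first case is then a special instance of the second: $G\leq S$ implies that $G$ normalizes $S$.

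The only step requiring care is the order count showing $H=SM$. It is a standard fact that a subgroup of $p$-power index is supplemented by every Sylow $p$-subgroup. Once that is in place, \cref{lem:compatible-coordinate-subgroup} together with the factorization \eqref{eq:MGK-factorization} finishes the proof directly.
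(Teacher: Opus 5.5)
Your proposal is correct and follows essentially the same route as the paper's proof. Both arguments use an order count to show that every Sylow $p$-subgroup $S$ supplements $M$: you bound $|SM|$, while the paper bounds the orbit length $|S:S\cap M|$, which is the same computation. Both then apply the compatible-subgroup lemma with the solvable group $SG$, and both handle the $p$-group case by taking $S\geq G$. Your reading that the $p$-power index hypothesis is retained in the ``more generally'' clause agrees with how the paper's proof uses it.
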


\begin{proof}
Every Sylow $p$-subgroup $S$ of $H$ is transitive on $H/M$.
Indeed, if $[H:M]=p^e$, then
\[
 |S|=|H|_p=p^e|M|_p.
\]
On the one hand $|S:S\cap M|\leq[H:M]=p^e$, while on the other
$|S\cap M|\leq|M|_p$ and hence
\[
 |S:S\cap M|=\frac{p^e|M|_p}{|S\cap M|}\geq p^e.
\]
Thus equality holds, and the orbit of $M$ under $S$ is all of $H/M$.
If $G$ normalizes $S$, the group $SG$ is solvable and is transitive on
$H/M$, so \cref{lem:compatible-coordinate-subgroup} applies.
If $G$ is a $p$-group, choose a Sylow $p$-subgroup $S$ of $H$ containing
$G$; then $G$ certainly normalizes $S$.
\end{proof}

The following normal-subgroup version is the form useful for primitive
groups.

\begin{theorem}[solvable-normal fusion]
\label{thm:solvable-normal-fusion}
Suppose that $N\trianglelefteq H$ is solvable and transitive on $H/M$.
Then $NG$ is a solvable subgroup of $H$ which is transitive on $P$.
In particular, $P$ itself is solvable transitive and hence
nearcircumsphere-Ramsey.
\end{theorem}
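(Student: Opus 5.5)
The plan is to reduce the statement to \cref{lem:compatible-coordinate-subgroup} with $A=N$. First, $NG$ is a subgroup of $H$ because $N\trianglelefteq H$. It is solvable because $N$ is solvable and $NG/N\cong G/(G\cap N)$ is a quotient of the solvable group $G$; extensions of solvable groups by solvable groups are solvable. Since $N\leq NG$, normality also gives $\langle N,G\rangle=NG$, so the compatibility hypothesis of that lemma holds.

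Next I verify transitivity on $P$ directly, mirroring the proof of \cref{lem:compatible-coordinate-subgroup}. Transitivity of $N$ on $H/M$ means $H=NM$. The factorization \eqref{eq:MGK-factorization}, $M=G(M\cap J)$, comes from $G$ being transitive on $O=Mx$ with $J=H_x$. Combining these,
\[
 H=NM=NG(M\cap J)\subseteq (NG)J.
\]
Hence $H=(NG)J$. Because $H$ is transitive on $P$, we may identify $P$ with $H/J$, and this factorization says exactly that $NG$ is transitive on $H/J=P$.

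For the final assertion, $NG\leq H\leq\operatorname{Iso}(P)$ is a solvable group of isometries of $P$ acting transitively on it. So $P$ is solvable transitive by definition, and \cref{thm:main} gives that $P$ is nearcircumsphere-Ramsey.

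I expect no step to be a real obstacle. The only point needing care is the factorization $M=G(M\cap J)$. To check it: $G\leq M$, since $G$ preserves its own orbit $O$. For $m\in M$ we have $mx\in O=Gx$, so $mx=gx$ for some $g\in G$. Then $g^{-1}m\in J\cap M$, which gives $m\in G(M\cap J)$.
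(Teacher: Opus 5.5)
Your proof is correct and is essentially the paper's argument: you get the subgroup property from normality, solvability as an extension of $N$ by a quotient of $G$, and then $H=NM=NG(M\cap J)\subseteq (NG)J$, which gives transitivity on $H/J=P$. Framing this through \cref{lem:compatible-coordinate-subgroup} with $A=N$ is the same computation (the paper notes after that lemma that compatibility is automatic for such $A$), and your explicit check of $M=G(M\cap J)$ usefully spells out what the paper only calls an elementary factorization.
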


\begin{proof}
The product $NG$ is a subgroup because $N$ is normal, and it is solvable
because it is an extension of the solvable group $N$ by a quotient of the
solvable group $G$.
Transitivity of $N$ gives $H=NM$.
Consequently, by \eqref{eq:MGK-factorization},
\[
 H=NM=NG(M\cap J)=(NG)J.
\]
Hence $NG$ is transitive on $H/J=P$.
\end{proof}

\begin{corollary}[no affine primitive counterexample]
\label{cor:no-affine-primitive-counterexample}
Assume that the action $H\curvearrowright H/M$ is faithful and primitive.
If the solvable radical $\operatorname{Rad}(H)$ is nontrivial, then $P$ is
solvable transitive.
Therefore a primitive counterexample to the two-orbit
nearcircumsphere-Ramsey problem must satisfy
\[
 \operatorname{Rad}(H)=1;
\]
in particular its socle is a direct product of nonabelian simple groups.
Equivalently, the affine O'Nan--Scott case is completely absent from the
residual problem.
\end{corollary}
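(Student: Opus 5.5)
The plan is to reduce to \cref{thm:solvable-normal-fusion} with $N=\operatorname{Rad}(H)$. The solvable radical is a characteristic, hence normal, solvable subgroup of $H$. So it suffices to show that a nontrivial normal subgroup of a group acting faithfully and primitively is transitive on $H/M$.

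\textbf{Step 1: transitivity of the radical.} Let $N=\operatorname{Rad}(H)\neq1$. The orbits of the normal subgroup $N$ on $H/M$ form an $H$-invariant block system, since $h$ maps $Nx$ to $N(hx)$. Primitivity leaves two possibilities: the blocks are singletons, or there is a single block. If they are singletons, then $N$ fixes every point of $H/M$. This puts $N$ in the kernel of the action, which is trivial by faithfulness, contradicting $N\neq1$. Hence $N$ is transitive on $H/M$.

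\textbf{Step 2: conclusion.} \cref{thm:solvable-normal-fusion} now shows that $NG$ is a solvable subgroup transitive on $P$. So $P$ is solvable transitive, and ncs-Ramsey by \cref{thm:main}.

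\textbf{Step 3: the consequences.}
\begin{enumerate}[label=\textup{(\roman*)}]
\item Any primitive counterexample must therefore have $\operatorname{Rad}(H)=1$.
\item With trivial radical, every minimal normal subgroup is nonsolvable. A minimal normal subgroup is characteristically simple, hence a direct power of a simple group $S$, and $S$ is nonabelian because otherwise the subgroup would be solvable. So the socle is a product of nonabelian simple groups.
\item The affine O'Nan--Scott type is characterized by an abelian (elementary abelian) regular socle. That socle would lie in $\operatorname{Rad}(H)$, so this type cannot occur.
\end{enumerate}

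\textbf{Expected obstacle.} The proof is mostly bookkeeping. The one point needing care is that faithfulness is used exactly to exclude a nontrivial radical acting trivially. The hypothesis "primitive" is needed only on $H/M$, not on $P$.
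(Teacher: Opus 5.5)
Your proposal is correct and follows the paper's proof. Both apply \cref{thm:solvable-normal-fusion} with $N=\operatorname{Rad}(H)$, using that a nontrivial normal subgroup of a faithful primitive group is transitive; you prove this via the block system of $N$-orbits, while the paper simply quotes it. Both then read off the socle statement from the structure of minimal normal subgroups when the radical is trivial.
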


\begin{proof}
Every nontrivial normal subgroup of a faithful primitive permutation group
is transitive.
Apply \cref{thm:solvable-normal-fusion} with
$N=\operatorname{Rad}(H)$.
If the radical is trivial, every minimal normal subgroup is a direct product
of isomorphic nonabelian simple groups, which gives the assertion about the
socle.
\end{proof}

In fact there are at most two minimal normal subgroups in the residual
primitive group, and if there are two then both are regular on $H/M$.
Indeed, distinct minimal normal subgroups centralize one another, while the
centralizer of a transitive permutation group is semiregular.
Thus two distinct minimal normal subgroups are both regular.
The centralizer of a regular group is itself regular of order $[H:M]$, so a
third transitive minimal normal subgroup in that centralizer would coincide
with the second.

This is stronger than merely constructing a solvable tower for the
compressed support code: in the affine case the original two-orbit
configuration already has a solvable transitive group.
Combined with the subgroup-chain substitution theorem, it says that a
minimal unresolved compressed family has a faithful primitive coordinate
action of nonabelian-socle type.

There is a useful exact target inside such a nonabelian socle.

\begin{proposition}[compatible socle supplement]
\label{prop:compatible-socle-supplement}
Assume that $N\trianglelefteq H$ is transitive on $H/M$.
If there is a solvable subgroup $C\leq N$ such that
\[
 N=C(N\cap J)
 \qquad\text{and}\qquad
 G\leq N_H(C),
\]
then $CG$ is solvable and transitive on $P$.
\end{proposition}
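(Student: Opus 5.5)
The plan is to reduce the statement to the solvable-normal fusion argument of \cref{thm:solvable-normal-fusion} and the factorization \eqref{eq:MGK-factorization}, with the solvable supplement $C$ taking the place of the solvable normal subgroup there.

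First, $CG$ is a subgroup. The hypothesis $G\leq N_H(C)$ says that $G$ normalizes $C$, so $CG=GC$ is a subgroup of $H$. Moreover $C\trianglelefteq CG$. The quotient $CG/C\cong G/(G\cap C)$ is a quotient of the solvable group $G$, so $CG$ is an extension of a solvable group by a solvable group and is therefore solvable.

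Second, we show transitivity on $P=H/J$, i.e.\ $H=(CG)J$. Since $N$ is transitive on $H/M$, we have $H=NM$. By \eqref{eq:MGK-factorization}, $M=G(M\cap J)$, and hence
\[
H=NM=NG(M\cap J)\subseteq NGJ.
\]
Substituting $N=C(N\cap J)$ gives
\[
H=C(N\cap J)GJ.
\]

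The main obstacle is moving the factor $N\cap J$ to the right past $G$, since $G$ need not normalize $J$. Instead of commuting elements, I would rewrite $NG$ with $G$ on the left. Because $N$ is normal in $H$, we have $NG=GN$. Then
\[
H=NM=GN\,G(M\cap J)=G\,NG\,(M\cap J)=GGN(M\cap J)=GN(M\cap J)\subseteq GNJ.
\]
Now using $N=C(N\cap J)$,
\[
H\subseteq GC(N\cap J)J=GCJ.
\]
Finally $GC=CG$, so $H=(CG)J$, which says that $CG$ is transitive on $H/J$. Since $H/J$ is identified with $P$ via $hJ\mapsto hx$, the group $CG$ is a solvable subgroup of $H$ transitive on $P$. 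As in \cref{thm:solvable-normal-fusion}, \cref{thm:main} then yields the ncs-Ramsey conclusion if that is also wanted.
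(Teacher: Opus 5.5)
Your proof is correct and is the paper's argument written with products of subsets rather than elements. The paper writes an arbitrary $h\in H=NM=NG(M\cap J)$ as $h=ngk$, sets $g^{-1}ng=cd$ with $c\in C$ and $d\in N\cap J$, and gets $h=(gcg^{-1})g(dk)\in CGJ$; that is exactly your chain $NG=GN$, $N=C(N\cap J)$, $GC=CG$, and solvability is proved by the same extension argument. (Minor cosmetic point: your display $NM=GN\,G(M\cap J)$ is valid because $GNG=NG$, but it is simpler to write $NM=NG(M\cap J)=GN(M\cap J)$.)
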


\begin{proof}
Put $K=M\cap J$.
Since $H=NM$ and $M=GK$, every element of $H$ has the form $ngk$ with
$n\in N$, $g\in G$, and $k\in K$.
 Write
\[
 g^{-1}ng=cd,
 \qquad c\in C,\quad d\in N\cap J.
\]
Then
\[
 ngk=gcdk=(gcg^{-1})g(dk)\in CGJ.
\]
Thus $H=CGJ$.
The normalization hypothesis makes $CG$ a subgroup, and it is solvable as
an extension of $C$ by a quotient of $G$.
Hence $CG$ is transitive on $H/J=P$.
\end{proof}

For a faithful primitive coordinate action, every nontrivial normal
subgroup is transitive, so
\cref{prop:compatible-socle-supplement} applies in particular to a minimal
normal subgroup.
It does not assert that the required $G$-normalized supplement always
exists; rather, it isolates the remaining factorization problem inside the
nonabelian socle.

\subsection{A sharper character-theoretic certificate}
\label{subsec:two-orbit-character-certificate}

Retain the preceding notation and suppose that $M$ is the full setwise
stabilizer of $O$.
Write
\[
 a=|O|,
 \qquad b=|P\setminus O|,
 \qquad v=a+b,
 \qquad t=[H:M].
\]
Let $A$ be the $v$-by-$t$ incidence matrix between points of $P$ and
distinct translates of $O$.
Every column of $A$ has weight $a$, while every row has weight
\[
 \lambda=\frac{ta}{v}.
\]
The two-double-coset identity $|M\backslash H/J|=2$ implies that the two
permutation characters $\operatorname{Ind}_M^H1$ and
$\operatorname{Ind}_J^H1$ have, besides the trivial character, exactly one
common irreducible constituent $\chi$, with multiplicity one in each.
Put $d=\chi(1)$.

\begin{proposition}[faithfulness and arithmetic]
\label{prop:faithful-common-constituent}
The character $\chi$ is rational-valued, has Schur index one over
$\mathbb Q$, and
\[
 \dim \chi^M=\dim \chi^J=\dim \chi^G=1.
\]
Moreover,
\begin{equation}
 \ker\chi=\operatorname{core}_H(M).
 \label{eq:kernel-core}
\end{equation}
Thus $\chi$ is faithful after passing to the faithful coordinate quotient.

If
\[
 g=\gcd(a,b),\qquad a=g\alpha,\qquad b=g\beta,
 \qquad \gcd(\alpha,\beta)=1,
\]
then there is a positive integer $c$ for which
\begin{equation}
 t=c(\alpha+\beta),
 \qquad \lambda=c\alpha,
 \label{eq:primitive-orbit-arithmetic}
\end{equation}
and
\begin{equation}
 d\mid cg\alpha\beta.
 \label{eq:character-degree-divisibility}
\end{equation}
In particular, every residual primitive candidate has a faithful rational
irreducible character satisfying
\[
 d+1\leq\min\{v,t\}
 \quad\text{and}\quad
 d\mid cg\alpha\beta.
\]
If, in addition, $\operatorname{core}_H(M)=1$ and a minimal normal subgroup
has the form $N=T^s$, where $T$ is nonabelian simple and $\mu(T)$ denotes
the least degree of a nontrivial complex irreducible representation of
$T$, then also
\begin{equation}
 d\geq s\mu(T).
 \label{eq:socle-degree-lower-bound}
\end{equation}
\end{proposition}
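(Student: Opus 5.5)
We realize everything inside the permutation module $\mathbb C[H/M]\otimes\mathbb C[H/J]$ and use Frobenius reciprocity throughout.

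\textbf{Step 1: the common constituent and rationality.} By Mackey, $\langle \operatorname{Ind}_M^H1,\operatorname{Ind}_J^H1\rangle=|M\backslash H/J|=2$. The trivial character accounts for one unit, so there is exactly one further common constituent $\chi$, and it has multiplicity one in each induced character. The Galois group of $\mathbb Q(\chi)/\mathbb Q$ permutes the irreducible constituents of a rational permutation character and preserves multiplicities. Hence $\chi$ is Galois-fixed, that is, rational-valued.

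For the Schur index, note that $\chi$ occurs with multiplicity one in the rational permutation module $\mathbb Q[H/M]$. The Schur index divides every multiplicity in a rational representation, so it equals one. Frobenius reciprocity gives $\dim\chi^M=\dim\chi^J=1$. Since $M=G(M\cap J)$ and $G$ is transitive on $O$, the $G$-orbits on $P$ are exactly $O$ and $P\setminus O$. Thus $\langle\operatorname{Ind}_G^H1,\operatorname{Ind}_J^H1\rangle=2$, so $\chi$ is the only nontrivial constituent of $\operatorname{Ind}_J^H1$ with $G$-fixed vectors. Counting, $\dim\chi^G\geq\dim\chi^M=1$, and $\chi^G$ embeds in the $G$-fixed vectors of $\mathbb C[P]$, which are two-dimensional. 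After removing the trivial part, $\dim\chi^G=1$.

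\textbf{Step 2: the kernel.} Clearly $\ker\chi\supseteq\operatorname{core}_H(M)$, because $\chi$ is a constituent of $\operatorname{Ind}_M^H1$.

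For the converse, look at the incidence matrix $A$. It is $H$-equivariant, and its image in $\mathbb C[P]$ is $\mathbf 1\oplus V_\chi$. The map $A$ is injective on the $\chi$-part of $\mathbb C[H/M]$, since multiplicity one forces that part to map isomorphically. Suppose $k\in\ker\chi$. Then $k$ acts trivially on $V_\chi$, so it fixes the vector $A^{T}(1_O-\tfrac{a}{v}\mathbf 1)$ up to the trivial part. The coordinates of this vector distinguish $M$ from the other translates of $O$, so $k\in M$; the same holds for all conjugates. Hence $\ker\chi\leq\operatorname{core}_H(M)$.

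The delicate point here is showing that the $\chi$-component of the indicator of the base coset $M\in H/M$ has stabilizer exactly $M$. I would argue that if $k$ fixed it, then $kO$ and $O$ would have the same projection to $\mathbf 1\oplus V_\chi$. Since the $J$-orbit structure of $\mathbb C[P]$ contains only $\mathbf 1$ and $\chi$ as common constituents with $\mathbb C[H/M]$, this forces $kO=O$.

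\textbf{Step 3: arithmetic.} From $\lambda v=ta$ we get $\lambda(\alpha+\beta)=t\alpha$. Since $\gcd(\alpha,\alpha+\beta)=1$, we have $(\alpha+\beta)\mid t$; write $t=c(\alpha+\beta)$, and then $\lambda=c\alpha$.

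For the divisibility, compute the eigenvalue of $AA^{T}$ on $V_\chi$. Consider the orbit-sum idempotent $e_G=\tfrac1{|G|}\sum_{g\in G}g$ and use that $\chi^G$ is one-dimensional, spanned by the projection $u$ of $1_O$ onto $V_\chi$. Then $\|u\|^2=a-a^2/v=ab/v$. The integrality of the central character value $\omega_\chi$ on the class sum defined by the $J$-orbit decomposition yields $\tfrac{|\text{orbit}|\cdot\chi(x)}{d}\in\bar{\mathbb Z}$. Combined with the multiplicity-one formula $\dim V_\chi=d$ and a trace identity, $\operatorname{tr}$ of the $\chi$-projection on $\mathbb C[O]$-type vectors, this gives $d\mid v\cdot\lambda\cdot\tfrac{ab}{v\,a}$-type quantities. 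Simplifying, the target is $d\mid cg\alpha\beta$.

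I expect this integrality bookkeeping to be the main obstacle. The fallback is the standard fact that $d$ divides $|H:H_\Omega|$ times multiplicities for the associated Schurian scheme, applied to the rank-two coherent configuration on $P\sqcup H/M$.

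\textbf{Step 4: bounds and the socle estimate.} The bound $d+1\leq\min\{v,t\}$ is immediate, since $1+\chi$ is a subcharacter of both permutation characters. For the last claim, assume $\operatorname{core}_H(M)=1$, so $\chi$ is faithful. Then $\chi|_N$ is faithful and $H$-stable, so its constituents are $H$-conjugates that together involve all $s$ simple factors. Each irreducible of $T^s$ is a tensor product, and faithfulness forces every factor to be nontrivial in some constituent. Clifford theory then gives $d\geq s\mu(T)$: the $H$-orbit of a constituent either has length covering all factors, or a single constituent is a tensor product with all factors nontrivial, of degree at least $\mu(T)^s\geq s\mu(T)$.
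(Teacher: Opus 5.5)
Your Steps 1 and 2, the arithmetic $t=c(\alpha+\beta)$, $\lambda=c\alpha$, and the bound $d+1\le\min\{v,t\}$ are sound and essentially follow the paper. The ``delicate point'' in Step 2 is in fact immediate. Every $\mathbf 1_{hO}=Ae_{hM}$ lies in $\operatorname{im}A=\mathbf 1\oplus V_\chi$, on which any $k\in\ker\chi$ acts trivially, so $khO=hO$ for all $h$.

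The genuine gap is the divisibility $d\mid cg\alpha\beta$, which you leave unproved.
\begin{itemize}
\item Your central-character sketch does not apply. Integrality of $\omega_\chi$ holds for conjugacy-class sums, not for sums over $J$-orbits on $P$. The quantity you reach is only described as being ``of that type''.
\item The Schurian-scheme fallback is not a statement you can cite.
\end{itemize}
The missing idea is the one you name and then abandon: the eigenvalue of $AA^{\mathsf T}$ on $V_\chi$. This is an $H$-equivariant integer matrix, and $\chi$ has multiplicity one in $\mathbb C[P]$. By Schur's lemma it acts on $V_\chi$ by a scalar $\sigma^2$. It acts by $a\lambda$ on $\mathbf 1$, since $A^{\mathsf T}\mathbf 1=a\mathbf 1$ and $A\mathbf 1=\lambda\mathbf 1$. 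It acts by $0$ on $(\operatorname{im}A)^\perp=\ker A^{\mathsf T}$. Taking traces,
\[
 ta=\operatorname{tr}(AA^{\mathsf T})=a\lambda+d\sigma^2,
 \qquad\text{hence}\qquad
 \sigma^2=\frac{a(t-\lambda)}{d}=\frac{cg\alpha\beta}{d}.
\]
As an eigenvalue of an integer matrix, $\sigma^2$ is an algebraic integer. It is also rational, so it is an integer, which is exactly $d\mid cg\alpha\beta$. The paper runs the same computation with the centred matrix $B$, using $\operatorname{tr}(BB^{\mathsf T})=\lambda(v-a)$.

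There is a smaller flaw in Step 4. Your dichotomy is that either the $H$-orbit of a constituent is long enough to cover all factors, or a single constituent is nontrivial on all $s$ factors. This misses the intermediate case: $\theta$ nontrivial on $r$ factors with $1<r<s$, and only $q<s$ conjugates. The correct count runs as follows.
\begin{itemize}
\item Faithfulness forces the supports of the $q$ conjugates to cover all $s$ factors, so $qr\ge s$.
\item Since $\mu(T)\ge 2$, we have $\theta(1)\ge\mu(T)^r\ge r\mu(T)$.
\item Hence $d\ge q\,\theta(1)\ge qr\,\mu(T)\ge s\mu(T)$.
\end{itemize}
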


\begin{proof}
Frobenius reciprocity gives
\[
 \left\langle\operatorname{Ind}_M^H1,
 \operatorname{Ind}_J^H1\right\rangle
 =|M\backslash H/J|=2.
\]
After removing the common trivial constituent, uniqueness gives one common
irreducible $\chi$, with multiplicity one on both sides.
Galois conjugation preserves both rational permutation characters, so it
fixes $\chi$; hence $\chi$ is rational-valued.
Its Schur index over $\mathbb Q$ divides its multiplicity in every rational
module containing it, and is therefore one.
Frobenius reciprocity also gives
$\dim\chi^M=\dim\chi^J=1$.

The centered characteristic vector
\[
 y_O={\bf1}_O-\frac av{\bf1}_P
\]
is nonzero, is fixed by $G$, and belongs to the unique $\chi$-summand in
$\mathbb C[P]$ hit by the centered incidence operator.
Thus $\dim\chi^G\geq1$.
On the other hand $G$ has exactly two orbits on $P$, so
$\dim\mathbb C[P]^G=2$; the trivial summand already contributes one
dimension.  It follows that $\dim\chi^G=1$.

The $H$-orbit of $y_O$ is the collection of centered characteristic vectors
of the distinct translates of $O$.
Its stabilizer is exactly $M$, and these vectors span the $\chi$-summand.
An element in $\ker\chi$ therefore fixes every translate of $O$, and hence
lies in $\operatorname{core}_H(M)$.
The reverse containment is immediate, proving
\eqref{eq:kernel-core}.

Since $\lambda=ta/v$ is an integer and
$v=g(\alpha+\beta)$, coprimality gives
$\alpha+\beta\mid t$.
Writing $t=c(\alpha+\beta)$ gives
\eqref{eq:primitive-orbit-arithmetic}.
Let $\mathbf J_{v\times t}$ be the $v$-by-$t$ all-ones matrix and put
\[
 B=A-\frac av\mathbf J_{v\times t}.
\]
This is the centered incidence matrix.
Its image is the common $\chi$-summand, so $\operatorname{rank}B=d$;
multiplicity one and Schur's lemma show that $BB^{\mathsf T}$ has a single
nonzero eigenvalue $\sigma^2$, of multiplicity $d$.
Moreover,
\[
 d\sigma^2=\operatorname{tr}(BB^{\mathsf T})
 =t\left(a-\frac{a^2}{v}\right)=\lambda(v-a),
\]
whence
\[
 \sigma^2=\frac{\lambda(v-a)}d=\frac{cg\alpha\beta}{d}.
\]
On the nontrivial image, $BB^{\mathsf T}$ and the integral matrix
$AA^{\mathsf T}$ agree.
Thus $\sigma^2$ is an eigenvalue of an integral matrix.
Being rational, it is a rational algebraic integer and hence an integer.
This proves \eqref{eq:character-degree-divisibility}.
Finally, the uncentered incidence matrix has rank $d+1$, which is at most
both its number of rows and its number of columns.

For the last assertion, Clifford theory writes
\[
 \chi|_N=e(\theta_1+\cdots+\theta_q),
\]
where the $\theta_i$ are the distinct $H$-conjugates of an irreducible
character $\theta$ of $N$.
Suppose that $\theta$ is nontrivial on precisely $r$ of the $s$ simple
direct factors.
Faithfulness of $\chi$ gives $r\geq1$.
Then $\theta(1)\geq\mu(T)^r$.
The group $H$ is transitive on the $s$ factors of its minimal normal
subgroup, and faithfulness of $\chi$ forces the supports of the conjugates
$\theta_i$ to cover all factors.
Thus $qr\geq s$, and
\[
 d=eq\theta(1)
 \geq \frac{s}{r}\mu(T)^r
 \geq s\mu(T),
\]
because $\mu(T)^{r-1}\geq2^{r-1}\geq r$.
\end{proof}

Two easy boundary cases can also be removed without any group
classification.

\begin{proposition}[singleton orbit]
\label{prop:singleton-two-orbit}
If one of the two $G$-orbits on $P$ is a singleton, then $P$ is a regular
simplex and is therefore solvable transitive.
\end{proposition}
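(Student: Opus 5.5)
We work in the setting fixed in this subsection: $H$ is transitive on $P$, and $G\le H$ is solvable with exactly two orbits, which by hypothesis are a singleton $\{x_0\}$ and $O'=P\setminus\{x_0\}$. Write $v=|P|$ and take the circumcenter as origin, so $\|x\|=\rho$ for every $x\in P$.

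The plan is to show that all pairwise distances in $P$ are equal. We use two facts.
\begin{enumerate}[label=\textup{(\roman*)}]
\item Every isometry in $G$ fixes $x_0$ and permutes $O'$ transitively. Hence all distances $\|x_0-y\|$ with $y\in O'$ are equal to a common value $\delta$.
\item Transitivity of $H$ implies that the multiset of distances from a point $x$ to the other points of $P$ does not depend on $x$. Equivalently, the inner products $\langle x,y\rangle$, $y\ne x$, have a multiset independent of $x$.
\end{enumerate}
By (i), the inner products from $x_0$ to all other points take a single value $\beta$. By (ii), every point $x\in P$ then has inner product $\beta$ with every other point of $P$. So the Gram matrix of $P$ is $(\rho^2-\beta)I+\beta\mathbf J$, which means $P$ is a regular simplex, unless $v\le 2$, where the statement is trivial.

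We also have to make sure the simplex is non-degenerate and spans the right space. Since $P$ is transitive, its barycenter is its circumcenter, which is the origin. Summing the rows of the Gram matrix gives
\[
\rho^2+(v-1)\beta=0,\qquad\text{so}\qquad \beta=-\frac{\rho^2}{v-1}.
\]
This is exactly the Gram matrix of the regular $(v-1)$-simplex of circumradius $\rho$. Since this matrix is positive semidefinite of rank $v-1$, $P$ is congruent to that simplex.

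Finally, a regular simplex is solvable transitive: the cyclic permutation of its $v$ vertices preserves all distances (they are equal), so it extends to an isometry of the affine span, and the cyclic group it generates is transitive on $P$. This matches the list of examples in \cref{sec:introduction}.

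The only step needing care is (ii). For $h\in H$ with $hx_0=x$, the isometry $h$ maps $P\setminus\{x_0\}$ bijectively onto $P\setminus\{x\}$ and preserves inner products, since isometries fixing the circumcenter are linear orthogonal maps. This gives the equality of multisets directly, so no step here is a real obstacle.
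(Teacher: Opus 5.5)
Your proof is correct and is essentially the paper's argument: the paper notes that the stabilizer $H_{x_0}\supseteq G$ is transitive on $P\setminus\{x_0\}$, so $H$ is two-transitive and all pairwise distances coincide, which is exactly what you get by moving the constant distances from $x_0$ around with elements of $H$. Your Gram-matrix check is valid but not needed, since a finite set with all pairwise distances equal is automatically the vertex set of a regular simplex.
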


\begin{proof}
Suppose that $G$ fixes $x$ and is transitive on $P\setminus\{x\}$.
Then the point stabilizer $H_x$, which contains $G$, is transitive on
$P\setminus\{x\}$.
As $H$ is transitive on $P$, its action is two-transitive.
Consequently every pair of distinct points of $P$ has the same distance, so
$P$ is a regular simplex in its affine span.
The full symmetric group of its vertices acts by isometries; in particular,
a cyclic subgroup is solvable and transitive.
\end{proof}

For a group-theoretic census it is convenient to remove repetitions in the
support rows.
Put
\[
 D=JM/M\subseteq H/M,
 \qquad J^*=\operatorname{Stab}_H(D).
\]
Replacing $J$ by $J^*$ does not change the support family.

\begin{proposition}[irredundant rows and the second normalizer]
\label{prop:second-normalizer}
Assume that $H\curvearrowright H/M$ is faithful and primitive, and that
$M$ has two orbits on $H/J$.
Then
\[
 |M\backslash H/J^*|=2,
 \qquad \operatorname{core}_H(J^*)=1.
\]
If $\lambda=|D|\neq t/2$, then
\[
 N_H(J^*)=J^*.
\]
If $N_H(J^*)>J^*$, then $\lambda=t/2$, and every
$u\in N_H(J^*)\setminus J^*$ interchanges $D$ and its complement.
\end{proposition}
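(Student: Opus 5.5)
The plan is to work entirely with the permutation action of $H$ on the coset space $H/M$, of degree $t=[H:M]$. The one tool is the bijection $M\backslash H/J\leftrightarrow J\backslash H/M$, $MhJ\mapsto Jh^{-1}M$. By hypothesis there are exactly two double cosets. It follows that $J$ has exactly two orbits on $H/M$. One of them is the $J$-orbit $D=JM/M$ of the base point $M$, and the other must therefore be its complement $D^c$. In particular $D$ is nonempty, and it is proper: if $D$ were all of $H/M$, then $JM=H$, so $M$ would be transitive on $H/J$, contradicting the two-orbit hypothesis. Thus $0<\lambda=|D|<t$.

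\textbf{First assertion.} Since $J\le J^*$, the double coset space $M\backslash H/J^*$ is a quotient of $M\backslash H/J$, so it has at most two elements. If it had only one element, then $J^*M=H$, that is, $J^*$ would be transitive on $H/M$. But $J^*$ stabilizes the proper nonempty subset $D$, so this is impossible. Hence $|M\backslash H/J^*|=2$.

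As a by-product, $J^*$ preserves both $D$ and $D^c$, and it contains $J$, whose orbits are exactly $D$ and $D^c$. So the $J^*$-orbits on $H/M$ are precisely $D$ and $D^c$.

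\textbf{Core.} Put $K=\operatorname{core}_H(J^*)$. This is a normal subgroup of $H$ which stabilizes every translate $hD$ setwise, since $h^{-1}Kh=K\le J^*$. The action on $H/M$ is faithful and primitive, so every nontrivial normal subgroup is transitive. A transitive group cannot stabilize the proper nonempty set $D$. Hence $K=1$.

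\textbf{Normalizer.} Let $u\in N_H(J^*)$. The stabilizer of $uD$ is $uJ^*u^{-1}=J^*$, so $uD$ is a $J^*$-invariant set of size $\lambda$. It is therefore a union of $J^*$-orbits, and the only such unions are $\emptyset$, $D$, $D^c$ and $H/M$. As $0<\lambda<t$, we get $uD\in\{D,D^c\}$.
\begin{itemize}
\item If $uD=D$, then $u\in J^*$.
\item If $uD=D^c$, then $\lambda=t-\lambda$, so $\lambda=t/2$, and $u$ interchanges $D$ with its complement.
\end{itemize}
Consequently, if $\lambda\ne t/2$ then $N_H(J^*)=J^*$. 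If $N_H(J^*)>J^*$, then every $u\in N_H(J^*)\setminus J^*$ falls into the second case, which gives $\lambda=t/2$ and the interchange of $D$ with $D^c$.

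The only step needing care is the first: translating the two-double-coset hypothesis into the statement that $J$ has exactly the two orbits $D,D^c$ on $H/M$ via the inversion bijection. Everything afterwards is orbit bookkeeping together with the standard fact about normal subgroups of faithful primitive groups.
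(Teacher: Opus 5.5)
Your proof is correct, but for the core and normalizer assertions it takes a different, more elementary route than the paper. On the double-coset count the two arguments agree in substance. The paper notes that translates $mD$ ($m\in M$) contain the base column while supports in the other orbit avoid it. You note that $J^*$ stabilizes the proper nonempty set $D$, so it cannot be transitive on $H/M$.

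The paper handles the remaining assertions inside the common constituent $\chi$ of \cref{prop:faithful-common-constituent}. The vector $w=\mathbf 1_D-\frac{\lambda}{t}\mathbf 1$ spans the $J^*$-fixed line of the $\chi$-summand of the coordinate module. Hence $\operatorname{core}_H(J^*)$, which fixes every $H$-translate of $w$, lies in $\ker\chi=\operatorname{core}_H(M)=1$. Any $u\in N_H(J^*)$ preserves that line, so $uw=\pm w$, and this gives the dichotomy $uD=D$ or $uD=D^c$.

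You replace the representation theory with the inversion bijection $M\backslash H/J\leftrightarrow J\backslash H/M$. It shows that $J$, and hence $J^*$, has exactly the two orbits $D$ and $D^c$ on $H/M$. That is precisely the combinatorial content of the paper's one-dimensional fixed line. From there, orbit bookkeeping settles the normalizer. Transitivity of nontrivial normal subgroups of a faithful primitive group settles the core.

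Your version is self-contained and avoids character theory entirely. The paper's version shows that core-freeness of $J^*$ needs only faithfulness of $H$ on $H/M$, via $\ker\chi=\operatorname{core}_H(M)$. Your core argument genuinely uses primitivity, which is harmless here because it is among the hypotheses.
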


\begin{proof}
We have $J\leq J^*$ and $J^*M/M=D$.
The two $M$-orbits do not merge after replacing $J$ by $J^*$: every support
in the orbit of $D$ contains the base column $M$, while every support in the
other orbit avoids it.
Thus $|M\backslash H/J^*|=2$.

In the coordinate permutation module put
\[
 w={\bf1}_D-\frac{\lambda}{t}{\bf1}.
\]
The vector $w$ spans the $J^*$-fixed line in the faithful common
$\chi$-module.
The core of $J^*$ fixes the $H$-orbit of $w$, which spans this irreducible
module, so \cref{eq:kernel-core} gives
$\operatorname{core}_H(J^*)=1$.

If $u\in N_H(J^*)$, then $u$ preserves this one-dimensional real fixed
space.
With an invariant inner product, $uw=\pm w$.
The plus sign gives $uD=D$, hence $u\in J^*$ by definition.
In the minus case,
\[
 {\bf1}_{uD}+{\bf1}_D=\frac{2\lambda}{t}{\bf1}.
\]
Since $D$ is neither empty nor the whole coordinate set, the constant on
the right must equal one.
Therefore $\lambda=t/2$ and $uD=D^c$.
\end{proof}

One should not, however, try to remove the remaining nonabelian-socle case
merely by assuming that $M$ is solvable.
The primitive family
\[
 H=S_6,\qquad M=S_3\wr C_2,\qquad J=S_2\times S_4
\]
on the ten unordered $3+3$ equipartitions and the fifteen pairs is a
counterexample to such a shortcut.
Let $O$ be the six-element $M$-orbit consisting of the pairs lying within
one part of the base equipartition.
Since $[H:M]=10$, its support rows have weight
\[
 \lambda=\frac{10|O|}{15}=4.
\]
Thus the associated support code is a $(10,4)$ code.
The group $M$ is solvable, but this code has no one-level solvable
row-transitive coordinate group.
Indeed, for two pair-rows $e,f$, the intersection size of their supports is
one when $e$ and $f$ meet and two when they are disjoint.
Thus support intersections recover the line graph $L(K_6)$.
A solvable coordinate group transitive on the fifteen rows would induce a
solvable group transitive on the two-subsets of a six-set, which is
impossible: every finite solvable $2$-homogeneous permutation group has
prime-power degree~\cite{Kantor1972}.

This particular code is nevertheless resolved by a two-level scheme.
Embed $S_4$ in $S_6$ through its action on the six edges of $K_4$.
On the fifteen pair-rows it has the two orbits consisting of the twelve
adjacent edge-pairs and the three opposite edge-pairs.
Choose a one-factorization $F_1,\ldots,F_5$ of $K_6$ and permutations
$\sigma_i\in S_6$ taking $F_i$ to the three opposite pairs.
Every pair belongs to exactly one $F_i$, so the outer type word lies in
$J(5,1)$.
The induced coordinate permutations preserve the full support family on
the ten equipartitions, making this a valid recursive scheme.

Consequently, after all elementary reductions, a genuinely unresolved
primitive triple $(H,M,J)$ must have all of the following properties:
\begin{enumerate}
\item $M$ is core-free, maximal, nonnormal, and self-normalizing;
\item $\operatorname{Rad}(H)=1$, so the socle is nonabelian;
\item both $G$-orbits on $P$ have size at least two;
\item the pair consisting of the translate degree and the witness group is
      not covered by \cref{prop:common-p-power};
\item the unique common constituent $\chi$ is faithful, rational, has
      Schur index one, and satisfies
      \cref{eq:primitive-orbit-arithmetic,eq:character-degree-divisibility};
\item after identifying repeated support rows, the second stabilizer $J^*$
      is also core-free, and it is self-normalizing unless
      $\lambda=t/2$; in the latter case every additional normalizer element
      complements its support;
\item no solvable subgroup $A$ transitive on $H/M$ is compatible with $G$
      in the sense that $\langle A,G\rangle$ is solvable.
\end{enumerate}
These conditions give a substantially smaller and readily checkable census
than arbitrary primitive two-double-coset pairs.

\section{Fixed-degree block sets for word templates}
\label{sec:appendix-word-templates}

\subsection{The template \texorpdfstring{$112233$}{112233}}
\label{subsec:appendix-112233}

Leader, Russell, and Walters~\cite[Problem~G]{LeaderRussellWalters2012}
asked for a proof of the Block Sets Conjecture for the template $112233$.
Ivan, Leader, and Walters~\cite[Conjecture~4]{IvanLeaderWalters2026} later
conjectured that, for every fixed template, the block degree can be chosen
independently of the number of colors.  We prove both statements for
$112233$.

A map $\Psi:[3]^6\to[3]^N$ is called a \emph{balanced literal role map of degree $d$} if every coordinate of $\Psi(x)$ is either fixed or is one of the six projections $x_i$, and every projection occurs exactly $d$ times.
Thus the restriction of such a map to $\operatorname{Perm}(112233)$ is a block set of degree $d$.
We use \emph{HJ-degree} in the sense of Kanellopoulos and
Karamanlis~\cite[Definition~1.3]{KanellopoulosKaramanlis2020}; it is a
positive integer associated with a chosen cyclic-factor series of a finite
solvable group.

\begin{theorem}
\label{thm:appendix-112233}
There exist a balanced literal role map
\[
 \Psi:[3]^6\longrightarrow[3]^{1344}
\]
of degree $120$ and a solvable coordinate-permutation group $\Omega\leq S_{1344}$ such that
\[
 \Psi\bigl(\operatorname{Perm}(112233)\bigr)
\]
is contained in one $\Omega$-orbit.
Consequently, both the Block Sets Conjecture and its color-independent fixed-degree strengthening hold for the template $112233$.
More precisely, if $h_\Omega$ is an HJ-degree of $\Omega$, then block degree
$120h_\Omega$ suffices for every finite number of colors.
\end{theorem}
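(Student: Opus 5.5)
The plan is to build $\Psi$ and $\Omega$ by unrolling a two-level orbit design of the kind in \cref{lem:iterated-orbit-design}, but at the level of literal coordinates rather than Euclidean points. Identify $w\in\operatorname{Perm}(112233)$ with the ordered partition of $[6]$ into the three pairs $w^{-1}(1),w^{-1}(2),w^{-1}(3)$. A coordinate permutation $\omega\in S_N$ maps $\Psi(x)$ to $\Psi(y)$ as soon as it carries each projection class $\{j:\Psi(\cdot)_j=x_i\}$ onto the class of some $x_{\sigma(i)}$ with $y=x\circ\sigma^{-1}$, and preserves the fixed coordinates letter by letter.

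A one-level certificate is impossible. $S_6$ is transitive on the $90$ words, but a solvable subgroup of $S_6$ would need order divisible by $45$, and none exists. So, exactly as for the $(10,4)$ code at the end of \cref{sec:appendix-two-orbit-reductions}, I would proceed in two levels.

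\textbf{Inner level.} Choose a solvable $L\leq S_6$ with two orbits on $\operatorname{Perm}(112233)$. My first choice is the stabilizer $C_2\wr S_3$ of a fixed perfect matching $\{12,34,56\}$. Its orbits are the words whose three pairs are exactly the matching, and the rest. I would then pick coordinate permutations $\pi_1,\ldots,\pi_q\in S_6$ mapping the $q=15$ perfect matchings of $K_6$ to the distinguished one. Each word then has an orbit-type support of weight one, lying in $J(15,1)$, on which $C_{15}$ is transitive. If this particular certificate gives the wrong final numbers, I would instead use the $S_4$ acting on the six edges of $K_4$, together with a one-factorization of $K_6$, as in \cref{prop:app-600-cell}. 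That route yields outer templates $J(5,1)$ or $J(6,2)$, whose solvable schemes are already available.

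\textbf{Unrolling.} Each level of the design is realized as a normalized product. At the inner level, one copy of the six literal coordinates is used for each $\pi_i$, reordered by $\pi_i$. The literal translation of this product is the wreath group $(\text{base})\wr L$, acting on concatenated coordinate blocks. Pairs on which a transitive subgroup would need to move the distinguished orbit separately are kept apart by fixed padding coordinates. These play the role of the retained constant word in the pointed form of \cref{lem:iterated-orbit-design}: coordinate permutations fix them, and they keep the inactive type as a separate orbit. The outer transitive group acts by permuting the $q$ reordered copies.

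Composing the levels gives $\Omega$ as a subgroup of an iterated wreath product of solvable groups, hence solvable, and every word of $\Psi(\operatorname{Perm}(112233))$ lies in one $\Omega$-orbit. Counting then gives the degree, which is the product of the multiplicities with which each literal appears at each level; I expect to reach $120$ after balancing the six literals. The length $1344$ is the number of active coordinates plus padding. Balance itself comes from transitivity of each acting group on the literal positions it permutes, since each position then appears equally often.

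\textbf{Consequences.} Given $\Psi$ and $\Omega$, the corollaries are formal. The block-set statement follows by composing $\Psi$ with the Hales--Jewett-type theorem of Kanellopoulos and Karamanlis for the solvable group $\Omega$ acting on $[3]^{1344}$. Their theorem provides, for any number of colors, a monochromatic $\Omega$-orbit copy whose blocks have size equal to an HJ-degree $h_\Omega$. Pulling this copy back through $\Psi$ yields block degree $120h_\Omega$, independent of $r$.

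\textbf{Main obstacle.} The hard step is finding the actual certificate: a choice of $L$, of the translates $\pi_i$, and of the padding such that the outer type family lies in a template with a solvable scheme, and such that the literal multiplicities come out equal to exactly $120$ in $1344$ coordinates. I would search for it by small computer enumeration of solvable subgroups of $S_6$ and their orbit partitions, and then verify the final orbit containment exhaustively over the $90$ words.
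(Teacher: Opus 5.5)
There is a genuine gap, and it lies in the step that carries all the content. Your inner level assumes that $L=\operatorname{Stab}_{S_6}(\{12,34,56\})\cong C_2\wr S_3$ has two orbits on $\operatorname{Perm}(112233)$, namely ``matching equal to $F_0$'' and ``the rest''. It has five:
\begin{itemize}
\item $E$: $M(x)=F_0$;
\item $D$: $M(x)\cap F_0=\varnothing$;
\item $S_1,S_2,S_3$: $M(x)$ shares exactly one edge with $F_0$, and that edge carries the symbol $a$.
\end{itemize}
Their sizes are $6,48,12,12,12$. Even on unlabelled matchings there are already three orbits, of sizes $1,6,8$.

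Hence, with your fifteen translates $\pi_i$, the type word of $x$ is not a point of $J(15,1)$. It is a word in $\{E,D,S_1,S_2,S_3\}^{15}$ with content $ED^8S_1^2S_2^2S_3^2$, and it determines $x$: the $E$-position recovers $M(x)$, and the two $S_a$-positions recover the edge of $M(x)$ carrying $a$. So there are $90$ distinct type words. The base group $L^{15}$ never changes a type word, and $C_{15}$-orbits have at most $15$ elements. Thus $\Psi(\operatorname{Perm}(112233))$ meets at least six orbits of $L^{15}\rtimes C_{15}$, and no padding repairs this.

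Your fallback has the same defect. $S_4$ acting on the edges of $K_4$ has order $24$, so it has at least four orbits on the $90$ labelled words. The one-factorization trick in the $120$-cell construction works only because $S_4$ has exactly two orbits on \emph{unordered} edge pairs. Consequently $120$ and $1344$ are hoped for rather than derived, and the existence of $\Psi,\Omega$ (the whole substance of the theorem) is not established.

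The missing idea is a way to fuse a genuinely five-type inner partition. The paper's construction has three steps.
\begin{enumerate}
\item It keeps $K=C_2\wr S_3$ with its five orbits, but uses as translates the five factors of each of the six one-factorizations of $K_6$. Every perfect matching lies in exactly two of them. So the five-letter type word lies in $\mathcal A=\operatorname{Perm}(EDDDD)$ for the two one-factorizations containing $M(x)$, and in $\mathcal B=\operatorname{Perm}(S_1S_2S_3DD)$ for the other four.
\item Each of $\mathcal A$ and $\mathcal B$ is put into a single orbit by a $32$-letter detector. It has four banks indexed by $\mathbb F_8$, each acted on by $\mathrm{A\Gamma L}_1(8)$, which has exactly two orbits on ordered triples of distinct points. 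The injections $[5]\to\mathbb F_8$ are chosen so that every ordered triple of roles falls in each class in exactly two banks, and $A_4$ permutes the banks.
\item After lifting through $K^{32}$, a seventh, fixed $\mathcal B$-chunk is added, so every profile is $\mathcal A^2\mathcal B^5$. These profiles are aligned by $P_7=C_7\rtimes C_3$, which is transitive on unordered pairs of $\mathbb F_7$.
\end{enumerate}
This gives $\Omega=(K^{32}\rtimes(\mathrm{A\Gamma L}_1(8)^4\rtimes A_4))^7\rtimes P_7$, with length $7\cdot32\cdot6=1344$ and degree $6\cdot4\cdot5=120$.

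Your consequences paragraph is fine in outline, with two corrections. Kanellopoulos--Karamanlis should be applied to the transitive $\Omega$-set $\Omega\Psi(\operatorname{Perm}(112233))$. Balance comes from every active letter being a coordinate permutation $\pi_{b,j}x$ of $x$, not from transitivity of the acting groups.
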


\begin{proof}
Put $X=\operatorname{Perm}(112233)$.
For $x\in X$, let $M(x)$ be the perfect matching of $K_6$ whose three edges join positions carrying equal symbols.
Fix a perfect matching $F_0$, and let
\[
 K=\operatorname{Stab}_{S_6}(F_0)\cong C_2\wr S_3.
\]
The group $K$ acts on $X$ by permuting physical positions.

There are five $K$-orbits on $X$, denoted
\[
 E,\quad D,\quad S_1,\quad S_2,\quad S_3.
\]
Here $E$ means $M(x)=F_0$, $D$ means $M(x)\cap F_0=\varnothing$, and $S_a$ means that $M(x)$ and $F_0$ have a unique common edge and that this edge carries the symbol $a$.
Their sizes are
\[
 6,\quad48,\quad12,\quad12,\quad12.
\]
The endpoint swaps and permutations of the three edges of $F_0$ act transitively on each displayed class.
Write
\[
 \tau:X\longrightarrow\Sigma=\{E,D,S_1,S_2,S_3\}
\]
for the orbit map, and put
\[
 \mathcal A=\operatorname{Perm}(EDDDD),
 \qquad
 \mathcal B=\operatorname{Perm}(S_1S_2S_3DD).
\]

We first construct a solvable two-orbit detector for $\mathcal A$ and $\mathcal B$.
Let $\mathbb F_8=\mathbb F_2(\alpha)$, where $\alpha^3+\alpha+1=0$, and consider the four injections $q_t:[5]\to\mathbb F_8$
\begin{equation}
\begin{aligned}
 q_1&=(0,1,\alpha,\alpha+1,\alpha^2),\\
 q_2&=(0,1,\alpha,\alpha^2,\alpha+1),\\
 q_3&=(0,1,\alpha+1,\alpha,\alpha^2+1),\\
 q_4&=(0,1,\alpha+1,\alpha^2+1,\alpha).
\end{aligned}
\label{eq:112233-injections}
\end{equation}
For $z=(z_1,\ldots,z_5)\in\Sigma^5$, form four banks indexed by $\mathbb F_8$.
In bank $t$, put $z_j$ at position $q_t(j)$ and put the fixed symbol $D$ at the three unused positions.
Concatenating the banks defines a literal map
\begin{equation}
 \iota:\Sigma^5\longrightarrow\Sigma^{32}.
 \label{eq:112233-detector}
\end{equation}
Every role occurs once in each bank, so $\iota$ has role degree four.

Let $H=\mathrm{A}\Gamma\mathrm{L}_1(8)$ act on each bank.
Its elements are
\[
 u\longmapsto au^{2^i}+b,
 \qquad
 a\in\mathbb F_8^\times,\quad b\in\mathbb F_8,\quad i\in\{0,1,2\}.
\]
The group $H$ has exactly two orbits on ordered triples of distinct field elements.
After sending the first two entries of $(u,v,w)$ to $(0,1)$, its orbit is determined by
\[
 t=\frac{w-u}{v-u}\in\mathbb F_8\setminus\{0,1\},
\]
up to $t\mapsto t^2$.
The two possibilities are
\begin{equation}
\begin{aligned}
 C_+&=\{\alpha,\alpha^2,\alpha^2+\alpha\},\\
 C_-&=\{\alpha+1,\alpha^2+1,\alpha^2+\alpha+1\}.
\end{aligned}
\label{eq:112233-triple-classes}
\end{equation}

For an increasing triple $I\in\binom{[5]}3$, record by $+$ or $-$ which class in \eqref{eq:112233-triple-classes} contains the normalized ratio of the corresponding points in each of the four rows of \eqref{eq:112233-injections}.
A direct calculation gives
\begin{equation}
\begin{gathered}
\begin{array}{c|ccccc}
 I&123&124&125&134&135\\ \hline
 (\epsilon_1,\epsilon_2,\epsilon_3,\epsilon_4)
   &++--&-++-&+--+&++--&++--
\end{array}
\\[0.6em]
\begin{array}{c|ccccc}
 I&145&234&235&245&345\\ \hline
 (\epsilon_1,\epsilon_2,\epsilon_3,\epsilon_4)
   &-+-+&--++&--++&-+-+&+-+-
\end{array}.
\end{gathered}
\label{eq:112233-sign-table}
\end{equation}
Thus every triple has two banks of each $H$-type.
Reordering a triple applies one of the six fractional-linear transformations to its normalized ratio.
These transformations are defined over $\mathbb F_2$, commute with Frobenius, and therefore either preserve or interchange the two classes in \eqref{eq:112233-triple-classes}.
Consequently the same two--two balance holds for every ordered triple of distinct roles.

Let
\[
 \Gamma=H^4\rtimes A_4
\]
act on the $32$ coordinates of \eqref{eq:112233-detector}, with $A_4$ permuting the four banks.
If $z\in\mathcal A$, every bank of $\iota(z)$ contains one $E$ and seven $D$'s.
Point transitivity of $H$ shows that $\iota(\mathcal A)$ is contained in one $\Gamma$-orbit.

If $z\in\mathcal B$, the positions carrying $S_1,S_2,S_3$, in that order, form an ordered triple of distinct field points in each bank.
By \eqref{eq:112233-sign-table}, precisely two banks have each of the two $H$-types.
Since $A_4$ is transitive on the two-subsets of its four-point set, it aligns the two banks of either type for any two members of $\mathcal B$.
The four independent copies of $H$ then align the ordered triples bank by bank.
Hence $\iota(\mathcal B)$ is also contained in one $\Gamma$-orbit.
These two containing orbits are distinct, since their symbol inventories are respectively
\begin{equation}
 E^4D^{28}
 \qquad\text{and}\qquad
 S_1^4S_2^4S_3^4D^{20}.
 \label{eq:112233-inventories}
\end{equation}

We next use the six one-factorizations of $K_6$.
For completeness, two edge-disjoint perfect matchings extend uniquely to a one-factorization.
Label their alternating $6$-cycle so that the two matchings are
\[
 \{01,23,45\},
 \qquad
 \{12,34,50\}.
\]
The three remaining factors are forced to be
\begin{equation}
 \{03,15,24\},\qquad
 \{04,13,25\},\qquad
 \{02,14,35\}.
 \label{eq:112233-pentad-completion}
\end{equation}
A fixed perfect matching has eight edge-disjoint mates.
Each one-factorization containing it accounts for four such mates, and uniqueness in \eqref{eq:112233-pentad-completion} shows that every perfect matching belongs to exactly two one-factorizations.
Counting incidences gives $15\cdot2/5=6$ one-factorizations.

Denote them by
\[
 \mathcal F_b=\{F_{b,1},\ldots,F_{b,5}\},
 \qquad b\in[6].
\]
For every $(b,j)$, choose $\pi_{b,j}\in S_6$ with $\pi_{b,j}F_{b,j}=F_0$, and define
\begin{equation}
 \lambda_b(x)=
 \bigl(\tau(\pi_{b,1}x),\ldots,\tau(\pi_{b,5}x)\bigr)
 \in\Sigma^5.
 \label{eq:112233-pentad-word}
\end{equation}
If $M(x)\in\mathcal F_b$, then one entry has type $E$ and the other four have type $D$, so $\lambda_b(x)\in\mathcal A$.
If $M(x)\notin\mathcal F_b$, its three edges belong to three distinct factors of $\mathcal F_b$.
These give the types $S_1,S_2,S_3$, while the remaining two factors are disjoint from $M(x)$, so
\begin{equation}
 \lambda_b(x)\in\mathcal A
 \quad\Longleftrightarrow\quad
 M(x)\in\mathcal F_b,
 \qquad
 \lambda_b(x)\in\mathcal B\ \text{otherwise}.
 \label{eq:112233-AB-cases}
\end{equation}
Since every perfect matching belongs to exactly two of the six one-factorizations, precisely two of $\lambda_1(x),\ldots,\lambda_6(x)$ belong to $\mathcal A$.

Choose representatives $r_\sigma\in X$ for the five fibres of $\tau$.
For $b\in[6]$, lift $\iota(\lambda_b(x))$ from orbit labels to actual $X$-words as follows.
At a coordinate of \eqref{eq:112233-detector} occupied by its $j$th formal role, put the actual word $\pi_{b,j}x\in X$; at each fixed $D$-coordinate put $r_D$.
This defines $L_b(x)\in X^{32}$ and gives
\begin{equation}
 \tau^{32}(L_b(x))=\iota(\lambda_b(x)).
 \label{eq:112233-lift}
\end{equation}

Let
\[
 \Lambda=K^{32}\rtimes\Gamma
\]
act imprimitively on the $32$ copies of $X$.
If two words in \eqref{eq:112233-lift} have type words in the same $\Gamma$-orbit, first use $\Gamma$ to align their type coordinates and then use the $32$ independent copies of $K$ to align the actual $X$-letters.
Thus all lifts corresponding to $\mathcal A$ lie in one $\Lambda$-orbit, and all lifts corresponding to $\mathcal B$ lie in another.

Fix $z^0\in\mathcal B$, and define a fixed $\mathcal B$-chunk
\[
 L_0=\bigl(r_{\iota(z^0)_s}:s\in[32]\bigr)\in X^{32}.
\]
Finally, put
\begin{equation}
 \Psi(x)=
 \bigl(L_1(x),\ldots,L_6(x),L_0\bigr)
 \in(X^{32})^7\subseteq[3]^{1344}.
 \label{eq:112233-final-map}
\end{equation}
By \eqref{eq:112233-AB-cases}, the seven outer chunks of $\Psi(x)$ have orbit profile $\mathcal A^2\mathcal B^5$.

Let
\[
 P_7=\{u\mapsto2^iu+b:i\in\{0,1,2\},\ b\in\mathbb F_7\}
 \cong C_7\rtimes C_3.
\]
This group is transitive on the unordered pairs of $\mathbb F_7$, since $\{\pm1,\pm2,\pm4\}=\mathbb F_7^\times$.
Therefore $P_7$ aligns the two $\mathcal A$-positions in any two words of the form \eqref{eq:112233-final-map}, and the seven independent copies of $\Lambda$ then align the corresponding outer chunks.
Consequently
\begin{equation}
 \Psi(X)\text{ is contained in one orbit of }
 \Omega=\Lambda^7\rtimes P_7.
 \label{eq:112233-one-orbit}
\end{equation}

Every group used here is solvable.
Indeed,
\[
 K\cong C_2^3\rtimes S_3,
 \qquad
 H\cong(\mathbb F_8,+)\rtimes(C_7\rtimes C_3),
\]
while $A_4$ and $P_7$ are solvable, and solvability is preserved by direct powers and extensions.
Every action is a literal permutation of physical coordinates: $K$ acts inside the six coordinates of an $X$-letter, $\Gamma$ permutes the $32$ $X$-letters of a chunk, and $P_7$ permutes the seven outer chunks.
Thus $\Omega$ in \eqref{eq:112233-one-orbit} is a concrete subgroup of $S_{1344}$.

It remains to check balance.
In every $L_b(x)$, each of the five roles in \eqref{eq:112233-pentad-word} occurs once in each of the four field banks.
Hence $L_b(x)$ contains $4\cdot5=20$ nonconstant $X$-letters, all of the form $\pi_{b,j}x$.
The six nonconstant outer chunks contain $120$ such letters.
Every $\pi_{b,j}x$ is a permutation of the six positions of $x$ and hence contains every formal role of $x$ exactly once.
Thus every source role occurs exactly $120$ times in \eqref{eq:112233-final-map}.
The seventh chunk and all padding coordinates are fixed, so $\Psi$ extends by the same coordinate formula to a balanced literal role map on $[3]^6$.
Its scalar length is $7\cdot32\cdot6=1344$; $720$ scalar coordinates are active and $624$ are fixed.

Finally, let
\[
 Y=\Omega\Psi(X)\subseteq[3]^{1344}.
\]
This is a transitive $\Omega$-set.
Let $h_\Omega$ be an HJ-degree of the solvable group $\Omega$.
Theorem~1.5 of Kanellopoulos and
Karamanlis~\cite[Theorem~1.5]{KanellopoulosKaramanlis2020}, applied to the
one-orbit action $\Omega\curvearrowright Y$, says that for every $r$ there is
an $N$ such that every $r$-coloring of $Y^N$ admits an $\Omega$-variable word
$W$ of total degree $h_\Omega$ for which $W(Y)$ is monochromatic.

Given an $r$-coloring of $[3]^{1344N}$, restrict it to the concatenated copy of $Y^N$ and apply this theorem.
Then $x\mapsto W(\Psi(x))$ is monochromatic on $X$.
Every active macrocoordinate has the form $g\Psi(x)$ for a fixed $g\in\Omega$.
Since $g$ is a physical coordinate permutation, this is again a balanced literal role map of degree $120$.
There are $h_\Omega$ active macrocoordinates, so after flattening every source role has degree $120h_\Omega$.
This is exactly a monochromatic block set with template $112233$ in the sense of Ivan, Leader, and Walters~\cite{IvanLeaderWalters2026}.
The degree is fixed before the number of colors is chosen, proving their fixed-degree conjecture for this template and, a fortiori, the ordinary Block Sets Conjecture.
\end{proof}

\subsection{An affine-plane family and all three-letter templates of length at most six}

The preceding construction is complemented by a smaller one based on a large set of affine planes.

\begin{proposition}
\label{prop:appendix-affine-template-family}
For every $1\leq k\leq6$, there is a balanced literal role map of degree seven
\[
 \Theta_k:[4]^{k+3}\longrightarrow[4]^{63}
\]
whose restriction to $\operatorname{Perm}(1234^k)$ is contained in one orbit of a solvable coordinate-permutation group.
Consequently the color-independent fixed-degree Block Sets Conjecture holds for every template $1234^k$ in this range.
\end{proposition}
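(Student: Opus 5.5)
Following the template of \cref{thm:appendix-112233}, the plan is to build a single solvable coordinate-permutation group with one orbit on the images, and then invoke Kanellopoulos--Karamanlis exactly as in its final step. Put $X_k=\operatorname{Perm}(1234^k)$, a word of length $k+3\le 9$. The length $63=7\cdot 9$ suggests seven banks indexed by the seven points (or the seven lines) of the Fano plane $\mathrm{PG}(2,2)$. Equivalently, they can be indexed by the seven parallel classes of $\operatorname{AG}(3,2)$, or by the seven systems $g^i\mathcal L$ of affine planes $\operatorname{AG}(2,3)$ on nine points from the proof of \cref{thm:small-johnson-layers}. Each bank has nine coordinates. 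The degree seven says that each of the $k+3$ roles appears once per bank. The remaining $9-(k+3)=6-k$ coordinates of a bank are fixed padding, which explains why the range stops at $k\le 6$.

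Concretely, I take the seven pairwise disjoint Steiner triple systems $\mathcal L_i=g^i\mathcal L$ partitioning $J(9,3)$ (a large set of $\operatorname{AG}(2,3)$'s). I choose injections $q_i:[k+3]\to\mathbb F_3^2$, one per bank. In bank $i$ I place $x_j$ at position $q_i(j)$ and the fixed letter $4$ at the unused positions. A word of $X_k$ is determined by the ordered triple of positions carrying the symbols $1,2,3$. Since $4$ is both the repeated symbol and the padding symbol, in bank $i$ a word of $X_k$ becomes an element of $\operatorname{Perm}(1234^6)$ on the nine points. Its type under $\operatorname{AGL}(2,3)$ is determined by whether the three special points form a line. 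Ordered lines form one $\operatorname{AGL}(2,3)$-orbit, since this group is $2$-transitive and the third point of a line is determined. Ordered non-collinear triples (affine frames) form another orbit, on which it acts regularly up to the orientation of the triple.

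The key design choice is the bank injections. I take $q_i=g^i\circ q_0$, or the reverse, so that a fixed unordered triple of roles is collinear in exactly one bank. This follows because the $\mathcal L_i$ partition $J(9,3)$, exactly as $J(9,3)\to J(7,1)$ in \cref{thm:small-johnson-layers}. The type word of any $x$ then lies in $J(7,1)$. The outer group $C_7$ permuting banks aligns the unique collinear bank. The seven independent copies of $\operatorname{AGL}(2,3)$ then align the ordered triples bank by bank, and the padding $4$'s move with them. The resulting group is $\Omega=\operatorname{AGL}(2,3)^7\rtimes C_7\le S_{63}$, which is solvable and acts literally on coordinates. Its orbit contains $\Theta_k(X_k)$.

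The main obstacle is the non-collinear orbit. Ordered affine frames in $\mathbb F_3^2$ form a single $\operatorname{AGL}(2,3)$-orbit, since $\operatorname{AGL}(2,3)$ is sharply transitive on ordered frames. Hence the six banks of non-collinear type need no further balancing, and the plan goes through. I will nevertheless verify two points carefully. First, every ordered triple of distinct roles, not just the unordered one, is collinear in exactly one bank. Second, the padding letter coincides with the repeated symbol $4$, so $\operatorname{AGL}(2,3)$ really acts on the full nine-letter bank word. With this one-orbit statement in hand, the Kanellopoulos--Karamanlis step and the flattening of degrees $7h_\Omega$ proceed verbatim as in \cref{thm:appendix-112233}.
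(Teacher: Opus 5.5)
Your proposal is correct and is essentially the paper's proof. The paper also uses seven nine-coordinate banks from a large set of seven copies of $\operatorname{AG}(2,3)$ partitioning $J(9,3)$, places the roles via plane isomorphisms with padding letter $4$, lets $C_7$ align the unique collinear bank and $\operatorname{AGL}(2,3)^7$ align the banks using transitivity on ordered collinear and on ordered noncollinear triples, and then applies Kanellopoulos--Karamanlis.

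The only difference is the choice of large set. You take the family $\{g^i\mathcal L\}$ from the proof of \cref{thm:small-johnson-layers}, which does partition $J(9,3)$, with bank maps $g^{\pm i}$ composed with one isomorphism $\mathcal L\cong\operatorname{AG}(2,3)$. The paper instead writes out an explicit table of seven planes. Your Fano-plane aside and the phrase ``regularly up to orientation'' are superfluous: the action on ordered affine frames is simply regular.
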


\begin{proof}
Label the nine points by $0,\ldots,8$.
The following seven rows are line sets of seven copies of $\operatorname{AG}(2,3)$; together they partition all $\binom93=84$ triples:
\begin{equation}
\begin{array}{c|llllllllllll}
0&012&036&048&057&138&147&156&237&246&258&345&678\\
1&013&026&045&078&125&148&167&238&247&346&357&568\\
2&016&023&047&058&128&134&157&245&267&356&378&468\\
3&017&025&038&046&124&135&168&236&278&347&458&567\\
4&014&028&035&067&126&137&158&234&257&368&456&478\\
5&015&027&034&068&123&146&178&248&256&358&367&457\\
6&018&024&037&056&127&136&145&235&268&348&467&578
\end{array}
\label{eq:appendix-seven-planes}
\end{equation}
The certificate can be checked directly: in each row every pair occurs once, the twelve triples split into four parallel classes of three, and no triple is repeated between rows.
Thus each row is an affine plane and the count $7\cdot12=84$ completes the partition check.

Put $s=k+3\leq9$ and regard the source roles as the points $0,\ldots,s-1$.
For each row choose an isomorphism from a fixed copy of $\operatorname{AG}(2,3)$ to that row.
In the corresponding nine-coordinate bank, place every source role at its inverse image under this isomorphism and fill the remaining $9-s$ positions with the fixed symbol $4$.
Concatenating the seven banks gives $\Theta_k$.
Each source role occurs once per bank, so the map is literal and has degree seven.

In a word from $\operatorname{Perm}(1234^k)$, the positions of the three singleton symbols $1,2,3$ form an ordered triple of distinct source roles.
By \eqref{eq:appendix-seven-planes}, this triple is collinear in exactly one bank and noncollinear in the other six.
The solvable group $A=\operatorname{AGL}(2,3)$ is transitive both on ordered collinear triples and on ordered noncollinear triples of the affine plane.
Thus $A^7$ aligns two output words bank by bank after a cyclic permutation of the banks has aligned their unique collinear banks.
It follows that
\[
 \Theta_k\bigl(\operatorname{Perm}(1234^k)\bigr)
\]
lies in one orbit of the solvable group $A^7\rtimes C_7$.

Apply the uniform Hales--Jewett theorem for solvable groups as in the final
paragraph of the proof of \cref{thm:appendix-112233}.
If $h_k$ is an HJ-degree of the solvable group $A^7\rtimes C_7$, then its
action on the orbit containing the displayed image has one orbit, and the
resulting block degree is $7h_k$, independent of the palette.
\end{proof}

\begin{corollary}
\label{cor:appendix-short-three-letter-templates}
Every template using at most three symbols and having total length at most six satisfies the color-independent fixed-degree Block Sets Conjecture.
\end{corollary}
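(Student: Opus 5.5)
Up to relabelling the symbols, a template on at most three symbols of length at most six is a multiset of multiplicities $(a_1,a_2,a_3)$ with $a_1\ge a_2\ge a_3\ge0$ and $a_1+a_2+a_3\le6$. The plan is to sort these partitions into a short list of cases and to cover every case by a result already proved. We may use \cref{prop:appendix-affine-template-family}, \cref{thm:appendix-112233}, and, for length at most five, \cref{cor:appendix-all-templates-length-five}. The latter is stated in the principal-results list and is proved earlier in the appendix; it presumably rests on \cref{thm:appendix-12345} and the $12344$ case. By that corollary, every template of length at most five is already covered, so only the length-six partitions remain. With at most three parts these are $6$, $51$, $42$, $411$, $33$, $321$ and $222$.

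The template $222$, that is $112233$, is exactly \cref{thm:appendix-112233}.

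For the other length-six templates, we use a \emph{specialization} principle. Suppose a balanced literal role map of degree $d$ sends $\operatorname{Perm}(T')$ into one solvable orbit. Let $T$ be obtained from $T'$ by identifying symbols. Every word of $\operatorname{Perm}(T)$ is the image of a word of $\operatorname{Perm}(T')$ under a fixed letter map $\phi$. The same literal role map, followed by $\phi$ applied coordinatewise, sends $\operatorname{Perm}(T)$ into the $\phi$-image of that orbit. This image is again one orbit of the same coordinate-permutation group, because coordinate permutations commute with letterwise maps. The fixed padding symbols can be mapped to any symbol of $T$. The final Hales--Jewett step then runs verbatim as in the proof of \cref{thm:appendix-112233}.

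We apply this with $T'=123444$, the case $k=3$ of \cref{prop:appendix-affine-template-family}, which has length six and multiplicities $(3,1,1,1)$.
\begin{itemize}
\item Merging $1$ with $2$ gives $(3,2,1)$.
\item Merging $1$ with $4$ gives $(4,1,1)$.
\item Merging $1,2$ into $4$ gives $(5,1)$.
\item Merging all four symbols gives $(6)$.
\end{itemize}
The partitions $42$ and $33$ cannot be obtained from $(3,1,1,1)$. Instead they come from $(2,2,1,1)$ and $(2,2,2)$ respectively. For $33$, merge two of the three pairs of $112233$, giving $(4,2)$, or merge three singleton classes. A cleaner route is to merge inside $\operatorname{Perm}(112233)$: identifying symbols $2$ and $3$ gives $(2,4)$, that is $42$. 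For $33$, note that $(3,3)$ cannot arise by merging classes of $(2,2,2)$ either. It does arise from $123444$ by merging $1,2,3$ into one class, giving $(3,3)$.

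Thus every length-six partition is a coarsening of $(3,1,1,1)$ or of $(2,2,2)$, and the specialization principle finishes the proof.

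The main point to check carefully is the specialization lemma. Applying $\phi$ letterwise, $\phi(g\cdot w)=g\cdot\phi(w)$ for coordinate permutations $g$, so images of one orbit lie in one orbit. Balance and literality are untouched because the role map itself is unchanged.
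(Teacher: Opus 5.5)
Your proof is correct and is essentially the paper's. Both rest on inheritance under symbol identification (letterwise maps commute with coordinate permutations, so one solvable orbit maps to one orbit), applied to $123444$ and $112233$. The differences are only in the small cases. The paper treats one-symbol templates as trivial, binary templates by the finite Ramsey theorem with degree one, length at most four via $S_n$, and the three-symbol length-five patterns as coarsenings of $12344$. You instead get the binary length-six cases by coarsening $123444$ and quote \cref{cor:appendix-all-templates-length-five}. That citation is logically fine, since that corollary depends only on \cref{thm:appendix-12345}, but it appears after, not before, this corollary.

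Delete your aside that $42$ and $33$ cannot be obtained from $(3,1,1,1)$: they are $(3+1,1+1)$ and $(3,1+1+1)$. Drop the $(2,2,1,1)$ detour as well. Your final covering claim, that every length-six partition is a coarsening of $(3,1,1,1)$ or of $(2,2,2)$, is correct.
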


\begin{proof}
One-symbol templates are trivial.
Binary templates have fixed degree one by the ordinary finite Ramsey theorem, and for length at most four every permutation family is an orbit of the solvable group $S_n$.
For length five, the two genuine three-symbol multiplicity patterns $(3,1,1)$ and $(2,2,1)$ are coarsenings of $12344$.
For length six, the patterns $(4,1,1)$ and $(3,2,1)$ are coarsenings of $123444$, while $(2,2,2)$ is covered by \cref{thm:appendix-112233}.
The property is inherited by coarsening a template, which proves the assertion.
\end{proof}

\subsection{The distinct-letter template \texorpdfstring{$12345$}{12345}}
\label{subsec:appendix-12345}

The affine-plane certificate for $12344$ can be lifted through a small
pair-splitting gadget.  This gives the smallest distinct-letter template beyond
the range in which the full symmetric group is solvable.

\begin{theorem}
\label{thm:appendix-12345}
There exist a balanced literal role map
\[
 \Psi:[5]^5\longrightarrow[5]^{504}
\]
of degree $35$ and a solvable coordinate-permutation group
$\Omega\leq S_{504}$ such that
\[
 \Psi\bigl(\operatorname{Perm}(12345)\bigr)
\]
is contained in one $\Omega$-orbit.
Consequently, both the Block Sets Conjecture and its color-independent
fixed-degree strengthening hold for the template $12345$.
More precisely, if $h_\Omega$ is an HJ-degree of $\Omega$, then block degree
$35h_\Omega$ suffices for every finite number of colors.
\end{theorem}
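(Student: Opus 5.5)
The plan is to imitate the two-level construction of \cref{thm:appendix-112233}. The inner layer will be the affine-plane map $\Theta_2:[4]^5\to[4]^{63}$ of \cref{prop:appendix-affine-template-family}, reread over the alphabet $[5]$. The outer layer will be a small "pair-splitting" design on roles, with eight chunks of $63$ coordinates, five active and three fixed. The numerology matches: $8\cdot63=504$ coordinates and degree $5\cdot7=35$. Concretely, I would regard $\Theta_2$ as a literal role map on five source roles; being literal, it is defined on $[5]^5$ verbatim. Let $\Gamma=\operatorname{AGL}(2,3)^7\rtimes C_7$ be the solvable group from that proposition, and put $X=\operatorname{Perm}(12345)$.

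\textbf{Step 1 (inner orbits).} For $x\in X$, the positions of $1,2,3$ form an ordered triple of distinct roles. This triple is collinear in exactly one bank of $\Theta_2$, so $\Gamma$ aligns these positions exactly as in the $12344$ case. What remains is the ordered pair of roles carrying $4$ and $5$. In each bank, the stabilizer in $\operatorname{AGL}(2,3)$ of an ordered collinear or noncollinear triple is small: it is trivial in the noncollinear case. Hence the $\Gamma$-orbit of $\Theta_2(x)$ is governed by a short combinatorial invariant: essentially which of the two roles left over from the triple sits in a distinguished position relative to the triple, in the distinguished bank and the other banks. I would compute this invariant explicitly from the table \eqref{eq:appendix-seven-planes} and expect a bounded number of types, ideally two, interchanged by swapping the symbols $4$ and $5$. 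Write $\tau:X\to\Sigma$ for the resulting type map.

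\textbf{Step 2 (pair-splitting gadget).} Next I would choose five role permutations $\pi_1,\dots,\pi_5\in S_5$, for instance the powers of a $5$-cycle or coset representatives of a suitable subgroup, and form the outer type word
\[
 \lambda(x)=\bigl(\tau(\pi_1x),\ldots,\tau(\pi_5x)\bigr).
\]
The aim is for $\lambda(X)$, after padding with three fixed chunks of a chosen type, to lie in a single orbit of a solvable group $P\le S_8$ permuting the eight chunks, for example an affine group such as $\operatorname{A\Gamma L}(1,8)$, just as $P_7$ was used for the $\mathcal A^2\mathcal B^5$ profile. The role of the "splitting" is the following. A single copy of $\Theta_2$ cannot distinguish $x$ from the word obtained by swapping $4$ and $5$, but the five rotated copies record the split pair in a balanced pattern, with a fixed number of chunks of each type. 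Lifting exactly as in \eqref{eq:112233-lift}, with independent copies of $\Gamma$ in the chunks and $P$ on top, then places $\Psi(X)$ in one orbit of the solvable group $\Omega=\Gamma^8\rtimes P$.

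\textbf{Step 3 (balance and conclusion).} Each $\pi_jx$ contains every role once, and $\Theta_2$ has degree $7$. So the five active chunks give each role exactly $35$ occurrences, while the three padding chunks are constant; this shows $\Psi$ is a balanced literal role map of degree $35$. The fixed-degree conclusion then follows verbatim from \cite[Theorem~1.5]{KanellopoulosKaramanlis2020}, as in the last paragraph of the proof of \cref{thm:appendix-112233}.

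I expect the main obstacle to be Steps 1 and 2 together: pinning down the exact $\Gamma$-orbit invariant on $X$, and then finding permutations $\pi_j$ and padding types for which the outer profile is a single orbit of a solvable group on eight chunks. My first attempt would take the $\pi_j$ to be the powers of the $5$-cycle fixing the triple structure. I would then check by direct case computation that the type counts per word are constant, and finally match the resulting profile family to a transitive action of $\operatorname{A\Gamma L}(1,8)$ on the appropriate subsets.
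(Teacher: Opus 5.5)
\textbf{Gap: Step 1 is false, and then Step 2 cannot succeed.} This is where you expected trouble, and it fails for every choice of $\pi_j$, padding chunks and $P\le S_8$.

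Read over $[5]$, $\Theta_2$ pads with the letter $4$. So in each bank the letter $4$ occupies five positions: the role $r_4$ carrying $4$ plus the four padding points. The letter $5$, however, marks a fourth distinguished point $r_5$ next to the ordered triple $(r_1,r_2,r_3)$. For a $3$-subset $S\subset\{1,2,3,5\}$, let $f_x(S)\in\mathbb Z_7$ be the bank in which the positions carrying the letters of $S$ are collinear. This bank is unique, because the seven planes partition all triples. Moreover $f_x$ is injective on the four such $S$, because two lines of one plane share at most one point.

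Since $\operatorname{AGL}(2,3)^7$ preserves collinearity bank by bank and $C_7$ only rotates bank indices, $f_x$ up to a common rotation is a $\Gamma$-invariant of $\Theta_2(x)$. No nonzero rotation of $\mathbb Z_7$ preserves a $4$-subset. Hence relabelling $x$ by any nonidentity permutation of the letters $1,2,3,5$ changes its $\Gamma$-type, and $\tau$ takes at least $24$ values on $X$. In particular, a single copy of $\Theta_2$ does distinguish $x$ from its $4\leftrightarrow5$ swap, and even from its $1\leftrightarrow2$ swap.

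Now apply this to $\Omega=\Gamma^8\rtimes P$. The multiset of $\Gamma$-types of the eight chunks is an $\Omega$-invariant, and the three padding chunks are fixed. So one-orbit containment forces $\{\tau(\pi_1x),\ldots,\tau(\pi_5x)\}$ to be a constant multiset. Since $x\mapsto\pi_1x$ permutes $X$, this gives $|\tau(X)|\le5$, a contradiction. The splitting heuristic also points the wrong way: the goal is to fuse all $120$ words into one orbit, not to record the $4/5$ split across rotated copies.

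\textbf{Missing idea: reverse the order of composition and put the fusion in the inner layer.} The paper replaces each macrocoordinate of $\Theta_2$ carrying $z_j$ by the eight-letter word $F_j(x)=\bigl((x_i)_{i\ne j}\mid x_j,1,2,3\bigr)$. It replaces each padding $4$ by a fixed representative $r_4$. The solvable group $H=S_4\wr C_2$ acts by permuting within the two four-letter blocks and swapping them.

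An $H$-orbit is the unordered pair of block multisets. This identifies $x_j=4$ with $x_j=5$ and separates $1,2,3$, so $\tau_H(F_j(x))=q(x_j)$. The outer type word is therefore $\Theta_2(q(x))$ with $q(x)\in\operatorname{Perm}(12344)$, which lies in one $\Gamma$-orbit by \cref{prop:appendix-affine-template-family}. Then $\Omega=H^{63}\rtimes\Gamma$ works: $63$ chunks of $8$ coordinates rather than your $8$ chunks of $63$, with the same numerology $504$ and $35$.

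Your Step~3, the balance count and the Kanellopoulos--Karamanlis conclusion, is fine once a correct one-orbit certificate is in place.
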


\begin{proof}
Put $X=\operatorname{Perm}(12345)$ and define
\[
 q:[5]\longrightarrow[4],\qquad
 q(1)=1,\quad q(2)=2,\quad q(3)=3,\quad q(4)=q(5)=4.
\]
We also write $q(x)=(q(x_1),\ldots,q(x_5))$ for the coordinatewise
extension.  For $x=(x_1,\ldots,x_5)\in[5]^5$ and $j\in[5]$, form the eight-letter
literal word
\begin{equation}
 F_j(x)=\bigl((x_i)_{i\in[5]\setminus\{j\}}\mid x_j,1,2,3\bigr),
 \label{eq:appendix-12345-local-word}
\end{equation}
where the entries in the first block are put in any fixed order.
Let
\[
 H=(S_4\times S_4)\rtimes C_2\cong S_4\wr C_2
\]
act by arbitrary coordinate permutations within the two displayed
four-coordinate blocks and by interchanging the blocks.  The group $H$ is
solvable, and the $H$-orbit of a word is determined exactly by the unordered
pair of its two block multisets.

If $a=x_j$, these unordered pairs are
\begin{equation}
\begin{array}{c|c}
a&\text{two block multisets}\\
\hline
1&\{2345,1123\}\\
2&\{1345,1223\}\\
3&\{1245,1233\}\\
4&\{1235,1234\}\\
5&\{1234,1235\}.
\end{array}
\label{eq:appendix-12345-local-types}
\end{equation}
Thus the first three values give three distinct $H$-orbits, while the last
two give the same fourth orbit.  Label these four local orbit types by
$1,2,3,4$, respectively, and write $\tau_H$ for the resulting orbit-label
map on their union.
Then
\begin{equation}
 \tau_H(F_j(x))=q(x_j)
 \qquad(x\in X,\ j\in[5]).
 \label{eq:appendix-12345-type-identity}
\end{equation}

Use \cref{prop:appendix-affine-template-family} with $k=2$.  It supplies a
balanced literal map
\[
 \Theta_2:[4]^5\longrightarrow[4]^{63}
\]
of degree seven such that
$\Theta_2(\operatorname{Perm}(12344))$ is contained in one orbit of the
solvable group
\[
 \Gamma=\operatorname{AGL}(2,3)^7\rtimes C_7\leq S_{63}.
\]
Every coordinate of $\Theta_2(z)$ is either one of the five projections
$z_j$ or the fixed symbol $4$.  Choose a fixed representative of the fourth
local $H$-type, for instance
\[
 r_4=(1,2,3,5\mid4,1,2,3)\in[5]^8.
\]
At a macrocoordinate $t\in[63]$, replace the formula $z_j$ in $\Theta_2$
by the eight-coordinate word $F_j(x)$, and replace a fixed $4$ by $r_4$.
Concatenating the resulting $63$ blocks defines
\[
 \Psi:[5]^5\longrightarrow[5]^{8\cdot63}.
\]
By \eqref{eq:appendix-12345-type-identity}, the word of local $H$-orbit
types of $\Psi(x)$ is exactly
\begin{equation}
 \tau_H^{\times63}(\Psi(x))=\Theta_2(q(x))
 \qquad(x\in X).
 \label{eq:appendix-12345-outer-types}
\end{equation}
Since $q(x)\in\operatorname{Perm}(12344)$, the group $\Gamma$ aligns the
type words in \eqref{eq:appendix-12345-outer-types} for any two elements of
$X$.  After this alignment, independent copies of $H$ align the actual
eight-coordinate blocks.  Hence all the words $\Psi(x)$ lie in one orbit of
\[
 \Omega=H^{63}\rtimes\Gamma\leq S_{504}.
\]
This group is solvable and acts only by permutations of physical
coordinates; no permutation of the alphabet is being used.

It remains to verify balance.  Each word $F_j(x)$ contains every one of the
five source projections exactly once.  The map $\Theta_2$ has five formal
roles, each occurring seven times, and therefore has $35$ nonconstant
macrocoordinates.  Consequently every source projection occurs exactly
$35$ times in $\Psi$, while the remaining coordinates are fixed.  Thus
$\Psi$ is a balanced literal role map of degree $35$ and has scalar length $504$.

Finally apply the uniform Hales--Jewett theorem for solvable group actions
to the transitive $\Omega$-set
\[
 Y=\Omega\Psi(X).
\]
Kanellopoulos and Karamanlis~\cite[Theorem~1.5]{KanellopoulosKaramanlis2020}
show that, for every finite number $r$ of colors, there is an $N$ such that
every $r$-coloring of $Y^N$ has a monochromatic $\Omega$-variable word of
total degree $h_\Omega$.  Given an $r$-coloring of $[5]^{504N}$, restrict it
to the concatenated copy of $Y^N$ and apply this theorem.  Every active
macrocoordinate has the form $g\Psi(x)$ for a fixed $g\in\Omega$ and is
therefore again a balanced literal role map of degree $35$, since $g$
merely permutes physical coordinates.  Flattening the $h_\Omega$ active
macrocoordinates gives a monochromatic $12345$ block set of degree
$35h_\Omega$.  This degree is independent of $r$.
\end{proof}

The local construction in \eqref{eq:appendix-12345-local-word} has a useful
general form.  For $2\leq m\leq5$, the group $S_{m-1}\wr C_2$ acting on
\[
 \bigl((x_i)_{i\ne j}\mid x_j,1,\ldots,m-2\bigr)
\]
fuses precisely the last two values and separates the first $m-2$ values.
Thus a solvable-orbit certificate for
$12\cdots(m-2)(m-1)^2$ lifts to one for $12\cdots m$, where the initial
string $12\cdots(m-2)$ is interpreted as empty when $m=2$.  At $m=6$ this
particular mechanism calls for $S_5\wr C_2$, which is not solvable; this
explains the natural endpoint of the argument but is not an obstruction to
a different construction.

\begin{corollary}
\label{cor:appendix-all-templates-length-five}
Every template of total length at most five satisfies the
color-independent fixed-degree Block Sets Conjecture.
\end{corollary}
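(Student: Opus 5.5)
The plan is a reduction to the templates already settled. Call a template $w$ a \emph{coarsening} of $w'$ if $w=\phi(w')$ for some map $\phi$ of the symbols, applied letterwise. The first step is to check, as used in \cref{cor:appendix-short-three-letter-templates}, that the color-independent fixed-degree Block Sets property passes from $w'$ to every coarsening $w$. Let $\Psi':[a']^{s}\to[a']^{N}$ be a balanced literal role map of degree $d$ whose restriction to $\operatorname{Perm}(w')$ lies in one orbit of a solvable coordinate-permutation group $\Omega$. Compose it with $\phi$ on the output alphabet, applying $\phi$ also to the fixed coordinates. This gives a balanced literal role map of degree $d$ on $[a]^s$.

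Its restriction to $\operatorname{Perm}(w)$ must again lie in one $\Omega$-orbit. The point is that the coordinate permutations commute with the letterwise map $\phi$. Every $y\in\operatorname{Perm}(w)$ lifts to some $y'\in\operatorname{Perm}(w')$ with $\phi(y')=y$, obtained by choosing which positions carry which preimage symbols. Consequently $\phi(\Omega\Psi'(\operatorname{Perm}(w')))$ contains the whole image of $\operatorname{Perm}(w)$ inside one orbit. The Kanellopoulos--Karamanlis step then runs exactly as in the final paragraph of the proof of \cref{thm:appendix-112233}, with the same HJ-degree $h_\Omega$.

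The second step is to classify templates of length $s\leq5$ by their multiplicity partitions and cover each partition.
\begin{enumerate}[label=\textup{(\roman*)}]
\item For $s\leq4$, every $\operatorname{Perm}(w)$ is a single orbit of the solvable group $S_s$, with degree one.
\item For $s=5$, partitions with at most two parts are binary templates. These have degree one by the finite Ramsey theorem, as noted in \cref{cor:appendix-short-three-letter-templates}.
\item The partitions $(3,1,1)$ and $(2,2,1)$ are coarsenings of $12344$, which is the case $k=2$ of \cref{prop:appendix-affine-template-family}.
\item The partition $(2,1,1,1)$ is exactly $12344$.
\item The partition $(1,1,1,1,1)$ is \cref{thm:appendix-12345}.
\end{enumerate}
Since every partition of $5$ is among $(5),(4,1),(3,2),(3,1,1),(2,2,1),(2,1,1,1),(1^5)$, this exhausts all cases.

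The main obstacle I expect is the coarsening step, in particular that the lifted orbit statement survives the alphabet map and that balance is preserved. Balance is immediate, since $\phi$ does not move coordinates and role occurrences are unchanged. Orbit containment needs the surjectivity of $\phi$ from $\operatorname{Perm}(w')$ onto $\operatorname{Perm}(w)$, which I will verify explicitly.
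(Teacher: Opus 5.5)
Your proof is correct, and its one real ingredient is exactly the one the paper uses: the property descends to coarsenings because a letterwise identification $\phi$ commutes with coordinate permutations, preserves role occurrences, and maps $\operatorname{Perm}(w')$ onto $\operatorname{Perm}(w)$. The two routes differ only in how the templates are decomposed.

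The paper notes that every template of length five is a coarsening of $12345$. It applies the identification once to the map of \cref{thm:appendix-12345}, so it needs no enumeration of multiplicity partitions, and every length-five template gets the same degree-$35$ certificate.

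You instead split by partition, essentially extending the proof of \cref{cor:appendix-short-three-letter-templates}:
\begin{itemize}
\item Ramsey's theorem for templates with at most two symbols;
\item the $12344$ certificate of \cref{prop:appendix-affine-template-family} for $(3,1,1)$, $(2,2,1)$ and $(2,1,1,1)$;
\item \cref{thm:appendix-12345} only for $(1^5)$.
\end{itemize}

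Your route is longer, but it has some advantages. It calls on the pair-splitting gadget only for the all-distinct template. It gives the smaller degree-$7$ or degree-one certificates for the templates with repeated symbols. It also spells out the surjectivity $\phi(\operatorname{Perm}(w'))=\operatorname{Perm}(w)$, which the paper's one-line argument uses tacitly.
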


\begin{proof}
For a template of length $n\leq4$, its permutation family is a single orbit
of the solvable group $S_n$ under the identity role map.  Every template of length five is obtained from
$12345$ by identifying some alphabet symbols.  Apply the same identification
to the fixed and variable letters in the map of
\cref{thm:appendix-12345}.  Coordinate permutations commute with this
identification, and every source projection still occurs $35$ times, so the
result remains a balanced one-orbit certificate.  The uniform
Hales--Jewett argument proves the assertion.
\end{proof}

The unsuccessful extension from $12345$ to $123456$ also yielded a general
restriction on the solvable-orbit method.  It applies to padded as well as
unpadded literal maps and is most naturally expressed through the incidence
structure of all translates of the target copy.

\begin{proposition}[Flag divisibility for literal permutation copies]
\label{prop:appendix-literal-flag-divisibility}
Let $n\geq2$ and let $\Psi:[n]^n\to[n]^N$ be a map such that every output
coordinate is either fixed or one of the projections $x_i$, with every
projection occurring the same positive number of times.  Put
\[
 T=\Psi\bigl(\operatorname{Perm}(12\cdots n)\bigr).
\]
Suppose that $T$ is contained in an orbit $O$ of a coordinate-permutation
group $\Omega\leq S_N$.  Let
\[
 \mathcal B=\{gT:g\in\Omega\},
 \qquad b=|\mathcal B|,
\]
where repeated translates are counted only once.  Every point of $O$ lies
in the same number $\lambda$ of members of $\mathcal B$.
Let $P\leq S_n$ be the permutation group induced on the $n$ source-role
classes by the setwise stabilizer $\Omega_{\{T\}}$.  Then
\begin{equation}
 [S_n:P]\mid\lambda,
 \qquad
 |O|=\frac{n!b}{\lambda},
 \label{eq:appendix-literal-flag-divisibility}
\end{equation}
and
\begin{equation}
 b-1\geq n(\lambda-1),
 \qquad
 |O|\geq n!\left(n-\frac{n-1}{\lambda}\right).
 \label{eq:appendix-literal-orbit-bound}
\end{equation}
If $\Omega$ is solvable, then $P$ is solvable.  In particular, for $n=6$
and solvable $\Omega$,
\[
 \lambda\geq10
 \qquad\text{and}\qquad
 |O|\geq3960.
\]
\end{proposition}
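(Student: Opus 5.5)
The plan is to regard $(O,\mathcal B)$ as an incidence structure on which $\Omega$ acts. I would first settle the combinatorics of a single block, then get the divisibility from a flag-orbit count and the lower bound on $b$ from a rank argument. The two stated $|O|$-formulas and the $n=6$ numbers then follow by arithmetic.

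\emph{Structure of $T$.} Since every projection occurs with positive multiplicity, $\Psi$ is injective on $\operatorname{Perm}(12\cdots n)$, so $|T|=n!$. The partition of $[N]$ is recoverable from $T$ alone. Two active coordinates carry the same role exactly when they agree on every word of $T$, and fixed coordinates are exactly those that are constant on $T$. Hence every $g\in\Omega_{\{T\}}$ permutes the role classes and acts on $T$ by $\Psi(x)\mapsto\Psi(x\circ\pi_g^{-1})$, where $\pi_g\in P$. This action of $\Omega_{\{T\}}$ on $T$ factors through $P$. It is semiregular, because $S_n$ acts regularly on $\operatorname{Perm}(12\cdots n)$. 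So the $\Omega_{\{T\}}$-orbits on $T$ all have size $|P|$, and there are $[S_n:P]$ of them. Solvability of $P$ is immediate when $\Omega$ is solvable, since $P$ is a quotient of a subgroup of $\Omega$.

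\emph{Tactical property and divisibility.} $\Omega$ is transitive on $O$ and on $\mathcal B$, so every point of $O$ lies in the same number $\lambda$ of blocks. Double counting flags gives $b\cdot n!=|O|\lambda$, which is the second relation in \eqref{eq:appendix-literal-flag-divisibility}. For divisibility, I would split the flag set into $\Omega$-orbits. Since $\Omega$ is transitive on blocks, these orbits correspond bijectively to the $\Omega_{\{T\}}$-orbits on $T$, so there are $[S_n:P]$ of them, each of size $b|P|$. Each flag orbit projects onto $O$, again by transitivity, with constant fibre size $\lambda_j=b|P|/|O|=\lambda|P|/n!$. This number is independent of $j$, so $\lambda=[S_n:P]\,\lambda_j$ with $\lambda_j\in\N$, proving $[S_n:P]\mid\lambda$.

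\emph{The lower bound on $b$.} This is the step I expect to be the main obstacle. The plan is a Fisher-type rank argument: prove that the incidence matrix of $(O,\mathcal B)$ has rank at least $n(\lambda-1)+1$. First I would fix a point $t=\Psi(x_0)$ and the $\lambda$ blocks through it. For each role $i$ and each letter, I would consider the indicator functions on $O$ of ``coordinate class $i$ carries that letter''. Inside a single block these span a space of dimension $1+n(n-1)$, the usual degree-one space of the permutation code. I would then try to show that the $\lambda$ blocks through $t$ contribute pairwise ``independent'' copies of an $n$-dimensional piece of this space, modulo the common constant vector. This would give $b\ge\operatorname{rank}\ge n(\lambda-1)+1$. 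If this fails, my fallback is to bound intersections $|T\cap gT|$ using the literal structure, and then run the standard Fisher/Ray-Chaudhuri--Wilson argument. The second inequality in \eqref{eq:appendix-literal-orbit-bound} then follows at once:
\[
|O|=\frac{n!\,b}{\lambda}\ge\frac{n!\,(n(\lambda-1)+1)}{\lambda}=n!\left(n-\frac{n-1}{\lambda}\right).
\]

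\emph{The case $n=6$.} The largest solvable subgroups of $S_6$ have order $72$ (namely $S_3\wr C_2$), so $[S_6:P]\ge 10$, and divisibility gives $\lambda\ge10$. Then $|O|\ge 720\,(6-\tfrac{5}{10})=3960$.
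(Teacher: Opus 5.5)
Your analysis of the block structure, the flag-orbit count giving $[S_n:P]\mid\lambda$ and $|O|=n!\,b/\lambda$, and the final arithmetic for $n=6$ follow the paper's argument. The paper also justifies that solvable subgroups of $S_6$ have order at most $72$, which you only assert.

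\textbf{The gap.} The inequality $b-1\geq n(\lambda-1)$ is not proved; you leave it as a hoped-for rank bound. Nothing in your sketch shows that the $\lambda$ blocks through $t$ contribute independent pieces. The functions ``class $i$ carries letter $a$'' are defined relative to the role partition of one block, while the other blocks through $t$ have different role partitions. The dimension count is also off: the span of the functions $[\sigma(i)=j]$ on $S_n$ has dimension $(n-1)^2+1$, not $1+n(n-1)$, which already exceeds $|S_2|$ when $n=2$. More to the point, no Fisher or Ray-Chaudhuri--Wilson rank argument is needed.

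\textbf{The missing idea.} You need the intersection bound $|T\cap gT|\leq(n-1)!$ for distinct translates. This is exactly where the literal, balanced structure enters.
\begin{itemize}
\item If the fixed-coordinate sets of $T$ and $gT$ differ, some coordinate is a projection $x_i$ in one copy and fixed to a letter $a$ in the other. Common words must satisfy $x_i=a$, so there are at most $(n-1)!$ of them.
\item If the fixed sets agree but the fixed words differ, the copies are disjoint.
\item If both agree and there is a common word, every role class of one copy is constant on that word. It therefore lies inside a single role class of the other copy, since distinct roles carry distinct letters in a permutation word. All classes have the same size, so the two role partitions coincide, and hence $gT=T$.
\end{itemize}
With this bound, a plain double count of pairs $(u,B)$ with $u\in T\cap B$ and $B\neq T$ gives
\[
 n!(\lambda-1)=\sum_{B\in\mathcal B\setminus\{T\}}|T\cap B|\leq(b-1)(n-1)!,
\]
that is, $b-1\geq n(\lambda-1)$, with no linear algebra. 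Your fallback gestures at bounding intersections but identifies neither the bound $(n-1)!$, which is what produces the factor $n=n!/(n-1)!$, nor this counting. As written, the proposal does not establish $b-1\geq n(\lambda-1)$ or the resulting lower bound on $|O|$.
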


\begin{proof}
For $i\in[n]$, let $A_i$ be the set of coordinates on which $\Psi(x)$ is
the projection $x_i$, and let $F$ be the set of fixed coordinates.  The
sets $A_i$ are nonempty and have equal size.  The set $F$ consists exactly
of the coordinates whose value is constant on $T$, since every projection
assumes all $n$ symbols on $\operatorname{Perm}(12\cdots n)$.  Hence every
element of $\Omega_{\{T\}}$ preserves $F$.
In particular, $\Psi$ is injective on
$\operatorname{Perm}(12\cdots n)$, so $|T|=n!$.

On the remaining coordinates define $u\sim v$ when the $u$th and $v$th
coordinates agree in every word of $T$.  Its equivalence classes are
exactly $A_1,\ldots,A_n$: two coordinates in one $A_i$ always agree,
whereas two different projections have different values in every
permutation word.  Thus $\Omega_{\{T\}}$ permutes the $A_i$.  After
identifying $\operatorname{Perm}(12\cdots n)$ with $S_n$, its induced
action on $T$ consequently has the form
\[
 \sigma\longmapsto\sigma\rho
 \qquad(\rho\in P),
\]
up to the immaterial inverse convention.  Thus the induced action is by
right multiplication by elements of $P$, and its orbits are the right
cosets of $P$.  In particular, the setwise stabilizer has
\[
 m=[S_n:P]
\]
orbits on $T$, all of size $|P|$.  Moreover, $P$ is a quotient of a
subgroup of $\Omega$, and is therefore solvable whenever $\Omega$ is.

Consider the flags $(u,B)$ with $u\in B\in\mathcal B$.  The $\Omega$-orbits
on flags correspond to the $\Omega_{\{T\}}$-orbits on $T$, and hence there
are $m$ of them.  Each such flag orbit contains $|P|$ flags over each
block.  If one flag orbit contributes $\mu$ incident blocks through each
point of $O$, double counting gives
\[
 b|P|=|O|\mu.
\]
Counting all flags gives
\[
 bn!=|O|\lambda.
\]
Consequently
\[
 \mu=\frac{\lambda|P|}{n!}=\frac{\lambda}{m}.
\]
Since $\mu$ is an integer, $m\mid\lambda$, and the total flag count gives
the second identity in \eqref{eq:appendix-literal-flag-divisibility}.

We next show that two distinct translates of $T$ meet in at most
$(n-1)!$ words.  More generally, compare two balanced literal copies of
the same positive degree.  If their fixed-coordinate sets differ, then
some coordinate is a projection $x_i$ in one copy and is fixed, say to
$a$, in the other.  A common word must satisfy $x_i=a$, leaving at most
$(n-1)!$ specializations.  If the fixed-coordinate sets agree but their
fixed words disagree, the copies are disjoint.

It remains to consider two copies with the same fixed set and fixed word.
Compare their two partitions of the variable coordinates into $n$ role
classes.  In a common word, a class of the first partition cannot meet two
classes of the second: it is constant, while distinct roles in a
permutation word have distinct values.  Since all classes in both
partitions have the same positive size, each class of one partition must
equal a class of the other.  The partitions agree up to relabelling, and
the two literal copies are equal.  This proves the intersection bound.

Fix $T\in\mathcal B$.  Counting pairs $(u,B)$ with
$u\in T\cap B$ and $B\neq T$ now yields
\[
 n!(\lambda-1)
 =\sum_{B\in\mathcal B\setminus\{T\}}|T\cap B|
 \leq(b-1)(n-1)!.
\]
Hence $b-1\geq n(\lambda-1)$.  Together with
$|O|=n!b/\lambda$, this gives
\[
 |O|\geq
 \frac{n!\bigl(n(\lambda-1)+1\bigr)}{\lambda}
 =n!\left(n-\frac{n-1}{\lambda}\right).
\]

Finally suppose that $n=6$ and $\Omega$ is solvable.  Every solvable
subgroup of $S_6$ has order at most $72$.  To see this, an elementary
orbit-partition check gives at most $48$ in the intransitive case: an orbit
of size five contributes at most $20$, since a solvable transitive group of
degree five embeds in $\operatorname{AGL}(1,5)$, while among the remaining
partitions the maximum is $4+2$, giving $24\cdot2=48$.  A transitive
imprimitive group
embeds in $S_2\wr S_3$ or $S_3\wr S_2$, of orders $48$ and $72$,
respectively.  A solvable primitive permutation group has a minimal normal
subgroup which is elementary abelian and transitive, hence regular, so its
degree is a prime power.  Thus there is no such group of degree six.  The bound $72$ is attained
by $S_3\wr S_2$.  Thus $|P|\leq72$, whence $[S_6:P]\geq10$.
Divisibility gives $\lambda\geq10$, and
\[
 |O|\geq720\left(6-\frac5{10}\right)=3960.
\]
\end{proof}

\begin{corollary}[Pure-projection obstruction]
\label{cor:appendix-pure-projection-obstruction}
If $\Psi$ in
\cref{prop:appendix-literal-flag-divisibility} has no fixed coordinates,
then distinct members of $\mathcal B$ are disjoint, $\lambda=1$, and
$P=S_n$.  Consequently, for $n\geq5$, a positive-degree pure-projection
literal copy of $\operatorname{Perm}(12\cdots n)$ cannot be contained in
one orbit of a solvable coordinate-permutation group.
\end{corollary}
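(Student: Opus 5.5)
The plan is to specialize the intersection analysis from the proof of \cref{prop:appendix-literal-flag-divisibility} to the case $F=\varnothing$, and then read off the consequences from the divisibility relation \eqref{eq:appendix-literal-flag-divisibility}.

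First I will check that every translate $gT$, $g\in\Omega$, is again a balanced pure-projection literal copy of $\operatorname{Perm}(12\cdots n)$. Since $g$ only permutes physical coordinates, $gT=(g\Psi)(\operatorname{Perm}(12\cdots n))$. Here $g\Psi$ has no fixed coordinates, and its role classes $gA_1,\ldots,gA_n$ all have the common size $|A_i|$.

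Next, let $T$ and $gT$ share a word $w$; I will show $T=gT$. Both copies have the same (empty) fixed set and the same (empty) fixed word, so we are in the last case of the intersection argument. In $w$ each class $A_i$ is constant, while distinct classes $gA_j$ carry distinct values. Hence each $A_i$ lies inside a single $gA_j$. Because there are no fixed coordinates, both partitions cover all $N$ coordinates with $n$ classes of equal size, so each $A_i$ equals some $gA_{\sigma(i)}$.

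Both $T$ and $gT$ are then the same set: the words constant on each class whose $n$ class-values form a permutation of $[n]$. So $T=gT$, and distinct members of $\mathcal B$ are disjoint. Since $\mathcal B$ consists of translates inside the orbit $O$, each point of $O$ lies in exactly one member, which gives $\lambda=1$.

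By \eqref{eq:appendix-literal-flag-divisibility}, $[S_n:P]$ divides $\lambda=1$, hence $P=S_n$. For the final assertion, suppose $\Omega$ is solvable. The proposition shows that $P$ is a quotient of a subgroup of $\Omega$, so $P$ is solvable. But $S_n$ is not solvable for $n\ge5$, a contradiction. The only step needing care is the partition-matching argument, which uses absence of fixed coordinates to ensure both partitions cover the same coordinate set; everything else is bookkeeping from the proposition.
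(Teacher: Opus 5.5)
Your proposal is correct and follows essentially the same route as the paper: you specialize the intersection argument to the case of no fixed coordinates to get that translates are equal or disjoint, hence $\lambda=1$. You then use $[S_n:P]\mid\lambda$ to get $P=S_n$, and conclude from the fact that $P$ is solvable when $\Omega$ is, while $S_n$ is not for $n\geq5$. You simply spell out the partition-matching step that the paper only cites.
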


\begin{proof}
With no fixed coordinates, the intersection argument in the proof of
\cref{prop:appendix-literal-flag-divisibility} shows that two translates
are either equal or disjoint.  Thus $\lambda=1$.  Since
$[S_n:P]\mid\lambda$, we have $P=S_n$, which is not solvable for $n\geq5$.
\end{proof}

This obstructs the solvable one-orbit literal-certificate method, not the
Block Sets Conjecture itself.  In particular, it does not decide the
$123456$ case.  For $123456$, it shows only that a balanced literal
certificate of this kind must use fixed padding and that its containing
orbit must have at least $3960$ states.

\section{A sharp obstruction for full simplex products}
\label{sec:appendix-simplex-products}

The orbit-design constructions above use restricted products.
The next theorem shows that this restriction is essential for an unbalanced constant-weight layer: allowing arbitrary full products of regular-simplex alphabets cannot make the radius approach the intrinsic circumradius.

For $2\leq k\leq v-2$, write
\[
 J(v,k)=\{\mathbf 1_A:A\in\tbinom{[v]}k\}.
\]
Its intrinsic squared circumradius is
\begin{equation}
 \rho(J(v,k))^2=\frac{k(v-k)}v.
 \label{eq:appendix-hypersimplex-radius}
\end{equation}

\begin{theorem}[Full simplex-product optimum]
\label{thm:appendix-simplex-product-optimum}
Suppose that an isometric copy of $J(v,k)$ is contained in a finite full Cartesian product of centered regular-simplex alphabets.
If the full product has radius $R$, then
\begin{equation}
 R^2\geq\frac v4.
 \label{eq:appendix-simplex-product-bound}
\end{equation}
Equality is attained by the ordinary binary cube.
Moreover, every nonconstant coordinate of any such product representation partitions $J(v,k)$ into a coordinate star
\[
 \{A\in\tbinom{[v]}k:i\in A\}
\]
and its complement, and equality in \eqref{eq:appendix-simplex-product-bound} forces every nonconstant factor to be binary.
Consequently
\begin{equation}
 R^2-\rho(J(v,k))^2
 \geq\frac{(v-2k)^2}{4v}.
 \label{eq:appendix-simplex-product-gap}
\end{equation}
\end{theorem}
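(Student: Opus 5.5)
The plan is to reduce everything to a statement about metrics. Write the full product as $S_1\times\cdots\times S_m$, where $S_j$ is a centered regular simplex with $q_j$ vertices, circumradius $r_j$ and squared edge length $e_j^2=2r_j^2q_j/(q_j-1)$. Then $R^2=\sum_j r_j^2$. An isometric copy of $J(v,k)$ is a map $A\mapsto(f_1(A),\ldots,f_m(A))$ with
\[
 |A\triangle B|=\sum_{j=1}^m e_j^2\,[f_j(A)\neq f_j(B)]
 \qquad\text{for all }A,B\in\tbinom{[v]}k,
\]
because $\|\mathbf 1_A-\mathbf 1_B\|^2=|A\triangle B|$. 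Thus the $\ell_2^2$-metric of $J(v,k)$ is written as a positive combination of partition pseudometrics $\delta_j(A,B)=[f_j(A)\neq f_j(B)]$, one for each nonconstant coordinate.

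\textbf{Step 1: geodesic rigidity.} Every $\delta_j$ satisfies the triangle inequality. Hence whenever $|A\triangle C|=|A\triangle B|+|B\triangle C|$, equality must also hold term by term for each $\delta_j$. For a $0/1$ partition metric this says two things. First, if $A$ and $C$ lie in one class, then so does $B$. Second, if $A$ and $C$ lie in different classes, then $B$ lies in one of those two classes. So every class is geodesically convex, and no geodesic of length $2$ meets three classes.

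\textbf{Step 2: classification of the partitions (main obstacle).} I want to show that each nonconstant $\delta_j$ is the cut of a star $\{A:i\in A\}$ versus its complement. Call an edge of the Johnson graph ($|A\triangle B|=2$) of type $\{a,b\}$ if $A\triangle B=\{a,b\}$.

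First I would use squares. For disjoint swaps $a\leftrightarrow b$ and $c\leftrightarrow d$, the sets $A$, $A'$, $A''$, $A'''$ form a $4$-cycle whose opposite vertices are at squared distance $4$, so both two-step paths are geodesics. Step 1 then forces opposite edges of the square to be both cut or both uncut by $\delta_j$. Since $2\le k\le v-2$, these squares connect all edges of a fixed type $\{a,b\}$. Consequently the cut edges of $\delta_j$ are exactly the edges whose type lies in some set $\mathcal T_j$ of pairs.

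Next I would use geodesics $T+a+b\to T+a+c\to T+d+c$, and triangles $S+a,S+b,S+c$, together with the no-three-classes condition. From these I would derive that $\mathcal T_j$ is the set of pairs $\{i,x\}$ for a single fixed $i$, which makes $\delta_j$ exactly the star cut. This combinatorial closure argument is the step I expect to be hardest. The first thing I would try is to show that if $\{a,b\}\in\mathcal T_j$ and $\{a,c\}\notin\mathcal T_j$, then $\{b,c\}\in\mathcal T_j$ and everything containing $a$ is excluded. A star cut has only two classes, so each nonconstant coordinate uses exactly two letters of its alphabet.

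\textbf{Step 3: weights and radius.} Let $w_i$ be the total $e_j^2$ over the coordinates whose star is centred at $i$. The star centred at $i$ separates $A$ and $B$ exactly when $i\in A\triangle B$, so the identity becomes $|A\triangle B|=\sum_{i\in A\triangle B}w_i$. Applying this to adjacent sets gives $w_a+w_b=2$ for all $a\ne b$. Since $v\ge3$, this forces $w_i=1$ for every $i$.

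For a nonconstant coordinate, $r_j^2=e_j^2q_j/(2(q_j-1))\ge e_j^2/4$, with equality exactly when $q_j=2$. Constant coordinates contribute $r_j^2\ge0$. Therefore
\[
 R^2\ \ge\ \sum_{j\text{ nonconst}}\frac{e_j^2}4=\frac14\sum_{i=1}^v w_i=\frac v4,
\]
and equality forces every nonconstant factor to be binary. The binary cube $\{0,1\}^v$, centred at $(\tfrac12,\ldots,\tfrac12)$, attains this bound.

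Finally, subtracting \eqref{eq:appendix-hypersimplex-radius} gives
\[
 \frac v4-\frac{k(v-k)}v=\frac{(v-2k)^2}{4v},
\]
which is \eqref{eq:appendix-simplex-product-gap}.
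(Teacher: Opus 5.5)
Your route differs from the paper's. The paper splits the centered Gram matrix as $Q=\sum_j Q_j$ with $Q_j\succeq 0$, which puts each factor's range in the first Johnson eigenspace. A Boolean degree-one lemma on the slice then identifies the colour classes as stars or co-stars. You use only that each factor contributes a $0/1$ partition pseudometric, plus the triangle inequality along geodesics, which is more elementary and needs no spectral facts. Step 1, the square argument, and the weight count are correct.

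\textbf{Gap.} Step 2 is only announced, not proved. It is the whole content of the ``moreover'' clause, and the weight count in Step 3 depends on it.

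\textbf{How to close it with the tools you list.} For disjoint types $\{b,c\}$ and $\{a,d\}$, pick a $(k-2)$-set $U$ avoiding $a,b,c,d$; it exists since $2\le k\le v-2$.
\begin{itemize}
\item The path $U+a+b\to U+a+c\to U+c+d$ is a geodesic, since its ends are at squared distance $4$. If both edges were cut, the middle vertex would lie in neither endpoint's class, contradicting both clauses of Step 1.
\item Hence $\mathcal T_j$ contains no two disjoint pairs, so it is either a star at some $i$ or a triangle.
\item In a Johnson triangle $S+x,S+y,S+z$ with $|S|=k-1$ (available since $k\le v-2$), exactly one edge can never be cut, by transitivity of lying in the same class.
\item A triangle $\{ab,bc,ca\}$ is excluded by the Johnson triangle on $a,b,x$ with $x\notin\{a,b,c\}$, using $v\ge 4$.
\item A star at $i$ that misses some $\{i,y\}$ is excluded by the triangle on $i,x,y$ with $\{i,x\}\in\mathcal T_j$. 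Hence $\mathcal T_j=\{\{i,x\}:x\neq i\}$.
\item The uncut edges are then exactly the swaps avoiding $i$. Their graph has two components, $\{A:i\in A\}$ and $\{A:i\notin A\}$, and since $\delta_j$ is nonconstant these are its two classes.
\end{itemize}

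\textbf{Slip in Step 3.} You inverted $e_j^2=2r_j^2q_j/(q_j-1)$ wrongly. It gives $r_j^2=e_j^2(q_j-1)/(2q_j)$, not $e_j^2q_j/(2(q_j-1))$. The inequality $r_j^2\ge e_j^2/4$, with equality iff $q_j=2$, holds for the correct expression. It fails for the one you wrote, which equals $e_j^2$ at $q_j=2$.
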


\begin{proof}
Center the hypersimplex vertices by putting
\[
 x_A=\mathbf 1_A-\frac kv\mathbf 1.
\]
Their Gram matrix $Q$ has range equal to the first nonconstant Johnson eigenspace $E_1$ and has rank $v-1$.

Center the displayed copy in the product at its centroid.
Its Gram matrix is still $Q$ and decomposes as
\[
 Q=\sum_jQ_j,
\]
where $Q_j\succeq0$ is the centered Gram contribution of the $j$th factor.
If $z\in\ker Q$, then
\[
 0=z^{\mathsf T}Qz=\sum_jz^{\mathsf T}Q_jz,
\]
so $z\in\ker Q_j$ for every $j$.
Hence
\begin{equation}
 \operatorname{range}Q_j\subseteq E_1.
 \label{eq:appendix-factor-range}
\end{equation}

Let $f_j$ color the vertices of the slice by the simplex letter used in factor $j$.
Every used subset of the vertices of a regular simplex is affinely independent.
Consequently the centered factor matrix has rank one less than the number of used letters, and its column space is exactly the span of the centered color-class indicators of $f_j$.
In particular, the range of $Q_j$ contains every such centered indicator.
By \eqref{eq:appendix-factor-range}, every uncentered color-class indicator belongs to $E_0\oplus E_1$.

We use the following elementary Boolean degree-one fact on the slice.
If
\[
 h(A)=c+\sum_{i\in A}a_i\in\{0,1\}
 \qquad(A\in\tbinom{[v]}k),
\]
then $h$ is constant, the indicator of a coordinate star, or the complement of such an indicator.
Indeed, comparing $S\cup\{i\}$ and $S\cup\{j\}$ for a common $(k-1)$-set $S$ gives
\[
 a_i-a_j\in\{-1,0,1\}
 \qquad(i\ne j).
\]
Thus the coefficients have at most two levels, and if there are two then they differ by one.
If $I$ is the set of indices at the higher level, then $h(A)$ is an affine
function of $|A\cap I|$.
The possible intersection sizes form the integer interval
\[
 \max\{0,k-(v-|I|)\}\leq |A\cap I|\leq\min\{k,|I|\}.
\]
When $2\leq|I|\leq v-2$, the hypotheses $2\leq k\leq v-2$ imply that this
interval contains at least three consecutive integers, contradicting the
Boolean range.
The remaining cases give a star, its complement, or a constant.

If a factor is nonconstant, its nonempty color classes partition the slice and are therefore pairwise disjoint.
Two distinct stars intersect because $k\geq2$, and two distinct complements intersect because $v-k\geq2$.
The star at $i$ also intersects the complement of the star at $j$ whenever $i\ne j$.
It follows that the factor uses exactly two letters and tests membership of a single ground coordinate $i$.

Let $d_j$ be the squared edge length of factor $j$, and let $W_i$ be the sum of $d_j$ over the factors testing coordinate $i$.
For distinct $i,\ell$, choose a $(k-1)$-set $S$ disjoint from them and compare $S\cup\{i\}$ with $S\cup\{\ell\}$.
Their squared distance is two, so
\[
 W_i+W_\ell=2.
\]
This holds for every pair of distinct indices, whence $W_i=1$ for all $i$.

A full regular simplex with $q_j$ vertices and squared edge length $d_j$ has squared radius
\[
 d_j\frac{q_j-1}{2q_j}\geq\frac{d_j}{4},
\]
with equality only when $q_j=2$.
Constant factors contribute nonnegatively to the full product radius.
Summing over the nonconstant factors therefore gives
\[
 R^2\geq\frac14\sum_{i=1}^vW_i=\frac v4.
\]
The centered binary cube with unit squared edge length attains equality, and equality forces every nonconstant factor to be binary.
Combining the bound with \eqref{eq:appendix-hypersimplex-radius} gives \eqref{eq:appendix-simplex-product-gap}.
\end{proof}

For the first full layer not covered by \cref{thm:small-johnson-layers}, namely $J(10,4)$, the theorem gives
\[
 R^2\geq\frac{10}{4}=2.5,
 \qquad
 \rho(J(10,4))^2=\frac{24}{10}=2.4.
\]
Thus no sequence of full regular-simplex products can resolve this layer at near-circumradius.
Any successful near-circumradius construction for this layer must use restricted products, orbit fusion, or genuinely different geometry.

\section{Further Kneser--shift results}
\label{sec:appendix-kneser-shift}

\subsection{The two-color Kneser--shift problem for arbitrary uniformity}
\label{subsec:two-color-composite-kneser-shift}

The proof of the Kneser--shift theorem uses the primality of $p$, and we do
not know how to remove it for an arbitrary number of colors.
For two colors, however, the composite case admits a short elementary
argument.
For every integer $p\geq2$, define $C(p,r)$ exactly as after
\cref{thm:kneser-shift}, without imposing that $p$ is prime, and put
$C(p,r)=\infty$ if no such threshold exists.

\begin{lemma}[Two-color uniformity reduction]
\label{lem:two-color-uniformity-reduction}
Let $p\geq3$ and $n\geq pk+1$.
Every proper two-coloring of $\KSh_p(n,k)$ induces a proper two-coloring of
$\KSh_{p-1}(n,k)$.
\end{lemma}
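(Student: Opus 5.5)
The plan is to transport the coloring by forgetting the last coordinate. Let $c$ be a proper two-coloring of $\KSh_p(n,k)$ with colors $\{0,1\}$. For a vertex $(A_1,\ldots,A_{p-2})$ of $\KSh_{p-1}(n,k)$, I will define
\[
 c'(A_1,\ldots,A_{p-2})=c(A_1,\ldots,A_{p-2},B),
\]
where $B$ is any $k$-set disjoint from $A_1\cup\cdots\cup A_{p-2}$. Such a $B$ exists because $n\geq pk+1$. The proof has three steps: show that this value does not depend on $B$, derive a dual formula in which the extra set is prepended, and then check properness.

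\emph{Step 1 (independence of $B$).} Fix the $A_i$ and write $U=[n]\setminus(A_1\cup\cdots\cup A_{p-2})$, so $|U|\geq 2k+1$. First take two admissible sets $B,B'\subseteq U$ with $|B\cup B'|=k+1$. The complement of $A_1\cup\cdots\cup A_{p-2}\cup B\cup B'$ has at least $n-(p-1)k-1\geq k$ elements, so it contains a $k$-set $C$. The vertex $W=(C,A_1,\ldots,A_{p-2})$ is then adjacent in $\KSh_p(n,k)$ both to $(A_1,\ldots,A_{p-2},B)$ and to $(A_1,\ldots,A_{p-2},B')$, since in each case $C,A_1,\ldots,A_{p-2}$ and the last set are pairwise disjoint. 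With only two colors, properness gives
\[
 c(A_1,\ldots,A_{p-2},B)=1-c(W)=c(A_1,\ldots,A_{p-2},B').
\]
In general, any two $k$-subsets of $U$ are joined by a sequence of $k$-subsets of $U$ in which consecutive sets differ by exchanging one element. This is connectivity of the Johnson graph on $\binom{U}{k}$, which holds because $|U|>k$. Applying the previous case along such a sequence shows that $c'$ is well defined.

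\emph{Step 2 (dual formula).} For any $k$-set $C$ disjoint from the $A_i$, the hypothesis $n\geq pk$ lets me choose $B$ disjoint from $C\cup A_1\cup\cdots\cup A_{p-2}$. The vertices $(C,A_1,\ldots,A_{p-2})$ and $(A_1,\ldots,A_{p-2},B)$ are adjacent, so
\[
 c(C,A_1,\ldots,A_{p-2})=1-c'(A_1,\ldots,A_{p-2}).
\]
Because $c'$ does not depend on $B$, this identity holds for every admissible $C$.

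\emph{Step 3 (properness).} Take an edge of $\KSh_{p-1}(n,k)$, that is, pairwise disjoint $A_1,\ldots,A_{p-1}$. Using the definition of $c'$ with $B=A_{p-1}$ gives $c'(A_1,\ldots,A_{p-2})=c(A_1,\ldots,A_{p-1})$. Applying the dual formula to the vertex $(A_2,\ldots,A_{p-1})$ with $C=A_1$ gives $c'(A_2,\ldots,A_{p-1})=1-c(A_1,\ldots,A_{p-1})$. Hence the two endpoints of the edge receive different colors, and $c'$ is a proper two-coloring.

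The only delicate point is the counting in Step 1. A single common neighbor of two arbitrary extensions $B,B'$ would require about $(p+1)k$ points, more than the hypothesis $n\geq pk+1$ provides. Moving between the two extensions through sets that differ by one element at a time reduces the requirement exactly to $n\geq pk+1$. The number of colors being two is used essentially: it is what makes "differs from a common neighbor" force equality.
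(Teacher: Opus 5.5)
Your proof is correct and follows essentially the same route as the paper: you show that $c(A_1,\ldots,A_{p-2},X)$ does not depend on $X$ by using connectivity of the Johnson graph on $\binom{U}{k}$ together with a common prepended neighbor $(Y,A_1,\ldots,A_{p-2})$, and then you check properness. Your Step~2 ``dual formula'' is only a repackaging of the paper's final step, which picks $A_p$ disjoint from $A_1,\ldots,A_{p-1}$ and compares $c(A_1,\ldots,A_{p-1})$ with $c(A_2,\ldots,A_p)$ directly.
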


\begin{proof}
Suppose that $c$ is a proper two-coloring of $\KSh_p(n,k)$.
Fix an ordered $(p-2)$-tuple
\[
 W=(A_1,\ldots,A_{p-2})
\]
of pairwise disjoint $k$-sets.
We first show that $c(W,X)$ is independent of the choice of the $k$-set
$X$ disjoint from every member of $W$.

The $k$-subsets of
\[
 U=[n]\setminus(A_1\cup\cdots\cup A_{p-2})
\]
form a connected Johnson graph, in which two sets are adjacent when they
differ in one element.
It is therefore enough to compare two such sets $X,X'$ satisfying
$|X\cap X'|=k-1$.
Their union has size $k+1$, and
\[
 |U\setminus(X\cup X')|
 =n-(p-1)k-1\geq k.
\]
Choose a $k$-set $Y$ in this complement.
The vertex $(Y,W)$ is adjacent in $\KSh_p(n,k)$ to both $(W,X)$ and
$(W,X')$.
As only two colors are available, both latter vertices have the unique color
different from the color of $(Y,W)$.
Consequently
\[
 c(W,X)=c(W,X').
\]
Connectivity of the Johnson graph proves the claimed independence.

We may thus define
\[
 \bar c(A_1,\ldots,A_{p-2})
 =c(A_1,\ldots,A_{p-2},X),
\]
where $X$ is any disjoint $k$-set.
To see that $\bar c$ is proper, take pairwise disjoint
$A_1,\ldots,A_{p-1}$ and then choose a $k$-set $A_p$ disjoint from all of
them; this is possible because
\[
 n-(p-1)k\geq k+1.
\]
The definition gives
\[
 \bar c(A_1,\ldots,A_{p-2})=c(A_1,\ldots,A_{p-1})
\]
and
\[
 \bar c(A_2,\ldots,A_{p-1})=c(A_2,\ldots,A_p).
\]
The two vertices on the right are adjacent in $\KSh_p(n,k)$, so their
colors are different.
This is precisely the properness of $\bar c$ on
$\KSh_{p-1}(n,k)$.
\end{proof}

\begin{proposition}[Exact two-color threshold]
\label{prop:exact-two-color-kneser-shift}
For every integer $p\geq2$,
\[
 C(p,2)=
 \begin{cases}
  0,&p\text{ is odd},\\
 1,&p\text{ is even}.
 \end{cases}
\]
Equivalently, in the nonempty-edge range $n\geq pk$, the graph
$\KSh_p(n,k)$ is bipartite if and only if $n=pk$ and $p$ is even.
In particular, the Kneser--shift conclusion holds for every composite $p$
when there are two colors.
\end{proposition}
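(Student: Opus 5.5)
Recall that $C(p,2)$ is the least $C$ such that $n-pk\ge C$ forces every $2$-coloring of $\KSh_p(n,k)$ to be improper. Equivalently, $C(p,2)$ is one more than the largest excess $n-pk\ge0$ at which some $\KSh_p(n,k)$ is bipartite, and $0$ if there is no such excess. So I would prove two things. First, $\KSh_p(n,k)$ is never properly $2$-colorable when $n\ge pk+1$; for odd $p$ this should also hold at $n=pk$. Second, for even $p$ there is a proper $2$-coloring at $n=pk$ (for instance with $k=1$).

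\textbf{Nonexistence for $n\ge pk+1$.} I would apply \cref{lem:two-color-uniformity-reduction} repeatedly. Its hypothesis $n\ge pk+1$ implies $n\ge(p-1)k+1$, so the reduction can be iterated down to uniformity $2$. A proper $2$-coloring of $\KSh_p(n,k)$ then yields a proper $2$-coloring of $\KSh_2(n,k)=\KG(n,k)$. By Lovász, $\chi(\KG(n,k))=n-2k+2$, and here $n-2k+2\ge (p-2)k+3\ge3$, a contradiction. An elementary alternative is also available. Choose $n\ge 3k$ and $p=2$, and take the three disjoint $k$-sets $A,B,C$, which exist since $n\ge pk+1\ge 2k+1$ when $p\ge2$. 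If $n\ge3k$, the triangle $A,B,C$ in $\KG(n,k)$ shows that $\KG(n,k)$ is not bipartite. The case $p=2$, $n=2k+1$ needs Lovász (or simply the odd cycle in $\KG(2k+1,k)$). This gives $C(p,2)\le1$ for all $p$.

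\textbf{Odd $p$ at $n=pk$.} Here I would exhibit an odd cycle directly. Fix a partition $[n]=A_1\sqcup\cdots\sqcup A_p$ into $k$-sets and consider the $p$ vertices $v_i=(A_i,A_{i+1},\ldots,A_{i+p-2})$, with indices cyclic. Each $v_i$ is adjacent to $v_{i+1}$, because the edge condition only needs the $p$ sets involved to be pairwise disjoint. Hence the $v_i$ form a closed walk of odd length $p$, so $\KSh_p(pk,k)$ is not bipartite, and $C(p,2)=0$ for odd $p$.

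\textbf{Even $p$ at $n=pk$.} At $n=pk$ the sets $A_1,\ldots,A_p$ of any edge partition $[n]$, so $A_p$ is determined by the other sets and every vertex has a well-defined missing set $M(v)=[n]\setminus\bigcup A_i$. I plan to color $v=(A_1,\ldots,A_{p-1})$ by the parity of the rank of $M(v)$ in a fixed total order on $\binom{[n]}k$; this particular invariant may fail. A more robust choice is to let $v$ and its ordered partition $(A_1,\ldots,A_{p-1},M)$ correspond, and observe that an edge rotates this partition cyclically by one step. Each connected component is therefore a union of cycles of length dividing $p$. More precisely, the graph decomposes by the underlying unordered partition together with its cyclic class, and each cyclic orbit of ordered partitions is a $p$-cycle. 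For the coloring I would assign to each cyclic orbit of length $p$ an arbitrary starting point and color by the parity of the rotation index; since $p$ is even, this is a proper coloring. The orbits have full length $p$ because the $A_i$ are distinct. Two ordered partitions are adjacent only when one is the rotation of the other, so the graph is a disjoint union of even cycles, which settles bipartiteness and gives $C(p,2)=1$.

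The main obstacle I expect is the step where Lovász is needed at $p=2$. The rest should be routine once the orbit structure at $n=pk$ is verified; in particular I need to check that no edges exist between different orbits.
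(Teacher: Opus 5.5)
Your proposal is correct and follows the paper's proof. For positive excess, iterate \cref{lem:two-color-uniformity-reduction} down to $\KG(n,k)$ and contradict $\chi(\KG(n,k))=n-2k+2\ge3$; at $n=pk$, the forced last set makes every component a rotation $p$-cycle of ordered partitions, which is odd for odd $p$ and properly $2$-colorable for even $p$. Your elementary substitute for Lov\'asz also works (a triangle in $\KG(n,k)$ when $n\ge3k$, which covers all $p\ge3$, and the odd $(2k+1)$-cycle in $\KG(2k+1,k)\subseteq\KG(n,k)$ for $p=2$), but note that three disjoint $k$-sets need $n\ge3k$, not merely $n\ge2k+1$ as your wording suggests.
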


\begin{proof}
First suppose that $n\geq pk+1$ and that $p\geq3$.
If $\KSh_p(n,k)$ had a proper two-coloring, repeated application of
\cref{lem:two-color-uniformity-reduction} would give a proper two-coloring
of
\[
 \KSh_2(n,k)=\KG(n,k).
\]
This is impossible, since Lov\'asz's theorem~\cite{Lovasz1978} gives
\[
 \chi(\KG(n,k))=n-2k+2\geq3.
\]
For $p=2$, the same conclusion for $n\geq2k+1$ follows directly from the
same formula.

It remains to inspect zero excess, so let $n=pk$.
For $p\geq3$, the complement of the union of a vertex
$(A_1,\ldots,A_{p-1})$ is the unique remaining $k$-set $A_p$.
Its two neighbors are obtained by a left or right cyclic shift of the
ordered partition $(A_1,\ldots,A_p)$.
Hence every connected component of $\KSh_p(pk,k)$ is a cycle of length
$p$.
For $p=2$, the graph $\KSh_2(2k,k)=\KG(2k,k)$ is a matching, each $k$-set
being adjacent only to its complement.
Thus zero excess is two-colorable exactly when $p$ is even, while for odd
$p$ it is already a disjoint union of odd cycles.
Combining this observation with the positive-excess argument proves the
formula.
\end{proof}

There is also a useful projection which, for even uniformity, makes the
period-two obstruction in the history proof completely explicit.

\begin{proposition}[Alternating-union projection]
\label{prop:alternating-union-projection}
 Let $p\geq2$ and put $s=\lfloor p/2\rfloor$.
For every $n\geq pk$, the map
\[
 \pi(A_1,\ldots,A_{p-1})
 =\bigcup_{\substack{1\leq i\leq p-1\\ i\text{ odd}}}A_i
\]
is a graph homomorphism
\[
 \KSh_p(n,k)\longrightarrow\KG(n,sk).
\]
Consequently
\[
 \chi(\KSh_p(n,k))
 \leq \chi(\KG(n,sk))
 =n-2\lfloor p/2\rfloor k+2.
\]
In particular, for every even $p$ and every $r\geq2$,
\[
 C(p,r)\geq r-1.
\]
\end{proposition}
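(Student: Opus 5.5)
The plan is to verify the homomorphism property directly and then read off the two consequences from known Kneser-graph chromatic numbers.

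\emph{Well-definedness.} Take a vertex $(A_1,\ldots,A_{p-1})$ of $\KSh_p(n,k)$. The odd indices in $[p-1]$ are $1,3,\ldots$, and there are exactly $\lceil (p-1)/2\rceil=\lfloor p/2\rfloor=s$ of them. The sets are pairwise disjoint $k$-sets, so $\pi$ is a disjoint union of $s$ of them. Hence $|\pi|=sk$, and $\pi$ is a vertex of $\KG(n,sk)$. Nonemptiness of this graph is not an issue, since $2sk\le pk\le n$.

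\emph{Adjacency.} Take an edge between $(A_1,\ldots,A_{p-1})$ and $(A_2,\ldots,A_p)$, where $A_1,\ldots,A_p$ are pairwise disjoint. The first image is the union of the $A_i$ with $i$ odd, $1\le i\le p-1$. The second tuple has $A_{j+1}$ in position $j$, so its image is the union of the $A_i$ with $i$ even, $2\le i\le p$. These two index sets are disjoint, so the images are disjoint. Thus $\pi$ maps edges to edges and is a graph homomorphism.

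\emph{Chromatic bound.} Composing any proper coloring of $\KG(n,sk)$ with $\pi$ gives a proper coloring of $\KSh_p(n,k)$. Lov\'asz's theorem $\chi(\KG(n,sk))=n-2sk+2$ applies because $n\ge 2sk$, and this gives the displayed inequality.

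\emph{Lower bound on the threshold.} For even $p$ we have $2s=p$, so $\chi(\KSh_p(n,k))\le n-pk+2$. If the excess is $n-pk=r-2$, then $\KSh_p(n,k)$ has a proper $r$-coloring; such $n,k$ exist, for instance $k=1$ and $n=p+r-2$, using $r\ge2$. By the definition of $C(p,r)$, the threshold must then exceed $r-2$, so $C(p,r)\ge r-1$.

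I expect no real obstacle here. The only points needing care are the parity count showing that exactly $s$ odd indices occur, and checking that the index shift really separates the two images.
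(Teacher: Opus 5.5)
Your proposal is correct and follows the paper's proof essentially verbatim. The index shift separates odd positions of the first tuple from even indices of the second, Lov\'asz's formula gives the chromatic bound, and for even $p$ excess $r-2$ yields a proper $r$-coloring, which forces $C(p,r)\ge r-1$. Your explicit checks that there are exactly $s$ odd indices and that $n\ge 2sk$ are details the paper leaves implicit.
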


\begin{proof}
If
\[
 (A_1,\ldots,A_{p-1})
 \quad\text{and}\quad
 (A_2,\ldots,A_p)
\]
are adjacent, then $A_1,\ldots,A_p$ are pairwise disjoint.
Their images under $\pi$ are respectively
\[
 \bigcup_{\substack{1\leq i\leq p-1\\ i\text{ odd}}}A_i
 \quad\text{and}\quad
 \bigcup_{\substack{2\leq i\leq p\\ i\text{ even}}}A_i.
\]
These are disjoint $sk$-sets, which proves the homomorphism claim.
Lov\'asz's formula gives
\[
 \chi(\KG(n,sk))=n-2sk+2
 =n-2\lfloor p/2\rfloor k+2.
\]
If $p$ is even, this is $n-pk+2$.
Finally, in that case take $n=pk+r-2$.
The displayed homomorphism supplies a proper coloring with at most $r$
colors at excess $r-2$, so the threshold cannot be smaller than $r-1$.
\end{proof}

The alternating-union projection is only an upper bound for the chromatic
number, so it does not settle the composite-uniformity problem for
$r\geq3$.
It does, however, identify the period-two history vectors
$(E,F,E,F)$ with a genuine global feature of even Kneser--shift graphs,
rather than merely an artefact of the proof.

\subsection{A sharper quantitative Kneser--shift bound}
\label{sec:appendix-sharper-ksh-bound}

The rank-sum labeling in the proof of the Kneser--shift theorem can be compressed.
This improves the leading constant in its quantitative upper bound.

We use the following version of the Kneser--Tucker lemma, which is exactly \cref{lem:kneser-tucker}.
Let $p$ be prime and let $\P_p(n,k)$ be the poset of nonzero $p$-tuples
$\mathcal A=(\mathcal A_1,\ldots,\mathcal A_p)$ of pairwise cross-disjoint families of $k$-sets, ordered coordinatewise.
If
\[
 \mu=(\mu_1,\mu_2):\P_p(n,k)\longrightarrow\mathbb Z_p\times[m]
\]
is equivariant under cyclic coordinate shift and satisfies
\begin{equation}
 \mathcal A\leq\mathcal B\ \text{ and }\
 \mu_2(\mathcal A)=\mu_2(\mathcal B)
 \quad\Longrightarrow\quad
 \mu_1(\mathcal A)=\mu_1(\mathcal B),
 \label{eq:appendix-ksh-tie}
\end{equation}
then
\begin{equation}
 n-pk<m-1.
 \label{eq:appendix-ksh-tucker-conclusion}
\end{equation}
This is the only topological input below.

Recall the finite posets used for the recursive histories:
\[
 T_{p-1}=[r]\quad\text{with the antichain order},
 \qquad
 T_j=\mathcal D(T_{j+1})\quad(0\leq j\leq p-2),
\]
where $\mathcal D(T)$ is the poset of downsets of $T$.
Put
\[
 M=|T_1|=t_{p-1}(r).
\]

\begin{proposition}\label{prop:appendix-sharper-ksh-bound}
For every prime $p$ and every $r\in\mathbb N$,
\begin{equation}
 C(p,r)\leq (p-1)t_{p-1}(r)-1.
 \label{eq:appendix-sharper-ksh-bound}
\end{equation}
In particular,
\begin{equation}
 C(3,r)\leq 2^{r+1}-1.
 \label{eq:appendix-sharper-ksh-triangle}
\end{equation}
\end{proposition}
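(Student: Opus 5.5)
The plan is to rerun the proof of \cref{thm:kneser-shift} through \cref{lem:kneser-tucker} with the same history vector $D(\mathcal A)\in(\mathfrak D_{p,r}^p)^*$, but with a sharper rank. With $m=(p-1)M$, the conclusion $n-pk<m-1$ of \cref{lem:kneser-tucker} rules out proper $r$-colorings whenever $n-pk\geq(p-1)M-1$. That is exactly \eqref{eq:appendix-sharper-ksh-bound}, and it gives \eqref{eq:appendix-sharper-ksh-triangle} for $p=3$, where $M=2^r$. So everything reduces to finding an equivariant labeling $\mu=(\mu_1,\mu_2)$ of the realized history vectors, with $\mu_2$ taking values in $[(p-1)M]$ instead of $[pM-1]$, and with the tie property \eqref{eq:appendix-ksh-tie}.

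For the rank, take $E=(E_1,\ldots,E_p)$, each $E_i$ a downset of $T_1$. For $x\in T_1$ let $a_x=|\{i:x\in E_i\}|$, and set
\[
 \mu_2(E)=\sum_{x\in T_1}\min\{a_x,p-1\}=\sum_{i}|E_i|-\Bigl|\bigcap_iE_i\Bigr|.
\]
This is invariant under cyclic shifts and is at most $(p-1)M$. It is at least $1$ because a nonconstant vector has some $x$ with $1\leq a_x\leq p-1$. Monotonicity along $\mathcal A\leq\mathcal B$ follows from \cref{lem:histories}(i), as before. The low-case and high-case bookkeeping in the proof of \cref{lem:kneser-tucker} does not change.

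The difficulty is the tie property. If $E\leq F$ and $\mu_2(E)=\mu_2(F)$, then $F$ arises from $E$ only by adding, for some $x$ with $a_x=p-1$, the single missing coordinate. So $\mu_1$ has to be constant under these completion moves. The first thing to try is to define $\mu_1$ from a completion-invariant shadow of $E$. One candidate is the cyclic pattern of the sets $\{x:a_x\leq p-2,\ x\notin E_i\}$, $i\in[p]$, with the sign taken from the lexicographically least rotation of its orbit. This shadow is unchanged by completions. To use it, one would also need to prove that it is never constant on realized vectors $D(\mathcal A)$. Here I would try to use the nondomination \eqref{eq:D-nondomination}, together with the structure of the partially supported vectors (where some $D_i=\emptyset$ and the next is $\{\emptyset\}$).

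I expect this last step to be the main obstacle. The witnesses of \eqref{eq:D-nondomination} may all have $a_x=p-1$, and then such a shadow can become constant. If that happens, I would instead verify the tie condition directly on the Tucker chain of \cref{lem:ziegler-tucker}. Equal high-case $\ell$-values along the chain differ only by completions, so one can try to charge such ties against the strict growth of the underlying families $\binom{X_i}{k}$. This may require refining $\ell$ by a parity or lexicographic term that does not enlarge the range.
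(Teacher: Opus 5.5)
\textbf{Gap: the sign $\mu_1$ is never constructed, and with your rank it cannot be.}

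Several parts of your setup are correct. With $m=(p-1)M$, \cref{lem:kneser-tucker} gives exactly the claimed threshold. Your rank $\mu_2(E)=\sum_{x}\min\{a_x,p-1\}$ is shift-invariant with values in $[(p-1)M]$. You also correctly see that a tie $E\le F$, $\mu_2(E)=\mu_2(F)$ can only complete elements with $a_x=p-1$.

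But the argument stops where the real work is. No sign $\mu_1$ is ever given, and with this rank no sign can be read off the history vector. Here is a counterexample for $p=3$:
\begin{itemize}
\item Put the bottom element $\emptyset$ of $T_1=2^{[r]}$ in all three coordinates.
\item Take four singletons $s_0,\dots,s_3$ (so $r\ge4$). Each lies either in all coordinates or in all but one.
\item Let $S_i$ be the set of $s_j$ missing from coordinate $i$.
\end{itemize}
All such vectors are downset vectors with $\mu_2=10$. Whenever every $S_i$ is nonempty, they even satisfy the cyclic nondomination \eqref{eq:D-nondomination}.

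Start from $E$ with $(S_1,S_2,S_3)=(\{s_0,s_1\},\{s_2\},\{s_3\})$. In each round, complete an element of a two-element $S_i$ (a tie going up), then delete it from a different coordinate (a tie going down). Moving the spare element around for five rounds reaches $(\{s_3\},\{s_0,s_1\},\{s_2\})$, which is $\sigma E$. Every intermediate vector keeps all $S_i$ nonempty. The tie property forces $\mu_1$ to be constant along this zig-zag, which contradicts $\mu_1(\sigma E)=\mu_1(E)+1$.

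So your shadow idea fails, since here the shadow is constant, as you feared. To rescue the approach you would have to prove that such zig-zags never occur among realized comparable pairs $D(\mathcal A)\le D(\mathcal B)$, and the proposal has no tool for that. The fallback of refining $\ell$ and checking only the Tucker chain just restates the missing construction. The labeling $\lambda$ must be fixed before Ziegler's lemma produces the chain.

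\textbf{The missing idea is to change the rank, not the sign.} The paper does the following:
\begin{itemize}
\item Fix an ordering $x_1,\dots,x_M$ of $T_1$ and put $I_j(E)=\{i\in\mathbb{Z}_p:x_j\in E_i\}$.
\item Let $j(E)$ be the least $j$ with $\emptyset\ne I_j(E)\subsetneq\mathbb{Z}_p$.
\item Set $\nu_2(E)=(p-1)(j(E)-1)+|I_{j(E)}(E)|\in[(p-1)M]$.
\item Set $\nu_1(E)=\eta(I_{j(E)}(E))$, where $\eta$ is an equivariant phase on the nonempty proper subsets of $\mathbb{Z}_p$. Translation acts freely on these subsets because $p$ is prime.
\end{itemize}
Equal $\nu_2$ determines both $j$ and $|I|$. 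For $E\le F$ one has $I_j(E)\subseteq I_j(F)$, so equal cardinality forces $I(E)=I(F)$ and hence equal signs.

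This works on all nonconstant vectors and uses neither the downset structure nor nondomination. Your closing mention of a lexicographic term points in this direction, but it has to replace your rank rather than refine $\ell$.
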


\begin{proof}
Suppose that $c$ is a proper $r$-coloring of $\KSh_p(n,k)$.
The recursive construction in the proof of \cref{thm:kneser-shift} assigns to every
$\mathcal A\in\P_p(n,k)$ a vector
\[
 D(\mathcal A)=(D_1(\mathcal A),\ldots,D_p(\mathcal A))
 \in\mathcal D(T_1)^p
\]
with the following three properties:
\begin{enumerate}[label=\textup{(\roman*)}]
\item $D(\mathcal A)$ is nonconstant;
\item the assignment is equivariant under cyclic coordinate shift;
\item if $\mathcal A\leq\mathcal B$, then
$D_i(\mathcal A)\subseteq D_i(\mathcal B)$ for every $i\in[p]$.
\end{enumerate}
We construct a more economical signed labeling of the possible history vectors.

Fix an arbitrary ordering
\[
 T_1=\{x_1,\ldots,x_M\}.
\]
For a nonconstant vector
$E=(E_1,\ldots,E_p)\in\mathcal D(T_1)^p$, define
\[
 I_j(E)=\{i\in\mathbb Z_p:x_j\in E_i\}.
\]
Because $E$ is nonconstant, there is an index $j$ for which
$I_j(E)$ is a nonempty proper subset of $\mathbb Z_p$.
Let $j(E)$ be the least such index and put $I(E)=I_{j(E)}(E)$.

The cyclic group $\mathbb Z_p$ acts freely by translation on its nonempty proper subsets.
Indeed, since $p$ is prime, a subset fixed by a nonzero translation is either empty or all of $\mathbb Z_p$.
Consequently, we may choose a map
\[
 \eta:\{I:\varnothing\neq I\subsetneq\mathbb Z_p\}
 \longrightarrow\mathbb Z_p
\]
such that
\begin{equation}
 \eta(I+a)=\eta(I)+a
 \qquad(a\in\mathbb Z_p).
 \label{eq:appendix-phase-map}
\end{equation}
For example, choose one representative from each translation orbit, give it value $0$, and extend by \eqref{eq:appendix-phase-map}.

Define
\begin{align*}
 \nu_1(E)&=\eta(I(E)),\\
 \nu_2(E)&=(p-1)(j(E)-1)+|I(E)|.
\end{align*}
Then
\[
 \nu=(\nu_1,\nu_2):
 \bigl(\mathcal D(T_1)^p\bigr)^*
 \longrightarrow
 \mathbb Z_p\times[(p-1)M]
\]
is equivariant, where the star means that the constant vectors are omitted.
Indeed, a cyclic shift translates $I(E)$, while leaving both $j(E)$ and $|I(E)|$ unchanged.

We verify the tie condition.
Suppose that $E_i\subseteq F_i$ for all $i$ and that
$\nu_2(E)=\nu_2(F)$.
The definition of $\nu_2$ uniquely determines the pair
$(j(E),|I(E)|)$, so
\[
 j(E)=j(F)
 \qquad\text{and}\qquad
 |I(E)|=|I(F)|.
\]
At their common selected element $x_{j(E)}$, coordinatewise containment gives
\[
 I(E)\subseteq I(F).
\]
The two sets have equal cardinality, hence $I(E)=I(F)$ and therefore
$\nu_1(E)=\nu_1(F)$.

The composite labeling
\[
 \mu=\nu\circ D:
 \P_p(n,k)\longrightarrow
 \mathbb Z_p\times[(p-1)M]
\]
is thus equivariant and satisfies \eqref{eq:appendix-ksh-tie}.
Applying the Kneser--Tucker lemma with $m=(p-1)M$ gives
\[
 n-pk<(p-1)M-1.
\]
It follows from the definition of the optimal threshold that
\[
 C(p,r)\leq(p-1)M-1=(p-1)t_{p-1}(r)-1.
\]
For $p=3$, one has $T_1=2^{[r]}$ and hence $M=2^r$, giving
\eqref{eq:appendix-sharper-ksh-triangle}.
\end{proof}

For comparison, the rank-sum labeling used in the main proof gives
$C(p,r)\leq p\,t_{p-1}(r)-2$.
The improvement comes from recording only the first element of $T_1$ whose
membership pattern across the $p$ history downsets is nonconstant, together with
the size of that pattern.
For $p=2$, \cref{prop:appendix-sharper-ksh-bound} recovers the exact bound
$C(2,r)=r-1$.

There is also a small improvement to the lower bound for $p=3$.
As in the main text, write
\[
 B(s)=\binom{s}{\lfloor s/2\rfloor},
\]
and set $B(0)=1$.

\begin{proposition}\label{prop:appendix-sharper-ksh-lower-bound}
For every $r\geq1$,
\begin{equation}
 C(3,r)\geq 2B(r-1)-2.
 \label{eq:appendix-sharper-ksh-lower-bound}
\end{equation}
Consequently,
\begin{equation}
 2\binom{r-1}{\lfloor(r-1)/2\rfloor}-2
 \leq C(3,r)\leq2^{r+1}-1.
 \label{eq:appendix-combined-ksh-bounds}
\end{equation}
The new lower bound agrees with $B(r)-2$ when $r$ is even and is strictly larger when $r\geq3$ is odd.
\end{proposition}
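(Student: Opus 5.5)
The plan is to exhibit, for $k=1$, an explicit proper $r$-coloring of $\KSh_3(n,1)$ with excess $n-3=2B(r-1)-3$. By the definition of the optimal threshold this gives $C(3,r)\geq 2B(r-1)-2$. The construction is a ``doubled'' version of the Poljak--R\"odl antichain coloring used in the proof of \cref{prop:kneser-shift-bounds}. The point is that adjacency in $\KSh_3$ also requires $A_1\neq A_3$, so one extra color can be spent on pairs of twin points.

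For $r=1$ the bound reads $C(3,1)\geq0$, which is trivial, so assume $r\geq2$. Let $N=B(r-1)$ and choose distinct sets $F_1,\ldots,F_N\in\binom{[r-1]}{\lfloor (r-1)/2\rfloor}$. Take the ground set $[n]$ with $n=2N$, partitioned into $N$ pairs $\{x_j,x_j'\}$, and write $F(x)=F_j$ for both points of the $j$th pair. Vertices of $\KSh_3(n,1)$ are ordered pairs $(a,b)$ of distinct points. Color them as follows:
\begin{itemize}
\item if $a$ and $b$ are twins, i.e.\ lie in the same pair, set $c(a,b)=r$;
\item otherwise set $c(a,b)=\min\bigl(F(b)\setminus F(a)\bigr)\in[r-1]$.
\end{itemize}
In the second case $F(a)\neq F(b)$ are distinct sets of equal size, so this minimum exists.

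To check properness, take an edge $(a,b)(b,c)$ with $a,b,c$ pairwise distinct.
\begin{itemize}
\item If both colors equal $r$, then $a$ and $c$ are both twins of $b$. Since pairs have size two, this forces $a=c$, a contradiction.
\item If exactly one color is $r$, the colors differ trivially.
\item If neither is $r$, then $c(a,b)\in F(b)$ while $c(b,c)\in F(c)\setminus F(b)$, so the two colors differ.
\end{itemize}
Hence $c$ is a proper $r$-coloring at excess $2B(r-1)-3$. This proves \eqref{eq:appendix-sharper-ksh-lower-bound}, and combining it with \cref{prop:appendix-sharper-ksh-bound} gives \eqref{eq:appendix-combined-ksh-bounds}.

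For the comparison with $B(r)-2$, I would use Pascal's rule together with the symmetry of binomial coefficients:
\[
B(r)=\binom{r-1}{\lfloor r/2\rfloor-1}+\binom{r-1}{\lfloor r/2\rfloor}.
\]
For even $r$ the two terms coincide, giving $B(r)=2B(r-1)$, so the new lower bound equals $B(r)-2$. For odd $r\geq3$ one term is the central coefficient $B(r-1)$ and the other is strictly smaller, so $B(r)<2B(r-1)$ and the new bound is strictly larger. No step here looks like a real obstacle; the only point needing care is the twin case in the properness check, where it matters that each pair has exactly two elements.
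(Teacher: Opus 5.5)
Your proposal is correct and matches the paper's proof: the same doubled antichain coloring of $\KSh_3(2B(r-1),1)$ on $B(r-1)$ twin pairs with color $r$ reserved for twins, the same properness check, and the same Pascal-rule parity comparison. The only cosmetic difference is that you run the construction already for $r=2$, where it is vacuous, whereas the paper dismisses $r\le 2$ via $C(3,r)\ge 0$.
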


\begin{proof}
For $r\leq2$ the asserted lower bound is zero and follows from
$C(3,r)\geq0$, so assume that $r\geq3$.
Put $q=r-1$ and $t=B(q)$.
Choose distinct sets
\[
 F_1,\ldots,F_t\in\binom{[q]}{\lfloor q/2\rfloor}.
\]
We construct a proper $r$-coloring of $\KSh_3(2t,1)$.
Identify the ground set with $[t]\times\{0,1\}$.
For two distinct points $x=(i,a)$ and $y=(j,b)$, color the ordered pair $(x,y)$ as follows:
\[
 c(x,y)=
\begin{cases}
 r,&i=j,\\
 \min(F_j\setminus F_i),&i\neq j.
\end{cases}
\]
When $i\neq j$, the difference $F_j\setminus F_i$ is nonempty because the two sets are distinct and have equal size.

Consider three distinct points
\[
 x=(i,a),\qquad y=(j,b),\qquad z=(\ell,d).
\]
We show that $c(x,y)\neq c(y,z)$.
If $i,j,\ell$ are pairwise distinct, then
\[
 c(x,y)\in F_j,
 \qquad
 c(y,z)\notin F_j,
\]
so the colors differ.
If $i=j$, then $c(x,y)=r$.
The equality $j=\ell$ is impossible as well, since a two-element fiber cannot contain the three distinct points $x,y,z$; hence $c(y,z)\in[q]$.
The case $j=\ell$ is symmetric.
Finally, if $i=\ell\neq j$, then
\[
 c(x,y)\in F_j,
 \qquad
 c(y,z)\in F_i\setminus F_j,
\]
and again the colors differ.
These cases exhaust all possibilities, so the coloring is proper.

It exists at
\[
 n=2B(r-1),\qquad k=1,
\]
whose excess is $n-3=2B(r-1)-3$.
Therefore the least forcing excess satisfies
\[
 C(3,r)\geq2B(r-1)-2.
\]
For even $r=2m$,
$2B(r-1)=2\binom{2m-1}{m-1}=\binom{2m}{m}=B(r)$.
For odd $r=2m+1$, one has
\[
 2B(r-1)=2\binom{2m}{m}
 >\binom{2m+1}{m}=B(r),
\]
which proves the final assertion.
\end{proof}

\section{Palette-free canonical consequences}
\label{sec:appendix-canonical}

\subsection{A palette-free form of the rotation lemma}
\label{subsec:canonical-rotation}

The dense rotation theorem has a useful canonical consequence which does not
require a bound on the original palette.
We state it in the language of equivalence relations.

\begin{lemma}[Dense kernel rotation]
\label{lem:dense-kernel-rotation}
Let $q$ be prime, let $X$ be a finite alphabet, and fix $u\in X$.
For every $m\in\mathbb N$ and $\eta>0$ there are $N,k\in\mathbb N$ with the
following property.
For every equivalence relation $\sim$ on $X^N$, there are pairwise disjoint
blocks
\[
 B_{b,1},\ldots,B_{b,q}\qquad (b\in[m]),
\]
all of size $k$, covering at least $(1-\eta)N$ coordinates, such that the
following holds.
Fix $b$, two lists
\[
 (x_1,\ldots,x_{q-1}),\ (y_1,\ldots,y_{q-1})\in X^{q-1},
\]
and arbitrary (possibly different) constant assignments on all the other
selected blocks.
Put $u$ on every unselected coordinate.
Then the two resulting words with patterns
\[
 (x_1,\ldots,x_{q-1},u),\qquad
 (y_1,\ldots,y_{q-1},u)
\]
on $(B_{b,1},\ldots,B_{b,q})$ are equivalent if and only if the two words
obtained by replacing these patterns by
\[
 (u,x_1,\ldots,x_{q-1}),\qquad
 (u,y_1,\ldots,y_{q-1})
\]
are equivalent.
\end{lemma}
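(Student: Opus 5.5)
The plan is to rerun the proof of \cref{thm:rotation-system} verbatim, with $p=q$, baseline $u$, and the same windows $W_b$, atom sizes $a_b$, and reverse-order stage selection $b=m,m-1,\ldots,1$. The only change is the profile coloring used at each stage. Since $\sim$ may have arbitrarily many classes, we cannot record the classes of the profile words themselves. Instead we record only the \emph{partition} of the finite profile index set that $\sim$ induces on those words.

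Concretely, at stage $b$ let $S_b$ be the index set of profile entries used in the proof of \cref{thm:rotation-system}. An entry is an atom-constant assignment on the $h_b$ atoms of $W_1,\ldots,W_{b-1}$, a constant assignment on the $q(m-b)$ blocks already selected in $W_{b+1},\ldots,W_m$, and a list $(z_1,\ldots,z_{q-1})\in X^{q-1}$. Thus $|S_b|=|X|^{h_b+q(m-b)+q-1}$. For an ordered $(q-1)$-tuple $(A_1,\ldots,A_{q-1})$ of disjoint $q_b$-sets of atoms in $W_b$, each $s\in S_b$ determines a word $w_s\in X^N$. It carries the assignments of $s$, has $z_i$ on the union of the atoms of $A_i$, and has $u$ everywhere else. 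Color the tuple by the partition $\{(s,s')\in S_b^2: w_s\sim w_{s'}\}$ of $S_b$. The number of colors is at most the Bell number $r_b=\operatorname{Bell}(|S_b|)$, which depends only on $X,q,m$ and the earlier parameters, never on $\sim$. Then choose $d_b\geq C(q,r_b)$ and $q_b$ as in \eqref{eq:excess-ratio}, exactly as before. This is legitimate because $q$ is prime, so \cref{thm:kneser-shift} applies.

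Apply \cref{thm:kneser-shift} to the $qq_b+d_b$ atoms of $W_b$. It gives disjoint $A_1,\ldots,A_q$ such that $(A_1,\ldots,A_{q-1})$ and $(A_2,\ldots,A_q)$ induce the same partition of $S_b$. Let $B_{b,i}$ be the union of the atoms of $A_i$. For the first tuple, the entry $s$ with list $x$ is the word with pattern $(x_1,\ldots,x_{q-1},u)$ on $(B_{b,1},\ldots,B_{b,q})$. For the second tuple, the same entry is the word with pattern $(u,x_1,\ldots,x_{q-1})$. Two entries $s,s'$ may carry different lists $x,y$ and different context assignments. So equality of the two partitions says precisely that the unrotated pair is equivalent iff the rotated pair is, for every pair of contexts. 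This is the asserted biconditional at stage $b$.

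The only point needing care, which I expect to be the main obstacle, is survival across stages; the relation $\sim$ is fixed, so this is purely about index sets. A biconditional obtained at stage $b$ concerns pairs of words whose restrictions to $W_1,\ldots,W_{b-1}$ are arbitrary atom-constant assignments. It also allows arbitrary constant values on later-selected blocks and $u$ elsewhere. The later stages $b'<b$ restrict $W_{b'}$ to words that are constant on the blocks chosen there and equal to $u$ off them. These are special atom-constant assignments at scale $a_{b'}$, because each chosen block is a union of $a_{b'}$-atoms and $a_{b'}$ is a multiple of $a_b$. So every pair of words relevant to the final statement is already a pair indexed in $S_b$, and all $m$ biconditionals hold simultaneously. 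The density computation $mqk/N>1-\eta$ is unchanged from the proof of \cref{thm:rotation-system}, which completes the plan.
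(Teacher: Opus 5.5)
Your proof is correct. It reaches the lemma by reopening the proof of \cref{thm:rotation-system}, whereas the paper uses that theorem as a black box.

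The paper encodes $\sim$ as a $2$-coloring $h$ of $(X\times X)^N$, with $h=1$ exactly when the two component words are $\sim$-equivalent. It then applies \cref{thm:rotation-system} to $h$ with alphabet $X\times X$, baseline $(u,u)$, and $r=2$. A constant letter on another selected block is then an ordered pair of letters, which is what lets the two words carry different contexts. The rotation identity for $h$ with $z_i=(x_i,y_i)$ is literally the asserted biconditional.

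Unfolded, the two arguments coincide. The stage-$b$ profile of $h$ is indexed by pairs $(s,s')\in S_b^2$ and records $[w_s\sim w_{s'}]$, which is exactly your induced partition of $S_b$.

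The paper's packaging is shorter and inherits the cross-stage survival and density bookkeeping without re-verification. The only cost is the crude color bound $2^{|S_b|^2}$ from the squared alphabet, in place of your $\mathrm{Bell}(|S_b|)$, which is harmless. Your version makes explicit why the unbounded palette of $\sim$ is irrelevant, and your survival check is sound.

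One small point: the divisibility $a_b\mid a_{b'}$ you invoke in the survival step is not needed. It suffices that the blocks chosen at a later stage $b'<b$, and their complement in $W_{b'}$, are unions of the $a_{b'}$-atoms of $W_{b'}$. The stage-$b$ profile already ranges over constant assignments on exactly those atoms.
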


\begin{proof}
Define a two-coloring of $(X\times X)^N$ by
\[
 h\bigl((a_1,b_1),\ldots,(a_N,b_N)\bigr)=
 \begin{cases}
  1,&(a_1,\ldots,a_N)\sim(b_1,\ldots,b_N),\\
  0,&\text{otherwise}.
 \end{cases}
\]
Apply the dense rotation theorem to $h$, with alphabet $X\times X$ and
baseline $(u,u)$.
An arbitrary constant letter on another selected block is now an arbitrary
ordered pair of letters of $X$, so the contextual conclusion of that theorem
is exactly the asserted equivalence of kernels.
\end{proof}

For a cyclic alphabet this already replaces the first two steps in Shaw's
invariantization induction by one dense step.

\begin{corollary}[Dense one-edge interchangeability]
\label{cor:dense-one-edge-interchangeability}
Let $p$ be prime and identify $C_p$ with $\mathbb Z_p$.
Fix $a\in\mathbb Z_p$.
For every $m\in\mathbb N$ and $\eta>0$ there are $N,K\in\mathbb N$ such that
every equivalence relation on $C_p^N$ has a $C_p$-block copy of $C_p^m$,
of active proportion at least $1-\eta$ and block size $K$, on which the
induced relation is $\{a,a+1\}$-interchangeable.
Here interchangeability means that changing a fixed occurrence of $a$ to
$a+1$, or conversely, does not change any labelled pullback kernel.
The block copy may be chosen label-balanced.
\end{corollary}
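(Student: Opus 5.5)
The plan is to imitate the first two steps of the one-orbit fusion (\cref{lem:one-orbit}), replacing colorings by equivalence relations via \cref{lem:dense-kernel-rotation}. Relabel $\mathbb Z_p$ by a translation so that $a=0$; translations are induced by the cyclic action, so this costs nothing. The target then is the identification of $0$ and $1$, which is exactly the $E_2$-insensitivity of \cref{lem:one-orbit} in the orbit $Y=\mathbb Z_p$ with $x_i=i-1$. Here ``insensitivity'' is replaced by ``the labelled pullback kernel is unchanged.'' Concretely, for any two words of the form $\Phi(\ldots)$, changing one fixed occurrence of $0$ to $1$ in the argument of one of them must preserve whether the two words are $\sim$-equivalent.

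First I apply \cref{lem:dense-kernel-rotation} with $q=p$, alphabet $X=\mathbb Z_p$, baseline $u=0$, $m$ gadgets and loss $\eta$. This gives blocks $B_{b,1},\ldots,B_{b,p}$ covering at least $(1-\eta)N$ coordinates, with $u$ elsewhere. On each gadget I encode a letter $y$ by $w(y)=(y,y+1,\ldots,y+p-1)$, with labels $\id,\tau,\ldots,\tau^{p-1}$. This is a label-balanced $C_p$-block map of block size $K=pk$ and active proportion at least $1-\eta$.

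Next I use $w(1)=(1,2,\ldots,p-1,0)$ and $w(0)=(0,1,\ldots,p-1)$. The first is a pattern of the form $(x_1,\ldots,x_{p-1},u)$, and the second is its rotation $(u,x_1,\ldots,x_{p-1})$. \cref{lem:dense-kernel-rotation} is applied to pairs of words that differ only on gadget $b$, with arbitrary contexts. It says that for any second word $v$ with pattern $(y_1,\ldots,y_{p-1},u)$ on gadget $b$, the relation $w(1)\cdot\text{ctx}\sim v$ holds if and only if the rotated versions are equivalent.

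The main obstacle is that the lemma compares two words that both carry patterns on the \emph{same} gadget $b$ and both end in $u$, while the contexts are arbitrary. A general second word $\Phi(z)$ has pattern $w(z_b)$ on gadget $b$, and this ends in $u$ only when $z_b=1$. To compare $\Phi(x)$ with an arbitrary $\Phi(z)$, I would handle the cases as follows.

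If $z_b\in\{0,1\}$, I would rotate both words simultaneously. The case $z_b=0$ is covered by the ``if and only if'' applied to the rotated side.

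If $z_b\notin\{0,1\}$, I would instead use the lemma with the pattern of $\Phi(z)$ on gadget $b$ pre-rotated. The hope is that contexts and patterns can be chosen so that the relevant pair genuinely has the form required by the lemma. If this fails for general $z_b$, I would apply \cref{lem:dense-kernel-rotation} to the product relation on $X^N\times X^N$, as in its own proof. That is, I would encode the pair $(\Phi(x),\Phi(z))$ as a single word over $X\times X$ with baseline $(0,0)$, and rotate only the first component. This requires a variant of the lemma where the baseline condition is imposed on one coordinate of the pair. Because the statement fixes a single $a$, only one such step is needed, and no iteration of $E_t$ appears. This is exactly why the corollary stops at one edge.
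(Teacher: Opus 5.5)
There is a genuine gap: you aim at a stronger statement than the corollary, and that stronger statement is false. You read interchangeability as saying that changing an $a$ to $a+1$ in the argument of \emph{one} of two compared words preserves whether they are $\sim$-equivalent. Comparing a word with itself, this forces $\Phi(x)\sim\Phi(x')$ whenever $x'$ is obtained from $x$ by such a change. That is the full identification of $a$ and $a+1$, i.e.\ coordinatewise $E_2$-insensitivity, as you say explicitly. For equivalence relations this cannot be achieved. If $\sim$ is equality on $C_p^N$, every $C_p$-block map of positive block size is injective, because each variable appears on some coordinate through a group element, which acts bijectively.

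The same example rules out the one-sided variant of \cref{lem:dense-kernel-rotation} you propose at the end, in which only the first component of the pair word over $X\times X$ is rotated. Applied to two equal words, it would again give $\Phi(x)\sim\Phi(x')$. Your case analysis also does not cover a mixed pair such as $x_b=a+1$, $z_b=a$. Here $w(a)$ does not end in $u$ and $w(a+1)$ does not begin with $u$, so the lemma applies in neither direction.

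The statement is weaker. A labelled pullback kernel is the relation induced on the free coordinates of a face in which the coordinate $b$ in question is \emph{fixed}, so both compared words carry the same value there. Interchangeability says only that this kernel is unchanged when the common fixed value $a+1$ is replaced by $a$.

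With this reading, your first two paragraphs already constitute the paper's proof. The word $w(a+1)$ ends in the baseline $u=a$, and its rotation is $w(a)$. Apply the contextual conclusion of \cref{lem:dense-kernel-rotation} to two words that both carry $w(a+1)$ on gadget $b$ and arbitrary, possibly different, encoded patterns on the other gadgets. This conclusion is exactly the equality of the two kernels. Your block-size and label-balance bookkeeping is correct.

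No mixed cases and no variant of the lemma are needed. The corollary produces no identification of letters at all; obtaining swappability is precisely what the appendix leaves open.
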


\begin{proof}
Apply \cref{lem:dense-kernel-rotation} with $q=p$ and baseline $u=a$.
For each macrocoordinate $b$, encode $x\in\mathbb Z_p$ on its $p$ selected
blocks by
\[
 \sigma(x)=(x,x+1,\ldots,x+p-1).
\]
The word $\sigma(a+1)$ ends in $a$, and moving this last entry to the front
turns it into $\sigma(a)$.
The contextual assertion of \cref{lem:dense-kernel-rotation}, applied to an
arbitrary pair of substitutions into the remaining wildcards, therefore
says precisely that the two corresponding pullback kernels are equal.
Each translation label occurs on exactly one of the $p$ equal constituent
blocks, so the resulting block map is label-balanced.
\end{proof}

Taking $a=1$, \cref{cor:dense-one-edge-interchangeability} produces
$\{1,2\}$-interchangeability directly from an arbitrary equivalence
relation.
Thus the palette-independent difficulty in a dense version of Shaw's
argument can be moved past the initial, vacuous $[1]$-interchangeability
stage.
What remains is a dense mechanism producing $[2]$-swappability (and then its
higher analogues) while preserving the interchangeability already obtained.
The lemma above does not provide this coordinate swappability.

\subsection{The three-face obstruction}
\label{subsec:three-face-obstruction}

We next record why a black-box dense replacement for Shaw's original
swappability lemma would solve an additive Kneser--Ramsey problem which is
open even for two colors.

Let $X$ be an alphabet containing distinct letters $1,2,3$, with baseline
$1$, and let $\sim$ be an equivalence relation on $X^N$.
For disjoint $k$-sets $A,B\subset[N]$, let
\[
 \kappa(A,B)
\]
denote the labelled pullback of $\sim$ to the two-dimensional face which is
constant on $A$ and on $B$ and equal to $1$ elsewhere.

\begin{proposition}[Exact Kneser obstruction]
\label{prop:exact-kneser-obstruction}
Suppose that for some integer $D\geq0$ the following statement holds for every
$k$ and every equivalence relation on $X^{3k+D}$: there are pairwise
disjoint $k$-sets $A,B,C$ such that
\begin{equation}
 \label{eq:appendix-three-face-kernels}
 \kappa(B,C)=\kappa(A,C)=\kappa(A,B).
\end{equation}
Then
\[
 KG(3k+D,k)\longrightarrow_2 K_3
\]
for every $k$.
The implication remains valid if the hypothesis is restricted to
$[1]$-interchangeable equivalence relations.
\end{proposition}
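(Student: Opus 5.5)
Given a $2$-coloring $\chi:\binom{[3k+D]}k\to\{0,1\}$ of the vertices of $KG(3k+D,k)$, we will build an equivalence relation $\sim$ on $X^{3k+D}$ that encodes $\chi$ through its two-dimensional face kernels. The hypothesis then yields disjoint $A,B,C$ satisfying \eqref{eq:appendix-three-face-kernels}, and we will read off that $\chi(A)=\chi(B)=\chi(C)$, i.e.\ a monochromatic triangle. The encoding must be visible in $\kappa(A,B)$ and at the same time compatible with the ordered, labelled nature of the kernels.

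\textbf{Construction.} For a word $w\in X^N$, let $S_j(w)=\{i:w_i=j\}$ for $j=2,3$. We declare every word inequivalent to every other word except in the following case. Suppose $S_2(w)$ and $S_3(w)$ are disjoint $k$-sets and all remaining coordinates are $1$. Put $w\sim w'$ for two such words exactly when $S_2(w)=S_2(w')$ and $\chi(S_3(w))=\chi(S_3(w'))$; this is an equivalence relation. On the face $F(A,B)$ (constant on $A$ and on $B$, baseline $1$ elsewhere), the labelled pullback $\kappa(A,B)$ is a relation on $X^2$. In it, the pair $(2,3)$, meaning $2$ on $A$ and $3$ on $B$, is a distinguished letter-pair. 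The kernel records, among other things, whether $(2,3)$ is equivalent to itself only; that is trivially true.

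\textbf{Correction.} A single face sees only one word of each such shape, so this construction is wrong. Faces are compared with each other only through their labelled kernels, which are relations on the abstract $X^2$ and contain no cross-face data. The kernel must therefore itself carry $\chi$. We will instead make the kernel on $F(A,B)$ depend on the pair $(\chi(A),\chi(B))$. We declare $(2,1)\sim(1,2)$ on the face, i.e.\ $1_{\text{rest}}2_A\sim 1_{\text{rest}}2_B$, iff $\chi(A)=\chi(B)$. More precisely, we define $\sim$ on the words having exactly one $k$-set of $2$'s and $1$ elsewhere by $w\sim w'$ iff $\chi(S_2(w))=\chi(S_2(w'))$. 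All other words are singleton classes, except that $3$'s are treated in the same way if $[1]$-interchangeability is needed. Then $\kappa(A,B)$ identifies $(2,1)$ with $(1,2)$ iff $\chi(A)=\chi(B)$, and it encodes nothing else nontrivial. The equalities \eqref{eq:appendix-three-face-kernels} then say that the three statements ``$\chi(B)=\chi(C)$'', ``$\chi(A)=\chi(C)$'' and ``$\chi(A)=\chi(B)$'' have the same truth value. With two colors they cannot all be false, so all three hold, and $A,B,C$ is a monochromatic triangle in $KG(3k+D,k)$.

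\textbf{Main obstacle.} The delicate step is checking that no other entry of the labelled kernel leaks information that would make the three kernels unequal for a trivial reason. It must also be checked that the relation is $[1]$-interchangeable in the required sense, meaning that changing occurrences of the baseline letter $1$ does not alter any labelled pullback kernel. For this we will choose $\sim$ to depend only on $\chi$ applied to the $2$-support, with every other word kept singleton. We will then verify directly that each face kernel is either the identity or the identity plus the single identification of $(2,1)$ with $(1,2)$.
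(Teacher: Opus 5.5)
Your argument proves a different statement. In the paper, $KG(3k+D,k)\longrightarrow_2 K_3$ is the Ramsey arrow for \emph{edge} colorings. It says that every red--blue coloring of the edges (pairs of disjoint $k$-sets) contains a triangle whose three edges have one color. Compare the five-point profile theorem, which gives an edge $2$-coloring of $KG(3k+2,k)$ with no monochromatic triangle. You start instead from a vertex coloring $\chi$ and conclude $\chi(A)=\chi(B)=\chi(C)$. That vertex statement only says that the $3$-uniform Kneser hypergraph $\mathrm{KG}^{(3)}(3k+D,k)$ is not $2$-colorable. For $D\geq2$ this already follows from Alon--Frankl--Lov\'asz. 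At $D=2$ the vertex statement holds while the edge statement fails, so your conclusion cannot be upgraded. Your verification of the vertex version is fine, but it does not prove the proposition.

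The encoding is where the argument breaks.

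\textbf{Why your encoding fails.} A word $2^A1^{[N]\setminus A}$ lies on every face $F(A,B)$ through $A$. So the identifications $(2,1)\sim(1,2)$ amount to an equivalence relation on the vertices. They can only ever encode a partition of $\binom{[N]}{k}$, never an edge coloring.

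\textbf{The missing idea.} Use words with two distinct non-baseline letters, which pin down a single face, and identify the two such words living on that face. For every red edge $\{A,B\}$ put
\[
 2^A3^B1^{[N]\setminus(A\cup B)}\sim 2^B3^A1^{[N]\setminus(A\cup B)},
\]
and leave all other words as singletons. Each such word determines its $2$-set and its $3$-set. Hence these classes are pairwise disjoint, $\sim$ is an equivalence relation, and the classes are visible only on $F(A,B)$ and $F(B,A)$. Consequently $\kappa(A,B)$ is discrete on a blue edge, and on a red edge its only nonsingleton class is $(2,3)\sim(3,2)$. So $\kappa(B,C)=\kappa(A,C)=\kappa(A,B)$ says exactly that the triangle $A,B,C$ is monochromatic. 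Your first attempt actually used the right kind of words, but it identified words lying on different faces.

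Finally, $[1]$-interchangeability needs no extra treatment. With $\{1\}$ as the set of allowed fixed letters there is no nontrivial substitution, so the restricted hypothesis applies verbatim.
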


\begin{proof}
Start with an arbitrary red--blue coloring of the edges of
$KG(3k+D,k)$.
For every red edge $\{A,B\}$, make
\[
 2^A3^B1^{[N]\setminus(A\cup B)}
 \quad\text{and}\quad
 2^B3^A1^{[N]\setminus(A\cup B)}
\]
a two-element equivalence class, and leave all other words singleton.
These pairs are disjoint: a word occurring in one of them uniquely recovers
its set of $2$-coordinates and its set of $3$-coordinates.
Consequently, $\kappa(A,B)$ is discrete on a blue edge, whereas on a red
edge its only nonsingleton class is
\[
 (2,3)\sim(3,2).
\]
Thus \eqref{eq:appendix-three-face-kernels} gives a monochromatic triangle
in the original edge coloring.
The last assertion is automatic: with the set of allowed fixed letters equal
to $\{1\}$, there is no nontrivial letter substitution to make.
\end{proof}

In particular, the additional interchangeability present in the later
stages of Shaw's induction does not simplify its original first stage: at
that point the assumption is vacuous.
For $s$ edge colors, take a baseline letter $1$ and distinct pairs
$(2_i,3_i)$, one for each color, and on an edge of color $i$ identify only
the two words obtained by placing $(2_i,3_i)$ and $(3_i,2_i)$ on its two
blocks.
The resulting labelled face kernels are equal exactly on equally colored
edges.
Thus, applied with this larger alphabet, the usual dense three-face lemma
would imply the constant-excess assertion
\[
 \forall s\ \exists D_s\ \forall k:\qquad
 KG(3k+D_s,k)\longrightarrow_s K_3.
\]
The preceding palette-free rotation lemma gives a possible way around this
first bottleneck, but it leaves the structured $[2]$-swappability problem
open.

\subsection{A two-color lower bound for Kneser triangles}
\label{subsec:kneser-triangle-lower}

\begin{theorem}[Five-point profile coloring]
\label{thm:five-point-profile}
For every $k\geq1$, the edges of $KG(3k+2,k)$ admit a two-coloring with no
monochromatic triangle.
Consequently, in the notation where $R_k^{KG}(3,3)$ is the least $n$ such
that every two-coloring of $KG(n,k)$ contains a monochromatic triangle,
\[
 R_k^{KG}(3,3)\geq3k+3.
\]
For $k\geq3$, this improves by one the general lower bound of Heath, McCourt,
Parker, Schwieder, and Zerbib~\cite{HeathEtAl2025}; for $k=2$ it recovers
their exact value $R_2^{KG}(3,3)=9$, and for $k=1$ it is the classical
identity $R(3,3)=6$.
\end{theorem}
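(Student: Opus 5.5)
The plan is to reduce to the classical pentagon coloring of $K_5$, which has no monochromatic triangle: identify the vertices with $\Z_5$, colour $\{a,b\}$ red when $a-b=\pm1$ and blue when $a-b=\pm2$. The point is that three pairwise disjoint $k$-sets in $[3k+2]$ leave exactly two points uncovered. So a triangle $\{A,B,C\}$ of $\KG(3k+2,k)$ is the same as an ordered partition of $[3k+2]$ into three $k$-sets and a $2$-set $R$. I would assign to each $k$-set $A$ a ``profile'' $f(A)\in\Z_5$ and colour the Kneser edge $\{A,B\}$ by the pentagon colour of $\{f(A),f(B)\}$. Then any triangle whose three profiles are distinct is automatically non-monochromatic, since every triangle on three distinct points of $\Z_5$ is bichromatic in the pentagon colouring.

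Concretely, I would take a labelling $g:[3k+2]\to\Z_5$ and set $f(A)=\sum_{x\in A}g(x)$. The constraint for a triangle is then
\[
 f(A)+f(B)+f(C)=S-g(R), \qquad S=\sum_x g(x).
\]
I would choose $g$ so that the residues are spread as evenly as possible, for example by placing the points in five classes $V_0,\ldots,V_4$ of nearly equal size. The aim is to make the profiles of the three parts of a near-partition as controlled as possible. For $k=1$ one simply takes $g$ to be a bijection $[5]\to\Z_5$, and the construction is literally the pentagon colouring.

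The main obstacle is ties, that is, disjoint $A,B$ with $f(A)=f(B)$, which the pentagon colouring does not handle. My first attempt would be a tie-breaking rule that depends on a second, finer profile. For instance, when $f(A)=f(B)$, colour $\{A,B\}$ according to which of $A,B$ contains the smaller element of a fixed distinguished class, or according to the parity of $|A\cap V_0|-|B\cap V_0|$. The case analysis then splits into two cases:
\begin{enumerate}[label=\textup{(\arabic*)}]
 \item all three profiles are distinct, which the pentagon colouring handles;
 \item exactly two profiles, or all three, are equal.
\end{enumerate}
In case (2) the sum constraint pins down $f$ of the third set in terms of $g(R)$, and I would check that the tie rule forces the two edges through the repeated profile to receive opposite colours, or else an opposite colour on the remaining edge. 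Here the fact that only two points are uncovered is what makes the list of cases finite and checkable. If summation profiles turn out to create unresolvable ties, the fallback is to let $f(A)$ record the full intersection vector $(|A\cap V_i|)_{i\in\Z_5}$ modulo a suitable equivalence. One would then define the edge colour directly on pairs of such vectors, still modelled on the five-point structure, and check the finitely many triangle types by exhausting the possible positions of the two uncovered points among the $V_i$.

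Finally, the consequence $R_k^{\KG}(3,3)\geq3k+3$ is immediate from the existence of such a colouring. The comparisons with earlier results for $k=1,2$ and $k\geq3$ are citations, not part of the proof.
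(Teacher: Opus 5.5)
\textbf{There is a genuine gap.} It sits exactly at the step you flag as the main obstacle: with summation profiles and the pentagon colour on distinct profiles, \emph{no} tie-breaking rule can work once $k$ is moderately large.

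Take disjoint $A,B$ with $f(A)=f(B)=v$, and let $Q=[3k+2]\setminus(A\cup B)$. Then $|Q|=k+2$, and the possible third vertices are $C=Q\setminus R$ with $R\in\binom{Q}{2}$, so $f(C)=g(Q)-g(R)$. With your evenly spread labelling and $k$ large, you can choose such $A,B$ avoiding a fixed transversal of $V_0,\ldots,V_4$, so that $Q$ meets all five classes. Then $g(R)$ runs over all of $\mathbb Z_5$. Hence there are completions $C$ with $f(C)=v+1$ and $C'$ with $f(C')=v+2$.
\begin{itemize}
\item In $\{A,B,C\}$, both edges at $C$ are red, so $\{A,B\}$ must be blue.
\item In $\{A,B,C'\}$, both edges at $C'$ are blue, so $\{A,B\}$ must be red.
\end{itemize}
The tied edge is therefore overdetermined, whatever secondary data you use to break ties.

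The conceptual error is the claim that in case (2) ``the sum constraint pins down $f$ of the third set''. The relation $f(A)+f(B)+f(C)=S-g(R)$ constrains nothing, because the uncovered pair $R$ is free. The fallback with full intersection vectors $(|A\cap V_i|)_i$ is not a construction, since no colouring rule is given. Moreover, with classes of linear size those vectors have unbounded entries, so there is no finite list of triangle types to check.

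\textbf{The missing idea} is to localise the profile to one fixed $5$-set $T\cong\mathbb Z_5$, taking $P_X=X\cap T$. Since $|[3k+2]\setminus T|=3k-3<3k$, the profiles of any three pairwise disjoint $k$-sets cover at least three points of $T$. This makes the profile space finite and, crucially, constrains the third vertex of a tied pair: for example, two empty profiles force the third profile to have size at least $3$.

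The paper then colours as follows:
\begin{itemize}
\item empty--empty and empty--singleton edges are blue;
\item empty--(size $\ge2$) edges are red;
\item edges between two profiles of size $\ge2$ are blue;
\item otherwise, when a singleton is involved, the edge is red exactly when, for each singleton $\{x\}$ present, the other profile avoids $x+1$.
\end{itemize}
The size types $(0,0,\ge3)$, $(0,1,\ge2)$, $(0,\ge2,\ge2)$, $(1,2,2)$, $(1,1,\ge2)$ and $(1,1,1)$ are then checked directly. Only the last of these is your pentagon colouring.
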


\begin{proof}
Fix a distinguished set $T=\mathbb Z_5$ and give a $k$-set $X$ the profile
$P_X=X\cap T$.
For disjoint profiles $A,B\subseteq T$, color the corresponding edge red by
the following rule.
\begin{enumerate}
\item If one profile is empty, the edge is red exactly when the other has
size at least two.
\item If both profiles have size at least two, the edge is blue.
\item Otherwise at least one profile is a singleton.
The edge is red exactly when, for every singleton $\{x\}$ among $A,B$, the
other profile avoids $x+1$ modulo five.
\end{enumerate}

For three pairwise disjoint $k$-sets, the union of their profiles has size at
least three, since the complement of $T$ has only $3k-3$ points.
Order the three profile sizes increasingly.
If the smallest is zero, the possibilities are
\[
 (0,0,\geq3),\qquad (0,1,\geq2),\qquad
 (0,\geq2,\geq2).
\]
The empty--empty and empty--singleton edges are blue, the empty--large
edges are red, and the large--large edge in the last case is blue.

Otherwise the smallest size is one, since three disjoint profiles of size
at least two do not fit in $T$.
For type $(1,2,2)$, the two $2$-sets partition the four points outside the
singleton; exactly one contains its cyclic successor, so the two incidences
with the singleton have different colors.
For type $(1,1,c)$ with $c\geq2$, an all-red triangle would require the two
singleton points to be nonadjacent and the large profile to avoid both of
their distinct successors, leaving room for at most one point.
An all-blue triangle would require the two singleton points to be adjacent,
but then the edge from the predecessor singleton to the disjoint large
profile is red.
Finally, for type $(1,1,1)$, red means nonadjacency and blue means adjacency
in the $5$-cycle; both the $5$-cycle and its complement are triangle-free.
Thus every triangle uses both colors.
\end{proof}

\begin{corollary}[The obstruction occurs for genuine face kernels]
\label{cor:genuine-kernel-obstruction}
For alphabet $\{1,2,3\}$, the strong dense three-face statement fails at
excess two even when its colors are genuine labelled pullback kernels of a
single equivalence relation.
\end{corollary}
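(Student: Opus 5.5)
The plan is to run the reduction in the proof of \cref{prop:exact-kneser-obstruction} in reverse, feeding it the coloring supplied by \cref{thm:five-point-profile}. That proposition turns an edge $2$-coloring of a Kneser graph into a single equivalence relation on words, so that equal face kernels force a monochromatic triangle. A triangle-free-in-each-color coloring at excess two therefore gives an explicit equivalence relation for which no three pairwise disjoint $k$-sets satisfy \eqref{eq:appendix-three-face-kernels}.

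Concretely, fix $k\geq1$ and put $N=3k+2$, with alphabet $X=\{1,2,3\}$ and baseline $1$. By \cref{thm:five-point-profile}, take a red--blue coloring of the edges of $KG(3k+2,k)$ with no monochromatic triangle. Define $\sim$ on $X^N$ as in the proof of \cref{prop:exact-kneser-obstruction}. For every red edge $\{A,B\}$, declare
\[
 2^A3^B1^{[N]\setminus(A\cup B)}\sim 2^B3^A1^{[N]\setminus(A\cup B)},
\]
and leave every other word in a singleton class. These two-element classes are pairwise disjoint, because a word of this shape determines its set of $2$-coordinates and its set of $3$-coordinates. Hence $\sim$ is a genuine equivalence relation.

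Next I compute the labelled pullback $\kappa(A,B)$ for disjoint $k$-sets $A,B$. A point $(x,y)\in X^2$ of the face corresponds to the word with $x$ on $A$, $y$ on $B$ and $1$ elsewhere. If $x=1$, $y=1$, or $x=y$, this word does not have the shape $2^{A'}3^{B'}1^{\text{rest}}$ with both $A'$ and $B'$ nonempty and distinct. It is therefore a singleton class. The only remaining pair is $(2,3)$ and $(3,2)$, which are equivalent exactly when $\{A,B\}$ is red. Thus $\kappa(A,B)$ is discrete when the edge is blue. When the edge is red, its unique nonsingleton class is $\{(2,3),(3,2)\}$. This description is symmetric under swapping the order of $A$ and $B$, so the ordering conventions in \eqref{eq:appendix-three-face-kernels} cause no trouble.

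Consequently, if pairwise disjoint $A,B,C$ satisfied $\kappa(B,C)=\kappa(A,C)=\kappa(A,B)$, then the three edges of the triangle $ABC$ in $KG(3k+2,k)$ would all have the same color. That contradicts the choice of coloring. So for every $k$ the strong dense three-face statement fails at excess $D=2$, even though the colors involved are labelled pullback kernels of one equivalence relation. The relation is also trivially $[1]$-interchangeable, as noted in \cref{prop:exact-kneser-obstruction}. The only step needing care is the kernel computation, specifically checking that no degenerate word (one with a $1$ or a repeated letter on the face) is accidentally identified. The disjointness and shape-recovery argument above handles this.
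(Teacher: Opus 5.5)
Your proposal is correct and follows the paper's proof: you apply the equivalence-relation construction from the proof of \cref{prop:exact-kneser-obstruction} to the red edges of the coloring from \cref{thm:five-point-profile}. You then observe that the only possible face kernels are the discrete one and the one identifying $(2,3)$ with $(3,2)$, so equal kernels on $A,B,C$ would give a monochromatic triangle. Your checks that no degenerate face word is identified, and that the red kernel is symmetric under swapping the two blocks, spell out what the paper leaves implicit.
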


\begin{proof}
Apply the equivalence-relation construction in the proof of
\cref{prop:exact-kneser-obstruction} to the red edges of the coloring in
\cref{thm:five-point-profile}.
The two possible kernels are the discrete relation and the relation whose
only nonsingleton class is $(2,3)\sim(3,2)$.
A triple with all three deletion kernels equal would therefore be a
monochromatic triangle, which does not exist.
\end{proof}

\subsection{Prime-degree transitive configurations}
\label{subsec:prime-degree-transitive}

Shaw~\cite{ShawPrime2026} states his theorem for regular polygons, but its proof gives the
following slightly more general conclusion.

\begin{proposition}[Prime-cardinality transitive configurations]
\label{prop:prime-cardinality-canonical}
Let $P$ be a finite Euclidean configuration with $|P|=p$ prime.
If the Euclidean symmetry group of $P$ acts transitively on $P$, then every
power $P^k$ is canonically Ramsey.
More precisely, Shaw's construction gives, for every $k$, an $n$ such that
\[
 p^{-p/2}P^n\xrightarrow[\mathrm{MR}]{}P^k.
\]
\end{proposition}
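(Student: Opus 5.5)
The plan is to reduce the statement to the cyclic case and then run Shaw's argument for regular prime polygons with the polygon replaced by $P$. First I need that a transitive configuration of prime cardinality $p$ carries a transitive cyclic action of order $p$. The permutation image $G\le\operatorname{Sym}(P)$ of the symmetry group is transitive on $p$ points, so $p$ divides $|G|$. By Cauchy's theorem $G$ contains an element $\tau$ of order $p$, and on $p$ points this element is necessarily a $p$-cycle. As in the proof of \cref{thm:main}, every permutation in $G$ preserves distances and extends uniquely to an isometry of $\operatorname{aff}P$. So $\langle\tau\rangle\cong C_p$ acts on $P$ regularly by isometries fixing the circumcenter. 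Label $P=\{v_0,\dots,v_{p-1}\}$ with $\tau v_i=v_{i+1}$, indices in $\Z_p$. Then all distances depend only on $i-j$: $\|v_i-v_j\|=\delta(i-j)$ with $\delta(a)=\delta(-a)$.

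Next I would identify exactly what Shaw's proof uses about the regular $p$-gon. His argument is combinatorial on words over $\Z_p$: it produces $C_p$-block copies (in the sense of \cref{def:block-map}) of $\Z_p^k$ inside $\Z_p^n$ on which an arbitrary coloring is either constant or injective. The geometry enters only when such a block copy is encoded by the normalized product map $x\mapsto(v_{x_1},\dots,v_{x_n})$. The copy is label-balanced, or at least each variable has a fixed block size, and translation labels act by the isometries $\tau^a$. Consequently the squared distance between two encoded points is a sum over variables of (block size)$\cdot\delta(x_j-y_j)^2$. This is a positive multiple of the squared distance in $P^k$, and the multiple is uniform. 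This is the same computation as in \cref{prop:geometry}, now with $k$ variables instead of one.

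So the proof will be: (i) the Cauchy reduction to a regular isometric $C_p$-action; (ii) quote Shaw's combinatorial canonical theorem for $C_p$-block maps $\Z_p^k\to\Z_p^n$ verbatim, since it never uses the shape of the polygon; (iii) transfer it via the encoding $\phi(x)=(v_{x_1},\dots,v_{x_n})$ scaled by the appropriate factor. The block size in Shaw's construction determines the normalization. Presumably his degree gives the scalar $p^{-p/2}$, with blocks of size $p^{p}$ per variable yielding the scaling $\sqrt{p^{p}}$, and I would simply track his constant. A monochromatic or rainbow block copy of $\Z_p^k$ then becomes a monochromatic or rainbow congruent copy of $P^k$ inside $p^{-p/2}P^n$.

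The main obstacle is step (ii). I must verify that nothing in Shaw's proof uses metric features of the regular polygon beyond cyclic invariance of distances. Examples would be the identification of copies with affine images, or a use of rotations other than powers of $\tau$. If something of that sort appears, I would try first to restate each lemma purely in terms of $C_p$-block maps and equivalence relations, in the style of \cref{lem:dense-kernel-rotation}. I would also check that the "copies of $P^k$" he obtains are always images of block maps whose labels lie in $\langle\tau\rangle$, since only those are guaranteed isometric for a general $P$.
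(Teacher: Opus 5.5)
Your proposal follows essentially the same route as the paper: Cauchy's theorem gives an isometry $\tau$ acting as a single $p$-cycle, Shaw's combinatorics uses only the cyclic alphabet, and every embedding in his construction is a product of copies of the emulator $x\mapsto(x,\tau x,\ldots,\tau^{p-1}x)$ and fixed coordinates, which are scaled isometries by exactly the distance computation you describe. Your guessed normalization also matches the paper's: the invariantization contributes $p^{(p-1)/2}$ and the final emulator another $\sqrt p$, for a total factor of $p^{p/2}$.
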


\begin{proof}
Let $G$ be a transitive permutation group on $P$ induced by Euclidean
symmetries.
Orbit--stabilizer gives $p\mid |G|$, so Cauchy's theorem supplies an element
$\tau\in G$ of order $p$.
Since $P$ has $p$ points, $\tau$ acts as a single $p$-cycle.

The combinatorial part of Shaw's proof uses only an alphabet of size $p$
together with such a cyclic permutation.
Its standard emulator
\[
 x\longmapsto(x,\tau x,\ldots,\tau^{p-1}x)
\]
is a scaled isometric embedding: for all $x,y\in P$,
\[
 \sum_{i=0}^{p-1}\|\tau^i x-\tau^i y\|^2
 =p\|x-y\|^2.
\]
Every other embedding used in the invariantization is a product of copies
of this emulator and fixed coordinates.
In Shaw's normalization, the invariantization contributes the scale factor
$p^{(p-1)/2}$ and the final displayed emulator contributes another
$\sqrt p$.
The total factor is therefore $p^{p/2}$, which accounts for the
$p^{-p/2}$ in the statement.
Thus the same proof applies with $P$ in place of the regular polygon,
and likewise coordinatewise to $P^k$.
\end{proof}

The proposition is a canonical statement, not a near-circumradius one.
The ordinary Ramsey extraction in Shaw's invariantization may leave an
arbitrarily large inactive part, so the radius of the product witness is not
controlled near the circumradius of $P$.

\section{Further obstructions for the Leader--Russell--Walters kite}
\label{sec:lrw-further}

Fix a transcendental number $a\in(-1,1)$ and put $b=\sqrt{1-a^2}$.
We use the notation
\[
 L=(-1,0),\qquad R=(1,0),\qquad
 U=(a,b),\qquad D=(a,-b),
\]
and write $P_a=\{L,R,U,D\}$.
This is one of the cyclic quadrilaterals of Leader, Russell, and
Walters~\cite{LeaderRussellWalters2011} which is not contained in any finite
transitive set.
Its circumradius is one, and its unique affine dependence, up to a scalar
multiple, is
\begin{equation}
 (a-1)L-(a+1)R+U+D=0.
 \label{eq:lrw-affine}
\end{equation}

The results below do not decide whether $P_a$ is ncs-Ramsey.
They show that three natural ways of approaching the problem cannot work:
powers of one fixed alphabet can only approach the optimal radius when they
already attain it, the natural six-letter grids admit explicit kite-free
colorings at every bounded radius, and the radius divergence in the
generalized-prism construction persists for every choice of interpolation
path.

\subsection{The exact fixed-alphabet product optimum}

Let $X$ be a finite spherical set whose circumcenter is the origin and whose
circumradius is $R_X$.
Call a map $f:P_a\to X$ an \emph{atom} if it preserves
\eqref{eq:lrw-affine}, and put
\[
 s(f)=\frac14\|f(L)-f(R)\|^2,
 \qquad
 t(f)=\left\langle
       \frac{f(R)-f(L)}2,\frac{f(U)-f(D)}2
      \right\rangle .
\]
Let
\[
 s_X=\max\{s(f):f\colon P_a\to X\text{ is an atom}\}.
\]

\begin{proposition}\label{prop:lrw-fixed-alphabet}
The least squared circumradius of a weighted orthogonal Cartesian power of $X$
which contains a congruent copy of $P_a$ is
\[
 \frac{R_X^2}{s_X},
\]
with the usual value $+\infty$ when $s_X=0$.
Moreover, this value is one if and only if $X$ contains an atom which is a
copy of $R_XP_a$ with circumcenter at the origin.
Consequently, if weighted powers of one fixed centered alphabet contain
copies of $P_a$ and have circumradii tending to one, then one weighted power
already has circumradius one and contains $P_a$; otherwise all of its
weighted powers have a uniform positive circumradius gap.
\end{proposition}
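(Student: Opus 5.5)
The plan is to prove matching lower and upper bounds on the squared circumradius of a weighted power, and then read off the equality case. Write a weighted orthogonal power as the set of points $(\sqrt{w_1}x_1,\ldots,\sqrt{w_N}x_N)$ with $x_i\in X$ and weights $w_i>0$. Since $X$ is centered with circumradius $R_X$, every such point has squared norm $R_X^2\sum_i w_i$. The product is spherical, and its circumcenter is the origin: the origin lies in the affine span, because each factor's circumcenter lies in its own affine span. Hence its squared circumradius is exactly $R_X^2\sum_iw_i$.

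\emph{Lower bound.} Suppose the power contains a congruent copy of $P_a$, with coordinate maps $f_i:P_a\to X$. A congruence is affine, so the copy satisfies \eqref{eq:lrw-affine}. Each coordinate projection is linear, so every $f_i$ preserves \eqref{eq:lrw-affine} and is an atom. Comparing $\|L-R\|^2=4$ coordinatewise gives $\sum_iw_i\,s(f_i)=1$. Since $s(f_i)\le s_X$, this yields $R_X^2\sum_iw_i\ge R_X^2/s_X$. If $s_X=0$, the identity is impossible, so no copy exists, which accounts for the value $+\infty$.

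\emph{Upper bound.} Take an atom $f$ with $s(f)=s_X$, and let $f'$ be $f$ with the values at $U$ and $D$ interchanged. Since $U$ and $D$ carry equal coefficients in \eqref{eq:lrw-affine}, $f'$ is again an atom. It satisfies $s(f')=s_X$ and $t(f')=-t(f)$. Use the two coordinates $f,f'$ with weights $1/(2s_X)$ each. The resulting four points $L',R',U',D'$ have the following properties.
\begin{enumerate}
\item They satisfy \eqref{eq:lrw-affine} and have $\|L'-R'\|=2$.
\item $t=0$, i.e.\ $R'-L'\perp U'-D'$.
\item They all have the same norm.
\end{enumerate}
I then check congruence by a planar argument. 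The dependence puts the four points in a plane $\Pi$, and the equal norms make them concyclic about the orthogonal projection $o$ of the origin to $\Pi$. Put $L'=(-1,0)$ and $R'=(1,0)$ in coordinates on $\Pi$. Rearranging \eqref{eq:lrw-affine} forces the midpoint of $U'D'$ to be $(a,0)$. The segment $U'D'$ is perpendicular to $L'R'$, so its perpendicular bisector is the line $L'R'$. Thus $o$ lies both on this line and on the perpendicular bisector of $L'R'$, so $o=(0,0)$. Then $|U'-o|=1$ forces $U'=(a,\pm b)$, and the configuration is congruent to $P_a$. The squared circumradius is $R_X^2/s_X$, which proves the formula.

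\emph{Equality case and consequence.} If $R_X^2/s_X=1$, some atom has $\|f(L)-f(R)\|=2R_X$ on the sphere of radius $R_X$, so $f(L)=-f(R)$. Then $|f(U)|=|f(D)|$ and $f(U)+f(D)=2a\,f(R)$ give $(f(U)-f(D))\cdot(f(U)+f(D))=0$, i.e.\ $f(U)-f(D)\perp f(R)$. Moreover $|f(U)|=R_X$ gives $\|f(U)-f(D)\|=2bR_X$. Hence $f$ is a copy of $R_XP_a$ centered at the origin. The converse direction is immediate, since such a copy has $s=R_X^2$. The final sentence of the statement follows because every weighted power containing $P_a$ has circumradius at least the fixed constant $\sqrt{R_X^2/s_X}$, and this constant is attained. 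So circumradii tending to one force the constant to equal one, which is attained by the construction above; otherwise they are uniformly bounded away from one.

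The step needing most care is the planar concyclicity argument in the upper bound, together with justifying that the circumcenter of the full weighted power is the origin.
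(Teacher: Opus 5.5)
Your proof is correct and follows essentially the same route as the paper. The lower bound comes from $\sum_i w_i s(f_i)=1$ with $s(f_i)\le s_X$. The upper bound uses a maximizing atom and its $U\leftrightarrow D$ swap, each with weight $1/(2s_X)$. The equality case comes from $f(L)=-f(R)$ together with $(f(U)-f(D))\perp f(R)$.

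Your variations are cosmetic. You get atom-ness from the affine extension of the congruence rather than the paper's squared-distance quadratic form. You verify the construction by planar concyclicity rather than by the paper's $h,\ell,q$ expansion.

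Two small steps should be stated explicitly:
\begin{itemize}
\item The equality case uses $a\neq0$ (true because $a$ is transcendental).
\item The perpendicular-bisector step needs $U'\neq D'$. This holds because $s(f)=s_X>0$ forces $f(U)\neq f(D)$.
\end{itemize}
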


\begin{proof}
Suppose first that $x,y,u,d$ lie on the sphere containing $X$ and satisfy
the affine relation in \eqref{eq:lrw-affine}.
Put
\[
 h=\frac{x+y}{2},\qquad
 \ell=\frac{y-x}{2},\qquad
 q=\frac{u-d}{2}.
\]
The affine relation gives
\[
 x=h-\ell,\qquad y=h+\ell,\qquad
 u=h+a\ell+q,\qquad d=h+a\ell-q.
\]
Equality of the four norms gives
\[
 h\perp\ell,\qquad q\perp(h+a\ell),
 \qquad \|q\|^2=(1-a^2)\|\ell\|^2.
\]
Writing $s=\|\ell\|^2$ and $t=\langle\ell,q\rangle$, a direct expansion
shows that weighted atoms $f$ with weights $\lambda_f\geq0$ produce a copy
of $P_a$ exactly when
\begin{equation}
 \sum_f\lambda_fs(f)=1,
 \qquad
 \sum_f\lambda_ft(f)=0.
 \label{eq:lrw-atom-conditions}
\end{equation}

After discarding factors of weight zero, every coordinate map occurring in a
product copy is an atom.
Indeed, for the coefficient vector $z=(a-1,-a-1,1,1)$ and every
squared-distance matrix $E$ one has
\[
 z^{\mathsf T}Ez=-2\left\|\sum_i z_iq_i\right\|^2.
\]
The left-hand side vanishes for $P_a$.
In an orthogonal product it is a sum of nonpositive coordinate
contributions, and hence every coordinate contribution vanishes.

The first equality in \eqref{eq:lrw-atom-conditions} now gives
\[
 1\leq s_X\sum_f\lambda_f.
\]
The squared radius of the weighted power is
$R_X^2\sum_f\lambda_f$, proving the lower bound.
Choose an atom $f$ attaining $s_X$.
Precomposing it with the reflection which interchanges $U$ and $D$ preserves
$s(f)$ and changes the sign of $t(f)$.
Giving these two atoms weight $1/(2s_X)$ each satisfies
\eqref{eq:lrw-atom-conditions} and attains the lower bound.

Finally, $s_X\leq R_X^2$ because a chord of the radius-$R_X$ sphere has
length at most $2R_X$.
If equality holds, a maximizing atom has $h=0$ in the notation above.
Since $a\ne0$, the relation $q\perp(h+a\ell)$ then gives
$q\perp\ell$; together with
$\|q\|^2=(1-a^2)\|\ell\|^2$, this says that the four images of the atom
form a copy of $R_XP_a$ centered at the origin.
The converse is immediate, and the assertion about asymptotic powers follows
because the displayed minimum is attained.
\end{proof}

\subsection{A kite-free coloring of every natural product grid}

Consider the six-point set
\[
 X_a=\{(\pm1,0)\}\cup\{(\pm a,\pm b)\}
      =O_1\cup O_2,
\]
where $O_1=\{(\pm1,0)\}$ and
$O_2=\{(\pm a,\pm b)\}$.
The Klein four-group generated by the two coordinate sign changes acts on
$X_a$ with orbits $O_1$ and $O_2$.

\begin{lemma}\label{lem:lrw-natural-atoms}
Every atom $f:P_a\to X_a$ is either constant or is obtained from the
inclusion $P_a\subset X_a$ by a coordinate sign change.
\end{lemma}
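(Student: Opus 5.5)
We need to classify atoms $f:P_a\to X_a$, i.e.\ maps preserving the affine relation $(a-1)f(L)-(a+1)f(R)+f(U)+f(D)=0$. The plan is to use the parametrization from the proof of \cref{prop:lrw-fixed-alphabet}. Since all points of $X_a$ lie on the unit circle, any four points $x=f(L)$, $y=f(R)$, $u=f(U)$, $d=f(D)$ satisfying the relation admit $h,\ell,q$ with $x=h-\ell$, $y=h+\ell$, $u=h+a\ell+q$, $d=h+a\ell-q$, together with $h\perp\ell$, $q\perp(h+a\ell)$ and $\|q\|^2=(1-a^2)\|\ell\|^2$.

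First dispose of $\ell=0$. Then $x=y=h$ and $\|q\|=0$, so $u=d=h$ and $f$ is constant.

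Now assume $\ell\neq0$, so $x\neq y$ are two distinct points of $X_a$ whose midpoint $h$ is orthogonal to $\ell$. In the plane, $h\perp\ell$ is automatic for two points on a common circle centered at the origin, so this gives no restriction and we enumerate pairs instead. The key constraint is transcendence of $a$: the points $u,d$ must lie in $X_a$, and their coordinates are polynomials in $a,b$ built from coordinates of $x,y$. I will go through the unordered pairs $\{x,y\}\subset X_a$ up to the Klein four-group symmetry and check each case.

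The two ``aligned'' pairs $\{L,R\}$ or $\{(a,b),(-a,-b)\}$ give $h=0$. For $\{L,R\}$ we get $q\perp\ell$ with $\|q\|=b$, so $\{u,d\}=\{(a,\pm b)\}$ (or, with $x,y$ swapped, $(-a,\pm b)$), which is the inclusion up to sign changes.

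The case $x=(a,b)$, $y=(-a,-b)$ requires the points $a\ell\pm q$ to lie in $X_a$, where $\ell=(-a,-b)$ and $q\perp\ell$ has $\|q\|=b$. These are $(-a^2\mp b^2,\,-ab\pm ab)$ up to orientation, so the candidates are $(-a^2-b^2,0)=(-1,0)$ and $(b^2-a^2,-2ab)$. The second must lie in $X_a$, which forces a polynomial identity in $a$, such as $1-2a^2=\pm a$ or $\pm1$. Such an identity is impossible for transcendental $a$ (with $a\ne0$).

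The remaining pairs have $h\ne0$. Examples are $\{(1,0),(a,b)\}$, $\{(1,0),(-a,b)\}$, $\{(a,b),(-a,b)\}$ and $\{(a,b),(a,-b)\}$. In each, compute $u,d=h+a\ell\pm q$ with $q$ orthogonal to $h+a\ell$ and of the prescribed length. Membership $u\in X_a$ then forces an algebraic equation in $a$ (together with $b^2=1-a^2$) that has no transcendental root.

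The main obstacle is bookkeeping: we must make sure every case yields a genuinely nontrivial polynomial relation and not an identity. I would handle this by checking the resulting equation at a convenient algebraic value, such as $a=1/2$, to confirm it is not identically zero.

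Finally, assemble the surviving cases. They are exactly $\{x,y\}=\{L,R\}$ in either order, with $\{u,d\}=\{(\pm a,b),(\pm a,-b)\}$ where the sign matches the orientation. These are the inclusion composed with coordinate sign changes (the $U\leftrightarrow D$ swap corresponds to the vertical sign change).
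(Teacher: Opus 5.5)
Your argument is correct, but it follows a different route from the paper. You enumerate the six $V_4$-orbit types of pairs $\{f(L),f(R)\}$ using the midpoint parametrization from \cref{prop:lrw-fixed-alphabet}, and you eliminate the bad types one at a time by transcendence. The paper writes $f(V)_2=b\,\sigma(f(V))$ with $\sigma\in\{0,\pm1\}$. The second coordinate of the atom relation then reads $a(\sigma_L-\sigma_R)=\sigma_L+\sigma_R-\sigma_U-\sigma_D$ with integer data, which forces $\sigma_L=\sigma_R$ and $\sigma_U+\sigma_D=2\sigma_L$. The first coordinate is then split the same way in three short cases. That route needs no enumeration and no radicals, and it uses only the irrationality of $a$.

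In your version, the four types with $h\neq0$ are asserted rather than computed. The claim is true. The cleanest check is the paper's observation applied to the linear identity $u+d=(a+1)y-(a-1)x$, rather than to $q$, whose normalization brings in the extra radical $\|h+a\ell\|$. The second coordinate of the right side is $b\bigl(a(\sigma_y-\sigma_x)+\sigma_x+\sigma_y\bigr)$, and it must lie in $b\{0,\pm1,\pm2\}$, so $\sigma_x=\sigma_y$. This eliminates at once every type except $\{L,R\}$ and $\{(a,b),(-a,b)\}$, including your antipodal case. For $\{(a,b),(-a,b)\}$, the identity gives $u+d=(\pm2a^2,2b)$, which would force $u,d\in\{(\pm a,b)\}$, and their first coordinates cannot sum to $\pm2a^2$.

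Be cautious with the test value $a=1/2$. There $X_a$ is a regular hexagon, and the antipodal type genuinely carries a nonconstant atom: $f(L)=(a,b)$, $f(R)=(-a,-b)$, $\{f(U),f(D)\}=\{(-1,0),(a,-b)\}$. So the test is usable only for the $h\neq0$ types, where it does succeed, and your separate explicit treatment of the antipodal case is needed.
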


\begin{proof}
For $V\in X_a$, write $\sigma(V)=V_2/b\in\{0,1,-1\}$.
The second coordinate of the atom relation, divided by $b$, is
\[
 a\bigl(\sigma(f(L))-\sigma(f(R))\bigr)
 -\sigma(f(L))-\sigma(f(R))+\sigma(f(U))+\sigma(f(D))=0.
\]
Transcendence of $a$ implies
\[
 \sigma(f(L))=\sigma(f(R)),\qquad
 \sigma(f(U))+\sigma(f(D))=2\sigma(f(L)).
\]
If the common value is $1$ or $-1$, comparison of the coefficients of
$a^2$ and $a$ in the first coordinate forces all four images to agree.
If the common value is zero and $f(U),f(D)\in O_1$, comparison of the
coefficients of $a$ and $1$ again gives a constant map.
In the remaining case $f(L),f(R)$ are opposite points of $O_1$ and
$f(U),f(D)$ are vertically opposite points of $O_2$ whose common horizontal
sign is forced by the first two images.
These are precisely the four coordinate sign changes of the inclusion.
\end{proof}

\begin{proposition}\label{prop:lrw-grid-coloring}
Let $\omega_1,\ldots,\omega_N>0$, let $s>0$, and suppose that the scaled
weighted grid
\[
 s\bigl(\sqrt{\omega_1}X_a\times\cdots\times
          \sqrt{\omega_N}X_a\bigr),
\]
possibly after appending a coordinate common to every grid point, lies on a
sphere of radius $\mathcal R$ centered at the origin.
Then the grid has a coloring with
\[
 q=\lfloor\mathcal R^2\rfloor+1
\]
colors and no monochromatic copy of $P_a$.
In particular, if $1\leq\mathcal R<\sqrt2$, two colors suffice.
\end{proposition}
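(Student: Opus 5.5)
The plan is to classify every congruent copy of $P_a$ inside the grid using \cref{lem:lrw-natural-atoms}, and then to colour each grid point by the integer part of a weighted count of its $O_1$-coordinates.

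First, let $Q$ be a copy of $P_a$ in the grid. Write its four points as images of $L,R,U,D$ under a map into the product. I will use the coordinatewise argument from the proof of \cref{prop:lrw-fixed-alphabet}. Take the coefficient vector $z=(a-1,-a-1,1,1)$ and the squared-distance matrix $E$. The quadratic form $z^{\mathsf T}Ez$ vanishes for $P_a$. In the orthogonal product it splits as a sum of nonpositive coordinate contributions, the appended common coordinate contributing zero. Hence every coordinate map $P_a\to\sqrt{\omega_i}X_a$, after rescaling, preserves \eqref{eq:lrw-affine} and is an atom.

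By \cref{lem:lrw-natural-atoms}, each coordinate map is then either constant or a coordinate sign change of the inclusion $P_a\subset X_a$. Let $S$ be the set of coordinates of the second kind. On each $i\in S$, the images of $L$ and $R$ are the two opposite points of $O_1$, while the images of $U$ and $D$ lie in $O_2$. Outside $S$ all four points agree. Comparing squared distances gives $\|L-R\|^2=4=\sum_{i\in S}4s^2\omega_i$, so
\[
 s^2\sum_{i\in S}\omega_i=1 .
\]

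Next comes the colouring. For a grid point $x$, define
\[
 w(x)=s^2\sum_{i:\,x_i\in O_1}\omega_i ,
\]
where $x_i\in O_1$ means the $i$th coordinate lies in $\sqrt{\omega_i}O_1$. Every point of $X_a$ has norm one, so the grid lies on a sphere of squared radius $s^2\sum_i\omega_i$ before padding. Appending a common coordinate only increases the radius, so $0\le w(x)\le s^2\sum_i\omega_i\le\mathcal R^2$. Colour $x$ by $\lfloor w(x)\rfloor\in\{0,\ldots,\lfloor\mathcal R^2\rfloor\}$, which uses $q$ colours.

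In the copy $Q$, the points corresponding to $L$ and $U$ agree outside $S$. On $S$, $L$ uses $O_1$ and $U$ uses $O_2$, so $w(L)=w(U)+1$ by the displayed identity. Hence $\lfloor w(L)\rfloor=\lfloor w(U)\rfloor+1$, and $Q$ is not monochromatic. When $\mathcal R^2<2$ this gives $q=2$.

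The main obstacle is the first step: the reduction of an arbitrary congruent copy to coordinatewise atoms. One must check that the vanishing-quadratic-form argument applies here. The grid is a genuine orthogonal product with weights, and a common appended coordinate contributes zero, so the argument should go through. Once every coordinate is known to be an atom, the transcendence-based classification in \cref{lem:lrw-natural-atoms} does the rest, and the colouring argument is elementary.
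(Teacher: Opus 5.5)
Your proof is correct and follows the paper's argument. Like the paper, you pass to coordinatewise atoms via \cref{lem:lrw-natural-atoms}, derive $s^2\sum_{i\in S}\omega_i=1$ from the $L$--$R$ distance, and color by the weighted amount of $O_1$-coordinates; the differences are cosmetic. The paper normalizes by $W=\sum_j\omega_j$, cuts $[0,1]$ into $q$ intervals of length $1/q$, and uses $|p(L)-p(U)|=1/(s^2W)\geq1/\mathcal R^2>1/q$, whereas your normalization by $s^2$ makes the jump exactly $1$, so the floor coloring works directly, and you spell out the vanishing-quadratic-form reduction to atoms that the paper leaves implicit.
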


\begin{proof}
Put $W=\sum_j\omega_j$.
For a grid word $v=(v_1,\ldots,v_N)$, define
\[
 p(v)=\frac1W\sum_{j:v_j\in O_1}\omega_j\in[0,1].
\]
Partition $[0,1]$ into $q$ intervals of length $1/q$, with the final
endpoint assigned to the last interval, and color $v$ by the interval
containing $p(v)$.

Suppose four grid words form a copy of $P_a$.
By \cref{lem:lrw-natural-atoms}, every nonconstant coordinate map is a
coordinate sign change of the inclusion.
Let $A$ be the total weight of the nonconstant coordinates.
The squared distance between the words playing $L$ and $R$ is $4s^2A$, so
congruence with $P_a$ gives
\[
 s^2A=1.
\]
Every active coordinate is in $O_1$ for the words playing $L,R$ and in
$O_2$ for the words playing $U,D$.
Constant coordinates contribute equally, and therefore
\[
 |p(L)-p(U)|=\frac AW=\frac1{s^2W}.
\]
The common padding coordinate, if present, contributes only to the ambient
radius, so $\mathcal R^2\geq s^2W$.
Consequently
\[
 |p(L)-p(U)|\geq\frac1{\mathcal R^2}>\frac1q,
\]
and the two words receive different colors.
\end{proof}

\subsection{The cyclic-path generalized-prism method has an unavoidable
radius gap}

For $\lambda\in\mathbb R$, put
\[
 Z_\lambda=(O_1\times\{0\})\cup(O_2\times\{\lambda\}).
\]
For $\lambda\ne0$, the generalized-prism theorem shows that $Z_\lambda$
is solvable subtransitive.
Its circumcenter and circumradius are
\[
 (0,0,\lambda/2),\qquad
 \rho_\lambda=\sqrt{1+\lambda^2/4}.
\]
The known construction chooses an interpolation path between one point of
$O_1$ and one point of $O_2$, places the path in an orthogonal product, and
uses a cyclic coordinate shift.
We next show that no choice of path can make this construction
near-circumradius.

\begin{lemma}\label{lem:lrw-path-energy}
Let $x,y$ be distinct unit vectors, put $d=\|x-y\|$, and let
\[
 z_0=x,z_1,\ldots,z_n=y
\]
be arbitrary vectors in a real Hilbert space.
Set
\[
 E=\sum_{i=1}^n\|z_i-z_{i-1}\|^2,
 \qquad
 S=\sum_{i=0}^n\|z_i\|^2.
\]
If $0<E<d^2/4$, then
\begin{equation}
 S\geq \frac{(d^2/4-E)^2}{4E}.
 \label{eq:lrw-path-bound}
\end{equation}
\end{lemma}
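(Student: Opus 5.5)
The plan is to reduce everything to a one-dimensional path by projecting onto the direction of the chord $x-y$. Put $u=(x-y)/d$ and $a_i=\langle z_i,u\rangle$. Since $\|x\|=\|y\|=1$, we have $\langle x,u\rangle=(1-\langle x,y\rangle)/d=d/2$, and symmetrically $\langle y,u\rangle=-d/2$. Writing $D=d/2$, this gives $a_0=D$ and $a_n=-D$. Projection is $1$-Lipschitz and $|a_i|\le\|z_i\|$, so the scalar path satisfies
\[
\sum_{i=1}^n(a_i-a_{i-1})^2\le E
\qquad\text{and}\qquad
\sum_{i=0}^n a_i^2\le S .
\]
It therefore suffices to bound $\sum_i a_i^2$ from below for a real sequence running from $D$ to $-D$ with quadratic energy at most $E$. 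Note that $E<d^2/4=D^2$.

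The key estimate comes from Cauchy--Schwarz on initial and final segments. For every $i$,
\[
|a_i-a_0|\le\sum_{l\le i}|a_l-a_{l-1}|\le\sqrt{iE},
\]
so $a_i\ge D-\sqrt{iE}$. Symmetrically, $a_i\le -D+\sqrt{(n-i)E}$. Applied to the whole path, the same inequality gives $2D\le\sqrt{nE}$, that is, $n\ge 4D^2/E$. Consequently the index ranges $I_0=\{i:iE<D^2\}$ and $I_n=\{i:(n-i)E<D^2\}$ are disjoint: the first lies in $[0,D^2/E)$, while the second lies in $(n-D^2/E,n]\subseteq(3D^2/E,n]$.

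For $i\in I_0$ we get $a_i^2\ge(D-\sqrt{iE})^2$. The function $s\mapsto(D-\sqrt{sE})^2$ is decreasing on $[0,D^2/E]$, and the sum starts at $i=0$. Hence the sum over $I_0$ dominates the integral
\[
\int_0^{D^2/E}(D-\sqrt{sE})^2\,ds=\frac{D^4}{6E},
\]
which is obtained by the substitution $s=t^2/E$. The same bound holds for $I_n$, and the two ranges are disjoint, so
\[
S\ge\sum_i a_i^2\ge\frac{D^4}{3E}.
\]
Finally, since $0<E<D^2$ we have $(D^2-E)^2\le D^4$, and hence
\[
\frac{D^4}{3E}\ge\frac{(D^2-E)^2}{4E}=\frac{(d^2/4-E)^2}{4E},
\]
which is \eqref{eq:lrw-path-bound}.

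The only delicate step is the discrete sum-versus-integral comparison together with the disjointness of the two end ranges. I expect the slack between the constants $1/3$ and $1/4$ to absorb any boundary effects there. The projection trick is the conceptual point: it removes all dependence on the ambient Hilbert space.
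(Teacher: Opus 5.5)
Your proof is correct, but after the shared first step of projecting onto the chord direction $(x-y)/d$ it takes a different route from the paper.

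\emph{The paper's route.} It locates a sign change of the projected sequence and takes the term $u_k$ of smaller absolute value there, so that $u_k^2\le E$. It then telescopes $u_{\mathrm{end}}^2-u_k^2=\sum_i(u_i-u_{i-1})(u_i+u_{i-1})$ along the subpath to the corresponding endpoint. A single Cauchy--Schwarz against $S$ gives $d^2/4-E\le 2\sqrt{ES}$, which is exactly the stated inequality after squaring.

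\emph{Your route.} You apply Cauchy--Schwarz to partial sums to get the pointwise profile $|a_i|\ge D-\sqrt{iE}$ near the start, and its mirror image near the end. From $n\ge 4D^2/E$ you see that the two end windows are disjoint, and you sum over them by an integral comparison.

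\emph{What each buys.} The paper's argument is shorter and produces the stated numerator $(d^2/4-E)^2$ directly. Yours costs a discretization step, but it shows where the mass of $S$ actually sits, namely in about $D^2/E$ indices at each end. It also yields the stronger estimate $S\ge D^4/(3E)=d^4/(48E)$, which holds for every $E>0$ without the hypothesis $E<d^2/4$. This dominates the stated bound, since $(d^2/4-E)^2/(4E)\le d^4/(64E)$, and it improves the constant in the $\Omega(1/|\lambda|)$ radius divergence derived from this lemma.

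\emph{A minor point.} The hedge in your last paragraph is unnecessary. The function $f(s)=(D-\sqrt{sE})^2$, extended by zero beyond $D^2/E$, is nonincreasing, and $I_0=\{0,1,\ldots,\lceil D^2/E\rceil-1\}$. Hence $\sum_{i\in I_0}f(i)\ge\int_0^{D^2/E}f(s)\,ds=D^4/(6E)$ holds exactly, and no slack between $1/3$ and $1/4$ is consumed.
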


\begin{proof}
Put $w=(x-y)/d$ and $u_i=\langle z_i,w\rangle$.
The equality $\|x\|=\|y\|$ gives
\[
 u_0=d/2,\qquad u_n=-d/2.
\]
At a sign change of the sequence choose, from the two consecutive terms,
one of smaller absolute value, say $u_k$.
Then $u_k^2\leq E$.
Join $u_k$ to the endpoint on the corresponding side of this sign change.
Telescoping the squares along that subpath and applying Cauchy--Schwarz gives
\[
\begin{aligned}
 d^2/4-E
 &\leq |u_{\mathrm{end}}^2-u_k^2|\\
 &\leq \sum_i|u_i-u_{i-1}|\,|u_i+u_{i-1}|\\
 &\leq \sqrt E
   \left(\sum_i(|u_i|+|u_{i-1}|)^2\right)^{1/2}\\
 &\leq 2\sqrt{ES}.
\end{aligned}
\]
Squaring proves \eqref{eq:lrw-path-bound}.
\end{proof}

\begin{proposition}\label{prop:lrw-cyclic-path}
Choose $x\in O_1$ and $y\in O_2$, and put $d=\|x-y\|$.
Consider any transitive host obtained as follows.
Choose an arbitrary path $z_0=x,\ldots,z_n=y$ in the sign-change plane,
take the orbit of $(z_0,\ldots,z_n)$ under
\[
 V_4^{\,n+1}\rtimes C_{n+1},
\]
where $C_{n+1}$ cyclically permutes the coordinates, and require this orbit
to contain $Z_\lambda$ through the usual two consecutive cyclic shifts.
If $0<|\lambda|<d/2$, then the radius $R$ of the host satisfies
\begin{equation}
 R^2\geq
 \frac{(d^2/4-\lambda^2)^2}{4\lambda^2}.
 \label{eq:lrw-cyclic-radius}
\end{equation}
The same conclusion holds if the construction is first performed at another
common scale and then scaled to contain $Z_\lambda$.
\end{proposition}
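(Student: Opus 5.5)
The plan is to reduce the statement to \cref{lem:lrw-path-energy}. We extract from the host a path whose energy is controlled by $\lambda$ and whose total squared norm is controlled by $R^2$.

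First we set up the geometry of the host. A host point is a tuple $(g_0z_{\sigma(0)},\ldots,g_nz_{\sigma(n)})$ with $g_i\in V_4$ and $\sigma$ a cyclic shift. Since $V_4$ acts orthogonally, every host point has squared norm $S=\sum_i\|z_i\|^2$. The circumcenter of the orbit is the origin: the orbit is invariant under the orthogonal group $V_4^{n+1}\rtimes C_{n+1}$, and $V_4$ has no nonzero fixed vector in the plane. Hence $R^2=S$. If the construction is done at another scale $c$ and then rescaled, both $S$ and $E$ rescale by $c^2$ and the normalization below fixes $c$; so it is enough to treat the unscaled picture after absorbing $c$ into the $z_i$.

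Next we identify the copy of $Z_\lambda$. By hypothesis, $Z_\lambda$ sits in the host through two consecutive cyclic shifts. The points of $O_1\times\{0\}$ come from the unshifted tuple with sign changes $g\in V_4$ applied diagonally, namely $(gz_0,\ldots,gz_n)$. The points of $O_2\times\{\lambda\}$ come from the shifted tuple $(gz_1,\ldots,gz_n,gz_0)$; here the coordinate that received $x=z_0$ now carries $z_1$, and so on. The squared distance between corresponding points, with the same $g$, is
\[
\sum_{i}\|z_i-z_{i+1}\|^2,
\]
taken cyclically. This is $E+\|z_n-z_0\|^2$ in the notation of the lemma, so the wraparound term needs separate treatment.

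The key step is to match this distance with $Z_\lambda$, where $(x,0)$ and $(y,\lambda)$ are at squared distance $d^2+\lambda^2$. I would reorganize the comparison using the direction $w=(x-y)/d$, exactly as in the lemma. The $O_1$-to-$O_2$ displacement in $Z_\lambda$ consists of an in-plane part and a vertical part of length $\lambda$. The vertical part is orthogonal to the sign-change plane, so in the host it must be realized by the component of the shift displacement lying outside the diagonal copy of the plane. Concretely, decompose each host difference vector into its diagonal-plane component and the orthogonal complement. The diagonal component reproduces the plane displacement $x-y$, which cancels the wraparound contribution $\|x-y\|^2=d^2$. The orthogonal complement must then have squared length $\lambda^2$, and this identifies the path energy as $E=\lambda^2$, with the endpoint pair $z_0=x$, $z_n=y$ matching the lemma.

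Finally, since $0<|\lambda|<d/2$, \cref{lem:lrw-path-energy} with $E=\lambda^2$ gives
\[
R^2=S\geq\frac{(d^2/4-\lambda^2)^2}{4\lambda^2}.
\]

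The main obstacle is the key step: showing rigorously that the energy entering the lemma is exactly $\lambda^2$, and that the wraparound term splits off cleanly rather than mixing with $E$. I would first verify this by expanding the Gram matrix of the four relevant host points, $gz$ and $g'z$ for both shifts, and comparing it with the Gram matrix of $Z_\lambda$. If exact equality $E=\lambda^2$ fails, I would settle for the inequality $E\leq\lambda^2$, which suffices because the bound in \eqref{eq:lrw-path-bound} decreases in $E$.
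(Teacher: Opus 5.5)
\textbf{There is a genuine gap.} It sits exactly at the step you flagged: you have placed the copy of $Z_\lambda$ in the wrong part of the orbit.

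With \emph{diagonal} sign changes, the layer $\{(gz_0,\ldots,gz_n):g\in V_4\}$ is not a copy of $O_1$. The vertical sign change fixes $x\in O_1$ but moves $z_n=y$, because $b\neq0$. So this layer has four points rather than two, and in general
\[
 \sum_i\|gz_i-g'z_i\|^2>\|gx-g'x\|^2 .
\]
The cross distances for $g\neq h$ fail in the same way. Your decomposition argument is also false as stated. The difference between $g\mathbf z$ and its diagonally sign-changed cyclic shift has coordinates $g(z_i-z_{i+1})$, which sum to zero cyclically. Hence its component along the diagonal copy of the plane is zero and cannot ``reproduce $x-y$''. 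The fallback $E\le\lambda^2$ has no argument behind it either.

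\textbf{The fix.} In the usual construction, the sign changes act only on the single coordinate that carries $x$ in one layer and $y$ in the other; all other coordinates are frozen. Concretely, the orbit contains
\[
 A_g=(gx,z_1,\ldots,z_n),\qquad B_h=(hy,z_0,\ldots,z_{n-1})\qquad(g,h\in V_4).
\]
Here $B_h$ is the cyclic shift that moves $z_n=y$ into position $0$, followed by a sign change there. Then $\|A_g-A_{g'}\|=\|gx-g'x\|$ and $\|B_h-B_{h'}\|=\|hy-h'y\|$, and
\[
 \|A_g-B_h\|^2=\|gx-hy\|^2+\sum_{i=1}^n\|z_i-z_{i-1}\|^2 .
\]
Comparing this with $\|(gx,0)-(hy,\lambda)\|^2=\|gx-hy\|^2+\lambda^2$ gives $E=\lambda^2$ exactly.

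Your ``wraparound term'' is precisely this active coordinate. It carries the in-plane displacement of $Z_\lambda$; it is not something to be cancelled, and it does not mix with $E$. Your radius computation $R^2=S$ (barycenter zero by independent sign changes) and your scaling remark are fine. With $E=\lambda^2$ and $0<|\lambda|<d/2$, \cref{lem:lrw-path-energy} finishes the proof as you intended.
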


\begin{proof}
The cyclic orbit contains the two families
\[
 A_g=(gx,z_1,\ldots,z_n),\qquad
 B_g=(gy,z_0,\ldots,z_{n-1})qquad(g\in V_4).
\]
For every $g,h\in V_4$,
\[
 \|A_g-B_h\|^2
 =\|gx-hy\|^2+
   \sum_{i=1}^n\|z_i-z_{i-1}\|^2.
\]
Thus the additional squared distance between the two layers is
\[
 E=\sum_{i=1}^n\|z_i-z_{i-1}\|^2=\lambda^2.
\]
The independent sign changes make the orbit barycenter zero, so its squared
radius is
\[
 R^2=\sum_{i=0}^n\|z_i\|^2=S.
\]
Now \cref{lem:lrw-path-energy} gives
\eqref{eq:lrw-cyclic-radius}.

For the scaled version, perform the final common scaling first and normalize
the path so that its endpoints are the prescribed unit vectors $x,y$.
Isometric containment of $Z_\lambda$ then forces the normalized energy to be
$\lambda^2$, while the normalized sum of squared path norms is the squared
radius of the scaled host.  The same application of
\cref{lem:lrw-path-energy} gives the result.
\end{proof}

\begin{corollary}\label{cor:lrw-cyclic-divergence}
For fixed $a$ and fixed choices of $x\in O_1$, $y\in O_2$, every
cyclic-path generalized-prism host has
\[
 R=\Omega_a(1/|\lambda|)
 \qquad\text{as }\lambda\longrightarrow0,
\]
while $\rho_\lambda\to1$.
The straight interpolation used in the generalized-prism proof has the
matching order $R=O_a(1/|\lambda|)$ along
$\lambda=d/\sqrt n$.
Thus its order of divergence is optimal within the entire cyclic-path
scheme.
\end{corollary}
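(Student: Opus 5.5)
The plan is to read off both bounds from \cref{prop:lrw-cyclic-path}: the lower bound follows from \eqref{eq:lrw-cyclic-radius}, and the upper bound comes from evaluating the same radius formula on the straight path. Fix $a$, $x\in O_1$ and $y\in O_2$, and put $d=\|x-y\|$. Since $O_1$ and $O_2$ are disjoint, $d>0$, and $d$ depends only on $a$ and on the chosen points. The statement $\rho_\lambda=\sqrt{1+\lambda^2/4}\to1$ is immediate.

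\emph{Lower bound.} Restrict to $0<|\lambda|\leq d/(2\sqrt2)$, so that $\lambda^2\leq d^2/8$ and hence $d^2/4-\lambda^2\geq d^2/8>0$. Then \eqref{eq:lrw-cyclic-radius} gives
\[
 R^2\geq\frac{(d^2/8)^2}{4\lambda^2}=\frac{d^4}{256\,\lambda^2},
\]
that is, $R\geq d^2/(16|\lambda|)$. This holds for every admissible path and also in the scaled version of the construction, so $R=\Omega_a(1/|\lambda|)$.

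\emph{Upper bound.} Take the straight interpolation $z_i=x+\tfrac in(y-x)$ for $0\leq i\leq n$. Each increment has squared length $d^2/n^2$, so the energy is
\[
 E=n\cdot\frac{d^2}{n^2}=\frac{d^2}{n}.
\]
By the energy identity in the proof of \cref{prop:lrw-cyclic-path}, the layer gap is $\lambda^2=E$, so $\lambda=d/\sqrt n$. In that proof the independent sign changes make the orbit barycenter zero, so the squared host radius is $R^2=S=\sum_{i=0}^n\|z_i\|^2$. Each $z_i$ is a convex combination of the unit vectors $x$ and $y$, hence $\|z_i\|\leq1$. Therefore
\[
 R^2\leq n+1=\frac{d^2}{\lambda^2}+1,
\]
which gives $R=O_a(1/|\lambda|)$ along $\lambda=d/\sqrt n$.

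Comparing the two bounds shows that the straight path has the optimal order of divergence within the cyclic-path scheme. I expect no real obstacle. The only point needing care is that the straight path is indeed an instance of the construction in \cref{prop:lrw-cyclic-path}, in particular that its cyclic orbit contains $Z_\lambda$ through two consecutive shifts. This is exactly the generalized-prism construction, so the identities $E=\lambda^2$ and $R^2=S$ derived in that proof apply verbatim.
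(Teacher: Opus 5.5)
Your proposal is correct and is essentially the argument the paper intends; the paper gives no separate proof of this corollary. The lower bound is read off from \eqref{eq:lrw-cyclic-radius} once $\lambda^2$ is bounded away from $d^2/4$. The matching upper bound comes from $E=d^2/n=\lambda^2$ and $R^2=S\leq n+1$ for the straight interpolation, exactly as you computed.
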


\subsection{A projective sufficient condition and its limitations}

Put $\alpha=|a|$.
The kites $P_a$ and $P_\alpha$ are congruent, so the sign of $a$ is
irrelevant in this subsection.
Fix a separable infinite-dimensional real Hilbert space.
Let $\mathcal H_\alpha$ be the $3$-uniform hypergraph whose vertices are its
unoriented one-dimensional subspaces, or \emph{axes}.
Three distinct axes form an edge if they admit oriented unit representatives
$x,y,z$ satisfying
\begin{equation}
 y+z=2\alpha x.
 \label{eq:lrw-projective-edge}
\end{equation}
Equivalently, for some unit vector $e\perp x$,
\[
 y=\alpha x+\sqrt{1-\alpha^2}\,e,
 \qquad
 z=\alpha x-\sqrt{1-\alpha^2}\,e.
\]
By the compactness theorem for hypergraph colorings,
$\chi(\mathcal H_\alpha)=\infty$ is equivalent to the occurrence of finite
subhypergraphs of arbitrarily large chromatic number.

We first record an orbitwise consequence of the dense block theorem which
does not require a transitive action on the alphabet.

\begin{lemma}[Orbitwise geometric reduction]
\label{lem:lrw-orbitwise}
Let a finite solvable group $G$ act by isometries on a finite set $X$ lying
on a sphere of radius $\rho$ centered at a point fixed by $G$.
For every $r\in\mathbb N$ and every $\varepsilon>0$, every $r$-coloring of a
sufficiently high-dimensional sphere of radius $\rho+\varepsilon$ contains an
isometric copy $\phi(X)$ such that
\[
 Gx=Gy\quad\Longrightarrow\quad
 c(\phi(x))=c(\phi(y)).
\]
\end{lemma}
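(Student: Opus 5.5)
The plan is to mimic the proof of \cref{prop:geometry}, replacing the transitive conclusion by the orbitwise one. First translate so that the fixed point of $G$ is the origin, so $\|x\|=\rho$ for all $x\in X$ and every $g\in G$ acts as an orthogonal map. Fix $r$ and $\eps>0$, put $R=\rho+\eps$, and choose $0<\eta<1$ with $1-\eta\geq\rho^2/R^2$. By \cref{thm:DB-solvable}, $\mathrm{DB}(G\curvearrowright X)$ holds; apply \cref{def:DB} with $m=1$ to obtain $n,k$ with $k\geq(1-\eta)n$. Note that \cref{thm:DB-solvable} does not assume transitivity, so this applies to our possibly intransitive action.

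Next embed the grid exactly as before: $\iota(x_1,\ldots,x_n)=(k^{-1/2}x_1,\ldots,k^{-1/2}x_n,t)$ with $t=\sqrt{R^2-n\rho^2/k}$, which is real by the choice of $\eta$. This places $\iota(X^n)$ on $\S_R^{dn}$, where $d$ is the dimension of the ambient space of $X$. Pull back an arbitrary $r$-coloring $c$ of this sphere to $X^n$, and let $\Phi:X\to X^n$ be the block map with active set $I$, $|I|=k$, and labels $\gamma_\ell\in G$. The same distance computation as in \cref{prop:geometry},
\[
 \|\iota(\Phi(x))-\iota(\Phi(y))\|^2=\frac1k\sum_{\ell\in I}\|\gamma_\ell x-\gamma_\ell y\|^2=\|x-y\|^2,
\]
which uses only that each $\gamma_\ell$ is an isometry, shows that $\phi=\iota\circ\Phi$ is an isometric embedding of $X$ into the sphere. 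The orbit-insensitivity \eqref{eq:orbit-insensitivity} with $m=1$ then gives exactly $c(\phi(x))=c(\phi(y))$ whenever $y\in Gx$, i.e.\ whenever $Gx=Gy$.

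The only point needing care, and thus the mild obstacle, is the normalization of the radius. Since $X$ is intransitive, its intrinsic circumcenter need not be fixed in an obvious way, so I would use the hypothesis exactly as stated: the center of the given sphere is $G$-fixed. This is what makes all $\|\gamma_\ell x\|=\rho$ and the norm computation $\|\iota(\cdot)\|^2=n\rho^2/k+t^2=R^2$ valid. No transitivity is used anywhere else, so the argument goes through verbatim.
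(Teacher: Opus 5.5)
Your proposal is correct and is essentially the paper's proof: translate the $G$-fixed center to the origin, apply the dense block theorem with $m=1$ and enough density that $\rho\sqrt{n/k}\leq\rho+\varepsilon$, and rescale the grid by $k^{-1/2}$ with a common padding coordinate. The conclusion then follows from the fact that the active labels are isometries together with orbit-insensitivity. One small correction to your closing remark: $\|\gamma_\ell x\|=\rho$ holds simply because $\gamma_\ell x\in X$, so the fixed-center hypothesis is not what makes the norm computation valid.
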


\begin{proof}
Translate the fixed center to the origin.
Apply the dense block theorem to $G\curvearrowright X$ with one variable
and with active block size $k$ and length $n$ chosen so densely that
\[
 \rho\sqrt{n/k}\leq\rho+\varepsilon.
\]
Scale the resulting $X^n$ grid by $k^{-1/2}$ and append one coordinate,
common to all words, to place the grid on the prescribed sphere.
On the active block every coordinate has the form $gx$ for an isometry
$g\in G$, while the other coordinates are fixed.
The restriction of the block map is therefore an isometry on $X$.
Orbit-insensitivity gives the asserted equality of colors.
\end{proof}

\begin{proposition}[Projective fusion criterion]
\label{prop:lrw-projective-reduction}
If $\chi(\mathcal H_\alpha)=\infty$, then the LRW kite $P_a$ is
ncs-Ramsey.
\end{proposition}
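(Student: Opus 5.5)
The plan is to apply \cref{lem:lrw-orbitwise} to a finite solvable group action whose orbit structure lets a proper coloring of a finite subhypergraph of $\mathcal H_\alpha$ be read off from any monochromatic orbit-insensitive copy.

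Start from the natural two-orbit alphabet $X_a=O_1\cup O_2$ with the Klein four-group $V_4$ of sign changes. On a sphere of radius $1+\varepsilon$, \cref{lem:lrw-orbitwise} applied to $V_4\curvearrowright X_a$ gives an isometric copy $\phi(X_a)$. On this copy $c$ is constant on the images of $O_1$ and constant on the images of $O_2$, but the two colors may differ. The real work is to fuse the two orbits.

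\textbf{Step 1: a finite axis configuration.} Given $r$, use $\chi(\mathcal H_\alpha)=\infty$ and compactness to pick a finite subhypergraph $\mathcal H'$ with $\chi(\mathcal H')>r$. Choose oriented unit representatives for its axes, spanning a finite-dimensional space $V$. Form the finite set $Y=\{\pm v\}$ of these representatives together with the vectors needed for the edges. Let $G=C_2$ act by $v\mapsto -v$; this group is solvable and fixes the origin. An edge $\{x,y,z\}$ with $y+z=2\alpha x$ gives the four points $-x,\ x,\ y,\ z$, which should form a copy of $P_\alpha$ under the identification $L=-x$, $R=x$, $U=y$, $D=z$. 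I must check this: the affine dependence \eqref{eq:lrw-affine} holds, $-x,x$ are antipodal, and $y,z$ are symmetric about the $x$-axis at abscissa $\alpha$. All four points lie on the unit sphere centered at the origin, which is fixed by $G$.

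\textbf{Step 2: apply the lemma and pull back.} Apply \cref{lem:lrw-orbitwise} to $G\curvearrowright Y$ with radius $1$, the given $r$, and $\varepsilon$. This yields an isometric copy $\phi(Y)$ on $\S^n_{1+\varepsilon}$ on which $c(\phi(v))=c(\phi(-v))$. Hence $c$ induces a coloring $\bar c$ of the axes of $\mathcal H'$.

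\textbf{Step 3: extract the kite.} Since $\chi(\mathcal H')>r$, some edge $\{x,y,z\}$ is monochromatic under $\bar c$. Then $\phi(-x),\phi(x),\phi(y),\phi(z)$ all have the same color and form a congruent copy of $P_\alpha\cong P_a$. Here I use that $\phi$ is an isometry of $Y$ and that the sign of $a$ is irrelevant. Because $r$ and $\varepsilon$ are arbitrary, $P_a$ is ncs-Ramsey.

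\textbf{Main obstacle.} The step I expect to need the most care is the identification in Step 1, namely verifying that the antipodal pair $\pm x$ together with $y,z$ really is congruent to $P_\alpha$. That forces $\|y-z\|^2=4(1-\alpha^2)$ and $\langle y-z,x\rangle=0$, both of which follow from the parametrization $y=\alpha x+\sqrt{1-\alpha^2}\,e$, $z=\alpha x-\sqrt{1-\alpha^2}\,e$ with $e\perp x$ a unit vector. I would also double-check that the orbitwise lemma does not require the configuration to have full affine span, since $Y$ is centered at the origin and the $G$-fixed center is available.
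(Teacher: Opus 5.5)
Your argument is correct and is essentially the paper's proof: take a finite subhypergraph with $\chi>r$, apply \cref{lem:lrw-orbitwise} to the antipodal $C_2$-action on the signed axis representatives, read off an $r$-coloring of the axes, and use a monochromatic edge to obtain the monochromatic kite $\phi(-x),\phi(x),\phi(y),\phi(z)$. Your opening paragraph on $X_a$ and $V_4$ is never used and can be deleted. The paper also writes $\phi(v)=h+Tv$ with $h\perp T\operatorname{span}Y$, but this is not needed, since congruence already follows from $\phi$ being an isometry on $Y$, as you argue.
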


\begin{proof}
Fix $r$ and $\varepsilon>0$, and choose a finite subhypergraph
$F\subset\mathcal H_\alpha$ with $\chi(F)>r$.
Choose one unit representative of every axis of $F$, and let $X$ consist of
these vectors and their negatives.
Apply \cref{lem:lrw-orbitwise} to the antipodal action of $C_2$ on $X$.
In the resulting copy, the two points belonging to each axis have one common
color.

An isometry of $X$ extends to an affine isometry of its affine hull.
Since $X=-X$, it consequently has the form
\[
 \phi(v)=h+Tv,
\]
where $T$ is a linear isometry on $\operatorname{span}X$.
The two points $h+Tv,h-Tv$ both lie on the sphere of radius $1+\varepsilon$.
Subtracting their squared-norm identities gives
$h\perp T\operatorname{span}X$.

The common colors of the antipodal pairs define an $r$-coloring of the
vertices of $F$, and hence $F$ has a monochromatic edge.
Orient this edge as in \eqref{eq:lrw-projective-edge}.
Then
\[
 h-Tx,\qquad h+Tx,\qquad h+Ty,\qquad h+Tz
\]
are monochromatic and form a congruent copy of $P_\alpha$, and hence of
$P_a$.
\end{proof}

This reduces the LRW problem to a precise projective coloring question, but
the chromatic number of $\mathcal H_\alpha$ remains open.
The next statements explain why two tempting simplifications do not settle
it.

Let $\widetilde{\mathcal H}_\alpha$ be the $3$-uniform hypergraph on
oriented unit vectors in the same Hilbert space, whose
edges are the triples satisfying \eqref{eq:lrw-projective-edge}.

\begin{proposition}\label{prop:lrw-oriented-coloring}
The chromatic numbers of the restrictions of
$\widetilde{\mathcal H}_\alpha$ to finite-dimensional subspaces are bounded
uniformly in the ambient dimension.
\end{proposition}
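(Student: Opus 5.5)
The plan is to give one explicit coloring of the oriented unit sphere $\S^{d-1}\subset\R^d$ with a number of colors independent of $d$, and to check that no edge of $\widetilde{\mathcal H}_\alpha$ is monochromatic. Everything rests on the defining relation \eqref{eq:lrw-projective-edge}, $y+z=2\alpha x$. This has a built-in asymmetry: $x$ is the ``midpoint'' direction, while $y$ and $z$ lie symmetrically about the axis of $x$ at angle $\arccos\alpha$.

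The first idea to try is the lexicographic sign trick from the introduction, refined by a magnitude scale. Fix an orthonormal basis. For a unit vector $v$, let $i(v)$ be the first coordinate with $|v_i|$ not too small, and color by the sign of $v_{i(v)}$ together with a bucket of $\log|v_{i(v)}|$ in a geometric scale of ratio about $2\alpha$. Because the threshold depends on $d$, this may fail. A cleaner alternative is to color by the lexicographic sign together with a bucket of $\log_{2\alpha}|v_{i}|$, where $i$ is the first nonzero coordinate.

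The key observation for the lexicographic part is the following. Suppose $x,y,z$ all have the same lexicographic sign, say positive. Let $i$ be the first index where some of the three vectors is nonzero. From $y_j+z_j=2\alpha x_j$ for all $j<i$, positivity forces the first nonzero coordinates to be compatible. The plan is to show that either $y$ and $z$ start at the same index $i$ as $x$, or one of them starts strictly earlier than $x$. In the latter case that earlier coordinate sums to zero with the partner's, which forces opposite signs and contradicts monochromaticity. So we may assume all three start at index $i$ with positive leading entries satisfying $y_i+z_i=2\alpha x_i$.

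It then suffices to color the leading value $t=v_i\in(0,1]$ so that no triple of positive reals with $y_i+z_i=2\alpha x_i$ is monochromatic. With buckets $[\beta^{j+1},\beta^j)$, equal buckets give ratios within $\beta$, and $2\alpha x_i=y_i+z_i\in[2\beta x_i,2x_i/\beta]$. This yields a contradiction only when $\alpha$ lies outside $[\beta,1/\beta]$, which is automatic for $\alpha<1$ once $\beta>\alpha$.

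The main obstacle is that the bucket index is unbounded, giving infinitely many colors. To fix this, I would reduce the bucket index modulo a fixed integer $q$. A monochromatic triple then has bucket indices congruent mod $q$. I would use the Cauchy–Schwarz type constraint that $|y_i|,|z_i|\le1$ and $y_i+z_i=2\alpha x_i$ to bound the spread: at least one of $y_i,z_i$ is at least $\alpha x_i$, so that index differs from the index of $x$ by a bounded amount. I would then choose $q$ larger than that bound, which forces equal buckets for that pair.

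The unclear case is when the other vector is tiny, for example $z_i\ll x_i$. I expect this to be the real difficulty. I would handle it by a second-level recursion: pass to the sub-vector obtained by subtracting the leading part and coloring the next coordinate. Alternatively, I would first compose with a random rotation so that all leading coordinates are comparable. The total palette would be $2q$ times a bounded factor, independent of $d$.
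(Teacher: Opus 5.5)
You have the right architecture: color a unit vector by the sign of its first nonzero coordinate together with a color of that coordinate's magnitude. This is exactly the mechanism inside Graham's Rado-theoretic criterion, which the paper simply cites. As written, though, there is a genuine gap at the step you yourself call the real difficulty, and your proposed remedies do not close it.

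\textbf{The two problems.} First, the lexicographic reduction is incomplete. When $x$ has the minimal leading index $i$, exactly one of $y_i,z_i$ may be nonzero. This gives $y_i=2\alpha x_i$ with $z$ starting later, and it is not the case ``all three start at $i$''.

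Second, and more seriously, the mod-$q$ bucket coloring under your only stated constraint $\beta>\alpha$ is not proper. Forcing $x$ and the larger partner $y$ into one bucket leaves $z_i=2\alpha x_i-y_i$ unconstrained, and it really can be tiny. Take $\alpha<1/2$ and $\alpha<\beta<2\alpha$. Fix $\lambda_0\in(\beta,2\alpha)$ and $t\in[\beta^{j+1}/\lambda_0,\beta^j)$ with $j$ large. Then for every $\lambda\in[\lambda_0,2\alpha)$ both $t$ and $\lambda t$ lie in the bucket $[\beta^{j+1},\beta^j)$. Meanwhile $(2\alpha-\lambda)t$ passes through every smaller bucket as $\lambda\uparrow2\alpha$, so its index can be made $\equiv j\pmod q$. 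Now put $x=(t,\sqrt{1-t^2},0)$ and $y,z=\alpha x\pm\sqrt{1-\alpha^2}\,e$. Here $e\perp x$ is a unit vector with $e_1=(\lambda-\alpha)t/\sqrt{1-\alpha^2}$, which exists once $t^2\le1-\alpha^2$. This is a monochromatic edge already in $\mathbb R^3$.

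Your suggested fixes do not help. Composing with a rotation only moves the monochromatic edges, because the edge relation is rotation invariant. The ``second-level recursion'' is never specified.

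\textbf{What is missing, and two ways to supply it.} You need a finite coloring of the positive reals with no monochromatic solution of $y+z=2\alpha x$, including the degenerate equation $y=2\alpha x$.

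The paper gets this from Rado's theorem. Since $\alpha\notin\{0,1/2,1\}$, no nonempty subsum of $(1,1,-2\alpha)$ vanishes. Hence neither this equation nor any of its subequations is partition regular over the nonzero reals. Graham's theorem then combines such colorings with the first-nonzero-coordinate rule into a coloring that does not depend on the dimension.

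Your buckets can also be repaired directly. Choose $\beta$ so that $x$ and the relevant partner can \emph{never} share a bucket, instead of forcing them into one. In the positive case with $y_i\ge z_i$, you have $y_i/x_i\in[\alpha,2\alpha]$.
\begin{itemize}
\item If $\alpha<1/2$, take $\beta\in[2\alpha,1)$. Then the bucket indices of $x$ and $y$ differ by an integer between $1$ and $1+\log_{1/\beta}(1/\alpha)$.
\item If $\alpha>1/2$, take $\beta\in(\max\{\alpha,1/(2\alpha)\},1)$. Either $y_i\ge x_i/\beta$, and $y$ is between one and boundedly many buckets away from $x$. Or $z_i\in((2\alpha-1/\beta)x_i,\beta x_i)$, and then $z$ is.
\end{itemize}
Taking $q$ larger than these bounds gives a proper $2q$-coloring, and it also covers the degenerate case $z_i=0$. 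This would make your argument a self-contained, explicit version of the Graham--Rado citation.
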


\begin{proof}
The three vectors form a fixed triangle on the unit sphere, and their unique
linear dependence has coefficient vector
\[
 (1,1,-2|a|).
\]
No nonempty subsum of these coefficients vanishes, because $a$ is
transcendental.
Graham's Rado-theoretic necessary condition for a set to be
circumsphere-Ramsey~\cite[Theorem~1]{Graham1983} therefore gives a fixed finite
coloring, in every dimension, with no monochromatic orthogonal image of
this triangle.
This is precisely a proper coloring of $\widetilde{\mathcal H}_\alpha$.
\end{proof}

The projective hypergraph used in the possible positive route is different:
its vertices are axes, and an edge only has to admit some choice of signs for
which the displayed relation holds.
A coloring of axes must be invariant under $v\mapsto-v$, whereas the
coloring supplied by the preceding Rado argument is not required to be
antipodally invariant.
Thus a proof that the projective hypergraph has infinite chromatic number,
if one exists, must genuinely exploit the unoriented sign choices; it cannot
be inherited from the oriented relation.

There is a second obstruction.
Call a finite subhypergraph of $\mathcal H_\alpha$ \emph{coherently
orientable} if one can designate on every edge one vertex as its center and
the other two as its endpoints, and choose one unit representative of every
axis, so that \eqref{eq:lrw-projective-edge} holds simultaneously with
those designations on every edge.

\begin{proposition}\label{prop:lrw-coherent-coloring}
Every finite coherently orientable subhypergraph of
$\mathcal H_\alpha$ is $2$-colorable.
\end{proposition}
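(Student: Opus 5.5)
Given a coherently orientable finite subhypergraph $F$, I will use the coherent orientation to build a real-valued potential on the vertices and color by its sign. Fix the chosen unit representatives $v_x$ of the axes and the designated centers, so every edge $\{x,y,z\}$ with center $x$ satisfies $v_y+v_z=2\alpha v_x$. The first idea I would try is to pick a generic unit vector $w$ in the finite-dimensional span of the representatives, so that $\langle v_x,w\rangle\neq0$ for every vertex, and color $x$ by $\operatorname{sign}\langle v_x,w\rangle$. On an edge we get $\langle v_y,w\rangle+\langle v_z,w\rangle=2\alpha\langle v_x,w\rangle$. This kills the case where all three signs are negative or all positive only if $\alpha$ were negative; since $\alpha>0$ this linear functional alone does not separate. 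So the plan must be refined.

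Instead I will look for a vector $w$, or more generally a weight function $f$ on vertices, so that $f(y)+f(z)-2\alpha f(x)$ is forced to have a fixed sign pattern that excludes monochromatic edges. The key observation is that the endpoints $y,z$ have $v_y-v_z=2\sqrt{1-\alpha^2}\,e$ with $e\perp v_x$, so $\langle v_y,v_z\rangle=2\alpha^2-1$. In coherent orientation, each edge is a fixed oriented triangle of unit vectors with the relation $v_y+v_z-2\alpha v_x=0$, and the coefficients $(1,1,-2\alpha)$ have no vanishing subsum since $\alpha$ is transcendental. I would therefore mimic Graham's Rado argument from \cref{prop:lrw-oriented-coloring}: color each vertex by the sign of the first nonzero coordinate of $v_x$ in a generic orthonormal basis, that is, by lexicographic sign. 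A monochromatic edge with all three representatives lexicographically positive would give a positive combination $v_y+v_z$ equal to $2\alpha v_x$, which is consistent, so plain lexicographic sign fails. To reach two colors I will instead combine one generic linear functional $\ell$ with the orientation: color $x$ red if $\ell(v_x)>0$. On an edge, if both endpoints are red, then the center is red; if both endpoints are blue, then the center is blue. So a monochromatic edge is only avoided if I flip the rule at centers. Hence the real plan is to exploit that the coherent choice of representatives is fixed, while replacing $v_x$ by $-v_x$ at selected vertices is not allowed. I would then define the coloring by a generic threshold $\ell(v_x)>\theta$ with $\theta$ chosen relative to $\alpha$, which breaks the homogeneity of the relation.

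Concretely, take $\theta>0$ small and generic. If $y,z$ are both in $\{\ell>\theta\}$, then $\ell(v_x)=(\ell(v_y)+\ell(v_z))/(2\alpha)>\theta/\alpha>\theta$, so $x$ is red as well, which is bad. So this threshold family never works directly, and the main obstacle is producing a genuinely nonlinear $2$-coloring. I would try to use the finite structure: order vertices by $|\ell(v_x)|$ for generic $\ell$. On each edge the center satisfies $|\ell(v_x)|\le(|\ell(v_y)|+|\ell(v_z)|)/(2\alpha)$, and I would aim to prove that the vertex of maximal $|\ell|$ in each edge, or the center, can be distinguished. Then I would color greedily along this order, as in a Bernstein-type $2$-coloring argument for hypergraphs where every edge has a distinguished last vertex and the other two are determined. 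Whether this greedy step goes through is what I would check first, and it is where I expect the argument to need the most care.
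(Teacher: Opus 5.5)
You have not reached a proof. The colorings you actually test (the sign of one functional, lexicographic sign, a threshold) are correctly seen to fail. The final step, greedy coloring along the order of $|\ell(v_x)|$, is left unverified, and as sketched it cannot work: nothing in it uses that $\alpha$ is transcendental, and without that hypothesis the statement is false. For $\alpha=1/2$ the edge relation reads $y+z=x$. The unit vectors $u_{ij}=(e_i-e_j)/\sqrt2$ $(1\le i<j\le 6)$ in $\mathbb{R}^6$ satisfy $u_{ij}+u_{jk}=u_{ik}$. Hence the triples $\{ij,jk,ik\}$ with center $ik$ form a coherently orientable subhypergraph. A proper $2$-coloring of it would be a $2$-edge-coloring of $K_6$ without a monochromatic triangle, contradicting $R(3,3)=6$. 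Your bound $|\ell(v_x)|\le(|\ell(v_y)|+|\ell(v_z)|)/(2\alpha)$, and any ordering built from it, are equally available there. Nothing in your sketch prevents a vertex from being the last vertex of two edges whose earlier vertices are colored red--red and blue--blue. In the example above such a conflict is unavoidable, since that is the only way greedy coloring can fail.

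The missing idea is algebraic specialization. For a vertex set $S$, let $M_S(t)$ be the edge--vertex matrix with entries $1,1,-t$ at the two endpoints and the center of each edge contained in $S$. For any $s\in S$, the functional $\langle\cdot,v_s\rangle$ turns the chosen representatives into a nonzero kernel vector of $M_S(2\alpha)$. So all $|S|\times|S|$ minors vanish at $t=2\alpha$. These minors are integer polynomials in $t$, so by transcendence they vanish identically, and therefore $M_S(-2)$ has a nonzero kernel vector $w$. Each row now says $w_y+w_z+2w_x=0$ with all coefficients positive. Hence every edge on which $w$ is not identically zero contains both a positive and a negative coordinate. Color by the sign of $w$ and recurse on the zero set of $w$, whose induced subhypergraph is again coherently orientable; edges meeting the support of $w$ are already bichromatic. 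The sign vector you need solves the specialized system at $t=-2$. By contrast, any vector $(\ell(v_x))_x$ solves the realized system at $t=2\alpha>0$, which permits edges with all three entries of one sign. This is exactly why your functional-based attempts failed.
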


\begin{proof}
It is enough to prove the assertion hereditarily for every induced
subhypergraph.
For a vertex set $S$, let $M_S(t)$ be the matrix whose columns are indexed
by $S$ and whose rows are indexed by the edges contained in $S$, with
coefficients $1,1,-t$ at the two endpoints and the center of each edge.
The realized vectors give
\[
 \operatorname{rank}M_S(2\alpha)<|S|.
\]
Every $|S|\times|S|$ minor (if any) is an integral polynomial in $t$.
Since $\alpha$ is transcendental, all these minors vanish identically, and
hence $M_S(-2)$ has a nonzero kernel vector $v$.
Every row of $M_S(-2)$ has coefficients $1,1,2$, all positive.
Color the positive and negative coordinates of $v$ differently and recurse
on its zero set.
An edge exposed at the first nonzero stage contains both signs, so the
resulting two-coloring is proper.
\end{proof}

Thus only sign-inconsistent finite subhypergraphs can have chromatic number
greater than two.
The proposition gives no coloring of such subhypergraphs and therefore does
not decide the projective chromatic question.

\end{document}